\title{Singularity Formation in the Incompressible Porous Medium Equation without Boundary Mass}
\author{Kevin H. Dembski \footnote{Department of Mathematics, Duke University. E-mail: kevin.dembski@duke.edu.}}
\date{\today}
\newcommand{\R}{\mathbb{R}}
\newcommand{\C}{\mathbb{C}}
\newcommand{\p}{\partial}
\newcommand{\Ca}{\mathring{C}^{\alpha}}
\renewcommand{\d}{\text{\ d}}
\renewcommand{\H}{\mathcal{H}}
\newcommand{\tH}{\widetilde{\mathcal{H}}}
\newcommand{\inner}[2]{\left\langle #1, #2\right\rangle}
\def\L{\mathcal{L}}
\def\Lb{\overline{\mathcal{L}}}
\def\Lk{\mathcal{L}_K}
\def\P{\mathbb{P}}
\def\tG{\widetilde{G}}
\def\tM{\widetilde{M}}
\def\l{\ell}
\def\N{\mathcal{N}}
\newtheorem{theorem}{Theorem}[section]
\newtheorem{lemma}[theorem]{Lemma}
\newtheorem{proposition}[theorem]{Proposition}
\newtheorem{cor}[theorem]{Corollary}
\newtheorem{remark}[theorem]{Remark}
\newtheorem{definition}[theorem]{Definition}
\numberwithin{equation}{section}
\begin{document}

\maketitle

\begin{abstract}
We prove finite-time singularity formation for Lipschitz continuous solutions of the inviscid porous medium equation which vanish on the boundary of the domain. As the density vanishes on the boundary of the domain, the full regularizing effect of transport is present and must be overcome. The solutions are smooth away from the origin and the density can be made compactly supported.
\end{abstract}
\tableofcontents
\section{Introduction}

\subsection{The 2D Inviscid Porous Medium Equation}
The inviscid porous medium (IPM) equation models the flow of an incompressible fluid through a porous medium such as sand
\begin{align}\label{eq:IPM}\begin{dcases}
u + \nabla p = (-\rho, 0) \\
\p_t \rho + u \cdot \nabla \rho = 0 \\
\nabla \cdot u = 0.
\end{dcases}
\end{align}
Here, \(\Omega \subset \R^2\) is the region occupied by the fluid, \(u: \R_{\geq 0} \times \Omega \to \R^2\) denotes the velocity field of the fluid which evolves according to Darcy's law, and \(\rho: \R_{\geq 0} \times \Omega \to \R\) denotes the density which is transported by the velocity. On the boundary of \(\Omega\), we impose the usual no-penetration boundary condition \(u \cdot n = 0\) where \(n\) is the normal to \(\p\Omega\). Note that we adopt the slightly unusual convention in which gravity acts horizontally in the negative \(x\)-direction for sake of analogy with the centrifugal force in the 3D axisymmetric Euler equations.

The study of singularity formation in incompressible fluids is a major problem in partial differential equations. One of the main difficulties is the seemingly perfect balance between the effects of advection and stretching in such systems. In the case of the IPM equations, the gradient of the density \(\nabla \rho\) is advected  and stretched by an incompressible velocity field \(u\). Thus, while the stretching effect works to cause rapid growth in \(\nabla \rho\) towards a potential blow-up, the advection then rapidly ejects particles from regions of high growth limiting this effect. This phenomenon is known as regularization by transport and has been a fundamental obstruction for proving blow-up results in incompressible fluids. Many of the examples of blow-up in incompressible fluids occur in settings designed to limit this regularizing effect. To the author's knowledge, there is no known scenario to weaken the effect of advection given smooth initial data in a domain without boundary, and thus understanding this regularization effect is key to proving any such result. In this work, we establish finite-time blow-up for Lipschitz solutions to the IPM equation in a setting in where the regularizing effect of transport is present.

\subsection{Main Result}
In order to state our main result, we first recall the scale invariant H\"older spaces, first introduced in \cite{EJ-S}. These spaces have the same scaling as \(L^\infty\), however the singular integral operators which arise in the Biot--Savart law are bounded on \(\Ca(\Omega)\) when \(\Omega\) is a wedge domain strictly smaller than the half plane and \(\rho\) is even in \(x_2\). This allows our solutions to be placed in a suitable local well-posedness class.
\begin{definition}\label{def:Ca}
The scale-invariant H\"older space \(\mathring{C}^{\alpha}(\Omega)\) is defined by the norm \(\|\cdot\|_{\Ca}\),
\begin{align*}
\|f\|_{\Ca} &= \|f\|_{L^{\infty}} + \sup_{x \neq x'}\frac{||x|^{\alpha}f(x) - |x'|^{\alpha}f(x')|}{|x-x'|^{\alpha}}.
\end{align*}
The higher order spaces \(\mathring{C}^{k, \alpha}\) are then defined by
\[
\|f\|_{\mathring{C}^{k, \alpha}} = \|f\|_{\mathring{C}^{k-1, \alpha}} + \sup_{x \neq x'}\frac{||x|^{k+\alpha}\nabla^k f(x) - |x'|^{k+\alpha}\nabla^k f(x')|}{|x-x'|^{\alpha}}.
\]
\end{definition}
\noindent Note that for scale-invariant (0-homogeneous) functions, the \(\Ca\) norm is equivalent to the usual \(C^{\alpha}\) norm considered in the angular variable. We also remark that \(\nabla f \in \mathring{C}^{k, \alpha}\) clearly implies \(f\) is Lipschitz continuous. We now state our main theorem. 
\begin{theorem}\label{theorem:main}
For any \(k \geq 0\) and \(0 < \alpha < 1\), there exist \(\nabla u_0, \nabla \rho_0 \in \mathring{C}^{k,\alpha}(\Omega)\) with compact support such that the unique local in time solution to \eqref{eq:IPM} satisfies
\[
\limsup_{t \to 1}\int_{0}^{t}\|\nabla \rho(s)\|_{L^{\infty}}\d s = +\infty.
\]
Here, \(\Omega = \{(r, \theta) : -\beta \pi < \theta < \beta \pi\}\) for some \(\beta < \pi/2\) where \((r, \theta)\) denote the standard polar coordinates on \(\R^2\). Moreover, the initial density \(\rho_0\) can be chosen to be compactly supported in the angular variable.
\label{main}
\end{theorem}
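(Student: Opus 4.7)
The plan is to combine an explicit singular profile construction with a perturbative stability argument in the scale-invariant H\"older framework, in the spirit of the \(C^{1,\alpha}\) blow-up technology developed by Elgindi and by Elgindi--Jeong for related incompressible models. First I would impose that \(\rho\) is odd in \(x_2\), a symmetry preserved by IPM; this forces \(u_2 \equiv 0\) on the symmetry axis, keeps the wedge \(\Omega\) flow-invariant, and makes the Biot--Savart map \(\rho \mapsto u = -(\rho,0) + \nabla \Delta^{-1}\p_{x_1}\rho\), a Calder\'on--Zygmund operator, bounded on \(\Ca(\Omega)\) -- which is the reason the theorem requires \(\beta < 1/2\). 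The natural scale-invariant quantity is \(\nabla \rho\), which evolves by
\[
\p_t \nabla \rho + u \cdot \nabla (\nabla \rho) = -(\nabla u)^{\top}\nabla \rho,
\]
i.e.\ it is transported by \(u\) while being stretched by the zero-order operator \(\nabla u\).

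Next I would extract the leading near-origin behavior through the ansatz \(\rho(t,r,\theta) = r\,\phi(t,\theta) + r^{1+\alpha}\,\psi(t,r,\theta)\), with \(\phi\) a zero-homogeneous odd angular profile supported in \((-\beta\pi,\beta\pi)\) and \(\psi\) a perturbation of strictly lower order measured in \(\Ca\). Substituting into IPM and collecting the degree-one terms yields a one-dimensional reduced system of the form
\[
\p_t \phi + U_\theta[\phi]\,\p_\theta \phi = -U_r[\phi]\,\phi,
\]
where \(U_r[\phi]\) and \(U_\theta[\phi]\) are explicit nonlocal linear functionals of \(\phi\) obtained by restricting the Biot--Savart formula to degree-one homogeneity. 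The heart of the argument is to exhibit \(\phi_0\) for which the angular characteristic flow generated by \(U_\theta[\phi]\) has an attracting zero at some angle \(\theta_\ast\) where \(U_r[\phi](\theta_\ast) < 0\); then the stretching term dominates angular ejection, \(\phi\) concentrates at \(\theta_\ast\) with exponentially growing amplitude, and a Riccati-type differential inequality for \(\|\nabla\rho\|_{L^\infty}\) forces finite-time blow-up, with a time rescaling setting the blow-up at \(t = 1\).

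The complementary stability estimate controls the perturbation \(\psi\) in \(\Ca\): the extra \(r^\alpha\) in the ansatz absorbs the commutator errors that arise when the transport operator acts on \(\nabla \rho\), so \(\|\psi\|_{\Ca}\) grows at most linearly in the instantaneous stretching rate produced by \(\phi\), and the leading profile governs blow-up. Higher regularity \(\mathring{C}^{k,\alpha}\) for \(k \geq 1\) is propagated by commuting derivatives through the same ansatz and estimating each order in the identical scale-invariant framework. Compact support is then enforced by choosing \(\phi_0\) supported in a strict subinterval of \((-\beta\pi,\beta\pi)\) (giving compact angular support) and truncating the radial data at a large scale, since the blow-up mechanism operates in a shrinking neighborhood of the origin and outer data is irrelevant up to time \(t = 1\).

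I expect the hardest step to be the construction of the singular profile together with the verification that the attractor \(\theta_\ast\) persists under the full nonlocal perturbation. Because \(\rho\) vanishes on \(\p\Omega\), there is no boundary mass pinning \(U_\theta[\phi]\) to zero at a prescribed angle, and the algebraic simplifications available in boundary-mass arguments are unavailable. Overcoming this is precisely the quantitative comparison between stretching and angular transport flagged in the abstract; it is here that both the odd symmetry and the constraint \(\beta < 1/2\) enter essentially, the former eliminating undesirable modes of \(U_\theta\) on the symmetry axis and the latter giving the Riesz-type kernels defining \(U_r\) the sign required to guarantee \(U_r[\phi_0](\theta_\ast) < 0\).
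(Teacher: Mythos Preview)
Your overall framing --- reduce to an angular 1D system via exact 1-homogeneity, then control a remainder --- is in the right direction, and you correctly locate the hard step. However, the blow-up mechanism you propose for the 1D system has a genuine gap. You look for an angle $\theta_\ast$ that is \emph{attracting} for the angular characteristic flow and where the stretching coefficient is favorable, so that a Riccati inequality closes. In the reduced system $\partial_t P + 2GP' = G'P$ (with $G$ determined elliptically from $P$), the geometry is the opposite: stretching ($G'>0$) occurs near the symmetry point $\theta=0$, which is \emph{repelling} for the flow $\dot\theta = 2G$, while the attracting endpoint $\theta = L$ has $G'(L)<0$, i.e.\ damping. This is precisely the ``regularization by transport'' obstruction, and it defeats a direct Riccati/attractor argument once boundary mass is forbidden; there is no attracting stretching angle to exploit.

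The paper's route is substantially different. It seeks an \emph{exact} self-similar profile $P(t,\theta)=(1-t)^{-1}P_*(\theta)$, reducing the 1D PDE to a singular nonlocal ODE for $(P_*,G_*)$. Evaluating at $\theta=0$ forces $G_*'(0)=1$, converting the boundary problem into an initial-value problem; the special choice $P_*(0)=4$ unlocks a free parameter $P_*''(0)$, a monotonicity lemma propagates $M_*'\le 0$ (where $P_*=M_*\cos\theta$), and a shooting argument in this parameter hits $G_*(L)=0$, which automatically gives $P_*(L)=0$. The resulting profile is only $C^{1/2}$ at $\theta=L$, so a second major block is required: linearize about $P_*$ in a weighted space $\widetilde H^4$, decompose the linearized operator as coercive plus finite rank, and run a stable-manifold argument to produce a smooth, angularly compactly supported perturbation that truncates the profile near $\theta=L$ while preserving blow-up. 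The passage to 2D uses a decomposition $\rho = rP + \bar\rho$ with $\nabla^\perp\bar\rho \in C^\alpha$ vanishing at the origin --- not an $r^{1+\alpha}$ ansatz --- and the $\mathring C^{k,\alpha}$ regularity for all $k$ comes from smoothness of the angular datum, not from a small-$\alpha$ expansion. (Minor point: with gravity along $-x_1$, the preserved symmetry is $\rho$ \emph{even} in $x_2$, not odd.)
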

\begin{remark}
    In \S 3.4 we prove that for any \(\beta' < \pi/2\) we can take \(\beta' < \beta < \pi/2\). Thus, we obtain blow-up on domains arbitrarily close to the half-plane. 
\end{remark}
The primary novelty of the result is achieving blow-up for data which vanishes on the boundary of the domain. In \cite{EJ-B}, Elgindi and Jeong construct solutions of the Boussinesq equations which blow-up in finite time in the same setting as the current work. The key difference however, is that their solutions do not vanish on the boundary of the domain. In this way, they are able to use the boundary to diminish the regularizing effect of transport. Indeed, if the final assumption that \(\rho_0\) vanishes on the boundary is removed from Theorem \ref{main}, the result can be obtained via an ODE type argument as in \cite{EJ-B}. Our goal in this work is to provide a scenario in which the blow-up is not driven by the boundary, and the regularizing effect of transport is present.

We emphasize that the singularity is not merely an artifact of the geometry of the domain but is tied to the structure of the IPM equation itself. Indeed in \cite{EJ-S}, the Yudovich theory for global well-posedness of 2D Euler is carried over to domains with acute corners. In this sense, the singularity is not merely generated by the singularity of the domain but instead from the structure of the equation itself.

\subsection{Previous Results}
Among the first rigorous mathematical studies of the IPM equation was in \cite{CGR_IPM} where local well-posedness was shown and the question of global well-posedness was explored. While the global well-posedness of smooth solutions to the IPM equation remains open, there have been numerous works dedicated to addressing this issue. In \cite{KY-IPM}, Kiselev and Yao construct solutions which exhibit infinite in time growth of the density in any \(H^s\) space. In particular, they obtain gradient growth faster than \(t^{1/4}\). Very recently, in \cite{CM-IPM}, Cordoba and Martinez--Zoroa constructed smooth solutions of the forced equation which blow-up in finite time. 

The Muskat equation models the evolution of the interface between two fluids of different densities evolving according to the IPM equation. In the remarkable series of papers \cite{Z1, Z2}, the Muskat problem is studied on the half-plane and it is proven that singularities can form, even from smooth initial data.

\subsection{Regularization by Transport}

We now comment on the role of advection in our setting. As mentioned, a key difficulty in proving blow-up or growth in incompressible fluid models is overcoming the regularizing effect of advection. To suppress this regularizing effect, two key strategies have been employed: working in low regularity, and adding a boundary. 

Considering solutions of low regularity allows the data to be more highly concentrated near the origin, and then the transport is too weak to deplete such highly concentrated data. We refer the reader to \cite{EC1a, EJ-adv, CH-DG, CH_C1a, CMZZ_Euler} for a non-exhaustive list of works which employ this strategy. 

When considering a model posed on a domain with boundary, since the velocity field is tangent to the boundary, any mass on the boundary will remain attached to the boundary and cannot be ejected. Thus, in cases where the density does not vanish on the boundary of the domain, the regularizing effect of the transport is absent. This strategy was adopted in the work \cite{EJ-B} to construct scale-invariant solutions to the Boussinesq equation which blow-up in finite time. Kiselev and \v Sver\'ak \cite{KS} use a boundary to obtain the sharp double exponential growth of the gradient of the vorticity in the 2D Euler equations. In \cite{EJ-S}, the authors consider scale-invariant solutions of the 2D Euler equations. They prove that in the presence of the boundary, if there is initially non-zero vorticity on the boundary, then the vorticity gradient grows exponentially. They also prove that this is the optimal growth rate possible for scale-invariant solutions. It is a consequence of \cite{EMS} however, that if one does not place mass on the boundary then exponential growth is not possible for scale-invariant solutions. It remains a difficult open problem what growth rate can be achieved for the vorticity gradient in the 2D Euler equations without the use of a boundary in general. Finally, we mention that Chen and Hou \cite{CH1, CH2} have put forth an interesting, computer assisted proof of singularity formation for smooth solutions of the 2D Boussinesq and 3D Euler equations on domains with boundary.

If one wishes to construct smooth solutions to incompressible fluid models which blow up and exist in the whole space, it is necessary to deal with the effect of transport. In the recent works \cite{EP1, EP2}, Elgindi and Pasqualotto study the regularizing role of transport in the Boussinesq and 3D Euler equations. In their work, the solution is smooth in the angular variable and thus the transport cannot be treated fully perturbatively. In the present work, we establish blow-up for solutions of the IPM equation which vanish on the boundary of the domain. Since there is no mass present on the boundary of the domain, the regularizing effect of transport is present and must be overcome. Below, we include the difference in the setting of the boundary driven blow-up considered in \cite{EJ-B} and the setting studied here.
\begin{center}
    \begin{tikzpicture}[domain=-10:10, scale=0.32]
		\draw[->] (0, 0) -- (0,10) ;
            \draw[-, very thick] (0, 0) -- (10, 10) ;
            \draw[-, very thick] (0,0) -- (-10, 10);

		\draw[color=black, domain=8:0.4, ->, arrows = {-Stealth[scale=2]}, dotted, very thick]   plot (\x,{2/\x+\x});
        \draw[color=black, domain=0.4:0.2, dotted, very thick]   plot (\x,{2/\x+\x});

        \draw[color=black, domain=-0.4:-0.2, dotted, very thick]   plot (\x,{-2/\x-\x});
        \draw[color=black, domain=-8:-0.4, ->, arrows = {-Stealth[scale=2]}, dotted, very thick]   plot (\x,{-2/\x-\x});
		
		\node[draw, circle, scale=1, fill=black] at (7,7) {$+$};
            \node[draw, circle, scale=1, fill=black] at (-7,7) {$+$};

            \node[draw, scale=1] at (0, 12) {Boundary Driven Blow-Up};

	\end{tikzpicture}
        \quad
	\begin{tikzpicture}[domain=-10:10, scale=0.32]
		\draw[->] (0, 0) -- (0,10) ;
            \draw[-, very thick] (0, 0) -- (10, 10) ;
            \draw[-, very thick] (0,0) -- (-10, 10);

		\draw[color=black, domain=0.2:5, ->, arrows = {-Stealth[scale=2]}, dotted, very thick]   plot (\x,{2/\x+\x});
        \draw[color=black, domain=5:8, dotted, very thick]   plot (\x,{2/\x+\x});

        \draw[color=black, domain=-0.2:-5, ->, arrows = {-Stealth[scale=2]}, dotted, very thick]   plot (\x,{-2/\x-\x});
        \draw[color=black, domain=-5:-8, dotted, very thick]   plot (\x,{-2/\x-\x});

		\node[draw, circle, scale=1, fill=black] at (0.1,7) {$+$};

            \node[draw, scale=1] at (0, 12) {Current Construction};

	\end{tikzpicture}
\end{center}

    In both cases, a hyperbolic flow is set up near the corner. In \cite{EJ-B}, mass is made to flow along the boundary towards the corner. In the current construction, we have mass flowing down along the axis of symmetry towards the corner. As no mass is attached to the boundary, it is now possible for mass to be ejected away from the symmetry axis by the transport which would serve to limit potential growth. We prove that a singularity can still form in spite of this regularizing effect.

\subsection{Scale Invariance and Corner Domains}
The IPM equation possesses the following scaling symmetry: if \(\rho(t, x)\) is a solution of \eqref{eq:IPM}, then \(\rho_{\lambda}(t, x) := \lambda^{-1}\rho(t, \lambda x)\) is also a solution. This leads us to consider solutions which are invariant under this scaling. Such solutions take the form \(\rho(t, x) = |x|P(t, x/|x|)\). One advantage of considering such solutions is that they reduce the full two-dimensional system to the following one-dimensional system which is more amenable to analysis,
\begin{align*}
\begin{cases}
    \p_t P(t, \theta) + 2G(t, \theta)P'(t, \theta) = G'(t, \theta)P(t, \theta) \\
    G''(t, \theta) + 4G(t, \theta) = P(t, \theta)\sin(\theta) + P'(t, \theta)\cos(\theta).
\end{cases}
\end{align*}
In \cite{EIS}, the authors study the above system in the formal limit where the angle is small. That is, they study the above system with \(\sin\theta = 0,\ \cos\theta = 1\). By using a trajectory based approach, they are similarly able to produce solutions which vanish on the boundary of the domain and become singular in finite time. 

There are however, two primary drawbacks of considering scale-invariant solutions. First, they can be at most Lipschitz at the origin regardless of the regularity in the angular direction. Second, in order to place the solutions in a well-posedness class, they must only be defined on an acute subset of the plane. With the presence of additional symmetries of the IPM system, namely the even symmetry in \(x_2\) (recall our convention that gravity acts horizontally), there is well-posedness in any wedge domain strictly smaller than the half-plane. Unfortunately, due to the anisotropy of the IPM equation, it does not seem possible to study scale-invariant solutions without introducing a corner domain. As noted however, the Yudovich theory for 2D Euler can be carried over to such domains and consequently the blow-up is generated by the gradient stretching in the IPM equation and is not merely an artifact of the singular domain.

Another seeming drawback of scale invariant solutions is that they naturally have infinite energy. Indeed, such solutions must grow linearly at infinity. Fortunately, these solutions are stable under truncations at infinity and thus scale invariant solutions which blow up can be made compactly supported and therefore finite energy. This truncation procedure was developed by Elgindi and Jeong in \cite{EJ-S, EJ-B} and we apply it again in \S 8 to obtain compactly supported, finite energy solutions. 

\subsection{Discussion of The Proof}
We begin by considering solutions which are 1-homogeneous in space, 
\[
    \rho(t, r, \theta) = rP(t, \theta), \quad \Psi(t, r, \theta) = r^2G(\theta)
\]
where \(\Psi\) denotes the stream function, \(u = \nabla^{\perp} \Psi\). This reduces the two dimensional system to the following one dimensional system,
\begin{align}\label{eq:IPM1D}
\begin{cases}
\p_t P(t, \theta) + 2G(t, \theta)P'(t, \theta) = G'(t, \theta)P(t, \theta) \\
G''(t, \theta) + 4G(t, \theta) = P(t, \theta)\sin\theta + P'(t, \theta)\cos\theta \\
G(\cdot, -L) = G(\cdot, L) = 0
\end{cases}
\end{align}
posed on \([-L, L]\) for some \(L < \pi/2\). From this point forward, we consider angular functions \(P, G\) which are even and odd respectively, which is propagated by the equations. Our approach centres around constructing a blow-up profile for \eqref{eq:IPM1D}. Thus, we consider solutions of the form
 \[
 P(\theta, t) = (1-t)^{-1}P_*(\theta), \quad G(\theta, t) = (1-t)^{-1}G_*(\theta).
\]
In order to obtain solutions which vanish on the boundary, it is necessary to have \(P_*(L) = 0\).  The profile \(P_*\) then satisfies the following singular, nonlocal system of ODEs
\begin{align}\label{eq:profile_ode_P}
\begin{cases}
    P_*(\theta) + 2G_*(\theta)P_*'(\theta) = G_*'(\theta)P_*(\theta) \\
    G_*''(\theta) + 4G_*(\theta) = P_*(\theta)\sin\theta + P_*'(\theta)\cos\theta \\
    G_*(0) = G_*(L) = 0.
\end{cases}
\end{align}
A key observation is that by evaluating \eqref{eq:profile_ode_P} at \(\theta = 0\) it must be that \(G_*'(0) = 1\). Therefore, the system \eqref{eq:profile_ode_P} can be viewed instead as a more local initial value problem. A local solution near \(\theta = 0\) can then be constructed via Taylor expansion, coupled with a fixed point scheme. The next key observation is a monotonicity property of \(P_*\). This monotonicity then allows us to extend the locally constructed profile. The monotonicity also requires carefully fixing the initial condition \(P_*(0)\) which corresponds to choosing the strength of a background flow. Finally, to ensure the boundary condition \(G_*(L) = P_*(L) = 0\) is satisfied, we use a shooting method argument.

Once the profile is constructed, we wish to truncate it near the boundary \(\theta = L\). To do so requires examining the stability of the profile in a certain weighted, Sobolev type space, \(\tH^4\) (see Definition \ref{def:tH4}). After passing to the logarithmic time \(s = -\log(1-t)\) we study the linear stability of the profile. We show the linearized operator \(\L\) about the profile can be decomposed as \(\L = \mathcal{L}_K + \Lb\) where \(\mathcal{L}_K\) is a finite-rank, smoothing operator and \(\Lb\) is coercive on \(\tH^4\). The key observation in constructing such a decomposition is that there is only one nonlocal quantity in the equations and thus they can be localized by a finite-rank perturbation. It can then be seen that the localized system is coercive. From standard semigroup theory it then follows that \(\mathcal{L}\) has a finite-dimensional unstable subspace. Finally, a stable manifold theorem argument allows us to perform a cutoff near the boundary \(\theta = L\) while avoiding any instabilities of the profile.

After producing the desired singular solution of the associated 1D system \eqref{eq:IPM1D}, we pass to blow-up in the full IPM system \eqref{eq:IPM}. Finally, a truncation argument as in \cite{EJ-B} allows us to construct solutions which are compactly supported at spatial infinity and hence have finite energy. 
\subsection{Organization}
In \S 2 we prove local well-posedness of the IPM equation in \(\Ca\) on wedge domains strictly smaller than the half plane. In \(\S 3\) we construct a blow-up profile for the 1D system obtained by considering scale-invariant solutions. In \(\S 4\), we linearize about the profile and define the weighted space \(\tH^4\) in which we will show finite co-dimension stability of the linearized operator. We also prove some useful Hardy-type inequalities. In \(\S 5\) and \(\S 6\) we perform the coercive estimates in the low and high norm respectively. In \(\S 7\), we use a stable manifold theorem argument to construct a smooth solution to the 1D system which converges to the profile at the blow-up time. In \(\S 8\) we then pass to blow-up for finite energy solutions of the full 2D IPM system.

\subsection{Notation and Definitions}
Recall the space \(\Ca\) from Definition \ref{def:Ca}.
The following product estimate in \(\Ca\) is then immediate
\begin{align}
\|fg\|_{\Ca} \leq \|f\|_{L^{\infty}}\|g\|_{\Ca} + \|f\|_{\Ca}\|g\|_{L^{\infty}}\label{Ca_prod}.
\end{align}
We also have the following product estimate (see Lemma 2.3 in \cite{EJ-B})
\begin{lemma}\label{lemma:Ca_product}
There exists a constant \(C > 0\) such that for all \(f \in C^\alpha\) with \(f(0) = 0\)  and \(g \in \Ca\),
\[
\|fg\|_{C^\alpha} \leq C\|f\|_{C^\alpha}\|g\|_{\Ca}.
\]
\end{lemma}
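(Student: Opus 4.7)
The plan is to bound the two pieces of $\|fg\|_{C^\alpha}=\|fg\|_{L^\infty}+[fg]_{C^\alpha}$ separately, exploiting $f(0)=0$ to convert a classical Hölder difference of $g$ into the weighted one appearing in the $\Ca$ seminorm.

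The $L^\infty$ estimate is immediate: $\|fg\|_{L^\infty}\leq\|f\|_{L^\infty}\|g\|_{L^\infty}\leq\|f\|_{C^\alpha}\|g\|_{\Ca}$. For the Hölder seminorm, I would split via the triangle inequality as
\[
|f(x)g(x)-f(x')g(x')|\leq|f(x)-f(x')|\,|g(x)|+|f(x')|\,|g(x)-g(x')|.
\]
The first summand is bounded by $[f]_{C^\alpha}\|g\|_{L^\infty}|x-x'|^\alpha$ at once. For the second, the hypothesis $f(0)=0$ gives $|f(x')|\leq[f]_{C^\alpha}|x'|^\alpha$, so it suffices to estimate $|x'|^\alpha|g(x)-g(x')|$ by $C\|g\|_{\Ca}|x-x'|^\alpha$.

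Here is the one genuine step. I would add and subtract $|x|^\alpha g(x)$ to write
\[
|x'|^\alpha\bigl(g(x)-g(x')\bigr)=\bigl(|x'|^\alpha-|x|^\alpha\bigr)g(x)+\bigl(|x|^\alpha g(x)-|x'|^\alpha g(x')\bigr).
\]
The second parenthesis is controlled exactly by the scale-invariant seminorm, contributing $[g]_{\Ca}|x-x'|^\alpha$. For the first, I would invoke the elementary inequality $\bigl||x|^\alpha-|x'|^\alpha\bigr|\leq|x-x'|^\alpha$, which holds because $t\mapsto t^\alpha$ is $\alpha$-Hölder on $[0,\infty)$ for $0<\alpha<1$; this gives a bound of $\|g\|_{L^\infty}|x-x'|^\alpha$. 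Summing all contributions produces the desired estimate with an explicit constant.

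The only conceptually subtle point, really just bookkeeping, is that the $\Ca$ seminorm controls weighted differences $|x|^\alpha g(x)$ rather than bare differences of $g$, so one must manufacture the correct factor of $|x'|^\alpha$ on the other side. The hypothesis $f(0)=0$ exists precisely to let $|f(x')|$ absorb that weight; without it, a generic $\Ca$ function $g$ may blow up like $|x|^{-\alpha}$ near the origin, and no estimate of this form could hold.
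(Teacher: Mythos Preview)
Your argument is correct. The paper does not give its own proof of this lemma, instead citing Lemma~2.3 of \cite{EJ-B}; your approach---using $f(0)=0$ to produce the weight $|x'|^\alpha$ and then adding and subtracting $|x|^\alpha g(x)$ together with the elementary bound $\bigl||x|^\alpha-|x'|^\alpha\bigr|\leq|x-x'|^\alpha$---is the standard and natural one, and matches the proof in that reference.
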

\noindent We denote the \(k^{th}\) Taylor polynomial of a function \(f\) by \(\P_k(f)\),
\begin{equation}\label{def:taylor}
\mathbb{P}_k(f)(\theta) = \sum_{j=0}^{k}\frac{f^{(j)}(0)}{j!}\theta^j.
\end{equation}
\noindent We take the convention that \(\nabla^{\perp} = (-\p_y, \p_x)\).
When working on the one-dimensional system in the angular variable, we write \('\) for angular derivatives \(\p_\theta\). The letter \(C\) is reserved for an inconsequential constant factor which may change from line to line.

\section{Local Well-Posedness in Scale-Invariant Spaces}
We consider the equation \eqref{eq:IPM} posed on domains
\[
\Omega_{\beta}' = \{(r, \theta) : -\beta \pi < \theta < \beta \pi\}
\]
for \(\beta < 1/2\) where \((r, \theta)\) denote the standard polar coordinates on \(\R^2\). By considering solutions \(\rho\) which are even with respect to the \(x_1\)-axis, we may instead consider domains
\[
\Omega_{\beta} = \{(r, \theta) : 0 < \theta < \pi \beta\}.
\]
It is easily seen that such symmetry is preserved by the equation. The condition \(\beta < 1/2\) is crucial to obtaining well-posedness. In the case of the half-plane in which \(\beta = 1/2\), the equation degenerates and it is no longer possible to uniquely recover the velocity from the density. This phenomenon was first observed in \cite{E_Remarks} and is expanded upon in \cite{EJ-B, EJ-S}. We first prove the following theorem regarding the local well-posedness of \eqref{eq:IPM} in \(\Ca\).
\begin{proposition}\label{prop:lwp_si}
    Let \(\beta < 1/2\) and \(0 < \alpha < 1\). Let \(\rho_0: \Omega_\beta \to \R\) be such that \(\nabla^{\perp}\rho_0 \in \Ca(\overline{\Omega_\beta})\). Assume that \(\rho\) is even with respect to \(x_2\). Then, there exists \(T > 0\) such that there is a unique solution \(\rho \in C([0, T], \Ca(\overline{\Omega_\beta}))\) to the system \eqref{eq:IPM}.  Moreover, the solution can be extended beyond the interval \([0, T]\) if and only if
    \[
    \int_{0}^{T}\|\nabla u(s)\|_{L^\infty}\d s < \infty.
    \]
\end{proposition}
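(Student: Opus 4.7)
The plan is to follow the classical iteration scheme for the transport--elliptic structure of \eqref{eq:IPM}, adapted to the scale-invariant H\"older norm on the corner domain $\Omega_\beta$. The first key ingredient is the Biot--Savart reconstruction of $u$ from $\rho$: taking the curl of Darcy's law gives $\omega := \p_1 u_2 - \p_2 u_1 = -\p_2 \rho$, so that $u = \nabla^\perp \psi$ where $\psi$ solves $\Delta \psi = \p_2 \rho$ on $\Omega_\beta$ with $\psi = 0$ on $\p \Omega_\beta$ (the no-penetration condition). The condition $\beta < 1/2$ ensures that $\Omega_\beta$ is strictly smaller than the half-plane, which is the regime in which the relevant singular integrals are bounded on $\Ca(\overline{\Omega_\beta})$. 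Borrowing the elliptic estimate from \cite{EJ-S}, I would establish
\[
\|\nabla u\|_{\Ca} \lesssim \|\nabla^\perp \rho\|_{\Ca}
\]
for $\rho$ even in $x_2$, together with a corresponding $\|\nabla u\|_{L^\infty}$ bound.

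The second ingredient is the transport estimate. Applying $\nabla^\perp$ to the density equation gives
\[
\p_t (\nabla^\perp \rho) + u \cdot \nabla(\nabla^\perp \rho) = -(\nabla u)^\top \nabla^\perp \rho.
\]
Evaluating along the flow of $u$ to cancel the transport term, using the product estimate \eqref{Ca_prod}, and combining with the Biot--Savart bound, I obtain
\[
\frac{d}{dt}\|\nabla^\perp \rho\|_{\Ca} \lesssim \|\nabla u\|_{L^\infty}\|\nabla^\perp \rho\|_{\Ca} + \|\nabla u\|_{\Ca}\|\nabla^\perp \rho\|_{L^\infty} \lesssim \|\nabla^\perp \rho\|_{\Ca}^2,
\]
which yields local-in-time control on a time interval depending only on $\|\nabla^\perp \rho_0\|_{\Ca}$. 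One must also verify that $u$ is tangent to $\p \Omega_\beta$ so that its flow preserves $\overline{\Omega_\beta}$; this is immediate from $\psi = 0$ on $\p \Omega_\beta$.

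With these two estimates in hand, the solution is built by the standard Picard iteration. Set $\rho_{n+1}$ to be the solution of the linear transport equation $\p_t \rho_{n+1} + u_n \cdot \nabla \rho_{n+1} = 0$ with $u_n$ recovered from $\rho_n$ via Biot--Savart and $\rho_{n+1}(0) = \rho_0$. The uniform a priori estimate produces a common existence time and uniform bounds in $C([0,T]; \Ca)$, and contraction in a weaker norm (for instance the $L^\infty$ norm of $\rho_{n+1}-\rho_n$, combined with Lemma \ref{lemma:Ca_product} to control the difference of velocities) yields a limit $\rho$ which readily solves \eqref{eq:IPM}. Uniqueness follows by running the same difference argument between two putative solutions. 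The continuation criterion is obtained by noting that the above estimate actually closes linearly as $\frac{d}{dt}\|\nabla^\perp \rho\|_{\Ca} \lesssim \|\nabla u\|_{L^\infty}\|\nabla^\perp \rho\|_{\Ca}$ once Lemma \ref{lemma:Ca_product} is used to absorb the low-regularity factor on the Biot--Savart side; Gr\"onwall then gives boundedness of the norm on any interval on which $\int \|\nabla u\|_{L^\infty}$ is finite.

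The main obstacle is the first step: proving that the Biot--Savart operator is bounded on $\Ca$ on a corner domain. This requires a precise analysis of the singular integral kernel reflecting the geometry of $\p \Omega_\beta$ at the corner, which is exactly what breaks down in the half-plane limit $\beta = 1/2$ (see \cite{E_Remarks, EJ-B}). Fortunately, the required mapping properties were established in \cite{EJ-S} in the closely related Euler setting, so the work here reduces to checking that the same machinery applies to the IPM Biot--Savart kernel, essentially $\nabla \nabla^\perp \Delta^{-1} \p_2$ instead of $\nabla \nabla^\perp \Delta^{-1}$, after which the transport and iteration steps become routine.
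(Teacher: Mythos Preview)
Your outline follows essentially the same route as the paper: Biot--Savart bound $\|\nabla u\|_{\Ca}\lesssim\|\nabla^\perp\rho\|_{\Ca}$ via the $D^2\Delta^{-1}$ estimate on corner domains from \cite{EJ-S,EJ-B}, transport estimate for $\nabla^\perp\rho$ along the flow, iteration for existence, and Gr\"onwall for the continuation criterion. Two points where the paper is more careful than your sketch: first, the $\Ca$ transport estimate along the flow is not a one-line application of \eqref{Ca_prod}, because composing with the flow moves the weight $|x|^\alpha$ as well; the paper explicitly differentiates the quotient $\bigl||z|^\alpha\nabla^\perp\rho(z)-|z'|^\alpha\nabla^\perp\rho(z')\bigr|/|z-z'|^\alpha$ along the flow and splits into three pieces $I,II,III$ to get exactly the bound you state. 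Second, your uniqueness step is too loose: plain $L^\infty$ of $\bar\rho$ does not control $\bar u$ through the singular integral, and Lemma~\ref{lemma:Ca_product} does not help here. The paper instead works with $\|\bar\rho/|x|\|_{L^\infty}$ and proves a logarithmic bound $\|\nabla\bar p/|x|\|_{L^\infty}\lesssim\|\bar\rho/|x|\|_{L^\infty}\bigl(1+\log(\|\nabla\bar\rho\|_{L^\infty}/\|\bar\rho/|x|\|_{L^\infty})\bigr)$, closing via an Osgood argument. Similarly, your justification of the linear closure for the continuation criterion via Lemma~\ref{lemma:Ca_product} is off (that lemma needs vanishing at the origin, which $\nabla u$ need not satisfy); the paper instead first closes the $L^\infty$ estimate $\frac{d}{dt}\|\nabla^\perp\rho\|_{L^\infty}\le\|\nabla u\|_{L^\infty}\|\nabla^\perp\rho\|_{L^\infty}$ and feeds this into the $\Ca$ estimate.
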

\begin{proof}
The proof follows that of \cite{EJ-B}, but we include a sketch here for sake of completeness. We begin by providing a priori estimates. Let \(\rho\) be a smooth solution to \eqref{eq:IPM} on \([0, T]\). From Lemma 3.5 of \cite{EJ-B}, we recall that \(D^2\Delta^{-1}\) is bounded on \(\Ca(\overline{\Omega_\beta})\) and thus
\[
\|\nabla u\|_{\Ca} \leq C\|\nabla^{\perp}\rho\|_{\Ca}.
\]
In particular, the flow map \(\Phi_t\) which solves
\begin{align*}
 \frac{d\Phi_t}{dt} = u(\Phi_t(x), x), \quad \Phi_0(x) = x
\end{align*}
is well-defined as \(u\) is Lipschitz. Now, to estimate \(\nabla^{\perp}\rho\) in \(\Ca\), consider the evolution equation of \(\nabla^{\perp}\rho\)
\begin{align}
\p_t\nabla^{\perp}\rho + (u \cdot \nabla)\nabla^{\perp}\rho = \nabla u \nabla^{\perp}\rho \label{grad_rho}.
\end{align}
Writing \eqref{grad_rho} along the flow \(\Phi\) gives,
\begin{align*}
\p_t\nabla^{\perp} \rho \circ \Phi_t = \nabla u \circ \Phi_t \nabla^{\perp}\rho \circ \Phi_t
\end{align*}
and we immediately obtain the \(L^{\infty}\) bound
\begin{equation}
\frac{d}{dt}\|\nabla^{\perp}\rho\|_{L^{\infty}} \leq \|\nabla u\|_{L^{\infty}}\|\nabla^{\perp} \rho\|_{L^{\infty}}. \label{eq:apriori_L_infty}
\end{equation}
 Considering \(x, x' \in \Omega\) two arbitrary points in \(\Omega\), we suppress the time dependence for ease of notation and write \(z = \Phi_t(x), z' = \Phi_t(x')\). We now compute
\begin{align*}
\frac{d}{dt}\frac{|z|^{\alpha}\nabla^{\perp}\rho(z) - |z'|^{\alpha}\nabla^{\perp}\rho(z')}{|z-z'|^{\alpha}} &= I + II + III
\end{align*}
where \(I, II\) denote the terms from taking the time derivative of \(z\) in the numerator and denominator respectively and \(III\) denotes the term arising from the time derivative of \(\rho \circ \Phi\). Looking first at I, by the triangle inequality
\begin{align*}
|I| &= \frac{\alpha}{|z-z'|^{\alpha}}\left| \frac{z \cdot u(z)}{|z|^2} |z|^{\alpha}\nabla^{\perp}\rho(z) - \frac{z' \cdot u(z')}{|z'|^2}|z'|^{\alpha}\nabla^{\perp}\rho(z')\right| \\
&\leq \alpha \frac{1}{|z-z'|^{\alpha}}\left| \frac{z \cdot u(z)}{|z|^2} |z|^{\alpha}\nabla^{\perp}\rho(z) - \frac{z' \cdot u(z')}{|z'|^2}|z|^{\alpha}\nabla^{\perp}\rho(z)\right| + \alpha\frac{|u(z')|}{|z'|}\frac{||z|^{\alpha}\nabla^{\perp}\rho(z) - |z'|^{\alpha}\nabla^{\perp}\rho(z')|}{|z-z'|^{\alpha}}\\
&=: \alpha(I_{A} + I_{B}).
\end{align*}
The second term \(I_B\) can be bound
\begin{align*}
I_{B} \leq \alpha\|\nabla u\|_{L^{\infty}}\|\nabla^{\perp}\rho\|_{\Ca}.
\end{align*}
Using the triangle inequality, we have
\begin{align*}
    I_{A} &= \frac{|z|^{\alpha}|\nabla^{\perp}\rho(z)|}{|z-z'|^{\alpha}}\left|\frac{z \cdot u(z)}{|z|^2} - \frac{z' \cdot u(z')}{|z'|^2}\right| \\
&\leq 2\|\nabla^{\perp}\rho\|_{L^{\infty}}\|\nabla u\|_{L^{\infty}}\frac{|z-z'|^{1-\alpha}}{|z|^{1-\alpha}} + \|\nabla^{\perp}\rho\|_{L^{\infty}}\frac{|z|^{\alpha}}{|z-z'|^{\alpha}}\left|\frac{u(z') \cdot z'}{|z|^2} - \frac{z' \cdot u(z')}{|z'|^2}\right|.
\end{align*}
Without loss of generality we may assume \(|z'| \leq |z|, |z-z'| \leq 2|z|\). We then have
\begin{align*}
I_A &\leq 2\|\nabla^{\perp}\rho\|_{L^{\infty}}\|\nabla u\|_{L^{\infty}}\frac{|z-z'|^{1-\alpha}}{|z|^{1-\alpha}} +\|\nabla^{\perp}\rho\|_{L^{\infty}} \frac{|z|^{\alpha}}{|z-z'|^{\alpha}}\frac{1}{|z|^2|z'|}(|z|^2-|z'|^2)|u(z')| \\
&\leq C \|\nabla^{\perp}\rho\|_{L^{\infty}}\|\nabla u\|_{L^{\infty}} + \|\nabla^{\perp}\rho\|_{L^{\infty}}\frac{|u(z')|}{|z'|}\frac{|z-z'|}{|z-z|^{\alpha}}\frac{|z|^{\alpha}}{|z|}\frac{|z|+|z'|}{|z|} \\
&\leq C \|\nabla^{\perp}\rho\|_{L^{\infty}}\|\nabla u\|_{L^{\infty}}.
\end{align*}
The second term, arising from the time derivative of the denominator, can be bound,
\begin{align*}
|II| &\leq \alpha \left[|z|^{\alpha}\nabla^{\perp}\rho(z) - |z'|^{\alpha}\nabla^{\perp}\rho(z')\right]  \left|\frac{(z - z') \cdot (u(z) - u(z')}{|z-z'|^{\alpha + 2}}\right| \leq \alpha \|\nabla^{\perp}\rho\|_{\Ca} \|\nabla u\|_{L^{\infty}}.
\end{align*}
Finally, using the product bound \eqref{Ca_prod}
\begin{align*}
|III| &= \left|\frac{|z|^{\alpha}\nabla u(z) \nabla^{\perp}\rho - |z'|^{\alpha}\nabla u(z') \nabla^{\perp}\rho(z')}{|z-z'|^{\alpha}}\right|\leq \|\nabla u\|_{\Ca}\|\nabla^{\perp}\rho\|_{L^{\infty}}+\|\nabla u\|_{L^{\infty}}\|\nabla^{\perp}\rho\|_{\Ca}.
\end{align*}
Now, from \eqref{eq:IPM}
\[
\|\nabla u\|_{\Ca} \leq C \|D^2p\|_{\Ca} + \|\nabla \rho\|_{\Ca}
\]
and since \(\Delta p = -\p_{y}\rho\), from Lemma 3.5 of \cite{EJ-B}, \(D^2(-\Delta)^{-1}\) is bounded on \(\Ca(\overline{\Omega_\beta})\) so it follows that \(\|D^2 p\|_{\Ca} \leq C \|\nabla^{\perp}\rho\|_{\Ca}\). Hence, \(\|\nabla u\|_{\Ca} \leq C \|\nabla \rho\|_{\Ca}\) so that
\[
|III| \leq C \|\nabla^{\perp} \rho\|_{\Ca}\|\nabla^{\perp}\rho\|_{L^{\infty}}+ C\|\nabla u\|_{L^{\infty}}\|\nabla^{\perp}\rho\|_{\Ca}.
\]
Altogether, integrating we obtain the a priori bound
\[
\|\nabla^{\perp}\rho(t)\|_{\Ca} \lesssim \|\nabla^{\perp}\rho_0\|_{\Ca} + \int_{0}^{t}(\|\nabla u (s)\|_{L^{\infty}} + \|\nabla^{\perp}\rho (s)\|_{L^{\infty}})\|\nabla^{\perp}\rho(s)\|_{\Ca}\d s
\]
Combining this apriori estimate with \eqref{eq:apriori_L_infty}, we see that the solution can be continued beyond \(T\) provided
\[
\int_{0}^{T}\|\nabla u(s)\|_{L^\infty}\d s < \infty.
\]
Existence can now be proven using a standard iteration scheme as in \cite{EJ-B}. 

Finally, we prove uniqueness of the solution. Suppose \(\rho_1, \rho_2\) are two solutions as in Proposition \ref{prop:lwp_si} with corresponding velocity fields \(u_1, u_2\) respectively. Consider the differences \(\overline{\rho} = \rho_1 - \rho_2, u = u_1 - u_2\). Since \(\nabla^{\perp}\overline{\rho} \in \Ca\), it follows that \(\overline{\rho}/|x| \in L^\infty\). Then,
\[
\p_t \overline{\rho} + u_1 \cdot \nabla \overline{\rho} + \overline{u} \cdot \nabla \rho_2 = 0.
\]
and composing with the flow, it follows that
\begin{equation}
\frac{d}{dt}\left\|\frac{\overline{\rho}}{|x|}\right\|_{L^\infty} \leq \left\|\frac{u_1}{|x|}\right\|_{L^\infty}\left\|\frac{\overline{\rho}}{|x|}\right\|_{L^\infty} + \left\|\nabla \rho_2\right\|_{L^\infty}\left\|\frac{\overline{u}}{|x|}\right\|_{L^\infty}. \label{eq:rho_x_infty}
\end{equation}
Now, for the second term we note that
\begin{equation*}
    \left\|\frac{\overline{u}}{|x|}\right\|_{L^\infty} \leq \left\|\frac{\nabla \overline{p}}{|x|}\right\|_{L^\infty} + \left\|\frac{\overline{\rho}}{|x|}\right\|_{L^\infty}
\end{equation*}
and proceeding as in \cite{EJ-B} we have that \(\|\nabla \overline{p}/|x|\|_{L^\infty}\) can be bound by \(\|\overline{\rho}/|x|\|_{L^\infty}\) with only a logarithmic loss,
\begin{equation*}
    \left\|\frac{\nabla \overline{p}}{|x|}\right\|_{L^\infty} \leq C\left\|\frac{\overline{\rho}}{|x|}\right\|_{L^\infty}\left(1 + \log\left(\frac{\|\nabla \overline{\rho}\|_{L^\infty}}{\|\overline{\rho}/|x|\|_{L^\infty}}\right)\right).
\end{equation*}
Thus, from \eqref{eq:rho_x_infty} we conclude
\[
\frac{d}{dt}\left\|\frac{\overline{\rho}}{|x|}\right\|_{L^\infty} \leq C_{\beta, \nabla u_1, \nabla \rho_2}\left\|\frac{\overline{\rho}}{|x|}\right\|_{L^\infty}\left(1 + \log\left(\frac{\|\nabla \overline{\rho}\|_{L^\infty}}{\|\overline{\rho}/|x|\|_{L^\infty}}\right)\right).
\]
Since \(\overline{\rho}(0, x) \equiv 0\), this implies \(\overline{\rho}(t, x) \equiv 0\) completing the proof.
\end{proof}

\section{Construction of the Profile}
We recall that the IPM equation has the scaling symmetry 
\[
\rho(t,x) \mapsto \lambda^{-1}\rho(t, \lambda x), \quad u(t, x) \mapsto \lambda^{-1}u(t, \lambda x).
\]
Thus, \(1\)-homogeneity of \(\rho\) and \(u\) are propagated. This leads us to consider scale-invariant solutions of the form
\[
\rho(t, r, \theta) = rP(t, \theta), \quad \Psi(t, r, \theta) = r^2G(t, \theta).
\]
To derive the \(1D\) system for \(P, G\), we recall
\[
\nabla^{\perp} = -\hat{r}r^{-1}\p_\theta + \hat{\theta}\p_r, \quad \Delta = \p_{rr} + r^{-1}\p_r + r^{-2}\p_{\theta\theta}, \quad \p_{x_2} = r^{-1}\cos\theta\p_\theta + \sin\theta\p_r
\]
where \(\hat{r}, \hat{\theta}\) are the usual unit vectors in the \(r, \theta\) directions respectively. Thus, the Biot--Savart law \(\Delta \Psi = \p_{x_2} \rho\) gives
\[
G''(t, \theta) + 4G(t, \theta) = P(t, \theta)\sin\theta + P'(t, \theta)\cos\theta.
\]
where \('\) indicates an angular derivative \(\p_\theta\). Since \(\Psi\) is constant on the boundary due to the no penetration boundary conditions, we have the boundary conditions \(G(0) = G(L) = 0\). The velocity field is then given by \(u = -\hat{r}rG' + 2\hat{\theta}rG\) and then the transport equation \(\p_t \rho + u \cdot \nabla \rho = 0\) gives
\[
\p_t P(t, \theta) + 2G(t, \theta)P'(t, \theta) = G'(t, \theta)P(t, \theta).
\]
Thus, we obtain the following 1D system for \(P, G\)
\begin{align}\label{eq:IPM_1d}
\begin{cases}
\p_t P(t, \theta) + 2G(t, \theta)P'(t, \theta) = G'(t, \theta)P(t, \theta) \\
G''(t, \theta) + 4G(t, \theta) = P(t, \theta)\sin\theta + P'(t, \theta)\cos\theta \\
G(\cdot, 0) = G(\cdot, L) = 0 \\
P(0, \theta) = P_0(\theta).
\end{cases}
\end{align}
We will prove the following theorem about the existence of solutions to \eqref{eq:IPM_1d} which blow-up in finite time.
\begin{theorem}\label{theorem:ipm_1d}
    For any \(L' < \pi/2\) there exists \(L' < L < \pi/2\) such that there exists an even initial datum \(P_0 \in C^{\infty}([-L, L])\) such that \(\text{supp}(P_0) \subset [0, L-\delta]\) for some \(\delta > 0\), and the unique, local in time solution of \eqref{eq:IPM_1d} posed on \([-L, L]\) blows-up in finite time.
\end{theorem}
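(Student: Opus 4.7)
The plan is to seek a self-similar solution of \eqref{eq:IPM_1d} of the form $(P, G)(t, \theta) = (1-t)^{-1}(P_*(\theta), G_*(\theta))$, reducing the blow-up problem to the existence of a smooth profile satisfying \eqref{eq:profile_ode_P}, and then to establish a sufficiently robust stability result so that the profile can be truncated near $\theta = L$ to produce smooth, even, compactly supported initial data. Evaluating the first equation of \eqref{eq:profile_ode_P} at $\theta = 0$ together with $G_*(0) = 0$ forces $G_*'(0) = 1$ (provided $P_*(0) \neq 0$), so the Biot--Savart equation for $G_*$ can be treated as an initial value problem with initial data depending only on the single free parameter $P_*(0)$. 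A Taylor expansion of $(P_*, G_*)$ to sufficiently high order at the origin, combined with a contraction mapping in a suitably weighted space, yields a smooth local profile on some $[0, \delta]$. Extension is then achieved by identifying and propagating a monotonicity/sign property of $P_*$ which prevents breakdown of the ODE and keeps $P_* > 0$ on $(0, L)$; the free parameter $P_*(0)$, which corresponds to the strength of a background shear, is calibrated precisely so that this property persists. A shooting argument in $P_*(0)$, together with continuity and an intermediate value argument, then produces an $L < \pi/2$ (arbitrarily close to $\pi/2$) at which $G_*(L) = P_*(L) = 0$ simultaneously.

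With the profile in hand I would pass to the self-similar time $s = -\log(1-t)$, write $(P, G) = e^s(P_* + p, G_* + g)$, and study the perturbation equation $\p_s p = \L p + N(p)$ in the weighted Sobolev space $\tH^4$ of Definition \ref{def:tH4}. The linearization $\L$ I would split as $\L = \Lk + \Lb$, where $\Lk$ is finite-rank and smoothing while $\Lb$ is a purely local transport--stretching operator. The structural point that allows this decomposition is that the Biot--Savart coupling in \eqref{eq:profile_ode_P} feels $p$ only through a single nonlocal quantity (essentially a pairing of $p$ with the Green's function for $\p_{\theta\theta} + 4$ with Dirichlet data), so the nonlocal part of $\L$ can be captured by a finite-rank perturbation. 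Coercivity of $\Lb$ in $\tH^4$ and in a companion low norm then follows from Hardy-type inequalities finely tuned to the weights, and standard semigroup theory gives that $\L$ has only finitely many unstable eigenvalues on $\tH^4$, with essential spectrum in a strictly negative half-plane.

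Finally, a Hadamard--Perron stable manifold theorem for the semigroup $e^{s\L}$ on $\tH^4$, combined with a quadratic bound on $N$ in $\tH^4$ coming from the product and Hardy estimates, produces a finite-codimension Lipschitz manifold of initial data for which $p(s, \cdot) \to 0$ in $\tH^4$; the corresponding $P(t, \cdot)$ then blows up like $(1-t)^{-1} P_*$ as $t \to 1$. To isolate an initial datum $P_0$ that is smooth, even, and compactly supported in $[0, L - \delta]$, I would start from a smooth cutoff of $P_*$ near $\theta = L$ and correct it by an element of the finite-dimensional unstable subspace so the resulting datum lies on the stable manifold; compact support, evenness and smoothness are preserved since the correction is drawn from a finite-dimensional space that can be chosen to consist of such functions. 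The main obstacle I anticipate is the coercivity of $\Lb$ in the high norm $\tH^4$: the profile degenerates at both $\theta = 0$ (where $G_* \sim \theta$) and $\theta = L$ (where $P_* \to 0$), so the transport $2G_* p'$, the stretching $G_*' p$, and the localized pressure term must be balanced via weighted Hardy inequalities simultaneously at both endpoints. A secondary difficulty is verifying that the monotonicity of $P_*$ needed to extend the profile up to $\theta = L$ actually persists along the shooting curve for some admissible $P_*(0)$, which amounts to a delicate calibration of the background shear against the nonlinear coupling in \eqref{eq:profile_ode_P}.
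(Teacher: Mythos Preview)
Your overall strategy matches the paper's closely, but there is a genuine gap in the profile construction: the shooting parameter is not $P_*(0)$. Taylor expanding the profile system at $\theta = 0$ (after passing to $M_* = P_*/\cos\theta$) and using $G_*(0)=0$, $G_*'(0)=1$ yields the relation $4M_*''(0) = M_*(0)\,M_*''(0)$. Hence either $M_*(0)=4$ or $M_*''(0)=0$; in the latter case the unique local solution is the trivial one $M_*\equiv M_0$, $G_*=\tfrac12\sin 2\theta$, which neither vanishes at any $L<\pi/2$ nor gives a well-posed Biot--Savart law at $L=\pi/2$. So $P_*(0)=M_*(0)=4$ is \emph{forced}, not free, and shooting in $P_*(0)$ cannot generate the one-parameter family needed for the intermediate-value argument. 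The actual free parameter is $A:=-\tfrac12 M_*''(0)$: fixing $M_*(0)=4$ unlocks this second-order degree of freedom, and one then shows that as $A\to 0$ the profile converges to the trivial one, so the first zero $L(A)$ of $G_*$ tends to $\pi/2$, giving $L'<L<\pi/2$ for $A$ small.

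A secondary imprecision: you write that the finite-dimensional unstable correction ``can be chosen to consist of'' smooth, compactly supported functions. The unstable subspace $\tH^4_U$ is determined by $\mathcal L$ and cannot be chosen; indeed $M_*$ itself (the time-translation mode) lies in it and is not smooth at $\theta=L$. What is actually true, and what must be proved, is that every $\Psi\in\tH^4_U$ is smooth on $[0,L)$. The compactly supported smooth datum is then obtained as $(1-\chi_a)(M_*+g(0))$ with $g(0)\in\tH^4_U$ and $\chi_a$ a cutoff equal to $1$ near $L$; smoothness and compact support come from the cutoff together with interior regularity of the unstable modes, not from any special structure of $\tH^4_U$ itself.
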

Our approach will be to first construct a blow-up profile and then cut off this profile near the boundary. To this end, we now seek a solution to the system \eqref{eq:IPM1D} of the form \(P(t, \theta) = (1-t)^{-1}P_{*}(\theta)\) where \(P_*\) is even. The profile \((P_*, G_*)\) then satisfies the following ODE system
\begin{align}\label{eq:profile_ode_P2}
\begin{cases}
    P_*(\theta) + 2 G_*(\theta)P_*'(\theta) = G_*'(\theta)P_*(\theta) \\
    G_*''(\theta) + 4G_*(\theta) = P_*(\theta)\sin\theta + P_*'(\theta)\cos\theta \\
    G_*(0) = G_*(L) = 0.
\end{cases}
\end{align}
In order for \(\rho\) to vanish on the boundary of \(\Omega\), we seek a solution of \eqref{eq:profile_ode_P2} for which \(P_*(L) = 0\). Evaluating \eqref{eq:profile_ode_P2} at \(\theta = 0\) we obtain the condition \(P_*(0) = G_*'(0)P_*(0)\). Thus, in order to have non-zero mass at \(\theta = 0\), we must impose the condition \(G_*'(0) = 1\). With this added condition, \eqref{eq:profile_ode_P2} can be transformed into an initial value problem with the initial conditions \(G_*(0) = 0, G_*'(0) = 1\). We can now instead seek solutions of the initial value problem 
\begin{align}\label{eq:profile_ode_P_loc}
\begin{cases}
    P_*(\theta) + 2G_*(\theta)P_*'(\theta) = G_*'(\theta)P_*(\theta) \\
    G_*''(\theta) + 4G_*(\theta) = P_*(\theta)\sin\theta + P_*'(\theta)\cos\theta \\
    G_*(0) = 0, \quad G_*'(0) = 1.
\end{cases}
\end{align}
for which \(G_*(L) = 0\). For most choices of \(P_*(0)\) however, it is not possible to obtain a solution satisfying \(G_*(L) = 0\). We make a particular choice of \(P_*(0)\) which allows a degree of freedom corresponding to choosing \(P_*''(0)\). This extra degree of freedom allows us to perform a shooting method argument to show that there exists a choice of \(P_*''(0)\) for which the second boundary condition \(G_*(L) = 0\) is satisfied. The particular choice of \(P_*(0)\) corresponds to fixing the strength of the background flow. This same phenomenon appears in the work \cite{EP1} where the strength of the background flow must be finely tuned through a parameter \(A_*\) in order to construct a profile.

To construct a profile we then proceed as follows. First, we solve \eqref{eq:profile_ode_P_loc} locally near \(\theta = 0\). This is done by Taylor expanding and then closing a fixed point argument on the remainder. We then prove that the solution obeys a key monotonicity property which allows us to continue the local solution until it necessarily crosses zero. In this way, we obtain a solution to the boundary value \eqref{eq:profile_ode_P}.

\subsection{Local Solution of the Profile Equation}
Now, we construct a solution of the system \eqref{eq:profile_ode_P_loc}, on a small interval \([0, a]\) which satisfies \(M' \leq 0\). The solution is constructed by considering a Taylor polynomial approximation of the profile at \(\theta = 0\) and then closing a fixed point scheme for the fourth derivative. First, we consider the quantity \(M_*\), defined by \(P_*(\theta) = M_*(\theta)\cos\theta\). Then \(M_*\) then satisfies the local system
\begin{align} \label{eq:profile_ode_M_loc}
\begin{cases}
    M_*(\theta) + 2G_*(\theta)M_*'(\theta) = G_*'(\theta)M_*(\theta) + 2G_*(\theta)M_*(\theta)\tan(\theta) \\
    G_*''(\theta) + 4G_*(\theta) = M_*'(\theta)\cos^2(\theta) \\
    M_*(0) = M_0, \quad G_*(0) = 0, \quad G_*'(0) = 1.
\end{cases}
\end{align}
We wish to satisfy the second boundary condition \(G_*(L) = 0\) via a shooting method argument. This requires an extra degree of freedom which we will obtain through a careful choice of \(M_*(0)\). Taking two derivatives of \eqref{eq:profile_ode_M_loc} and evaluating at \(\theta = 0\) using the symmetries of \(M_*, G_*\) as well as the condition \(G_*'(0) = 1\) we arrive at
\begin{align}\label{eq:taylor_exp}
    4M_*''(0) = M_*(0)(G_*'''(0) + 4), \quad G_*'''(0) + 4 = M_*''(0)
\end{align}
which yields \(4 M_*''(0) = M_*(0) M_*''(0)\). Thus, we see that the choice \(M_*(0) = 4\) allows \(M_*''(0)\) to be chosen freely. Any other choice of \(M_*(0)\) forces \(M_*''(0) = 0\). In fact, it is easily seen that there is a trivial ``solution'' \(M_*(\theta) \equiv M_0, G_*(\theta) = \frac12 \sin(2\theta)\) to \eqref{eq:profile_ode_M_loc}. Unfortunately, this satisfies \(G_*(\pi/2) = 0\) for which the Biot--Savart law \(G_*'' + 4G_* = M_*'\cos^2(\theta)\) is no longer well-posed. This trivial solution for which \(M_*\) is constant corresponds to the classical, unstably stratified steady state \(\rho(x_1, x_2) = x_1\) which exhibits the Rayleigh--Taylor instability. That there is a nontrivial velocity field generated by the stream function \(G_* = \frac12 \sin(2\theta)\) is an artifact of the ill-posedness of the Biot--Savart law on the half-plane. However, choosing \(M_*(0) = 4\), we are able to freely choose \(M_*''(0)\) and avoid the trivial constant solution. In order to obtain a solution which decreases towards the boundary it is required that we choose \(M_*''(0) < 0\). This leads us to the following proposition regarding the local solvability of \eqref{eq:profile_ode_M_loc}.

\begin{proposition}\label{prop:profile_local}
	For any \(A > 0\), there exists \(a > 0\) such the system  \eqref{eq:profile_ode_M_loc} has a unique solution \((M_*, G_*) \in C^{\infty}([0, a]) \times C^{\infty}([0, a])\)) with \(M_*(0) = 4, M_*'(0) = 0, M_*''(0) = -2A\) and \(G_*(0) = 0\). In particular, there exists a non-constant solution to \eqref{eq:profile_ode_M_loc} such that \(M_*'(\theta) \leq 0\) for \(\theta \in [0, a]\).
\end{proposition}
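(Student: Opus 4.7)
The plan is to construct the solution by separating the low-order Taylor data from a higher-order remainder and closing a fixed point contraction for the remainder on a small interval $[0,a]$. The principal obstacle is that the first equation in \eqref{eq:profile_ode_M_loc}, when solved for $M_*'$, produces a factor $1/G_*$ with $G_*(0) = 0$; but enough cancellation occurs once a sufficiently long Taylor polynomial has been pre-computed, as the identities in \eqref{eq:taylor_exp} already suggest.

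First, using that $M_*$ is even and $G_*$ is odd together with the prescribed data $M_*(0)=4$, $M_*''(0)=-2A$, $G_*(0)=0$, $G_*'(0)=1$, I would successively differentiate the two equations of \eqref{eq:profile_ode_M_loc} at $\theta = 0$ to determine the forced Taylor coefficients. With $M_*(0)=4$, the quadratic relation $4M_*''(0) = M_*(0)M_*''(0)$ degenerates and $M_*''(0)$ remains free, chosen as $-2A$; everything else is then forced. In particular I would compute
\[
\P_3(M_*)(\theta) = 4 - A\theta^2, \qquad \P_3(G_*)(\theta) = \theta - \tfrac{A+2}{3}\theta^3,
\]
and then extend to whatever finite order $N$ the contraction step requires.

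Next, I would write $M_* = \P_N(M_*) + \theta^{N+1}\tM$ and $G_* = \P_N(G_*) + \theta^{N+1}\tG$, substitute into \eqref{eq:profile_ode_M_loc}, and use the rearrangement
\[
M_*' - M_* \tan\theta = \frac{(G_*'-1)M_*}{2G_*}.
\]
Because $G_*(\theta) = \theta + O(\theta^3)$ and $G_*'(\theta) - 1 = O(\theta^2)$ (the latter uses $G_*''(0)=0$), the factor $(G_*'-1)/G_*$ is smooth at the origin; after cancelling the matched Taylor contributions, the resulting system for $(\tM,\tG)$ has bounded coefficients on $[0,a]$ for $a$ small. The Biot--Savart relation reduces to a linear ODE in $\tG$ sourced by $\tM$ (solvable by variation of parameters with the explicit fundamental solutions of $G''+4G = 0$). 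I would then close a Picard iteration for $(\tM,\tG)$ in a weighted $C^0$ space on $[0,a]$, shrinking $a$ to make the map a contraction and produce the unique fixed point.

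Finally, $C^\infty$ regularity follows by bootstrap: since $G_*(\theta) \geq c\theta$ on $[0,a]$, successive differentiation of the system propagates $C^k$ bounds for all $k$. The monotonicity claim is immediate from the construction: $M_*''(0) = -2A < 0$ together with continuity gives $M_*'' < 0$ on $[0,a]$ after further shrinking $a$, and since $M_*'(0)=0$ evenness yields $M_*'(\theta) = \int_0^\theta M_*''(s)\,\d s \leq 0$, so the solution is non-constant and monotone. I expect the main technical hurdle to be the quantitative desingularization in the contraction step: one must choose $N$ and the weighted norm so that the cancellations from subtracting $\P_N(M_*), \P_N(G_*)$ dominate the $1/G_*$ factor uniformly as $a \to 0$.
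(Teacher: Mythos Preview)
Your proposal follows the same overall strategy as the paper: pre-compute the forced Taylor coefficients (exploiting the degeneracy at $M_*(0)=4$ that frees $M_*''(0)$), write $M_*, G_*$ as Taylor polynomial plus higher-order remainder, and close a contraction for the remainder on $[0,a]$. The paper carries this out with the minimal expansion $M_* = 4 - A\theta^2 + \theta^4 m$, $G_* = \theta - \tfrac{A+2}{3}\theta^3 + \theta^5 g$, substitutes, and obtains an explicit first-order Fuchsian system $\theta y' + Ay = b + R(y,\theta)$ for $y = (m, \theta g', g)$ with a constant matrix $A$; the contraction is then closed via the integral representation $y_j(\theta) = \theta^{-\lambda_j}\int_0^\theta \phi^{\lambda_j-1}(\cdot)\,\d\phi$ (packaged as Lemma~\ref{lemma:sing_ode}), which requires the eigenvalues $\lambda_j$ of $A$ to be positive. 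Your bootstrap to $C^\infty$ and the monotonicity argument at the end are fine and match the paper.

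One point deserves sharpening. Dividing by $G_*$ to write $M_*' = M_*\tan\theta + (G_*'-1)M_*/(2G_*)$ does make the right-hand side bounded at $\theta=0$, but it does \emph{not} make the remainder equation regular: after substituting $M_* = \P_N(M_*) + \theta^{N+1}\tM$ and cancelling, the leading linear part is still $\theta\tM' + (N+1)\tM = (\text{bounded})$, not $\tM' = (\text{bounded})$. So ``bounded coefficients'' is true only in the Fuchsian sense, and a na\"ive Picard on $\tM$ does not close; you need precisely the weighted integral operator above (this is presumably what your ``weighted $C^0$ space'' is meant to encode). In other words, your desingularization-by-division is a useful heuristic for seeing why the Taylor coefficients are forced, but it does not replace the need for Lemma~\ref{lemma:sing_ode} or an equivalent. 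The paper keeps the Fuchsian structure explicit throughout and checks positivity of the indicial exponents, which is the one nontrivial verification hidden in your sketch.
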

The difficulty in obtaining a local solution of \eqref{eq:profile_ode_M_loc} comes from the singular behaviour near \(\theta = 0\) as \(G_*(0) = 0\). This prevents us from directly using standard fixed point methods to solve the ODE. This difficulty can be overcome by taking the Taylor polynomial as an approximate solution and closing a fixed point argument for the remainder. Proceeding in this way, we will obtain a system of the form
\[
    \theta y'(\theta) + Ay = b + R(y, y', \theta)
\]
where \(A\) is a constant coefficient, positive-definite matrix, \(b\) is a constant vector and \(R\) is a remainder which is small in \(\theta\). In the Appendix (Lemma \ref{lemma:sing_ode}), we prove that under suitable smallness assumptions on the remainder such systems can be solved locally through a fixed point method.

\begin{proof}
Given the above discussion, it behooves us to expand
\[
    M_*(\theta) = 4 - A\theta^2 + \theta^4m(\theta), \quad G_*(\theta) = \theta - \frac{A+2}{3}\theta^3  + \theta^5 g(\theta)
\]
where we have substituted the second order Taylor expansion obtained from \eqref{eq:taylor_exp} and set \(M_*''(0) = -2A\). We now wish to solve for the remainders \(m, g\) via a fixed point scheme. Since our ansatz satisfies the equation up to fourth order, we obtain the following system for \(m, g\)
\begin{align}\label{eq:mg_system}\begin{cases}
    \theta m' + 4m -10g -2\theta g' = C_{1, A} + R_1 \\
	\theta^2g'' + 10\theta g' + 20g - \theta m' - 4m = C_{2, A} + R_2
    \end{cases}
\end{align}
where \(C_{1, A}, C_{2, A}\) are constants depending on \(A\) and \(R_1, R_2\) are the following remainders,
\begin{align*}
\begin{dcases}
	R_1 = m'(\theta)\left[\frac{A+2}{3}\theta^3 - \theta^5 g\right] + \frac{m(\theta)}{2}\left[\frac{5(A+2)\theta^2}{3} - 3\theta^4g + \theta^4 (\theta g')+2\tan\theta\left(\theta+\theta^5g-\frac{(A+2)\theta^3}{3}\right)\right] \\
    \qquad + g(\theta)\left[2\theta^2 A - \frac{5A}{2}\theta^2 + \theta\tan\theta(4 - A\theta^2)\right] - \frac{A}{2}\theta^2(\theta g') \\
    \qquad + \frac{4\theta^2}{9}\left[9t(\theta) - (A+2)\theta^2 - 3(A+2)\theta^4t(\theta)\right]\\
	R_2 = -4\sin^2\theta m(\theta) - \theta \sin^2\theta m'(\theta) \\
    C_{1, A} = -\frac{A+2}{3}\left(A + 8\right) + \frac83 - 2A \\
    C_{2, A} = 2A
\end{dcases}
\end{align*}
and in the above expression \(t(\theta) := \theta^{-5}(\tan\theta - \P_3\tan\theta)\).
Adding the first equation to the second in \eqref{eq:mg_system} yields the system
\begin{align*}
\begin{cases}
	\theta m' + 4m -10g -2\theta g' = F_1\\
	\theta^2g'' + 8\theta g' + 10g  =  F_2,
\end{cases}
\end{align*}
where \(F_2 = C_2 + R_2 +  C_1 + R_1\). Re-writing as a first order system, we let \(h = \theta g'\) to obtain
\begin{align}
\begin{cases}
    \theta m' + 4m - 10g - 2h = F_1 \\
    \theta h' + 7h + 10 g = F_2 \\
    \theta g' - h = 0.
\end{cases}
\end{align}
Using variation of parameters then yields the following integral system
\begin{align}\label{eq:profile_integral}
\begin{dcases}
    	m(\theta) = \frac{1}{\theta^4}\int_{0}^{\theta}\phi^{3}\left[F_1(\phi) - F_2(\phi)\right]\d\phi + \frac{1}{\theta^2}\int_{0}^{\theta}\phi F_2(\phi)\d\phi \\
	g(\theta) = -\frac{1}{3\theta^5}\int_{0}^{\theta}\phi^4 F_2(\phi)\d\phi + \frac{1}{3\theta^2}\int_{0}^{\theta}\phi F_2(\phi)\d\phi \\
	\theta g'(\theta) = \frac{5}{3\theta^5}\int_{0}^{\theta}\phi^4 F_2(\phi)\d\phi - \frac{2}{3\theta^2}\int_{0}^{\theta}\phi F_2(\phi)\d\phi.
\end{dcases}
\end{align}
To obtain a system of the form in Lemma \ref{lemma:sing_ode}, we must eliminate the dependence on \(m'\) in \(R_1, R_2\). For \(\lambda > 0\), integrating by parts gives
\begin{align*}
&\theta^{-\lambda}\int_{0}^{\theta}\phi^{\lambda-1}m'(\phi)\left[\frac{A+2}{3}\phi^3 - \phi^5 g\right]\d\phi  \\
&= \frac{A+2}{3}\theta^2m(\theta) - \theta^4 m(\theta)g(\theta)-\theta^{-\lambda}\int_{0}^{\theta}\phi^{\lambda-1}\left[\phi^2\frac{(A+2)(\lambda+2)}{3} - \phi^4(\lambda+4)g - \phi^4 h\right]m(\phi)\d\phi.
\end{align*}
Proceeding similarly for \(R_2\),
\begin{align*}
\theta^{-\lambda}\int_{0}^{\theta}-\theta \sin^2\theta m'(\theta)\phi^{\lambda-1}\d\phi &=  - \sin^2\theta m(\theta) + \theta^{-\lambda}\int_{0}^{\theta}\phi^{\lambda-1}\left[\lambda\sin^2\phi + 2\phi \sin\phi\cos\phi\right]m(\phi)\d\phi.
\end{align*}
We therefore obtain a fixed point problem of the form considered in Lemma \ref{lemma:sing_ode}. Applying Lemma \ref{lemma:sing_ode}, we obtain a local solution \(m, g \in C^{\infty}([0, a])\) for some \(a > 0\). By construction, \(m, g\) solve \eqref{eq:mg_system} and thus \(M_*, G_*\) solve \eqref{eq:profile_ode_M_loc} with the initial conditions of Proposition \ref{prop:profile_local}.
\end{proof}
\subsection{Monotonicity Lemma}
With a local solution to the profile equation \eqref{eq:profile_ode_M_loc} in hand, we now wish to extend the solution. The key fact which allows us to continue the solution is the following monotonicity property of \(M_*\).
\begin{lemma}\label{lemma:monotonicity}
Let \((M_*, G_*)\) solve the system \eqref{eq:profile_ode_M_loc} with the initial conditions 
\[
M_*(0) = M_0, \quad G_*(0) = 0, \quad G_*'(0) =~1
\]
on a domain \([0, \theta_*]\). Assume moreover that
\begin{itemize}
\item \(M_* > 0\) on \([0, \theta_*)\) 
\item There exists an interval \([0, \delta], \delta > 0\) such that \(M_*'(\theta) < 0\) for all \(\theta \in (0, \delta]\).
\end{itemize}
Then, \(M_*'(\theta) \leq 0\) for all \(\theta \in [0, \theta_*]\). \label{key_monotone}
\end{lemma}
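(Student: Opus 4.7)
The approach is by contradiction, using a change of variables that exposes the monotonicity. The starting observation is that $G_*(0) = 0$ and $G_*'(0) = 1$ match the Taylor expansion of $\frac12 \sin(2\theta)$, the trivial ``Rayleigh--Taylor'' stream function associated with constant $M_*$. This suggests setting
\[
F(\theta) := \frac{G_*(\theta)}{\sin\theta \cos\theta}, \qquad h(\theta) := \sin\theta \cos\theta,
\]
so that $F(0) = 1$. Before anything else I need to verify that $G_* > 0$ on $(0, \theta_*)$: if $G_*$ had a first zero at some $\theta_1 > 0$, evaluating the transport equation at $\theta_1$ together with $M_*(\theta_1) > 0$ would force $G_*'(\theta_1) = 1 > 0$, contradicting the first-zero property $G_*'(\theta_1) \leq 0$. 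Consequently $F$ is well-defined and positive on $(0, \theta_*)$.

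Next I rewrite the two equations of \eqref{eq:profile_ode_M_loc} in terms of $F$. Using the identity $\cos(2\theta) + 2\sin^2\theta = 1$, a direct computation collapses the transport equation into
\[
2 h(\theta) F(\theta) \cdot \frac{M_*'(\theta)}{M_*(\theta)} = \bigl(F(\theta) - 1\bigr) + h(\theta) F'(\theta),
\]
while the Biot--Savart equation $G_*'' + 4 G_* = M_*' \cos^2\theta$ becomes, after multiplying by $2\sin(2\theta)$ to produce a perfect derivative on the left,
\[
\bigl(\sin^2(2\theta)\, F'(\theta)\bigr)' = 4 M_*'(\theta) \sin\theta \cos^3\theta.
\]

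With these two identities the contradiction is short. Suppose the conclusion fails and let $\theta_0 \in (\delta, \theta_*]$ be the first point at which $M_*'$ vanishes; then $M_*' < 0$ strictly on $(0, \theta_0)$, with strict negativity throughout the positive-measure set $(0, \delta]$. Integrating the Biot--Savart identity from $0$ to any $\theta \in (0, \theta_0]$ gives
\[
\sin^2(2\theta)\, F'(\theta) = 4 \int_0^\theta M_*'(s) \sin s \cos^3 s \, ds < 0,
\]
and since $\sin(2\theta) > 0$ on $(0, \pi/2)$ this forces $F' < 0$ strictly on $(0, \theta_0]$, hence $F(\theta_0) < F(0) = 1$. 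But the transport identity evaluated at $\theta_0$, together with $M_*'(\theta_0) = 0$, forces
\[
h(\theta_0) F'(\theta_0) = 1 - F(\theta_0) > 0,
\]
contradicting $F'(\theta_0) < 0$.

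The main obstacle I expect is identifying the right substitution. Direct attempts to control $M_*'$ at a candidate first zero fail: the sub-harmonic comparison $G_* \leq \frac12 \sin(2\theta)$ coming from $G_*'' + 4 G_* \leq 0$ actually produces $M_*''(\theta_0) \geq 0$ at such a zero, which is consistent with (rather than contradicting) $M_*'$ turning positive. Dividing out the Rayleigh--Taylor reference profile, however, makes the transport equation collapse up to the deviation $F - 1$, and the coupling between the transport and Biot--Savart identities then closes the argument.
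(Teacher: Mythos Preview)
Your proof is correct. The route differs from the paper's: rather than differentiating the transport equation and using the variation-of-parameters formula for $G_*$ to compute $G_*'' + (2G_*\tan\theta)'$ explicitly, you normalize by the trivial Rayleigh--Taylor stream function via $F = G_*/(\sin\theta\cos\theta)$, which collapses the transport equation to $2hF\,M_*'/M_* = (F-1) + hF'$ and the Biot--Savart equation to the exact-derivative form $(\sin^2(2\theta)\,F')' = 4M_*'\sin\theta\cos^3\theta$. The two approaches are equivalent at the core---indeed $\sin^2(2\theta)\,F'(\theta)$ equals $2\cos^2\theta$ times the paper's quantity $G_*'' + (2G_*\tan\theta)' - M_*'\cos^2\theta$, and both hinge on the strict negativity of $\int_0^{\theta} M_*'(\phi)\cos^2\phi\sin(2\phi)\,d\phi$ at a putative first zero of $M_*'$. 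Your substitution packages the computation more cleanly and makes the contradiction at $\theta_0$ immediate; the paper's explicit calculation is longer but entirely self-contained. Two small points worth tightening: the failure of the conclusion forces the first zero $\theta_0$ to lie strictly in $(\delta,\theta_*)$, which is what guarantees $M_*(\theta_0)>0$ when you divide in the transport identity; and the integration of the Biot--Savart identity from $0$ uses $\sin^2(2\theta)F'(\theta)\to 0$ as $\theta\to 0^+$, which follows from the $C^2$ regularity of $G_*$ (so that $F'$ is bounded near the origin).
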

\begin{proof}
Differentiating \eqref{eq:profile_ode_M_loc} gives
\begin{align*}
    M_*' + 2G_*'M_*' + 2G_*M_*'' &= G_*'M_*' + (2G_*\tan\theta)'M_* + 2G_*\tan\theta M_*' + G_*''M_* \\
    2G_*M_*'' &= M_*'(-G_*'-1 + 2G_*\tan\theta) + M_*((2G_*\tan\theta)' + G_*'')
\end{align*}
Note that if \(G_*(\theta_1) = 0\) at some point \(0 < \theta_1 < \theta_*,\) then \(M_*(\theta_1) = 0\) since evaluating \eqref{eq:profile_ode_M_loc} at \(\theta_1\) yields \(M_*(\theta_1) = G_*'(\theta_1)M_*(\theta_1)\). Since \(G_* \geq 0\) near \(\theta = 0\), we must have \(G_*'(\theta_1) \leq 0\) and hence \(M_*(\theta_1) = 0\) contradicting the assumption that \(M_* > 0\). Thus, we have \(G_* \geq 0\) on \([0, \theta^*]\). It then suffices to show \((2G_*\tan\theta)' + G_*'' \leq 0\) since \(M_* \geq 0\). Note that \(G_*\) can be recovered explicitly from \(M_*\) as
\begin{align}\label{eq:G_formula}
    G_*(\theta) = \frac{1}{2}\sin(2\theta)\left[1 + \int_{0}^{\theta}M_*'(\theta')\cos^2(\phi)\cos(2\phi)\d\phi\right] - \frac{1}{2}\cos(2\theta)\left[\int_{0}^{\theta}M_*'(\phi)\cos^2(\phi)\sin(2\phi)\d\phi\right]. 
\end{align}
We can then compute,
\begin{align}\label{eq:G'_formula}
    G_*'(\theta) &= \cos(2\theta)\left[1 + \int_{0}^{\theta}M_*'(\phi)\cos^2(\phi)\cos(2\phi)\d\phi\right] + \sin(2\theta)\left[\int_{0}^{\theta}M_*'(\phi)\cos^2(\phi)\sin(2\phi)\d\phi\right] \\
    G_*''(\theta) &= M_*'(\theta)\cos^2(\theta) - 2\sin(2\theta)\left[1 + \int_{0}^{\theta}M_*'(\phi)\cos^2(\phi)\cos(2\phi)\d\phi\right] \\ & \qquad + 2\cos(2\theta)\left[\int_{0}^{\theta}M_*'(\phi)\cos^2(\phi)\sin(2\phi)\d\phi\right].\nonumber
\end{align}
Thus, we have the following explicit expression for the quantity of interest \(G_*'' + (2G_*\tan\theta)'\),
\begin{align*}
    G_*'' + (2G_*\tan\theta)' &= \left[-2\sin(2\theta) + \sec^2(\theta)\sin(2\theta) + 2\tan(\theta)\cos(2\theta)\right]\int_{0}^{\theta}M_*'(\phi)\cos^2(\phi)\cos(2\phi)\d\phi\\
    &\qquad + \left[2\cos2\theta - \sec^2(\theta)\cos(2\theta) + 2\tan\theta\sin2\theta\right]\int_{0}^{\theta}M_*'(\phi)\cos^2(\phi)\sin(2\phi)\d\phi \\
    & \qquad + M_*'(\theta)\cos^2(\theta) - 2\sin(2\theta) + 2\sec^2(\theta)\frac{1}{2}\sin(2\theta) + 2\tan(\theta)\cos(2\theta)
\end{align*}
which simplifies to
\begin{align*}
 G_*'' + (2G_*\tan\theta)' &=   M_*'(\theta)\cos^2\theta + \sec^2(\theta)\int_{0}^{\theta}M_*'(\phi)\cos^2(\phi)\sin(2\phi)\d\phi.
\end{align*}
Thus, if \(M_*'(\theta) \leq 0\) it follows that \(G_*''(\theta) + (2G_*\tan\theta))' \leq 0\). Since, we have \(M_*' \leq 0\) on \([0, \delta]\) a straightforward bootstrapping argument allows us to iteratively conclude \(M_*'(\theta) \leq 0\) for all \(\theta \in [0, L]\).
\end{proof}

\subsection{Continuation of the Profile}
Here, we argue using standard Cauchy--Lipschitz theory and the monotonicity property of Lemma \ref{lemma:monotonicity} that the previously constructed local solution of the profile equation can be continued to construct a global profile on an interval \([0, L]\). 

\begin{lemma}\label{lemma:profile_continuation}
    Consider the system
    \begin{align*}
    \begin{cases}
        M_* + 2G_*M_*' = G_*'M_* + 2M_*G_*\tan\theta \\
        G_*'' + 4G_* = M_*'\cos^2\theta \\
        M_*(\theta_0) = M_0, \quad G_*(\theta_0) = G_0, \quad G_*'(\theta_0) = G_0'.
    \end{cases}
    \end{align*}
    If \(G_0 > 0\) and \(\theta_0 < \pi/2\), then there exists \(\delta > 0\) such that there is a unique solution \(M_*, G_* \in C^{\infty}([\theta_0, \theta_0 + \delta]) \times C^{\infty}([\theta_0, \theta_0 + \delta])\).
\end{lemma}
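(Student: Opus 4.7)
The plan is to reduce the system to a standard first-order ODE system in the unknowns $(M_*, G_*, G_*')$ and then invoke Picard--Lindel\"of. The only subtlety is that the first equation contains $G_* M_*'$, so one cannot solve algebraically for $M_*'$ unless $G_*$ is bounded away from zero. The hypothesis $G_0 > 0$ combined with continuity provides exactly this: on some interval $[\theta_0, \theta_0 + \delta_0]$ we have $G_*(\theta) \geq G_0/2 > 0$, and on the same interval we can also assume $\theta < \pi/2$, so $\cos\theta$ and $\tan\theta$ remain smooth.

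Concretely, first I would divide the first equation by $2G_*$ to obtain
\[
M_*' = \frac{M_*\bigl(G_*' - 1 + 2G_*\tan\theta\bigr)}{2G_*},
\]
and substitute this into the Biot--Savart equation to get
\[
G_*'' = -4G_* + \frac{M_*\bigl(G_*' - 1 + 2G_*\tan\theta\bigr)\cos^2\theta}{2G_*}.
\]
Setting $Y = (M_*, G_*, G_*')$, this becomes a first-order system $Y' = F(\theta, Y)$ where $F$ is smooth on the open set $\{(\theta, y_1, y_2, y_3) : \theta < \pi/2,\ y_2 > 0\}$. Since $(\theta_0, M_0, G_0, G_0')$ lies in this open set by hypothesis, the classical Picard--Lindel\"of theorem produces a unique $C^1$ solution on some interval $[\theta_0, \theta_0 + \delta]$ with $\delta > 0$.

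To upgrade to $C^\infty$ regularity, I would use a bootstrap: differentiating $Y' = F(\theta, Y)$ shows that higher derivatives of $Y$ are polynomials in lower derivatives with smooth coefficients (the denominators involve only $G_*$, which stays positive on a possibly smaller interval, and $\cos\theta$, which stays positive since $\theta < \pi/2$). Thus $Y \in C^k$ for every $k$, giving $M_*, G_* \in C^\infty([\theta_0, \theta_0 + \delta])$. Uniqueness follows directly from the Picard--Lindel\"of theorem applied to $F$.

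The main (and essentially only) obstacle is keeping track of the two nondegeneracy conditions $G_* > 0$ and $\cos\theta > 0$. Both follow from the hypotheses and continuity by shrinking $\delta$ if necessary; no further argument is required. In particular, this lemma is purely a local statement --- the global continuation up to the first zero of $M_*$ or $G_*$ is handled separately by combining this local lemma with the monotonicity property from Lemma \ref{lemma:monotonicity}.
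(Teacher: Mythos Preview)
Your argument is correct. Both you and the paper divide through by $2G_*$ to obtain the nonsingular expression
\[
M_*' = \frac{M_*\bigl(G_*' - 1 + 2G_*\tan\theta\bigr)}{2G_*},
\]
and then close a fixed point argument; the bootstrap to $C^\infty$ is identical in spirit. The only real difference is in packaging: you write the full three-dimensional first-order system $Y'=F(\theta,Y)$ with $Y=(M_*,G_*,G_*')$ and invoke Picard--Lindel\"of directly, whereas the paper first solves the linear equation $G_*''+4G_*=M_*'\cos^2\theta$ explicitly by variation of parameters, obtaining $G_*$ (and $G_*'$) as integral functionals of $M_*$, and then runs a contraction mapping argument for $M_*$ alone in $C^0$. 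Your route is slightly more streamlined since it treats the problem as a black-box ODE; the paper's route has the minor advantage that the explicit integral formula for $G_*$ in terms of $M_*$ is reused elsewhere in the analysis (e.g., in the monotonicity lemma and the shooting argument), so deriving it here is not wasted effort. Either way the result is immediate once $G_*>0$ is secured.
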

\begin{proof}
    We first see that \(G_*\) can be recovered explicitly from \(M_*\) by the formula
    \begin{equation}\label{eq:G_continued}
    \begin{aligned}
    G_*(\theta) &= \frac{1}{2}\sin(2\theta)\left[G_s + \int_{\theta_0}^{\theta}M_*(\phi)(\sin2\phi\cos2\phi + 2\cos^2\phi\sin2\phi)\d\phi)\right]\\
    &\qquad - \frac{1}{2}\cos(2\theta)\left[ G_c - \int_{\theta_0}^{\theta}M_*(\phi)(-\sin^22\phi + 2\cos^2\phi\cos2\phi)\d\phi\right]
    \end{aligned}
    \end{equation}
    where \(G_s, G_c\) are the following constants
    \[
    G_s = 2G_0\sin2\theta_0 + \cos2\theta_0G_0'- M_0\cos^2\theta_0\cos2\theta_0, \  G_c = -2G_0\cos2\theta_0 + G_0'\sin2\theta_0 - M_0\cos^2\theta_0\sin2\theta_0.
    \]
    Since \(G_0 > 0\), we can divide by \(G_*\) to find that
    \[
    M_*'(\theta) = \frac{M_*(\theta)}{2G_*(\theta)}\big(G_*'(\theta) - 1 + 2G_*(\theta)\tan\theta\big)
    \]
    which yields the fixed point problem
    \begin{equation}\label{eq:M_continued}
    M_*(\theta) = M_0 + \int_{0}^{\theta}\frac{M_*(\phi)}{2G_*(\phi)}\big(G_*'(\phi) - 1 + 2G_*(\phi)\tan\phi\big)\d\phi
    \end{equation}
    which we consider on a ball \(B_R(0) \subset C^0([\theta_0, \theta_0+\delta])\) where \(R = 2|M_0|+1\). By choosing \(\delta\) sufficiently small, we can ensure that for all \(M_* \in B_R(0)\), the corresponding \(G_*\) given by \eqref{eq:G_continued} satisfies \(G_*(\theta) > G_0/2\) for all \(\theta \in [\theta_0, \theta_0 + \delta]\). It is then easily verified that for \(\delta\) sufficiently small (depending on \(M_0, G_0, G_0'\)) the right hand side of \eqref{eq:M_continued} maps \(B_R(0)\) to itself. For \(M_1, M_2 \in B_R(0)\) we define corresponding \(G_1, G_2\) by \eqref{eq:G_continued}. It is then easily seen that
    \begin{multline*}
    \left|\int_{0}^{\theta}\frac{M_1(\phi)}{2G_1(\phi)}\big(G_1'(\phi) - 1 + 2G_1(\phi)\tan\phi\big) - \frac{M_2(\phi)}{2G_2(\phi)}\big(G_2'(\phi) - 1 + 2G_2(\phi)\tan\phi\big)\d\phi\right| \\
    \leq C\delta R\left[\|M_1 - M_2\|_{C^0} + \|G_1 - G_2\|_{C^1} + \left\|\frac{1}{G_1}-\frac{1}{G_2}\right\|_{C^0}\right]
    \end{multline*}
    for some constant \(C > 0\) depending on the initial conditions. From \eqref{eq:G_continued}, it follows that \(\|G_1 - G_2\|_{C^1} \leq C\|M_1-M_2\|_{C^0}\) and since \(G_1, G_2 \geq G_0/2\) we conclude
    \begin{align*}
    \left|\int_{0}^{\theta}\frac{M_1(\phi)}{2G_1(\phi)}\big(G_1'(\phi) - 1 + 2G_1(\phi)\tan\phi\big) - \frac{M_2(\phi)}{2G_2(\phi)}\big(G_2'(\phi) - 1 + 2G_2(\phi)\tan\phi\big)\d\phi\right|
    \leq C\delta R\|M_1 - M_2\|_{C^0}.
    \end{align*}
    Choosing \(\delta\) sufficiently small, we conclude the mapping is a contraction and hence has a unique fixed point \(M_* \in C^0([\theta_0, \theta_0 + \delta])\). It then follows from \eqref{eq:G_continued} that \(G_* \in C^1([\theta_0, \theta_0+\delta])\) and hence from \eqref{eq:M_continued} we conclude that, in fact, \(M_* \in C^1([\theta_0, \theta_0+\delta])\). Continuing in this way, we conclude \(M_*, G_* \in C^{\infty}([\theta_0, \theta_0+\delta])\).
\end{proof}

\begin{proposition}\label{prop:profile_existence}
    For all \(A > 0\), there exists \(L < \pi/2\) such that there exists a unique solution  \((M_*, G_*) \in C^{\frac12}([0, L]) \times C^{1, \frac12}([0, L]) \cap C^{\infty}([0, L)) \times C^{\infty}([0, L))\) of \eqref{eq:profile_ode_M_loc} such that \(M_*''(0) = -2A\) and the boundary conditions \(G_*(L) = M_*(L) = 0\) are satisfied. 
\end{proposition}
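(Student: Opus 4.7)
The strategy is to extend the local solution from Proposition \ref{prop:profile_local} to a maximal interval, show this interval terminates at some $L < \pi/2$ where both $M_*$ and $G_*$ vanish, and check the stated regularity. Starting from the local solution on $[0,a]$ with $M_*(0)=4$, $M_*''(0)=-2A$, I would iteratively apply Lemma \ref{lemma:profile_continuation} to extend to a maximal interval $[0,L^*)$ on which $G_*(\theta)>0$. Lemma \ref{lemma:monotonicity} guarantees $M_*'\le 0$ throughout, so $0<M_*\le 4$. Moreover, $M_*$ cannot vanish at an interior point of $[0,L^*)$: when $G_*>0$ the equation for $M_*$ is a smooth linear ODE $M_*'=a(\theta)M_*$, so Cauchy--Lipschitz uniqueness forbids an interior zero while $M_* > 0$ on $[0,L^*)$. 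Hence the only way continuation can fail is $L^*=\pi/2$ or $G_*(L^*)=M_*(L^*)=0$, the second equality coming from the argument in the proof of Lemma \ref{lemma:monotonicity}.

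The key step is ruling out $L^*=\pi/2$. For this I would use the explicit representation \eqref{eq:G_formula}. At $\theta=\pi/2$ one has $\sin(2\theta)=0$ and $\cos(2\theta)=-1$, so
\[
G_*(\pi/2)=\tfrac{1}{2}\int_0^{\pi/2}M_*'(\phi)\cos^2(\phi)\sin(2\phi)\,\d\phi.
\]
Since $M_*'\le 0$ on $[0,L^*)$ and the remaining factors are nonnegative, $G_*(\pi/2)\le 0$; on the other hand, $G_*>0$ on $[0,\pi/2)$ gives $G_*(\pi/2)\ge 0$ by continuity. Equality forces the nonpositive integrand to vanish identically, so $M_*'\equiv 0$ on $(0,\pi/2)$, contradicting $M_*''(0)=-2A<0$. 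Setting $L:=L^*$ therefore yields $L<\pi/2$ with $M_*(L)=G_*(L)=0$. Uniqueness follows from the contraction-mapping uniqueness built into Lemma \ref{lemma:profile_continuation}.

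For the endpoint regularity I would analyze the ODE $M_*'=\frac{M_*}{2G_*}(G_*'-1+2G_*\tan\theta)$ near $\theta=L$. Let $\beta:=-G_*'(L)\ge 0$. In the generic case $\beta>0$ one has $G_*(\theta)=\beta(L-\theta)+O((L-\theta)^2)$ near $L$, so the log-derivative behaves like $\frac{d}{d\theta}\log M_*\sim\frac{\beta+1}{2\beta(\theta-L)}$, giving $M_*(\theta)\sim(L-\theta)^{(\beta+1)/(2\beta)}$. Since $(\beta+1)/(2\beta)\ge 1/2$ for every $\beta>0$, we obtain $M_*\in C^{1/2}([0,L])$, and feeding this into $G_*''+4G_*=M_*'\cos^2\theta$ and integrating the resulting weakly singular integrand yields $G_*\in C^{1,1/2}([0,L])$. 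The degenerate case $\beta=0$ produces super-polynomial decay of $M_*$ at $L$, hence smoothness there. The $C^\infty$ regularity on the open interval $[0,L)$ is inherited directly from the smoothness produced by Lemma \ref{lemma:profile_continuation}. The main obstacle is the exclusion of $L^*=\pi/2$: it relies essentially on the variation-of-parameters identity combined with the pointwise sign $M_*'\le 0$ supplied by the monotonicity lemma, since no coercive bound on $G_*$ away from zero is otherwise available.
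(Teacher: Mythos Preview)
Your proposal is correct and follows the same route as the paper: continuation via Lemma~\ref{lemma:profile_continuation} together with the monotonicity Lemma~\ref{lemma:monotonicity}, exclusion of $L=\pi/2$ by evaluating the variation-of-parameters formula~\eqref{eq:G_formula} at $\pi/2$, and extraction of the H\"older exponent at $L$ from the log-derivative of $M_*$. The one point the paper handles differently is the endpoint regularity: rather than splitting on whether $\beta=-G_*'(L)$ vanishes, the paper computes $G_*'(L)$ explicitly from \eqref{eq:G_formula}--\eqref{eq:G'_formula} and shows it is strictly negative (see~\eqref{eq:G_vanishing}), which both disposes of your heuristically treated degenerate case $\beta=0$ and supplies the strict linear vanishing of $G_*$ at $\theta=L$ that is invoked repeatedly in the later coercivity estimates.
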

\begin{proof}
By Proposition \ref{prop:profile_local}, there exists \(\delta > 0\) and \(M_*, G_* \in C^{\infty}([0, \delta])\) solving \eqref{eq:profile_ode_M_loc} such that \(M_*'(\theta) \leq 0\) for all \(\theta \in [0, \delta]\). Then, by Proposition \ref{lemma:profile_continuation} the solution can be continued until a point \(L\) where either \(G_*(L) = 0\) the first root of \(G\), or up to \(L = \pi/2\). First, note that \(M_*\) must be positive on \([0, L)\). Indeed, if \(M_*(z) = 0\) for some \(z < L\) then solving backwards from \(z\), by Gr\"onwall's inequality we would have \(M_* \equiv 0\) on \([0, L]\), a contradiction. Now, we claim that \(L < \pi/2\). Indeed, if the solution can be continued all the way up to \(\pi/2\), then by \eqref{eq:G_formula} and integration by parts we have that
\begin{align*}
\lim_{\theta \to \pi/2}G_*(\theta) &= \lim_{\theta \to \pi/2}\frac12\sin2\theta\left[1 + M_*(\theta)\cos(2\theta)\cos^2(\theta)- 4 + \int_{0}^{\theta}M_*(\phi)\sin(2\phi)(\cos2\phi + \cos^2\phi)\d\phi\right] \\
&\qquad + \lim_{\theta \to \pi/2}\int_{0}^{\theta}M_*'(\phi)\cos^2(\phi)\sin(2\phi)\d\phi.
\end{align*}
Since \(0 \leq M_* \leq 4\) we conclude that the first term goes to zero and thus
\begin{align*}
\lim_{\theta \to \pi/2}G_*(\theta) &= \lim_{\theta \to \pi/2}\int_{0}^{\theta}M'(\phi)\cos^2(\phi)\sin(2\phi)\d\phi < 0
\end{align*}
since \(M_*' \leq 0\) and not identically zero. Thus, it must be the case that \(G_*\) has a root at point \(L < \pi/2\). At this point we then have
\[
M_*(L) = G_*'(L)M_*(L)
\]
and since \(G_*'(L) \leq 0\) as \(G_* \geq 0\) before \(\theta = L\) we conclude \(M_*(L) = 0\). It now remains to prove the regularity of \(M_*, G_*\) at the boundary \(\theta = L\). First, we show that \(G_*\) vanishes only linearly at the boundary. Indeed, since \(G_*(L) = 0\) from \eqref{eq:G_formula} it follows that
\begin{equation}\label{eq:G_vanish_1}
\sin(2L)\left[1+\int_{0}^{L}M'(\phi)\cos^2(\phi)\cos(2\phi)\d\phi\right] = \cos(2L)\int_{0}^{L}M'(\phi)\cos^2(\phi)\sin(2\phi)\d\phi 
\end{equation}
and evaluating the formula for \(G_*'\) at \(\theta = L\) we have
\begin{equation}\label{eq:G_vanish_2}
G_*'(L) = \cos(2L)\left[1 + \int_{0}^{L}M'(\phi)\cos^2(\phi)\cos(2\phi)\d\phi\right] + \sin(2L)\int_{0}^{L}M'(\phi)\cos^2(\phi)\sin(2\phi)\d\phi.
\end{equation}
Substituting \eqref{eq:G_vanish_1} into \eqref{eq:G_vanish_2} we conclude
\begin{equation}\label{eq:G_vanishing}
G_*'(L) = \left[\frac{\cos^2(2L)}{\sin(2L)} + \sin(2L)\right]\int_{0}^{L}M'(\phi)\cos^2(\phi)\sin(2\phi)\d\phi < 0.
\end{equation}
Now, taking the limit as \(\theta \to L\) in \eqref{eq:profile_ode_M_loc}, we see that
\[
\lim_{\theta \to L}-\frac{(L-\theta)M_*'(\theta)}{M_*(\theta)} = \lim_{\theta \to L}\frac{G_*'(\theta)-1}{2\frac{G_*(\theta)}{L-\theta}} = \frac{1}{2} - \frac{1}{2G_*'(L)}
\]
and it follows (see Lemma \ref{lemma:holder}) that \(M_* \in C^{\alpha}([0, L])\) for \(\alpha < \min\left\{1, \frac{1}{2} - \frac12 G_*'(L)\right\}\). Since \(G_*'(L) < 0\), in particular we have that \(M_* \in C^{1/2}([0, L])\). 
\end{proof}
Note that the profile obtained in Proposition \ref{prop:profile_existence} does not possess the correct regularity at the boundary demanded by Theorem \ref{theorem:main} and is supported up to the boundary \(\theta = L\). In the remaining sections we construct a suitable, stable perturbation of \(M_*\) capable of smoothly truncating the solution near the boundary.

\subsection{Shooting Method}
Here, we consider the family of solutions from Proposition \ref{prop:profile_existence}. We show that by taking \(A \to 0\), for any \(\epsilon > 0\), there exists \(A > 0\) such that the first root of \(G\) after \(\theta = 0\) occurs at \(L \in (\pi/2 -\epsilon, \pi/2)\). Thus, we can choose \(\Omega\) arbitrarily close to the half-plane. To do so, we leverage the trivial solution \(M \equiv 4, G = \frac{1}{2}\sin2\theta\) obtained for \(A = 0\) and prove a stability result for solutions from Proposition \ref{prop:profile_existence} in the parameter \(A\).
\begin{proposition}
    For every \(L' < \pi/2\), there exists \(A > 0\) such that the solution \((M, G)\) of Proposition \ref{prop:profile_existence} satisfies \(M(L) = G(L) = 0\) for some \(L' < L < \pi/2\).
\end{proposition}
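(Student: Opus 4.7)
The plan is a continuous-dependence-in-parameter argument leveraging the explicit trivial solution at $A = 0$. Setting $A = 0$ in \eqref{eq:profile_ode_M_loc}, one checks directly that $M \equiv 4$ and $G(\theta) = \tfrac{1}{2}\sin(2\theta)$ solve the system: the second equation reduces to $G'' + 4G = 0$, and the first reduces to the identity $\cos(2\theta) + \sin(2\theta)\tan\theta = 1$. This solution is positive on $(0,\pi/2)$ with its first positive zero exactly at $\pi/2$. For $A > 0$, Proposition \ref{prop:profile_existence} produces $(M_A, G_A)$ whose first positive root $L(A)$ of $G_A$ satisfies $L(A) < \pi/2$ strictly, and I would show $L(A) \to \pi/2$ as $A \to 0^+$. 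The conclusion then follows: given $L' < \pi/2$, choosing $A$ sufficiently small yields $L(A) \in (L',\pi/2)$.

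The argument decomposes into two continuity steps. First, I would show that the locally constructed family $(M_A, G_A)$ from Proposition \ref{prop:profile_local} depends continuously on $A$ on some common small interval $[0,a]$, including at $A = 0$. Inspecting the fixed-point system \eqref{eq:profile_integral} for the remainder $(m_A, g_A)$, the constants $C_{1,A}, C_{2,A}$ and the nonlinear remainders $R_1, R_2$ are smooth in $A$ with bounds uniform for $A \in [0, A_0]$. Hence the contraction constant and the interval length $a$ in the application of Lemma \ref{lemma:sing_ode} can be chosen uniformly in $A$, yielding $(m_A, g_A) \in C^\infty([0,a])$ continuous in $A$. Reconstructing through the ansatz $M_A = 4 - A\theta^2 + \theta^4 m_A$, $G_A = \theta - \tfrac{A+2}{3}\theta^3 + \theta^5 g_A$, continuity is preserved, and at $A = 0$ this matches the explicit solution above.

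Second, fix $\eta > 0$ with $\pi/2 - \eta > L'$. On $[a, \pi/2 - \eta]$ the system \eqref{eq:profile_ode_M_loc} is a smooth, non-singular ODE as long as $G_A$ stays bounded away from zero. At $A = 0$, $G_0(\theta) = \tfrac12 \sin(2\theta) \geq c(\eta) > 0$ on this interval. By standard Cauchy--Lipschitz continuous dependence on parameters (of the flavor used in Lemma \ref{lemma:profile_continuation}), for $A$ sufficiently small $(M_A, G_A)$ exists throughout $[0, \pi/2 - \eta]$ with $G_A \geq c(\eta)/2 > 0$. Consequently the first positive zero of $G_A$ satisfies $L(A) > \pi/2 - \eta > L'$, and combined with the upper bound $L(A) < \pi/2$ from Proposition \ref{prop:profile_existence} this gives the desired $L' < L(A) < \pi/2$.

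The main obstacle is the first step: verifying uniformity in $A$ of the singular fixed-point construction near $\theta = 0$, i.e.\ that neither the interval length $a$ nor the contraction constant degenerates as $A \to 0^+$. This amounts to inspecting that the hypotheses of Lemma \ref{lemma:sing_ode} are met with bounds independent of $A \in [0,A_0]$, which should follow from the explicit polynomial and trigonometric form of $R_1, R_2, C_{1,A}, C_{2,A}$. Once this uniformity is established, the remainder of the shooting argument is a routine application of continuous dependence for a regular ODE and a sign comparison with the explicit trivial solution.
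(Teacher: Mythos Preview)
Your proposal is correct and follows essentially the same approach as the paper: both argue stability with respect to the trivial solution $M\equiv 4$, $G=\tfrac12\sin(2\theta)$ as $A\to 0^+$, first establishing uniformity-in-$A$ of the singular fixed-point construction near $\theta=0$, and then propagating closeness on $[a,L']$ where the ODE is regular. The only difference is stylistic: the paper carries out both steps quantitatively (proving $|M-4|\le CA^2\theta^2$ on $[0,d]$ and then a bootstrap bound $|\overline{M}(\theta)|\le DA^2e^{\gamma(\theta-d)/k_*^2}$ on $[d,L']$), whereas you invoke continuous dependence abstractly.
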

\begin{proof}
    Fix \(L' < \pi/2\). By Proposition \ref{prop:profile_existence}, there exists \(L = L(A)\) such that \(G(L) = M(L) = 0\) and \(0 < L < \pi/2\) so it suffices to show that \(L > L'\) for \(A\) sufficiently small. First, examining the fixed point problem \eqref{eq:profile_integral} we see that each \(F_i\) satsifes \(\|F_i\|_{C^1} \leq C(1+A)\) and the constants \(C_{i, A}\) satisfy \(|C_{i, A}| \leq C(1+A^2)\) for some constant \(C > 0\) so by Lemma \ref{lemma:sing_ode} we have a local bound \(|M(\theta) - 4| \leq CA^2\theta^2 + CA^2\theta^4\) for all \(\theta \in [0, d]\) for some \(d > c/(1+A)\). Thus, on an interval \([0, d]\) independent of \(A < 1\), we have \(|M(\theta) - 4| \leq CA^2\theta^2\) for all \(\theta \in [0, d]\). Now, for \(\theta > d\) we write
    \[
    M'(\theta) = \frac{1}{2G(\theta)}\big(-M(\theta) + G'(\theta)M(\theta) + 2M(\theta)G(\theta)\tan\theta\big).
    \]
    Considering the difference equation for \(\overline{M} := M - 4, \overline{G} = G - \frac12 \sin2\theta\), we have
    \[
    \overline{M}'(\theta) = \frac{1}{2G(\theta)}\left[-\overline{M}(\theta) + \overline{M}(\theta)G'(\theta) + 4\overline{G}'(\theta) + 2\overline{M}(\theta)G(\theta)\tan\theta + 8\overline{G}(\theta)\tan\theta\right].
    \]
    Define \(k_* = \min\{1, \inf_{[d, L']}\frac12\sin2\theta\} > 0\). We claim there exists \(\epsilon, D, \gamma > 0\) such that
    \[
    |\overline{M}(\theta)| \leq D A^2 e^{\gamma(\theta-d)/k_*^2}.
    \]
    for all \(A < \epsilon\) and \(d \leq \theta \leq L'\). First, observe that since \(M(\theta) \leq 4\) for all \(\theta > 0\), it follows that \(G, G'\) are bounded uniformly in \(A\). Thus, noting that \(\tan\theta \leq C/k_*\) it follows that
    \[
    |\overline{M}'(\theta)| \leq \frac{C}{k_*G(\theta)}\left[|\overline{M}(\theta)| + |\overline{G}(\theta)| + |\overline{G}'(\theta)|\right]
    \]
    for some constant \(C\) depending only on \(d\).
    Now, we bootstrap the assumption that \(|\overline{M}(\theta)|\leq D A^2 e^{\gamma(\theta-d)/k_*^2}\) for \(\gamma > 1\) to be chosen later and \(A\) sufficiently small depending on \(D, L', \gamma\). First, we note that by \eqref{eq:G_formula}, \eqref{eq:G'_formula}, the bootstrap assumption implies 
    \[
    |\overline{G}(\theta)| \leq 10A^2(De^{\gamma(\theta-d)/k_*^2} + d^3), \quad |\overline{G}'(\theta)| \leq 10A^2(DAe^{\gamma(\theta-d)/k_*^2} + d^3).
    \]
    For \(A\) sufficiently small, we then have
    \[
    G(\theta) = \frac{1}{2}\sin2\theta + \overline{G}(\theta) \geq \frac{k_*}{2}
    \]
    and hence
    \[
    |\overline{M}'(\theta)| \leq \frac{C}{k_*^2}\left[|\overline{M}(\theta)| + |\overline{G}(\theta)| + |\overline{G}'(\theta)|\right].
    \]
    Integrating, from \(d\) and using the bootstrap assumptions we then have
    \begin{align*}
    |\overline{M}(\theta)| &\leq |\overline{M}(d)| + \frac{CDA}{k_*^2}\int_{d}^{\theta}e^{\gamma(\phi-d)/k_*^2}\d\phi \leq C_1d^2A^2 + \frac{CDA^2}{\gamma}(e^{\gamma(\theta-d)/k_*^2}-1).
    \end{align*}
    For \(D > 2\gamma d^2C_1\) and \(\gamma > 2C\) we conclude
     \begin{align*}
         |\overline{M}(\theta)| \leq \frac{DA^2}{2}e^{\gamma(\theta-d)/k_*^2},
     \end{align*}
    closing the bootstrap. We then conclude that for all \(0 \leq \theta \leq L'\),
    \[
    G(\theta) \geq k_* - |\overline{G}(\theta)| \geq k_* - Ce^{\gamma(L'-d)/k_*^2}A^2
    \]
    which is positive for \(A^2 < k_*e^{-\gamma(L'-d)/k_*^2}/C\). Thus, \(L > L'\) as desired.
\end{proof}

\section{The Space \texorpdfstring{\(\tH^4\)}{H4}}
From this point forward, we consider fixed \(L < \pi/2\). As the profile \(M_*\) of Proposition \ref{prop:profile_existence} is not regular at the boundary \(\theta = L\) we wish to construct a small perturbation of \(M_*\) which truncates it near the boundary yet still blows-up in finite time. This leads us to consider the logarithmic time \(s = -\log(1-t)\), for which the blow-up happens at \(s = \infty\). Then, we obtain the system
\begin{align}\label{eq:ipm_1d}\begin{cases}
    \p_s M + M + 2MG' = G'M + 2MG\tan\theta \\
    G'' + 4G = M'\cos^2\theta \\
    G(0) = G(L) = 0.
    \end{cases}
\end{align}
The profile \((M_*, G_*)\) is then a stationary solution of \eqref{eq:ipm_1d}. In order to obtain a solution which continues to blow-up in finite time, we are lead to studying perturbations of \(M_*\). A perturbation \(M\) of \(M_*\) satisfies
\begin{equation}
    \p_s M + \L(M) = \N(M, M), \label{eq:ipm_pert}
\end{equation}
where \(\L\) is defined by
\[
\L(f) := f + 2G_*f' + 2FM_*' - G_*'f - F'M_* - 2M_*F\tan\theta - 2fG_*\tan\theta,
\]
\(F\) solves
\[
F'' + 4F = f'\cos^2\theta
\]
and
\[
\N(f, g) := -2Fg' + F'g.
\]
Our goal is to produce decaying solutions of \eqref{eq:ipm_pert} capable of truncating \(M_*\) near the boundary. We first explain the mechanism through which we obtain decay. Since \(G_* \geq 0\) for \(\theta > 0\), the background flow is always directed out towards the boundary \(\theta = L\). Moreover, since \(G_*' > 0\) near \(\theta = 0\) and \(G_*' < 0\) near \(\theta = L\), there is stretching occurring near the origin and damping occurring near the boundary. Thus, we see that perturbations are transported away from the stretching region towards the damping region. 
\begin{center}
    \begin{tikzpicture}[domain=-10:10, scale=0.5]
		\draw[-] (-10, 0) -- (10,0) ;
            \draw[->, thick] (3, 1) -- (7, 1);
            \draw[->, thick] (1, 1) -- (2, 1);
            \draw[->, thick] (8, 1) -- (9, 1);

            \draw[-] (10, -0.3) -- (10, 0.3);
            \draw[-] (-10, -0.3) -- (-10, 0.3);
            \draw[dashed] (0,0) -- (0, 2);

            \draw[->, thick] (-3, 1) -- (-7, 1);
            \draw[->, thick] (-1, 1) -- (-2, 1);
            \draw[->, thick] (-8, 1) -- (-9, 1);

            \node[below, scale=1] at (0, 0) {\(\theta = 0\)};
            \node[below, scale=1] at (10, 0) {\(\theta = L\)};
            \node[below, scale=1] at (-10, 0) {\(\theta = -L\)};

            \node[above, draw, scale=1] at (0, 2) {Stretching};
            \node[above, draw, scale=1] at (10, 2) {Damping};
            \node[above, draw, scale=1] at (-10, 2) {Damping};
	\end{tikzpicture}
\end{center}
The velocity field vanishes at zero however so the transport away from this region is weak. To counteract this, we work in a weighted space with a strong weight at the origin and a weak weight near the boundary. This effectively enforces that the perturbation vanishes to high order near \(\theta = 0\) so that there is very little mass near zero.

In practice, proving such decay requires careful study of the linearized operator \(\L\) in a certain weighted Sobolev space, which we denote \(\tH^4\). Namely, we will show that the linearized operator \(\mathcal{L}\) can be decomposed \(\mathcal{L} = \Lb + \Lk\) where \(\Lb\) is coercive on \(\tH^4\) and \(\Lk\) is a finite-rank smoothing operator. Standard semigroup theory then yields a decomposition \(\tH^4 = \tH^4_S \bigoplus \tH^4_U\) into the stable and unstable subspaces respectively, with \(\tH^4_U\) being finite dimensional since \(\Lk\) is compact. This general approach of decomposing the linearized operator into a coercive part and a compact part was used in \cite{MRRS} and has seen numerous applications to blow-up problems (see \cite{CH1, EP1, CCSV_CompressVorticity} for a non-exhaustive list). 
While we are only able to prove finite co-dimension stability and not actual stability, it is likely that the profile is indeed stable (up to modulating the blow-up time).

We now begin the construction of the inner product \(\inner{\cdot}{\cdot}_{\tH^4}\) and the decomposition \(\L = \Lb + \Lk\). As we will be working with solutions \(M\) which are even in \(\theta\), define \(L^2_*\) to be the space of even, \(L^2\) functions on \([-L, L]\). Recall that \(\P_k(f)\) denotes the \(k^{th}\) Taylor polynomial of \(f\).
\begin{definition}\label{def:tH4}
Define the weighted space \(\tH^4 := \{f \in L^2_*: \|f\|_{\tH^4} < \infty\}\) where the norm \(\| \cdot \|_{\tH^4}\) is defined by the associated inner product
\begin{align}\label{eq:inner_product_1}
    \inner{f}{g}_{\tH^4} &= f(0)g(0) + f^{(2)}(0)g^{(2)}(0) + B\int_{0}^{L}(f-\mathbb{P}_2 f)(g - \mathbb{P}_2g)\varphi(\theta)\d\theta + \int_{0}^{L}f^{(4)}(\theta)g^{(4)}(\theta)\psi(\theta)\d\theta
\end{align}
where \(\varphi, \psi\) are the weights,
\begin{align}
\varphi(\theta) = \begin{cases}\label{eq:weights}
\theta^{-8} & \theta \leq \ell_1\\
\l_1^{K-8}\theta^{-K} & \ell_1 \leq \theta \leq  \ell_2 \\
\l_1^{K-8}\l_2^{-K} & \ell_2 \leq \theta \leq L
\end{cases}, \quad \psi(\theta) = \begin{cases}
1 & \theta \leq \ell_1 \\
\l_1^{K}\theta^{-K} & \ell_1 \leq \theta \leq \ell_2 \\
\l_1^{K}\l_2^{-K}(L-\l_2)^{-13/2}(L-\theta)^{13/2} & \ell_2 \leq \theta \leq L
\end{cases}
\end{align}
and \(K, \l_1, \l_2\) and \(B\) are positive constants to be chosen later with \(\l_1, L-\l_2 < 1\), \(K > 10\). 
\end{definition}
Note that the constant factors in the definition of the weights simply come from imposing that \(\varphi, \psi\) be Lipschitz continuous away from zero. Moreover, the exact exponent \(13/2\) is not important, one need only choose an exponent small enough that \(\tH^4\) embeds in \(L^{\infty}\) (Lemma \ref{lemma:infty_embed}) yet large enough that the induced norm is weaker than \(C^{1/2}([0, L])\) where \(M_*\) lies. 

We henceforth consider the linearized operator \(\L\) on its natural domain
\[
\mathcal{D}(\mathcal{L}) = \{f \in \tH^4 : \mathcal{L}(f) \in \tH^4\}.
\]
The next two sections are devoted to proving the following proposition about \(\mathcal{L}\).
\begin{theorem}\label{thm:coercivity}
    The operator \(\mathcal{L}\) can be decomposed \(\mathcal{L} = \Lb + \Lk\) where \(\Lb\) is maximally accretive on \(\tH^4\), and \(\Lk\) is a bounded, finite-rank operator. Here, \(\Lb\) is again considered on \(\mathcal{D}(\L)\). \label{prop:decomp}
\end{theorem}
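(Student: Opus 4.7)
The plan is to isolate the nonlocal content of $\L$, which enters only through the Biot--Savart solution operator $f \mapsto F$, and then show that after removing a finite-rank projection the remainder is coercive on $\tH^4$. Using variation of parameters on $F'' + 4F = f'\cos^2\theta$ with the boundary conditions $F(0) = F(L) = 0$, one obtains
\[
F(\theta) = \tfrac{1}{2}\sin(2\theta)\int_0^\theta \cos(2\phi)\,f'(\phi)\cos^2\phi\,\d\phi - \tfrac{1}{2}\cos(2\theta)\int_0^\theta \sin(2\phi)\,f'(\phi)\cos^2\phi\,\d\phi + c[f]\,\sin(2\theta),
\]
where $c[f]$ is a \emph{single} linear functional of $f$ pinned down by $F(L) = 0$ (well-defined since $\sin(2L) \neq 0$ when $L < \pi/2$). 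Thus the truly nonlocal part of $F$ is the rank-one piece $F_K := c[f]\sin(2\theta)$. I would define
\[
\Lk f := c[f]\bigl[\,2\sin(2\theta)\,M_*' - 2\cos(2\theta)\,M_* - 2M_*\sin(2\theta)\tan\theta\,\bigr],
\]
so that $\Lk$ has rank one. Boundedness on $\tH^4$ is immediate: $c[f]$ is continuous on $\tH^4$ since $\tH^4 \hookrightarrow L^\infty$ (Lemma \ref{lemma:infty_embed}) and the kernels $\sin(2\phi)\cos^2\phi$, $\cos(2\phi)\cos^2\phi$ are smooth on $[0,L]$, while the image is a fixed one-dimensional subspace of smooth functions. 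Setting $\Lb := \L - \Lk$, the remaining dependence on $f$ enters only through $f$, $f'$, and the pointwise integral transform $F_{\text{loc}} := F - F_K$, which vanishes to high order at $\theta = 0$ and is amenable to Hardy-type control.

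Next, for the accretivity of $\Lb$ I would compute $\inner{\Lb f}{f}_{\tH^4}$ against the four components of \eqref{eq:inner_product_1}. The bare term $f$ gives $\|f\|_{\tH^4}^2$. For the transport $2G_* f'$, at the pointwise evaluations at $\theta = 0$ one expands via Leibniz, uses $G_*(0) = 0$ and $G_*'(0) = 1$, and combines with the stretching contribution $-G_*' f$ so that the pointwise pieces at the origin either cancel or are controlled at leading order. In the weighted bulk integrals, integration by parts converts $2G_* f'$ into $-(G_*\varphi)'(f - \P_2 f)^2$ and $-(G_*\psi)'(f^{(4)})^2$ up to boundary contributions at $\theta = 0$ (absorbed by the $O(\theta^3)$ vanishing of $f - \P_2 f$ against the $\theta^{-8}$ weight) and at $\theta = L$ (absorbed by $\psi(L) = 0$). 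Since $G_*(\theta)\sim\theta$ near the origin while $\varphi \sim \theta^{-8}$, the derivative $(G_*\varphi)'$ is roughly $-7\varphi$ in the inner regime and $-(K-1)\varphi$ in the intermediate regime, yielding strong coercivity once $K$ is large; near $\theta = L$, the sign $G_*'(L) < 0$ established in \eqref{eq:G_vanishing} makes $-(G_*\psi)'$ positive in a neighborhood of the boundary. The remaining local terms coming from $F_{\text{loc}}$, $-2M_*F\tan\theta$, and $-2fG_*\tan\theta$ are lower order: the first uses the Hardy-type inequalities to be proven in \S4, exploiting that $F_{\text{loc}}$ gains two derivatives over $f$, while the $\tan\theta$ terms are absorbed by the $\theta^{-8}$ and $(L-\theta)^{13/2}$ weights after tuning $B$ and $\l_2$.

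Finally, to upgrade accretivity to maximal accretivity I would check that $\lambda + \Lb$ maps $\mathcal{D}(\L)$ onto $\tH^4$ for some $\lambda > 0$, running Lax--Milgram on the coercive bilinear form $\lambda\inner{f}{g}_{\tH^4} + \inner{\Lb f}{g}_{\tH^4}$ over a weak completion, and then bootstrapping regularity of the solution via the explicit integral formula for $F$ together with elliptic regularity for $F'' + 4F = f'\cos^2\theta$. The main obstacle I anticipate is the coercivity step: the weights $\varphi, \psi$ are borderline with respect to the scaling of the transport, so the unfavorable stretching $-G_*'f$ must be absorbed by $(G_*\varphi)'$ with strictly positive residue, and simultaneously the subprincipal $\tan\theta$ perturbations and the interior $F_{\text{loc}}$ contributions must be dominated. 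This forces a carefully coordinated choice of $K$, $B$, $\l_1$, $\l_2$ with the transition regions engineered so that the inner, intermediate, and boundary regimes each yield a positive residue after all cancellations. The detailed realization of this balance across both the low-norm and high-norm pieces of $\tH^4$ is precisely what \S5 and \S6 will carry out.
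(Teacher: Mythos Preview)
Your decomposition is too small: removing only the rank-one boundary correction $c[f]\sin(2\theta)$ does not make $\Lb$ coercive on $\tH^4$. The obstruction is the pointwise piece $f(0)g(0)$ in the inner product \eqref{eq:inner_product_1}. With your $\Lb = \L - \Lk$, at $\theta = 0$ the bare term $f$ and the stretching $-G_*'f$ cancel exactly (since $G_*'(0)=1$), while $G_*(0)=F_{\mathrm{loc}}(0)=F_{\mathrm{loc}}'(0)=0$, so $(\Lb f)(0)=0$ for every $f$. Take $f\equiv 1$: then $f'=0$ forces $F\equiv 0$, hence $\tM=f-\P_2 f=0$ and $f^{(4)}=0$, so the two weighted integrals in $\inner{\Lb f}{f}_{\tH^4}$ vanish, and the pointwise pieces give $0\cdot 1+(\Lb 1)''(0)\cdot 0=0$. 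Thus $\inner{\Lb(1)}{1}_{\tH^4}=0$ while $\|1\|_{\tH^4}^2=1$, and coercivity fails.

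The paper's fix is to enlarge $\Lk$ by a further finite-rank piece: replace $M$ by $\tM:=M-\P_2 M$ in all the bilinear terms of $\L$ (and correspondingly feed $\tM$, not $M$, into the Biot--Savart IVP), then add back $\P_2 M$ as a standalone term in $\Lb$. This is still finite-rank since $\P_2$ has rank two on even functions, and it forces $(\Lb M)(0)=M(0)$ and $(\Lb M)''(0)=M''(0)$ identically, which is exactly what produces the $M(0)^2+M''(0)^2$ contribution needed for coercivity (see the proof of Proposition~\ref{prop:coercive}). A secondary benefit is that $\tG_{\mathrm{loc}}$ built from $\tM$ vanishes to higher order at $\theta=0$, which is what makes the $\theta^{-8}$-weighted estimates in \S5.1 close. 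Your sketch of the transport integration-by-parts and the choice of weights is otherwise on the right track, and your Lax--Milgram route to maximality would work as an alternative to the paper's closed-range argument, but only after the decomposition is corrected.
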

We first set up the decomposition. Define
\[
\tM := M - \mathbb{P}_2M, \quad \tG = \tG_{loc}(\theta) - \tG_{nl}(\theta)
\]
where \(\tG_{loc}\) solves
\begin{equation}\label{eq:G_loc_eqn}
\tG_{loc}'' + 4\tG_{loc} = \tM'\cos^2\theta, \quad \tG_{loc}(0) = \tG_{loc}'(0) = 0 
\end{equation}
and \(\tG_{nl}\) is the following non-local term which serves to enforce that \(\tG(L) = 0\),
\[
\tG_{nl}(\theta) = \frac{\tG_{loc}(L)}{\cos(2L)}\cos(2\theta)\eta\left(\frac{L-\theta}{L-\l_2}\right).
\]
where \(\eta\) is a compactly supported, non-negative bump function such that \(\eta \equiv 1\) on \([0, 1/2]\), \(\eta\) vanishes on \([1, \infty)\) and \(|\eta'| \leq 3\).
The decomposition can then be taken as
\begin{align*}
\Lb(M) &= \tM + 2G_*\tM' + 2\tG M_*' - G_*'\tM - \tG' M_*  - 2G_*\tM\tan\theta - 2\tG M_*\tan\theta + \mathbb{P}_2 M \\
\mathcal{L}_K &= \mathcal{L} - \Lb.
\end{align*}
Note that by replacing \(M\) with \(\tM\) in \eqref{eq:G_loc_eqn} we have only introduced a finite-rank perturbation and similarly, replacing the boundary value problem with the initial value problem in \eqref{eq:G_loc_eqn} also only introduces a finite-rank perturbation. It is then clear that \(\Lk\) is finite-rank.
\begin{remark}
The compact perturbation now reduces \(G\) to a ``local'' part \(\tG_{loc}\) in the sense that \(\tG_{loc}(\theta)\) does not depend on the values of \(\tM(\phi)\) for \(\phi \geq \theta\).
\end{remark}
\noindent The main issue in proving Theorem \ref{thm:coercivity} is proving that \(\Lb\) is coercive.
\begin{proposition}
\(\Lb\) is coercive on \(\tH^4\). That is, there exists a constant \(c > 0\) such that for all \(M \in \tH^4\) \(\inner{\Lb(M)}{M}_{\tH^4} \geq c \|M\|_{\tH^4}^2\).
\end{proposition}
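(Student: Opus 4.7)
The plan is to establish coercivity separately at each of the four pieces of the inner product \eqref{eq:inner_product_1} and add them up. The first observation is that $\tM = M - \P_2 M$ vanishes to order at least three at the origin, and hence $\tG_{loc}$ (which solves $\tG_{loc}'' + 4\tG_{loc} = \tM'\cos^2\theta$ with zero initial data) vanishes to order at least four, while $\tG_{nl}$ vanishes identically near $\theta = 0$ thanks to the cutoff $\eta$. Consequently every term in the bracketed expression defining $\Lb M - \P_2 M$ vanishes to order at least three at $0$, so $\P_2(\Lb M) = \P_2 M$ and $(\Lb M)(0) = M(0)$, $(\Lb M)''(0) = M''(0)$. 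The two pointwise levels of $\inner{\Lb M}{M}_{\tH^4}$ therefore produce $M(0)^2 + M''(0)^2$ exactly, with no error to absorb.

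For the weighted $L^2$ piece I would compute
\begin{equation*}
B\int_0^L \widetilde{\Lb M}\,\tM\,\varphi\,d\theta = B\int_0^L \tM^2\varphi\,d\theta + 2B\int_0^L G_*\tM'\tM\,\varphi\,d\theta - B\int_0^L G_*'\tM^2\varphi\,d\theta + \mathcal{R},
\end{equation*}
where $\mathcal{R}$ collects the $\tG$ and $\tan\theta$ cross terms. Integrating by parts in the transport term using $G_*(0) = G_*(L) = 0$ converts the leading quadratic form into $B\int_0^L (1 - 2G_*' - G_*\varphi'/\varphi)\,\tM^2\varphi\,d\theta$. The weight in Definition \ref{def:tH4} is engineered precisely so this coefficient is positive in every regime: near $\theta = 0$, where $\varphi(\theta) = \theta^{-8}$, one has $-G_*\varphi'/\varphi = 8G_*/\theta \to 8$, beating $2G_*'(0) = 2$; in the middle zone $\varphi = \l_1^{K-8}\theta^{-K}$ gives $-G_*\varphi'/\varphi \approx K G_*/\theta$, coercive for $K$ large; and near $\theta = L$, where $\varphi$ is constant, one uses $G_*'(L) < 0$ (from \eqref{eq:G_vanishing}) so that $1 - 2G_*' \geq 1$. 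The remainder $\mathcal{R}$ is absorbed using the Hardy-type inequalities announced in \S 4: the variation-of-parameters formula for $\tG_{loc}$ combined with $\tM(0) = 0$ lets one integrate by parts to express $\tG_{loc}$ and $\tG_{loc}'$ as weighted integrals of $\tM$, producing bounds of the shape $\int |\tG_{loc}^{(k)}|^2 \omega \lesssim \int \tM^2 \varphi$ with a small prefactor; the nonlocal correction $\tG_{nl}$ is supported in $[\l_2, L]$, where $\varphi$ attains its minimum value $\l_1^{K-8}\l_2^{-K}$, which can be made arbitrarily small by taking $K$ large.

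The fourth-derivative piece follows the same pattern once one commutes four derivatives through $\Lb$. The leading contribution is $\int_0^L (M^{(4)})^2 \psi\,d\theta + 2\int_0^L G_* M^{(5)} M^{(4)} \psi\,d\theta$, and integration by parts, using that $\psi$ vanishes to order $13/2$ at $L$ and that $\tM$ vanishes to order three at $0$, converts it to $\int_0^L (1 - 2G_*' - G_*\psi'/\psi)(M^{(4)})^2 \psi\,d\theta$, which is positive by the same region-by-region argument. The numerous commutators generated by differentiating $G_*\tM'$, $G_*'\tM$, the $\tan\theta$ terms, and the $\tG$ terms four times contain at most three derivatives on $\tM$, and are absorbed by Hardy interpolation using the lower-norm contributions and the pointwise terms $M(0)^2 + M''(0)^2$, together with the large multiplier $B$.

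The main obstacle will be the bookkeeping at this top level: verifying that every commutator and every $\tG$-contribution is genuinely lower-order and can be absorbed, and fixing the parameters in the correct order. Concretely, one first takes $K$ large enough to dominate the stretching in the middle region for both $\varphi$ and $\psi$; then takes $B$ large enough to absorb the $\tG$ remainders across all levels; then finally takes $\l_1$ and $L - \l_2$ small enough to harmonize the piecewise weights and to make the $\tG_{nl}$ contribution negligible. Combining the four coercivity estimates then yields $\inner{\Lb M}{M}_{\tH^4} \geq c\|M\|_{\tH^4}^2$ for a uniform constant $c > 0$, as required.
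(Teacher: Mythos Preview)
Your overall architecture matches the paper's proof: split into the four inner-product pieces, do a region-by-region analysis with the tuned weights, and absorb cross terms. But two steps in your sketch would actually fail as written.

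First, at the fourth-derivative level your leading coefficient is wrong. When you differentiate $2G_*\tM'$ four times, Leibniz produces $2G_*\tM^{(5)} + 8G_*'\tM^{(4)} + (\text{lower})$; the $8G_*'\tM^{(4)}$ term carries four derivatives on $\tM$ and is \emph{not} a lower-order commutator. Combined with $-G_*'\tM^{(4)}$ from the stretching term and the $-G_*'$ from integrating $2G_*\tM^{(5)}\tM^{(4)}$ by parts, the genuine top-order coefficient is $1 + 6G_*' - G_*\psi'/\psi - 2G_*\tan\theta$, not $1 - 2G_*' - G_*\psi'/\psi$. This matters: on $[0,\l_1]$ the weight $\psi$ is constant, so $\psi'/\psi = 0$, and your formula gives $1 - 2G_*'(0) = -1$, which is not coercive. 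The correct formula gives $1 + 6 = 7$, and the paper then subtracts $4$ coming from the top-order nonlocal term $M_*\tG^{(5)}$ (since $M_*\leq 4$ and $\int\tG^{(5)2}\leq (1+C\l_1^2)\int\tM^{(4)2}$) to land at $3 - C\l_1^2$. Your assertion that all commutators carry at most three derivatives on $\tM$ is therefore false, and the argument hinges on this explicit $7-4$ balance.

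Second, even at the low-norm level the nonlocal term $M_*\tG'$ does not come with a small prefactor. The sharp bound is $\int_0^{\l_1}\tG'^2\theta^{-8}\leq (1+C\l_1^2)\int_0^{\l_1}\tM^2\theta^{-8}$, so this term contributes exactly $4$ against the local $7$; it cannot be treated as a perturbation. Relatedly, your parameter ordering (fix $K$ large, then take $L-\l_2$ small) breaks the bulk estimate: the bulk coercivity constant is $Kc(L-\l_2) - C(L-\l_2)^{-1/2}$, which goes to $-\infty$ if $K$ is held fixed while $L-\l_2\to 0$. The paper instead couples the parameters via $K = \l_1^{-4}$, $L-\l_2 = \l_1^2$, and then takes $\l_1$ small; only after the weighted $L^2$ coercivity constant $c_1$ is secured does one choose $B$ large to absorb the low-order residue from the $\psi$-weighted estimate.
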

For ease of notation going forward, we define the following splitting of the operator \(\Lb\) into its local and nonlocal parts,
\[
\Lb_{L}(M) := \tM + 2G_*\tM' - G_*'\tM - 2\tM G_*\tan\theta, \quad \Lb_{NL} := \Lb - \Lb_L.
\]
The coercivity of \(\Lb\) will come entirely from the local piece \(\Lb_L\). The primary technicality in proving coercivity arises in showing the nonlocal contributions from \(\Lb_{NL}\) are small so as not to defeat the positivity of \(\Lb_{L}\). Proving coercivity requires estimates in both the high and low portion of the \(\tH^4\) norm in each of the three regions \([0, \l_1], [\l_1, \l_2], [\l_2, L]\) referred to as the local, bulk and endpoint regions from here on. Before proceeding with the coercivity estimates, we conclude this section with some useful Hardy type inequalities.

\subsection{Hardy-Type Inequalities}
Here, we prove some necessary Hardy-type inequalities. As a consequence, we obtain that the norm \(\|\cdot\|_{\tH^4}\) is equivalent to a standard Sobolev norm \(\|\cdot\|_{\H^4}\) with a weight at \(\theta = L\). In addition, we also show there is a continuous embedding \(\H^4 \hookrightarrow L^\infty\).

\begin{lemma}\label{lemma:equiv_norm}
Consider the norm \(\|\cdot\|_{\H^4}\) induced by the inner product
\[
\inner{f}{g}_{\H^4} = \int_{0}^{L}f(\theta)g(\theta)\d\theta + \int_{0}^{L}f^{(4)}(\theta)g^{(4)}(\theta)(L-\theta)^{13/2}\d\theta.
\]
Then, there exists a constant \(C > 0\) such that for all \(f \in \tH^4 \cap \H^4\)
\[
C^{-1}\|f\|_{\tH^4} \leq \|f\|_{\H^4} \leq C\|f\|_{\tH^4}.
\]
\end{lemma}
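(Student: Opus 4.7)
The plan is to compare the two inner products term-by-term. The crucial algebraic observation is that $f \in L^2_*$ is even, so $f'(0) = f'''(0) = 0$ and therefore $\P_2 f = \P_3 f$: the remainder $h := f - \P_2 f$ vanishes to order exactly four at the origin and satisfies $h^{(4)} = f^{(4)}$. First I would dispense with the high-order terms by a direct pointwise comparison of the weights. On $[\ell_2, L]$, $\psi$ and $(L-\theta)^{13/2}$ agree up to the positive constant $\ell_1^K \ell_2^{-K}(L-\ell_2)^{-13/2}$ by design. On $[0,\ell_2]$, $\psi$ is trapped between $\ell_1^K \ell_2^{-K}$ and $1$, while $(L-\theta)^{13/2}$ is trapped between $(L-\ell_2)^{13/2}$ and $L^{13/2}$, so both weights are comparable and the top-order contributions to the two norms are equivalent.

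For the easy direction $\|f\|_{\H^4} \lesssim \|f\|_{\tH^4}$, write $f = \P_2 f + h$ so that $\int_0^L f^2 \leq 2\int_0^L(\P_2 f)^2 + 2\int_0^L h^2 \leq C(f(0)^2 + f''(0)^2) + C\int_0^L h^2\varphi$, where in the last step I used that $\varphi$ attains a positive minimum on $[0, L]$, namely $\ell_1^{K-8}\ell_2^{-K}$. Combining with Step~1 gives one half of the equivalence.

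The harder direction $\|f\|_{\tH^4} \lesssim \|f\|_{\H^4}$ reduces to controlling the three nonstandard terms of the $\tH^4$ inner product. The pointwise values $f(0)$ and $f''(0)$ are handled by noting that $(L-\theta)^{13/2}$ is bounded below on $[0, L/2]$, so a finite $\H^4$-norm controls the unweighted $H^4([0, L/2])$ norm, and the one-dimensional Sobolev embedding $H^4 \hookrightarrow C^3$ then yields $|f(0)|^2 + |f''(0)|^2 \leq C\|f\|_{\H^4}^2$. For the weighted integral $\int_0^L h^2 \varphi$ I split at $\ell_1$: on $[\ell_1, L]$, $\varphi$ is bounded by $\ell_1^{-8}$, so this contribution is dominated by $\|f\|_{L^2}^2 + f(0)^2 + f''(0)^2$. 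On $[0,\ell_1]$, $\varphi(\theta) = \theta^{-8}$ and I invoke an iterated Hardy inequality. The classical estimate
\[
\int_0^{\ell_1} u^2\theta^{-2\lambda}\, d\theta \leq \frac{4}{(2\lambda - 1)^2} \int_0^{\ell_1}(u')^2 \theta^{-2(\lambda-1)}\, d\theta,
\]
in which the boundary term at $\theta=\ell_1$ arising from integration by parts has the favorable sign and is dropped, is applied successively with $\lambda = 4, 3, 2, 1$ to $h$, $h'$, $h''$, $h'''$. Each step uses the vanishing of the appropriate derivative of $h$ at $0$ to annihilate the boundary term there, yielding $\int_0^{\ell_1} h^2 \theta^{-8}\, d\theta \leq C\int_0^{\ell_1}(f^{(4)})^2\, d\theta$, which is controlled by $\|f\|_{\H^4}^2$ since the weight $(L-\theta)^{13/2}$ is bounded below on $[0,\ell_1]$.

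The main obstacle will be the iterated Hardy inequality and in particular the bookkeeping at $\theta = 0$: it is precisely the evenness of $f$ --- so that $h$ vanishes to order \emph{four} rather than only three --- that makes all four iterations close cleanly. This is why the $L^2_*$ hypothesis enters essentially in Definition~\ref{def:tH4}, and why the weight $\theta^{-8}$ paired with the $f^{(4)}$ term is the natural choice in the low-order block of the $\tH^4$ inner product.
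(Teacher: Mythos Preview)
Your proposal is correct and follows essentially the same approach as the paper: both argue by pointwise comparison of the weights $\psi$ and $(L-\theta)^{13/2}$ for the top-order part, and by an iterated Hardy inequality (using that evenness forces $h = f - \P_2 f$ to vanish to order four) to control $\int_0^{\ell_1} h^2\theta^{-8}$ by $\int_0^{\ell_1} (f^{(4)})^2$. Your use of the Sobolev embedding $H^4([0,L/2]) \hookrightarrow C^3$ to recover $f(0), f''(0)$ is a cosmetic variant of the paper's direct fundamental-theorem-of-calculus argument, and your explicit bookkeeping of the weight bounds on each subinterval is if anything cleaner than the paper's somewhat terse presentation.
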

\begin{proof}
The lemma follows from elementary calculus and Hardy's inequality. Indeed, by the fundamental theorem of calculus,
\[
(f - \P_2 f)(\theta) = \int_{0}^{\theta}\p_\theta (f - \P_2 f)\d\phi
\]
and hence
\[
\int_{0}^{\l_1}\frac{(f - \P_2 f)^2}{\theta^8}\d\theta = \int_{0}^{\l_1}\left(\theta^{-4}\int_{0}^{\theta}\p_\theta (f - \P_2 f)(\phi)\d\phi\right)^2\d\theta \leq C\int_{0}^{\l_1}(\p_\theta(f - \P_2 f)(\theta))^2\theta^{-6}\d\theta
\]
where we have applied the standard Hardy's inequality in the final step. Iterating this inequality, we obtain
\[
\int_{0}^{\l_1}\frac{(f - \P_2 f)^2}{\theta^8}\d\theta \leq C \int_{0}^{\l_1}(\p_\theta^{4}(f - \P_2 f)(\theta))^2\d\theta. 
\]
Moreover, by the fundamental theorem of calculus
\[
f(0)^2 + f''(0)^2 \leq C(\|f'''\|_{L^1(0, \l_1)}^2 + \|f\|_{L^1(0,\l_1)}^2) \leq C\|f\|_{\H^4}^2.
\]
Since the associated weight \(\varphi\) defining \(\tH^4\) satisfies \(\varphi(\theta) \leq C(L-\theta)^{13/2}\) on \([\l_1, L]\), it then follows that there exists \(C > 0\) such that \(\|f\|_{\tH^4} \leq C\|f\|_{\H^4}\). For the reverse inequality, we note that
\[
\int_{0}^{\l_1}f(\theta)^2\d\theta = \int_{0}^{\l_1}\left(f(0) + \int_{0}^{\theta}f'(\phi)\d\phi\d\theta\right)^2 \leq Cf(0)^2 + C\int_{0}^{\l_1}f'(\theta)^2\d\theta.
\]
Iterating this inequality, it follows that
\[
\int_{0}^{\l_1}f(\theta)^2\d\theta \leq Cf(0)^2 + Cf^{(2)}(0)^2 + \int_{0}^{\l_1}f^{(4)}(\theta)^2\d\theta
\]
and we then have \(\|f\|_{\H^4} \leq C\|f\|_{\tH^4}\) for some \(C > 0\).
\end{proof}

\begin{lemma}\label{lemma:weighted_interpolation}
    For all \(k \geq 1, p \geq 0\), there exists \(C > 0\) such that for all \(f \in \tH^4\), 
    \begin{equation}
        \int_{\l_2}^{L}f^{(k)}(\theta)^2(L-\theta)^p \d\theta \leq C\int_{\l_2}^{L}f(\theta)^2\d\theta  + C \int_{\l_2}^{L} f^{(k+1)}(\theta)^2(L-\theta)^{p+2}\d\theta.
    \end{equation}
\end{lemma}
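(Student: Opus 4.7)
The plan is an integration by parts on $[\l_2, L-\epsilon]$, sending $\epsilon \to 0^+$ at the end by monotone convergence, exploiting the identity $(L-\theta)^p = -\frac{1}{p+1}\p_\theta(L-\theta)^{p+1}$. For smooth $f$, this gives
\begin{align*}
\int_{\l_2}^{L-\epsilon} f^{(k)}(\theta)^2(L-\theta)^p \d\theta
&= \frac{f^{(k)}(\l_2)^2(L-\l_2)^{p+1} - f^{(k)}(L-\epsilon)^2\epsilon^{p+1}}{p+1} \\
&\quad + \frac{2}{p+1}\int_{\l_2}^{L-\epsilon} f^{(k)}(\theta)\, f^{(k+1)}(\theta)(L-\theta)^{p+1} \d\theta.
\end{align*}
The interior boundary term at $L-\epsilon$ is non-positive and may be discarded. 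The cross term is estimated by splitting the weight as $(L-\theta)^{p+1} = (L-\theta)^{p/2}\cdot(L-\theta)^{(p+2)/2}$ and applying Cauchy--Schwarz followed by Young's inequality, which produces a small multiple of $\int f^{(k)}(\theta)^2(L-\theta)^p \d\theta$ that I absorb into the left-hand side, together with $C\int f^{(k+1)}(\theta)^2(L-\theta)^{p+2}\d\theta$. Sending $\epsilon \to 0$ yields
\[
\int_{\l_2}^L f^{(k)}(\theta)^2(L-\theta)^p \d\theta \leq C\Bigl(f^{(k)}(\l_2)^2 + \int_{\l_2}^L f^{(k+1)}(\theta)^2(L-\theta)^{p+2} \d\theta\Bigr).
\]

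What remains is to bound the boundary value $f^{(k)}(\l_2)^2$ by the right-hand side of the lemma. The key observation is that on the sub-interval $I = [\l_2, (\l_2+L)/2]$, the weight $(L-\theta)^{p+2}$ is bounded below by a positive constant depending only on $p$ and $L-\l_2$; hence the weighted integral on the right controls $\|f^{(k+1)}\|_{L^2(I)}^2$ up to a constant. A standard one-dimensional Sobolev embedding on the bounded interval $I$, which follows from iterated use of the fundamental theorem of calculus together with Cauchy--Schwarz, yields $|f^{(k)}(\l_2)|^2 \leq C\bigl(\|f\|_{L^2(I)}^2 + \|f^{(k+1)}\|_{L^2(I)}^2\bigr)$. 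Combining this with the displayed bound gives the lemma.

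The only mild obstacle is this boundary term at $\l_2$: unlike the Hardy estimates near $\theta = 0$ in Lemma \ref{lemma:equiv_norm}, there is no natural vanishing of the weight to exploit, so a separate Sobolev interpolation on a fixed sub-interval is needed. The estimate for general $f \in \tH^4$ follows by density: mollifying $f$ on $[\l_2, L]$ produces smooth approximants for which the above argument applies verbatim, and passing to the limit via Fatou's lemma on the left-hand side and continuity of the right-hand side in the relevant weighted $H^{k+1}$ topology recovers the full statement.
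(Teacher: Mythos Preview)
Your proof is correct. The paper omits its own argument entirely, saying only that it ``is similar to that of Lemma~\ref{lemma:equiv_norm} and follows from elementary calculus''; your integration-by-parts with absorption is precisely the standard mechanism behind the weighted Hardy-type inequalities that lemma invokes, and the handling of the boundary value $f^{(k)}(\l_2)$ via unweighted Sobolev interpolation on $[\l_2,(\l_2+L)/2]$ is exactly the sort of elementary step the paper has in mind.
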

\noindent We omit the proof as it is similar to that of Lemma \ref{lemma:equiv_norm} and follows from elementary calculus. As a consequence, we obtain the \(L^\infty\) embedding.
\begin{lemma}\label{lemma:infty_embed}
    For all \(0 \leq j \leq 3\), there exists a constant \(C > 0\) such that for all \(f \in \H^4\),
    \[
    \|(L-\theta)^jf^{(j)}\|_{L^{\infty}} \leq C \|f\|_{\H^4}.
    \]
    In particular, there is a continuous embedding \(\H^4 \hookrightarrow L^\infty\).
\end{lemma}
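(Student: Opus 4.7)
The plan is to split the interval $[0, L]$ at $\l_2$ and handle the interior and boundary regions by different methods. On $[0, \l_2]$ the weight $(L-\theta)^{13/2}$ is bounded below by a positive constant, so the $\H^4$ norm controls the standard (unweighted) $H^4([0, \l_2])$ norm, and the classical one-dimensional Sobolev embedding $H^4 \hookrightarrow C^3$ yields $\|f^{(j)}\|_{L^\infty([0, \l_2])} \leq C \|f\|_{\H^4}$ for $0 \leq j \leq 3$. In particular $(L-\theta)^j f^{(j)}$ is uniformly bounded on this subinterval, and in the next step the boundary value $(L-\l_2)^j f^{(j)}(\l_2)$ is absorbed into $C\|f\|_{\H^4}$.

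On the boundary region $[\l_2, L]$, I would first iterate Lemma \ref{lemma:weighted_interpolation} to propagate integrability from $f^{(4)}$ inward through the lower derivatives. Applying the lemma with $(k, p) = (3, 9/2)$, then $(2, 5/2)$, then $(1, 1/2)$ in succession, each step feeds into the next while absorbing $\int_{\l_2}^L f^2$ into $\|f\|_{\H^4}^2$, and produces the chain of weighted bounds
\[
\int_{\l_2}^L (f^{(k)}(\theta))^2 (L-\theta)^{(4k-3)/2} \d\theta \leq C \|f\|_{\H^4}^2, \qquad k = 1, 2, 3, 4.
\]
The exponent $(4k-3)/2$ is tailored so that the residual weight appearing in the next step below is precisely the borderline-integrable $(L-\phi)^{-1/2}$.

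To pass from these weighted $L^2$ bounds to pointwise control on $(L-\theta)^j f^{(j)}$ for $1 \leq j \leq 3$, I would integrate the identity
\[
\frac{d}{d\phi}\bigl[(L-\phi)^j f^{(j)}(\phi)\bigr] = -j(L-\phi)^{j-1} f^{(j)}(\phi) + (L-\phi)^j f^{(j+1)}(\phi)
\]
from $\l_2$ to $\theta$. The boundary term at $\phi = \l_2$ is controlled by the interior estimate. Each integrand on the right is then estimated by Cauchy--Schwarz: write
\[
(L-\phi)^{j-1}|f^{(j)}| = (L-\phi)^{-1/4} \cdot (L-\phi)^{(4j-3)/4}|f^{(j)}|, \quad (L-\phi)^j|f^{(j+1)}| = (L-\phi)^{-1/4} \cdot (L-\phi)^{(4j+1)/4}|f^{(j+1)}|,
\]
so that by the chain above, the second factor of each product has $L^2([\l_2, L])$ norm at most $C\|f\|_{\H^4}$, while $(L-\phi)^{-1/4}$ is square-integrable on $[\l_2, L]$. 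The case $j = 0$ is handled identically, starting from $f(\theta) = f(\l_2) + \int_{\l_2}^\theta f'(\phi)\d\phi$ and using the $k = 1$ weighted bound; this yields the continuous embedding $\H^4 \hookrightarrow L^\infty$.

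I do not foresee a substantive obstacle; the choice of the exponent $13/2$ in the definition of $\psi$ at the boundary was made precisely so that the Hardy iteration terminates with residuals of the form $(L-\phi)^{-1/2}$, which are (barely) integrable. The only thing to check carefully is that the exponents $(4k-3)/2$ match up at each stage of the iteration and in the final Cauchy--Schwarz step, which is a routine arithmetic verification.
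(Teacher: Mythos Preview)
Your proposal is correct and takes essentially the same approach as the paper: iterate Lemma~\ref{lemma:weighted_interpolation} so that the intermediate derivatives satisfy $\int (f^{(k)})^2(L-\theta)^{(4k-3)/2} \lesssim \|f\|_{\H^4}^2$, then apply the fundamental theorem of calculus together with Cauchy--Schwarz against the just-integrable residual $(L-\phi)^{-1/2}$. The only cosmetic differences are that the paper works directly on $[0,L]$ rather than splitting at $\l_2$, and it writes out only the case $j=0$ (asserting the rest follow similarly), whereas you handle all $j$ explicitly by differentiating the product $(L-\theta)^j f^{(j)}$.
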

\begin{proof}
We prove the case \(j = 0\) as the others follow similarly. By the fundamental theorem of calculus and H\"older's inequality,
    \[
    |f(\theta)| \leq \int_{0}^{L}|f(\phi)|\d\phi + \int_{0}^{L}|f'(\phi)|\d\phi \leq \|f\|_{L^2} + C\left(\int_{0}^{L}f'(\phi)^2(L-\phi)^{1/2}\d\phi\right)^{1/2}.
    \]
    Repeatedly applying Lemma \ref{lemma:weighted_interpolation} gives
    \[
    |f(\theta)| \leq \|f\|_{L^2} + C\left(\int_{0}^{L}f^{(4)}(\phi)^2(L-\phi)^{13/2}\d\phi\right)^{1/2} \leq C\|f\|_{\H^4}
    \]
    as desired.
\end{proof}
\noindent Finally, we prove that \(\H^4\) (and thus \(\tH^4\)) is an algebra.
\begin{lemma}\label{lemma:algebra} If \(f, g \in \H^4\) then
\[
    \|fg\|_{\H^4} \lesssim \|f\|_{\H^4}\|g\|_{\H^4}.
\]
\end{lemma}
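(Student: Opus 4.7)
The plan is to prove the algebra property by reducing both components of the $\H^4$ norm to products of a weighted $L^\infty$ factor and a weighted $L^2$ factor, both of which are already controlled by $\|f\|_{\H^4}$ and $\|g\|_{\H^4}$. The $L^2$ piece of the norm is immediate: $\|fg\|_{L^2} \leq \|f\|_{L^\infty}\|g\|_{L^2} \lesssim \|f\|_{\H^4}\|g\|_{\H^4}$ by Lemma \ref{lemma:infty_embed}. So the entire content lies in estimating the weighted top-order term $\int_0^L ((fg)^{(4)})^2(L-\theta)^{13/2}\d\theta$.

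I would first expand using Leibniz: $(fg)^{(4)} = \sum_{k=0}^{4}\binom{4}{k}f^{(k)}g^{(4-k)}$, and estimate each of the five terms separately. The main tool is a two-sided reading of the weighted bounds. On the one hand, iterating Lemma \ref{lemma:weighted_interpolation} starting from the top order and working down yields
\begin{equation*}
\int_0^L (f^{(j)})^2 (L-\theta)^{13/2 - 2(4-j)}\d\theta \lesssim \|f\|_{\H^4}^2 \qquad (j=1,2,3,4),
\end{equation*}
that is, weights $1/2,\,5/2,\,9/2,\,13/2$ for $j=1,2,3,4$. On the other hand, Lemma \ref{lemma:infty_embed} gives the pointwise bound $\|(L-\theta)^j f^{(j)}\|_{L^\infty}\lesssim \|f\|_{\H^4}$ for $0\leq j\leq 3$.

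For each mixed derivative product $f^{(k)}g^{(4-k)}$ I would split the weight $(L-\theta)^{13/2}$ between the two factors so that the factor placed in $L^\infty$ gets exactly the power needed by Lemma \ref{lemma:infty_embed} while the factor placed in $L^2$ gets exactly the power appearing in the iterated weighted inequality above. Concretely: for $k=0$ (and symmetrically $k=4$) bound $f$ in $L^\infty$ via $\H^4\hookrightarrow L^\infty$ and keep the full weight on $g^{(4)}$; for $k=1$ (symmetrically $k=3$) write
\begin{equation*}
(f'g^{(3)})^2(L-\theta)^{13/2} = (f')^2(L-\theta)^{1/2}\cdot\bigl[(L-\theta)^3 g^{(3)}\bigr]^2,
\end{equation*}
place $g^{(3)}$ in weighted $L^\infty$ and integrate $(f')^2(L-\theta)^{1/2}$; for the balanced case $k=2$ write
\begin{equation*}
(f''g'')^2(L-\theta)^{13/2} = \bigl[(L-\theta)^2 f''\bigr]^2 \cdot (g'')^2(L-\theta)^{5/2},
\end{equation*}
place $f''$ in weighted $L^\infty$ and integrate $(g'')^2(L-\theta)^{5/2}$. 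In each case both factors are bounded by $\|f\|_{\H^4}\|g\|_{\H^4}$, so summing over $k$ closes the estimate.

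The only nontrivial step is checking that the book-keeping of exponents works out, i.e.\ that the choice of exponent $13/2$ in the definition of the weight is compatible with $3$ available levels of $L^\infty$ control from Lemma \ref{lemma:infty_embed}. This amounts to verifying that in each split $13/2 = (\text{pointwise power}) \cdot 2 + (\text{integrated power})$ the integrated power belongs to $\{1/2,\,5/2,\,9/2,\,13/2\}$, which it does for each $k\in\{0,1,2,3,4\}$. No boundary regularity issue arises since all the weights vanish at $\theta=L$ and integrability at $\theta=0$ is free (the weight is smooth there).
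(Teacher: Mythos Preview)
Your proof is correct and follows essentially the same approach as the paper: expand via Leibniz, then for each mixed term $f^{(k)}g^{(4-k)}$ split the weight $(L-\theta)^{13/2}$ so that one factor is controlled by the pointwise bound of Lemma~\ref{lemma:infty_embed} and the other by the weighted $L^2$ bound obtained from iterating Lemma~\ref{lemma:weighted_interpolation}. The only cosmetic difference is which factor you place in $L^\infty$ in the cases $k=1,3$ (you put the third derivative in $L^\infty$, the paper puts the first), but the book-keeping is identical.
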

\begin{proof}
    We first see that
    \[
    \int_{0}^{L}f(\theta)^2g(\theta)^2\d\theta \leq \|g\|_{L^\infty}^2\|f\|_{\H^4}^2.
    \]
    Moreover, for \(0 \leq j \leq 3\) we have
    \begin{align*}
    \int_{0}^{L} f^{(j)}(\theta)^2g^{(4-j)}(\theta)^2(L-\theta)^{13/2}\d\theta &\leq C\|f^{(j)}(\theta)(L-\theta)^{j}\|_{L^\infty}^2\int_{0}^{L}g^{(4-j)}(\theta)^2(L-\theta)^{13/2 - 2j} \\
    &\leq C\|f\|_{\H^4}^2\|g\|_{\H^4}^2
    \end{align*}
    where we have used Lemmas \ref{lemma:infty_embed} and \ref{lemma:weighted_interpolation} in the final inequality. The lemma now follows from Leibniz rule and reversing the roles of \(f, g\). 
\end{proof}
\subsection{Local Well-Posedness of the 1D System}
This section is devoted to proving the following local well-posedness of equation \eqref{eq:ipm_1d} in \(\tH^4\).
\begin{proposition}\label{prop:LWP_X}
    Let \(M_0 \in \tH^4\). Then, there exists \(T = T(\|M_0\|_{\tH^4}) > 0\)  such that there is a unique solution \((M, G) \in C([0, T], \tH^4)\) to the system \eqref{eq:ipm_1d}.
\end{proposition}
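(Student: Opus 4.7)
The plan is to construct solutions via a Picard iteration in $\tH^4$, relying on three ingredients from earlier in this section: the algebra property (Lemma \ref{lemma:algebra}), the embedding $\tH^4 \hookrightarrow L^\infty$ with its weighted upgrade (Lemma \ref{lemma:infty_embed}), and elliptic regularity for the Biot--Savart map $M \mapsto G$. Since $L < \pi/2$, the operator $\p_\theta^2 + 4$ is invertible under Dirichlet conditions on $[0, L]$, so $G$ and its first two derivatives are controlled in terms of $M \in \tH^4$ via a variation-of-parameters formula; in particular $G(0) = G(L) = 0$ so that $G$ vanishes linearly at both endpoints, a fact that will rescue the top-order estimates.

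The iteration is set up by defining $M^{(0)} \equiv M_0$ and letting $M^{(n+1)}$ solve the linear transport equation $\p_s M^{(n+1)} + 2 G^{(n)} \p_\theta M^{(n+1)} + M^{(n+1)} = (G^{(n)})' M^{(n+1)} + 2 M^{(n+1)} G^{(n)} \tan\theta$ with initial data $M_0$, where $G^{(n)}$ is the Biot--Savart reconstruction from $M^{(n)}$. Since the drift $2G^{(n)}$ is at least $C^1$ and vanishes at the boundary, characteristics are globally defined on $[0, L]$ and the iterate exists classically. The central task is the $\tH^4$ energy estimate on $M^{(n+1)}$: after differentiating four times via Leibniz and pairing with $(M^{(n+1)})^{(4)} \psi$, the dangerous term is $\int_0^L 2G^{(n)}(M^{(n+1)})^{(5)}(M^{(n+1)})^{(4)} \psi\,d\theta$. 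Integration by parts converts this into $-\int ((M^{(n+1)})^{(4)})^2 (G^{(n)})' \psi\,d\theta - \int ((M^{(n+1)})^{(4)})^2 G^{(n)} \psi'\,d\theta$, plus boundary contributions which vanish by $\psi(L)=0$ and $G^{(n)}(0)=0$. The commutators produce lower-order products of derivatives of $G^{(n)}$ and $M^{(n+1)}$ that close by Lemmas \ref{lemma:infty_embed}, \ref{lemma:weighted_interpolation}, and \ref{lemma:algebra}.

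The main obstacle is the second integral above near $\theta = L$, where $\psi' \sim -(L-\theta)^{11/2}$ is more singular than $\psi \sim (L-\theta)^{13/2}$. However, since $G^{(n)}$ vanishes linearly at $L$, we have $G^{(n)} \psi' = O((L-\theta)^{13/2})$, comparable to $\psi$, and the term can be absorbed into the energy. Combining everything gives
\begin{equation*}
\frac{d}{ds}\|M^{(n+1)}\|_{\tH^4}^2 \leq C\bigl(1 + \|M^{(n)}\|_{\tH^4}\bigr)\|M^{(n+1)}\|_{\tH^4}^2,
\end{equation*}
and an inductive bootstrap produces uniform bounds $\|M^{(n)}\|_{\tH^4} \leq 2\|M_0\|_{\tH^4}$ on a common interval $[0, T]$ with $T = T(\|M_0\|_{\tH^4}) > 0$. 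The same kind of weighted energy estimate applied to the difference $D_n := M^{(n+1)} - M^{(n)}$, carried out at one lower order of regularity to avoid any derivative loss on $G^{(n)} - G^{(n-1)}$, shows $\{M^{(n)}\}$ is Cauchy in $C([0, T], \tH^4)$ after further shrinking $T$. The limit $M$ is the desired solution, and uniqueness follows from the same Grönwall argument applied to the difference of any two solutions with identical initial data.
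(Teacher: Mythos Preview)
Your approach is correct and follows essentially the same strategy as the paper: energy estimates in the weighted space, integration by parts on the top-order transport term using $G(0)=G(L)=0$ to kill both the boundary contributions and the singular factor from $\psi'$, followed by a standard iteration. The paper makes one simplification you do not: it first invokes Lemma~\ref{lemma:equiv_norm} to pass to the equivalent norm $\H^4$, which carries only the single weight $(L-\theta)^{13/2}$ and no singular weight or Taylor subtraction at $\theta=0$; the low-norm part is then just unweighted $L^2$, and the paper closes via Lemma~\ref{lemma:G_bound_lwp} and the weighted interpolation Lemma~\ref{lemma:weighted_interpolation} to reach $\tfrac{d}{dt}\|M\|_{\H^4}^2 \lesssim \|M\|_{\H^4}^3$. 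Your direct route in $\tH^4$ works as well but would require you to say something about the $\varphi$-weighted low norm near the origin, which you skip. One small slip: estimating the difference $D_n$ at one order lower yields Cauchy in a \emph{weaker} norm, not in $\tH^4$; convergence in $\tH^4$ then follows by interpolating against the uniform $\tH^4$ bound (or by weak compactness plus uniqueness), a routine step you should make explicit.
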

We first note the following lemma bounding \((\p_\theta^2 + 4)^{-1}\).
\begin{lemma}\label{lemma:G_bound_lwp}
    Suppose \(G\) solves
    \[
    G'' + 4G = M'(\theta)\cos^2\theta, \quad G(0) = G(L) = 0.
    \]
    where we recall that \(L < \pi/2\) is fixed. Then, for all \(1 \leq j \leq 5\) and \(p > 0\),
    \[
    \int_{0}^{L}G^{(j)}(\theta)^2(L-\theta)^p\d\theta \lesssim \|M(\theta)\|_{L^1}^2 + \int_{0}^{L}M^{(j-1)}(\theta)^2(L-\theta)^{p}\d\theta, \quad \int_{0}^{L}G(\theta)^2(L-\theta)^p\d\theta \lesssim \|M(\theta)\|_{L^1}^2.
    \]
\end{lemma}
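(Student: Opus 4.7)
The plan is to write $G$ explicitly via variation of parameters and then bootstrap higher derivative estimates from the equation itself. Since $L < \pi/2$ guarantees $\sin(2L) \neq 0$, the BVP $G'' + 4G = M'\cos^2\theta$ with $G(0) = G(L) = 0$ has the closed-form solution
\[
G(\theta) = A\sin(2\theta) + \tfrac{1}{2}\int_0^\theta \sin(2(\theta - s))M'(s)\cos^2 s \d s,
\]
where $A$ is chosen so that $G(L) = 0$. Integrating by parts in $s$ in this integral and in the integral defining $A$ trades the derivative off $M$; using $\sin(0) = 0$ and $\sin(2L) \neq 0$, the boundary contributions combine cleanly and one arrives at a representation of $G(\theta)$ as a uniformly bounded kernel integrated against $M$. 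This immediately gives $\|G\|_{L^\infty} \lesssim \|M\|_{L^1}$, and hence the second inequality after trivial integration of $(L-\theta)^p$ over $[0, L]$.

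For the $j = 1$ case I would differentiate the variation-of-parameters formula in $\theta$ and then integrate by parts in $s$. The boundary evaluation at $s = \theta$ produces an isolated pointwise term proportional to $\cos^2\theta\, M(\theta)$. The value of $A$ is precisely such that, after integration by parts, the $M(0)$ contributions from the volume term and from the boundary piece at $s=0$ cancel, leaving $|G'(\theta)| \lesssim |M(\theta)| + \|M\|_{L^1}$. Squaring and integrating against $(L-\theta)^p$ yields the $j=1$ bound.

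For $j = 2$, the ODE $G'' = M'\cos^2\theta - 4G$ combined with the $L^\infty$ bound on $G$ immediately gives
\[
\int_0^L (G'')^2 (L-\theta)^p \d \theta \lesssim \int_0^L (M')^2 (L-\theta)^p \d\theta + \|M\|_{L^1}^2.
\]
For $j \geq 3$ I would induct, differentiating the ODE $j - 2$ times to write $G^{(j)} = \p_\theta^{j-2}(M'\cos^2\theta) - 4G^{(j-2)}$. The Leibniz expansion of $\p_\theta^{j-2}(M'\cos^2\theta)$ produces a linear combination of $M^{(k)}$ for $1 \leq k \leq j - 1$ with bounded trigonometric coefficients, while $G^{(j-2)}$ is controlled by the inductive hypothesis. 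Pointwise this gives $|G^{(j)}|^2 \lesssim \sum_{k=1}^{j-1}|M^{(k)}|^2 + |G^{(j-2)}|^2$.

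The main technical obstacle is absorbing the intermediate terms $\int_0^L (M^{(k)})^2 (L-\theta)^p \d\theta$ for $1 \leq k \leq j-2$ into $\|M\|_{L^1}^2 + \int_0^L (M^{(j-1)})^2 (L-\theta)^p \d\theta$. I would split $[0, L]$ at $\ell_2$: on $[\ell_2, L]$, iteratively applying Lemma \ref{lemma:weighted_interpolation} pushes the derivative order up to $j - 1$ at the cost of extra powers of $(L-\theta)$, which are harmless since $L - \theta \leq L$. On $[0, \ell_2]$ the weight is comparable to a positive constant, and a weighted integration-by-parts identity of the form
\[
\int_0^L M^2(L-\theta)^p \d\theta = \tfrac{L^{p+1}}{p+1}M(0)^2 + \tfrac{2}{p+1}\int_0^L MM'(L-\theta)^{p+1} \d\theta,
\]
combined with Cauchy--Schwarz, Young's inequality, and elementary Sobolev control of $M(0)$ by $\|M\|_{L^1}$ and higher-order derivatives via a cutoff, closes the interpolation. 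This final absorption step, rather than the derivation of the explicit formula or the ODE bootstrap, is where the real work lies.
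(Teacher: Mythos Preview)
Your approach mirrors the paper's closely: both write $G$ via variation of parameters, integrate by parts to express it as a bounded kernel against $M$, and extract pointwise control. The paper differentiates the explicit formula $j$ times directly, while you bootstrap through the ODE for $j\geq 2$; either way one arrives at
\[
|G^{(j)}(\theta)|\ \lesssim\ \|M\|_{L^1}+\sum_{k=0}^{j-1}|M^{(k)}(\theta)|,
\]
which is exactly the bound the paper records.

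You have correctly flagged that the lemma as stated---with only $M^{(j-1)}$ on the right---does \emph{not} follow immediately from this pointwise estimate; an interpolation is needed to absorb the intermediate $M^{(k)}$ terms. The paper simply writes ``from which the lemma immediately follows'' and does not carry this out. If you look at the only place the lemma is invoked (Proposition~\ref{prop:LWP_X}), the paper in fact cites it in the weaker form with the full sum $\sum_{i=0}^{j-1}\int (M^{(i)})^2(L-\theta)^q$, and only then applies Lemma~\ref{lemma:weighted_interpolation} to interpolate. So the step you call ``where the real work lies'' is deferred to the application, not done inside this lemma.

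That said, your own sketch of the interpolation has gaps. The assertion that $M(0)$ is controlled by $\|M\|_{L^1}$ is false without derivative input (take a tall narrow spike near $0$), and on $[\ell_2,L]$ the low-order term produced by Lemma~\ref{lemma:weighted_interpolation} is $\int M^2$, not $\|M\|_{L^1}^2$; bridging this requires an $L^\infty$ bound on $M$, hence derivative control, and for $p\geq 1$ the naive Cauchy--Schwarz against $(L-\theta)^{-p}$ diverges. A correct argument can be assembled, but not quite along the lines you indicate.
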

\begin{proof}
    We note that \(G\) can be recovered explicitly as
    \begin{align*}
        G(\theta) &= \frac{1}{2}\sin2\theta\left[-\int_{\theta}^{L}M'(\phi)\cos^2(\phi)\cos(2\phi)\d\phi + \cot(2L)\int_{0}^{L}M'(\phi)\cos^2(\phi)\sin(2\phi)\d\phi\right] \\
        &\qquad - \frac{1}{2}\cos(2\theta)\int_{0}^{\theta}M'(\phi)\cos^2(\phi)\sin(2\phi)\d\phi.
    \end{align*}
    Integrating by parts gives
    \begin{align*}
        G(\theta) &= \frac{1}{2}\sin(2\theta)\left[\int_{\theta}^{L}M(\phi)[\cos^2(\phi)\cos(2\phi)]'\d\phi - \cot(2L)\int_{0}^{L}M(\phi)[\cos^2(\phi)\sin(2\phi)]'\d\phi\right] \\
        &\qquad + \frac{1}{2}\cos(2\theta)\int_{0}^{\theta}M(\phi)[\cos^2(\phi)\sin(2\phi)]'\d\phi.
    \end{align*}
    Taking \(j\) derivatives we note that the only nonlocal terms appear as integrals of \(M\) against bounded kernels and therefore it is readily seen that we have the pointwise bound
    \begin{align*}
        |G^{(j)}(\theta)| &\lesssim \|M\|_{L^1} + \sum_{k = 0}^{j-1}|M^{(k)}(\theta)|
    \end{align*}
    from which the lemma immediately follows.
\end{proof}
We now proceed with the proof of Proposition \ref{prop:LWP_X}.
\begin{proof}
Due to the equivalence of norms from Lemma \ref{lemma:equiv_norm}, it suffices to provide a priori estimates using \(\|\cdot\|_{\H^4}\). Let \((M, G)\) be a smooth solution to \eqref{eq:ipm_1d}. Then,
\begin{align*}
    \frac12\frac{d}{dt}\|M\|_{\H^4}^2 = \int_{0}^{L}M(\theta)\p_t M(\theta) + M^{(4)}(\theta)\p_t M^{(4)}(\theta) (L-\theta)^{13/2}\d\theta.
\end{align*}
Dealing first with the low-norm integral,
\begin{align*}
\int_{0}^{L}M(\theta)\p_tM(\theta)\d\theta &= \int_{0}^{L} M(\theta)(-2G(\theta)M'(\theta) + G'(\theta) M(\theta) + 2M(\theta)G(\theta)\tan\theta)\d\theta \\
&\lesssim \int_{0}^{L}(|G'(\theta)|+|G(\theta)|)M(\theta)^2\d\theta \lesssim \|M\|_{L^\infty}\int_{0}^{L}M(\theta)^2\d\theta.
\end{align*}
For the high norm integral, differentiating the equation \eqref{eq:ipm_1d} for \(M\) four times, it follows using H\"older's inequality that
\begin{align*}
\int_{0}^{L}M^{(4)}(\theta)\p_t M^{(4)}(\theta)(L-\theta)^{13/2}\d\theta \lesssim \|M\|_{\H^4}\left(\sum_{\substack{k+j \leq 5 \\ k \leq 4}}\int_{0}^{L}G^{(j)}(\theta)M^{(k)}(\theta)(L-\theta)^{13/2}\d\theta\right)^{1/2}.
\end{align*}
For \(j \leq 1\), by Lemma \ref{lemma:G_bound_lwp} we have
\begin{align*}
    \int_{0}^{L}G^{(j)}(\theta)M^{(k)}(\theta)(L-\theta)^{13/2}\d\theta \lesssim \|M\|_{L^\infty}^2 + \int_{0}^{L}M^{(k)}(\theta)^2(L-\theta)^{13/2},
\end{align*}
while for \(j \geq 2\) using H\"older's inequality, 
\begin{align*}
\int_{0}^{L}G^{(j)}(\theta)M^{(k)}(\theta)(L-\theta)^{13/2}\d\theta \lesssim  \int_{0}^{L}G^{(j)}(\theta)^2(L-\theta)^{9/4 - k + j} + M^{(k)}(\theta)^2(L-\theta)^{17/4 + k - j}\d\theta.
\end{align*}
By Lemma \ref{lemma:G_bound_lwp} for \(j \geq 2\) we have
\begin{align*}
    \int_{0}^{L}G^{(j)}(\theta)^2(L-\theta)^{9/4-k+j} \d\theta \lesssim \sum_{i = 0}^{j-1}\int_{0}^{L}M^{(i)}(\theta)^2(L-\theta)^{9/4-k+j}\d\theta + \|M\|_{L^1}^2.
\end{align*}
By the interpolation inequality of Lemma \ref{lemma:weighted_interpolation} it follows that
\begin{align*}
    \int_{0}^{L}M^{(i)}(\theta)^2(L-\theta)^{9/4-k+j}\d\theta \lesssim \int_{0}^{L}M^{(4)}(\theta)^2(L-\theta)^{9/4-k+j+2(4-i)}\d\theta + \int_{0}^{L}M(\theta)^2\d\theta
\end{align*}
and similarly,
\begin{align*}
    \int_{0}^{L}M^{(k)}(\theta)^2(L-\theta)^{17/4+k-j}\d\theta \lesssim \int_{0}^{L}M^{(4)}(\theta)^2(L-\theta)^{17/4+k-j+2(4-k)}\d\theta + \int_{0}^{L}M(\theta)^2\d\theta.
\end{align*}
Since \(9/4-k+j+2(4-i) \geq 29/4\) and \(17/4+k-j+2(4-k) \geq 29/4\) we find that
\begin{align*}
\int_{0}^{L}M^{(4)}(\theta)\p_t M^{(4)}(\theta)(L-\theta)^{13/2}\d\theta &\lesssim \|M\|_{\H^4}\left[\int_{0}^{L}M(\theta)^2 + M^{(4)}(\theta)^2(L-\theta)^{29/2}\d\theta \right] + \|M\|_{\H^4}^3,
\end{align*}
and since \(29/4 > 13/2\), we conclude
\[
\frac{d}{dt}\|M\|_{\H^4}^2 \lesssim \|M\|_{\H^4}^3.
\]
Given the above a priori estimate, one can use a standard iteration scheme to prove existence of a solution \(M \in C([0, T]; \tH^4)\). 
\end{proof}

\section{Low-Norm Coercivity}
 In this section we prove the following proposition regarding the coercivity of \(\Lb\) for the low order portion of the inner product \(\inner{\cdot}{\cdot}_{\tH^4}\).
\begin{proposition}\label{prop:low_norm_coercive}
    There exists \(c_* > 0\) such that for all \(M \in \tH^4\),
    \[
    \int_{0}^{L}\tM(\theta)\Lb(M)(\theta)\varphi(\theta) \d\theta \geq c_1 \int_{0}^{L}\tM(\theta)^2\varphi(\theta) \d\theta
    \]
    where \(\varphi\) is the weight defined in \eqref{eq:weights}.
\end{proposition}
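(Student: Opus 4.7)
The plan is to decompose $\Lb = \Lb_L + \Lb_{NL}$, show the local part supplies coercivity through integration by parts, and absorb the nonlocal part using Cauchy--Schwarz together with Hardy-type bounds on $\tG$ in terms of $\tM$. As a preliminary observation, $\P_2\Lb(M) = \P_2 M$ (by evaluating $\Lb(M)$ and $\Lb(M)''$ at $\theta = 0$: for even $M$ one has $\tM = O(\theta^4)$, and together with $G_*(0) = 0$, $M_*'(0) = 0$, and the high-order vanishing of $\tG_{loc}$ at the origin, all terms except $\P_2 M$ vanish to second order at zero). Consequently the pairing in the proposition is equivalent to $\int_0^L \tM\,(\Lb(M) - \P_2 M)\,\varphi\,d\theta$, which is finite.

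First I would handle the local contribution. The only derivative-carrying term in $\Lb_L$ is $2G_*\tM'\tM = G_*(\tM^2)'$; integrating by parts and using $G_*(0) = G_*(L) = 0$ to eliminate the boundary terms yields
\[
\int_0^L \tM\,\Lb_L(M)\,\varphi\,d\theta \;=\; \int_0^L \tM(\theta)^2\,q(\theta)\,\varphi(\theta)\,d\theta,\qquad q(\theta) := 1 - 2G_*'(\theta) - 2G_*(\theta)\tan\theta - G_*(\theta)\frac{\varphi'(\theta)}{\varphi(\theta)}.
\]
It then suffices to show $q \geq 2c > 0$ on $[0,L]$. On $[0,\l_1]$ with $\varphi = \theta^{-8}$, the weight-derived term is $-G_*\varphi'/\varphi = 8G_*/\theta$, which tends to $8$ at the origin; combined with $G_*'(0) = 1$ this gives $q(0) = 1 - 2 - 0 + 8 = 7$, so $q \geq 6$ on $[0,\l_1]$ for $\l_1$ sufficiently small. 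On the bulk $[\l_1,\l_2]$ where $\varphi \propto \theta^{-K}$, we get $-G_*\varphi'/\varphi = KG_*/\theta$, and since $G_*/\theta$ is bounded below by a positive constant on this compact interval, taking $K$ large forces $q \geq 1$ uniformly. On $[\l_2, L]$ the weight is constant, so positivity reduces to $1 - 2G_*' - 2G_*\tan\theta > 0$, which holds at $\theta = L$ because $G_*(L) = 0$ and $G_*'(L) < 0$ by \eqref{eq:G_vanishing}; by continuity it extends to all of $[\l_2,L]$ once $L - \l_2$ is small enough.

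For the nonlocal part $\int_0^L \tM\,\Lb_{NL}(M)\,\varphi\,d\theta$, I would use the variation-of-parameters representation for $\tG_{loc}$ together with repeated integration by parts to move derivatives off $\tM$, obtaining pointwise estimates and weighted $L^2$ bounds of the form $\int_0^L \tG^2\,\varphi/\theta^2\,d\theta + \int_0^L (\tG')^2\,\varphi\,d\theta \lesssim \int_0^L \tM^2\,\varphi\,d\theta$ via Hardy's inequality, exploiting the high-order vanishing of $\tG_{loc}$ at the origin inherited from $\tM$. The rank-one correction $\tG_{nl}$ is supported in $[(L+\l_2)/2, L]$ with coefficient bounded by $|\tG_{loc}(L)| \lesssim \|\tM\|_{L^1}$, hence absorbed by the same Hardy estimate. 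Cauchy--Schwarz applied to each of $\tG M_*'$, $\tG' M_*$, $\tG M_*\tan\theta$ paired with $\tM\varphi$, using the uniform bounds on $M_*, M_*'$ and the boundedness of $\tan\theta$ on $[0,L]$ (since $L < \pi/2$), then dominates the nonlocal contribution by $\epsilon\int \tM^2\varphi\,d\theta$ with $\epsilon$ made small by taking $\l_1$ small, $K$ large, and $L - \l_2$ small.

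The principal difficulty is calibrating the three weight parameters simultaneously: the constants $\l_1^{K-8}$ and $\l_1^{K-8}\l_2^{-K}$ entering the piecewise weight formula must not inflate the Hardy constants beyond the coercive margin $c$ obtained from the local computation. One resolves this by choosing parameters in the order $\l_1$ first (to fix the local coercivity), then $K$ (to dominate the bulk), and finally $L - \l_2$ (to secure the endpoint), verifying at each step that the subsequent choices do not require revisiting the earlier ones.
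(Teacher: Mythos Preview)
Your overall architecture---the $\Lb_L/\Lb_{NL}$ split, the integration by parts producing the multiplier $q(\theta)$, and the three-region analysis---matches the paper. Two genuine gaps remain, however.

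First, the nonlocal contribution from $M_*\tG'$ in the local region $[0,\ell_1]$ is \emph{not} small. The sharp Hardy-type estimate here is $\int_0^{\ell_1}(\tG')^2\theta^{-8}\,d\theta \leq (1+C\ell_1^2)\int_0^{\ell_1}\tM^2\theta^{-8}\,d\theta$ (this is Lemma~\ref{lemma:G_bound_low_loc} in the paper), and the leading constant is $1$, not $\epsilon$. Since $M_*(0)=4$, Cauchy--Schwarz on $\int M_*\tG'\tM\,\theta^{-8}\,d\theta$ costs a full factor of $4$. The argument survives only because your correctly computed local margin $q(0)=7$ exceeds this: $7-4=3>0$. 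Shrinking $\ell_1$, enlarging $K$, or shrinking $L-\ell_2$ does nothing to this term, so the claim that the whole nonlocal piece can be pushed below any $\epsilon$ is false; you must track constants sharply in the local region.

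Second, $M_*'$ is not uniformly bounded. The profile is only $C^{1/2}$ at $\theta=L$ (Proposition~\ref{prop:profile_existence}), so $|M_*'(\theta)|\sim (L-\theta)^{-1/2}$ there, and your sentence ``using the uniform bounds on $M_*,M_*'$'' fails at the endpoint. In the paper this is handled by proving the weighted bound $\int_{\ell_2}^{L}\tG(\theta)^2(L-\theta)^{-1}\,d\theta \lesssim (L-\ell_2)\int_{\ell_2}^{L}\tM^2\,d\theta + \text{spillover}$ (Lemma~\ref{lemma:G_bound_low_endpoint}), so that the singular weight $(L-\theta)^{-1/2}$ from $M_*'$ is absorbed and a net factor of $(L-\ell_2)^{1/2}$ remains. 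Even in the bulk, $|M_*'|$ is only bounded by $C(L-\ell_2)^{-1/2}$, and this bad constant is precisely what forces the coupled choice $L-\ell_2=\ell_1^2$, $K=\ell_1^{-4}$ in the final assembly; your proposed sequential order $\ell_1 \to K \to L-\ell_2$ would need to be revisited once this dependence is made explicit.
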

Integrating by parts, we note that the boundary terms vanish since \(G_*(0) = G_*(L) = 0\) and hence
\begin{align*}
\int_{0}^{L}\tM(\theta)\Lb_{L}(M)(\theta)\varphi(\theta)\d\theta &= \int_{0}^{L} \tM(\theta)[\tM(\theta) + 2G_*(\theta)\tM'(\theta) - G_*'(\theta)\tM(\theta) - 2\tM(\theta)G_*(\theta)\tan\theta]\varphi(\theta)\d\theta \\
&= \int_{0}^{L}\tM(\theta)^2\left[1 - 2G_*'(\theta) - \frac{\varphi'(\theta)}{\varphi(\theta)}G_*(\theta) - 2G_*(\theta)\tan\theta\right]\varphi(\theta)\d\theta.
\end{align*}
We now require separate estimates in the three regions, locally near \(\theta = 0\), in the bulk and near the endpoint \(\theta = L\).
\subsection{Local Estimates}
This section is devoted to proving the following proposition regarding the coercivity of \(\Lb\) at low order on a small interval \([0, \l_1]\). In this region, the coercivity comes from the local transport term due to the strong, singular weight. There will be a negative contribution from the stretching term, however the singular weight of \(\theta^{-8}\) serves to amplify the effect of the transport term, in order to overcome the stretching term. The nonlocal terms will provide contributions which can be made small by choosing \(\l_1\) sufficiently small. 
\begin{proposition}\label{prop:local_low}
	There exists \(\l_1^* > 0\) such that for all \(\l_1 < \l_1^*\)  
	\begin{align*}
	&\int_{0}^{\l_1}\tM(\theta)^2\left[1 - 2G_*'(\theta) - \frac{\varphi'(\theta)}{\varphi(\theta)}G_*(\theta) - 2G_*(\theta)\tan\theta\right]\theta^{-8}\d\theta + \int_{0}^{\l_1} \theta^{-8} \tM(\theta)\Lb_{NL}(M)\d\theta \\
    &\qquad \geq \left[3 - C\l_1^2\right]\int_{0}^{\l_1} \theta^{-8}\tM(\theta)^2\d\theta.
	\end{align*}
\end{proposition}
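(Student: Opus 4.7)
The plan is to Taylor expand the bracketed coefficient near $\theta=0$ to identify its leading constant value, then show that the nonlocal contribution shifts this constant by exactly $-4$ at leading order, with all other corrections being $O(\ell_1^2)$. The crucial observation is that $\widetilde{G}'(\theta)$ agrees with $\widetilde{M}(\theta)$ to leading order near the origin, which allows us to reduce the dominant nonlocal term to a multiple of $\int \widetilde M^2\theta^{-8}$.

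First I would handle the local bracket. Using the profile data $G_*(0)=0$, $G_*'(0)=1$, $G_*''(0)=M_*'(0)=0$ (by evenness of $M_*$), and $G_*'''(0) = M_*''(0) - 4 = -2(A+2)$ (obtained by differentiating the Biot--Savart identity at $0$), Taylor expansion gives $G_*(\theta)=\theta-\tfrac{A+2}{3}\theta^3+O(\theta^5)$ and $G_*'(\theta)=1-(A+2)\theta^2+O(\theta^4)$. Since $\varphi(\theta)=\theta^{-8}$ on $[0,\ell_1]$ implies $\varphi'/\varphi=-8/\theta$, one computes termwise
\[
1-2G_*'(\theta)=-1+O(\theta^2),\quad -\tfrac{\varphi'}{\varphi}G_*=\tfrac{8}{\theta}G_*=8+O(\theta^2),\quad -2G_*\tan\theta=-2\theta^2+O(\theta^4),
\]
so the bracket equals $7+O(\theta^2)$. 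Since $\theta^2\le \ell_1^2$ on $[0,\ell_1]$, the local contribution is bounded below by $[7-C\ell_1^2]\int_0^{\ell_1}\widetilde M^2\theta^{-8}d\theta$.

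The heart of the argument is extracting the leading constant $-4$ from the nonlocal term $-\int_0^{\ell_1}\widetilde G' M_*\widetilde M\,\theta^{-8}d\theta$. On $[0,\ell_1]$ the cutoff $\widetilde G_{nl}$ vanishes, so $\widetilde G=\widetilde G_{loc}$, and the Green's function for $y''+4y=f$ with zero Cauchy data gives
\[
\widetilde G'(\theta)=\int_0^\theta\cos(2(\theta-\phi))\cos^2\phi\,\widetilde M'(\phi)\,d\phi.
\]
Subtracting $\widetilde M(\theta)=\int_0^\theta \widetilde M'(\phi)\,d\phi$ yields the remainder $R(\theta):=\widetilde G'(\theta)-\widetilde M(\theta)=-\int_0^\theta K(\theta,\phi)\widetilde M'(\phi)\,d\phi$ with kernel $K(\theta,\phi)=1-\cos(2(\theta-\phi))\cos^2\phi=O(\theta^2+\phi^2)$ uniformly for $0\le\phi\le\theta$. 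Since $M$ is even, $\widetilde M=M-\mathbb{P}_2 M$ vanishes to order $4$ at $0$, and a Cauchy--Schwarz/Hardy estimate gives the pointwise bound $|R(\theta)|\le C\theta^2|\widetilde M(\theta)|$ in a sense compatible with the weight $\theta^{-8}$. Using $M_*(\theta)=4+O(\theta^2)$, this produces
\[
-\int_0^{\ell_1}\widetilde G'M_*\widetilde M\,\theta^{-8}d\theta
=-4\int_0^{\ell_1}\widetilde M^2\theta^{-8}d\theta+O(\ell_1^2)\int_0^{\ell_1}\widetilde M^2\theta^{-8}d\theta.
\]
The remaining nonlocal pieces $2\widetilde G M_*'$ and $-2\widetilde G M_*\tan\theta$ contribute only $O(\ell_1^2)$: indeed $\widetilde G=O(\theta^5)$ from the Green's function (one order higher than $\widetilde G'$), and multiplying by $M_*'=O(\theta)$ or $M_*\tan\theta=O(\theta)$ gives terms absorbed by the factor $\theta^2$ relative to $\widetilde M^2\theta^{-8}$. (Under the natural interpretation in which this integral is computed with $\widetilde{\overline{\mathcal L}_{NL}(M)}$ rather than $\overline{\mathcal L}_{NL}(M)$, the $\mathbb P_2 M$ summand of $\overline{\mathcal L}_{NL}$ is removed, since all other contributions vanish to order $\ge 4$ at $0$.)

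Combining the two estimates yields
\[
\int_0^{\ell_1}\widetilde M^2[\text{bracket}]\theta^{-8}d\theta+\int_0^{\ell_1}\theta^{-8}\widetilde M\,\overline{\mathcal L}_{NL}(M)\,d\theta\ge [7-4-C\ell_1^2]\int_0^{\ell_1}\widetilde M^2\theta^{-8}d\theta=[3-C\ell_1^2]\int_0^{\ell_1}\widetilde M^2\theta^{-8}d\theta,
\]
which is the claim once $\ell_1^*$ is chosen small enough for the Taylor remainders to fit in the $C\ell_1^2$ slack. \emph{The main obstacle} is the quantitative bound on $\widetilde G'-\widetilde M$: the factor $\theta^{-8}$ in the weight is very singular, and the gain of $\theta^2$ in $R$ must precisely match $\widetilde M$'s order of vanishing in order to produce $O(\ell_1^2)$ and not spoil the positivity of $7-4=3$. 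This is exactly where both the evenness symmetry (giving order-$4$ rather than order-$3$ vanishing of $\widetilde M$) and the structure of the Green's function kernel are essential.
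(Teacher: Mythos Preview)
Your approach is correct and structurally identical to the paper's: both identify the local bracket as $7+O(\ell_1^2)$, extract the leading $-4$ from the $\widetilde G' M_*$ nonlocal term, and show the remaining nonlocal pieces are $O(\ell_1^2)$. The only difference is in packaging the main nonlocal estimate. You write $\widetilde G'=\widetilde M+R$ and argue $R$ contributes $O(\ell_1^2)$; the paper instead proves the $L^2$ bound $\int_0^{\ell_1}\widetilde G'^2\theta^{-8}\,d\theta\le(1+C\ell_1^2)\int_0^{\ell_1}\widetilde M^2\theta^{-8}\,d\theta$ (its Lemma~\ref{lemma:G_bound_low_loc}) directly from the explicit formula $\widetilde G_{loc}'(\theta)=\widetilde M(\theta)\cos^2\theta+(\text{integral terms})$, then uses $M_*\le 4$ and Cauchy--Schwarz. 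These are the same computation, since that explicit formula is exactly your decomposition after one integration by parts; the paper's $L^2$ packaging simply avoids having to make precise your heuristic pointwise claim ``$|R(\theta)|\le C\theta^2|\widetilde M(\theta)|$'', which as stated is not literally true (the remainder involves $\int_0^\theta|\widetilde M|$ rather than $|\widetilde M(\theta)|$, so one needs Cauchy--Schwarz against the weight anyway). Your parenthetical about the $\mathbb P_2 M$ summand being removed under the tilde is correct and in fact clarifies a minor imprecision in the paper's stated integrand.
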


\begin{proof}
        Dealing with the local part first, for \(0 \leq \theta \leq \l_1\), \(\varphi'(\theta)/\varphi(\theta) = - 8\theta^{-1}\) and therefore
	\begin{multline*}
			\int_{0}^{\l_1}\tM(\theta)^2\left[1 - 2G_*'(\theta) - \frac{\varphi'(\theta)}{\varphi(\theta)}G_*(\theta) - 2G_*(\theta)\tan\theta\right]\theta^{-8}\d\theta \\
             = \int_{0}^{\l_1}\theta^{-8}\tM(\theta)^2\left[1 - 2G_*'(\theta) + 8 \theta^{-1}G_*(\theta) - 2G_*(\theta)\tan\theta\right]\d\theta.
	\end{multline*}
	Since \(G_*'(0) = 1\) it follows that
	\[
	1 - 2G_*' + 8 \theta^{-1}G_* - 2G_*\tan\theta \geq 7 - C\l_1^2.
	\]
        To deal with the nonlocal piece \(\Lb_{NL}\), we must first prove a local bound on the operator \(\tM \mapsto \tG\) in \(\tH^k\).
        \begin{lemma}\label{lemma:G_bound_low_loc}
            Suppose \(\tG\) solves \eqref{eq:G_loc_eqn}, then 
	\begin{equation*}
		\int_{0}^{\l_1}\tG(\theta)^2\theta^{-8}\d\theta \leq C\l_1^2\int_{0}^{\l_1}\tM(\theta)^2\theta^{-8}\d\theta, \quad \text{and} \quad \int_{0}^{\l_1}\tG'(\theta)^2 \theta^{-8}\d\theta \leq (1+C\l_1^2)\int_{0}^{\l_1}\tM(\theta)^2\theta^{-8}\d\theta
	\end{equation*}
    for some universal constant \(C > 0\).
     \end{lemma}
    \begin{proof}
    First, we note that \eqref{eq:G_loc_eqn} admits an explicit solution. Multiplying \eqref{eq:G_loc_eqn} by \(\sin2\theta\) and \(\cos2\theta\) respectively, and integrating by parts we find
\begin{align*}
\int_{0}^{\theta}\tM'(\phi)\cos^2\phi\sin2\phi\d\phi &=  \tG_{loc}'(\theta)\sin2\theta - 2\tG_{loc}(\theta)\cos 2\theta
\end{align*}
and similarly,
\begin{align*}
\int_{0}^{\theta}\tM'(\phi)\cos^2\phi\cos2\phi\d\phi &=  \tG_{loc}'(\theta)\cos2\theta + 2\tG_{loc}(\theta)\sin 2\theta.
\end{align*}
Multiplying the equations by \(-\cos2\theta\) and  \(\sin2\theta\) respectively and summing gives an explicit formula for \(\tG_{loc}\),
\begin{equation}
	\tG_{loc}(\theta) = -\frac{1}{2}\cos2\theta\int_{0}^{\theta}M'(\phi)\cos^2\phi\sin2\phi\d\phi + \frac{1}{2}\sin2\theta\int_{0}^{\theta}M'(\phi)\cos^2\phi\cos2\phi\d\phi.\label{eq:G_loc_form_1}
\end{equation}
Integrating the above by parts and noting that \(\tM(0) = 0\) we have the following alternative explicit solution
\begin{align}
	\tG_{loc}(\theta) &= \frac12 \cos2\theta\int_{0}^{\theta}\tM(\phi)\left[2\cos^2\phi\cos2\phi - \sin^22\phi\right]\d\phi \notag \\
	& \qquad + \frac12 \sin2\theta\int_{0}^{\theta}\tM(\phi)\left[\sin2\phi\cos2\phi + 2\cos^2\phi\sin2\phi\right]\d\phi \notag \\
    &=: \frac12 \cos2\theta\int_{0}^{\theta}\tM(\phi)K_1(\phi)\d\phi + \frac12 \sin2\theta\int_{0}^{\theta}\tM(\phi)K_2(\phi)\d\phi,
    \label{eq:G_loc_form_2}
\end{align}
where \(K_1, K_2\) are the two integral kernels defined by the preceding line. Differentiating, we obtain the following explicit formula for \(\tG_{loc}'\),
\begin{align}\label{eq:G_loc'_form}
    \tG_{loc}'(\theta) &= \tM(\theta)\cos^2\theta - \sin 2\theta \int_{0}^{\theta}\tM(\phi)K_1(\phi)\d\phi + \cos2\theta\int_{0}^{\theta}\tM(\phi)K_2(\phi)\d\phi.
\end{align}
Squaring and integrating \eqref{eq:G_loc_form_2} against the weight \(\theta^{-8}\),
\begin{align*}
    \int_{0}^{\l_1}\tG(\theta)^2\theta^{-8}\d\theta &\leq \int_{0}^{\l_1}\theta^{-8}\left[\frac12 \cos^22\theta\left(\int_{0}^{\theta}\tM(\phi)K_1(\phi)\d\phi\right)^2 + \frac12\sin^22\theta\left(\int_{0}^{\theta}\tM(\phi)K_2(\phi)\d\phi\right)^2\right]\d\theta \\
    &\leq \frac12 \int_{0}^{\l_1}\theta^{-8}\left[\theta \int_{0}^{\theta}\tM(\phi)^2(K_1(\phi)^2+K_2(\phi)^2)\d\phi\right]\d\theta.
\end{align*}
Since \(K_1(\theta)^2 + K_2(\theta)^2 = 4\cos^4(\theta)+\sin^2(2\theta) \leq 4\), we then conclude
\begin{align*}
        \int_{0}^{\l_1}\tG(\theta)^2\theta^{-8}\d\theta &\leq 2\int_{0}^{\l_1}\theta\int_{0}^{\l_1}\tM(\phi)^2\phi^{-8}\d\phi\d\theta \leq \l_1^2\int_{0}^{\l_1}\tM(\phi)^2\phi^{-8}\d\phi
\end{align*}
which proves the first statement of the lemma. Using the explicit formula \eqref{eq:G_loc'_form}, 
\begin{align*}
    \int_{0}^{\l_1}\tG_{loc}'(\theta)^2\theta^{-8}\d\theta &= \int_{0}^{\l_1}\theta^{-8}\left[\sin^2 2\theta \left(\int_{0}^{\theta}\tM(\phi)K_1(\phi)\d\phi\right)^2 + \cos^22\theta\left(\int_{0}^{\theta}\tM(\phi)K_2(\phi)\d\phi\right)^2\right]\d\theta \\
    &\quad + \int_{0}^{\l_1}\theta^{-8}\cos^4(\theta)\tM(\theta)^2\d\theta\\
    &\quad + 2 \int_{0}^{\l_1}\theta^{-8}\tM(\theta)\cos^2(\theta)\left[ \cos2\theta\int_{0}^{\theta}\tM(\phi)K_2(\phi)\d\phi-\sin2\theta\int_{0}^{\theta}\tM(\phi)K_1(\phi)\d\phi \right]\d\theta \\
    &\quad - 2\int_{0}^{\l_1}\theta^{-8}\sin2\theta\cos2\theta \left[\int_{0}^{\theta}\tM(\phi)K_1(\phi)\d\phi\int_{0}^{\theta}\tM(\phi)K_2(\phi)\d\phi\right]\d\theta.
\end{align*}
The second term satisfies
\[
\int_{0}^{\l_1}\theta^{-8}\cos^4(\theta)\tM(\theta)^2\d\theta \leq \int_{0}^{\l_1}\theta^{-8}\tM(\theta)^2\d\theta.
\]
Performing similar computations to the case of \(\tG_{loc}\), it is readily seen that there exists a universal constant \(C > 0\) such that the remaining terms are bound above by
\[
C \l_1^2 \int_{0}^{\l_1}\theta^{-8}\tM(\theta)^2\d\theta
\]
from which the second statement of the lemma immediately follows.
\end{proof}
Considering the nonlocal contribution \(\Lb_{NL}\), since \(M_*(\theta) \leq 4\), by Lemma \ref{lemma:G_bound_low_loc} we obtain
\[
	\left|\int_{0}^{\l_1}\tM(\theta) M_*(\theta)\tG'(\theta) \theta^{-8}\d\theta \right|  \leq 	4\int_{0}^{\l_1}|\tM(\theta)| |\tG'(\theta)| \theta^{-8}\d\theta  \leq 4(1+C\l_1^2) \int_{0}^{\l_1}\theta^{-8}\tM(\theta)^2\d\theta.
\]
The remaining terms provide only a small (in \(\l_1\)) contribution since \(M_*\tan\theta + M_*'\) vanishes linearly at zero and thus by Lemma \ref{lemma:G_bound_low_loc},
	\begin{align*}
	\left|\int_{0}^{\l_1}2\tM(\theta)\tG(\theta) (M_*'(\theta) + M_*(\theta)\tan\theta)\theta^{-8}\d\theta\right| &\leq C \l_1 \left(\int_{0}^{\l_1}\tM(\theta)^2 \theta^{-8}\d\theta\right)^{1/2}\left(\int_{0}^{\l_1}\tG(\theta)^2 \theta^{-8}\d\theta\right)^{1/2} \\
    &\leq C\l_1^{2}\int_{0}^{\l_1}\tM(\theta)^2 \theta^{-8}\d\theta.
	\end{align*}
	Thus, we have
	\begin{multline*}
	\int_{0}^{\l_1}\tM(\theta)^2\left[1 - 2G_*'(\theta) - \frac{\varphi'(\theta)}{\varphi(\theta)}G_*(\theta) - 2G_*(\theta)\tan\theta\right]\varphi(\theta)\d\theta + \int_{0}^{\l_1} \theta^{-8} \tM(\theta)\Lb_{NL}(M)\d\theta \\
     \geq \left[7  - 4 - C\l_1^2\right]\int_{0}^{\l_1}\tM(\theta)^2 \theta^{-8}\d\theta
	\end{multline*}
	which completes the proof.
\end{proof}
\subsection{Bulk Estimates}
Now, we prove coercivity in the bulk. In this region, we use the rapid decay of the weight \(\varphi\) obtained by choosing \(K\) large. Integrating by parts the local transport term gives a term
\[
-\int_{\l_1}^{\l_2}\left(\frac{\varphi'(\theta)}{\varphi(\theta)}\right)G_*(\theta)\tM(\theta)^2\varphi(\theta)\d\theta = K \int_{\l_1}^{\l_2}(\theta^{-1}G_*(\theta))\tM(\theta)^2\varphi(\theta)\d\theta
\]
Since \(\theta^{-1}G_*\) is bounded away from zero away from the boundary \(\theta = L\), this gives us a large coercive term. The remaining terms then need only provide bounded contributions and then choosing \(K\) sufficiently large will give coercivity. The bound however degenerates near the right hand boundary as \(G_*\) vanishes here necessitating the need to consider a boundary region \([\l_2, L]\) separately. The nonlocal terms from \(\Lb_{NL}\) will generate large terms in the local region \([0, \l_1]\) due to the high degree of vanishing of the weight \(\varphi(\theta) = \theta^{-K}\), however these terms have been controlled in the previous section. Consequently, in the bulk region we will prove the following coercivity lemma.
\begin{proposition}\label{prop:bulk_low}
There exists \(c, C > 0\) such that for all \(M \in \tH^4\),
\begin{multline*}
    \int_{\l_1}^{\l_2}\tM(\theta)^2\left[1 - 2G_*'(\theta) - \frac{\varphi'(\theta)}{\varphi(\theta)}G_*(\theta) - 2G_*(\theta)\tan\theta\right]\theta^{-K}\d\theta + \int_{\l_1}^{\l_2}\tM(\theta)\Lb_{NL}(M)\theta^{-K}\d\theta \\
    \geq  [Kc(L-\l_2) - C(L-\l_2)^{-1/2}]\int_{\l_1}^{\l_2}\tM(\theta)\theta^{-K}\d\theta - C(L-\l_2)^{-1/2}\l_1^{10-K}\int_{0}^{\l_1}\tM(\theta)^2\theta^{-8}\d\theta. 
 \end{multline*}
\end{proposition}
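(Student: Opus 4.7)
The plan is to split $\Lb = \Lb_L + \Lb_{NL}$ on the bulk region and estimate the two pieces separately. The local part $\Lb_L$ supplies a large positive coercive factor of size $Kc(L-\l_2)$ coming from the weight derivative term, and the nonlocal part $\Lb_{NL}$ is controlled by Cauchy-Schwarz against the integral representations of $\tG_{loc}$ from Lemma~\ref{lemma:G_bound_low_loc}, producing a bulk self-term that can be absorbed into the coercive term and a residual supported on the inner region $[0,\l_1]$ that matches the second term on the right-hand side of the proposition.

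For the local part, on $[\l_1,\l_2]$ we have $\varphi'/\varphi = -K/\theta$, so the bracketed integrand becomes $1 - 2G_*'(\theta) + (K/\theta)G_*(\theta) - 2G_*(\theta)\tan\theta$. Since $G_*(0)=0$, $G_*'(0)=1$ and, by \eqref{eq:G_vanishing}, $G_*(\theta)\sim -G_*'(L)(L-\theta)$ as $\theta\to L$ with $G_*'(L)<0$, the ratio $G_*(\theta)/\theta$ is continuous and strictly positive on $[\l_1,L)$ and satisfies $G_*(\theta)/\theta \geq c(L-\theta) \geq c(L-\l_2)$ on $[\l_1,\l_2]$ for some $c>0$ depending only on the profile. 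The remaining terms $1 - 2G_*'(\theta) - 2G_*(\theta)\tan\theta$ are uniformly bounded there, so the local contribution is bounded below by $[Kc(L-\l_2) - C]\int_{\l_1}^{\l_2}\tM^2\theta^{-K}\d\theta$.

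For the nonlocal part, the cutoff $\eta((L-\theta)/(L-\l_2))$ appearing in $\tG_{nl}$ is supported in $[\l_2,L]$, so $\tG = \tG_{loc}$ throughout the bulk. Applying Cauchy-Schwarz to the representations \eqref{eq:G_loc_form_2} and \eqref{eq:G_loc'_form} after splitting $\int_0^\theta = \int_0^{\l_1} + \int_{\l_1}^\theta$ and pairing each piece with its natural weight ($\phi^{-8}$ on $[0,\l_1]$, $\phi^{-K}$ on $[\l_1,\theta]$), a Fubini exchange gives
\[
\int_{\l_1}^{\l_2}\tG_{loc}^2\,\theta^{-K}\d\theta \leq \frac{C\l_1^{10-K}}{K}\int_0^{\l_1}\tM^2\phi^{-8}\d\phi + \frac{C}{K}\int_{\l_1}^{\l_2}\tM^2\phi^{-K}\d\phi
\]
together with the analogous bound for $\tG_{loc}'$, the latter picking up an additional pointwise $C\int_{\l_1}^{\l_2}\tM^2\theta^{-K}\d\theta$ from the boundary term $\tM(\theta)\cos^2\theta$ in \eqref{eq:G_loc'_form}. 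The factor $(L-\l_2)^{-1/2}$ appearing in the statement enters through the pointwise bound $|M_*'(\theta)|\leq C(L-\theta)^{-1/2}\leq C(L-\l_2)^{-1/2}$ on $[\l_1,\l_2]$, obtained by dividing the profile equation by $G_*\sim -G_*'(L)(L-\theta)$ near $L$ and using the $C^{1/2}$ regularity of $M_*$ from Proposition~\ref{prop:profile_existence}.

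Combining these ingredients via Cauchy-Schwarz on $\int_{\l_1}^{\l_2}\tM(2\tG M_*' - \tG' M_* - 2\tG M_*\tan\theta)\theta^{-K}\d\theta$ produces a bulk self-contribution of size at most $C(L-\l_2)^{-1/2}\int_{\l_1}^{\l_2}\tM^2\theta^{-K}\d\theta$ and a cross contribution bounded by $C(L-\l_2)^{-1/2}\l_1^{10-K}\int_0^{\l_1}\tM^2\phi^{-8}\d\phi$ (the $1/K$ factors being absorbed into the constants). Adding this to the local estimate yields the stated lower bound, with the additive constant $-C$ from the local part being subsumed into $-C(L-\l_2)^{-1/2}$ since $L-\l_2<1$. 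The main obstacle is the bookkeeping: the $(L-\l_2)^{-1/2}$ blow-up of $M_*'$ at the boundary must ultimately be paid by the coercive $Kc(L-\l_2)$, which forces $K$ to be chosen last so that $K(L-\l_2)^{3/2}$ dominates all the constants accumulated in the nonlocal estimate.
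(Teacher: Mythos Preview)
Your proposal is correct and follows essentially the same approach as the paper: split into the local piece (giving $Kc(L-\l_2)-C$ from $\varphi'/\varphi=-K/\theta$ and the lower bound $G_*/\theta\geq c(L-\l_2)$), then control $\Lb_{NL}$ by proving the bulk analogue of Lemma~\ref{lemma:G_bound_low_loc} via the integral representations \eqref{eq:G_loc_form_2}--\eqref{eq:G_loc'_form} with the inner integral split at $\l_1$, and finally invoke $|M_*'|\leq C(L-\l_2)^{-1/2}$ from the $C^{1/2}$ regularity of $M_*$. Your explicit tracking of the $1/K$ factors and the closing remark about the order in which $K$ is chosen are minor refinements, but the argument is otherwise identical to the paper's.
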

\begin{proof}
Again, we begin by considering the local terms. For \(\theta \in [\l_1, \l_2]\), \(\varphi'(\theta)/\varphi(\theta) = -K\theta^{-1}\) and therefore
\begin{multline*}
	    \int_{\l_1}^{\l_2}\tM(\theta)^2\left[1 - 2G_*'(\theta) - \frac{\varphi'(\theta)}{\varphi(\theta)}G_*(\theta) - 2G_*(\theta)\tan\theta\right]\theta^{-K}\d\theta \\
        = \int_{\l_1}^{\l_2}\theta^{-K}\tM(\theta)^2\left[1 + K\theta^{-1}G_*(\theta) - 2G_*'(\theta) - 2G_*(\theta)\tan\theta\right]\d\theta.
\end{multline*}
	Since \(G_*'(0) = 1\) and \(G_*\) vanishes linearly at \(\theta = L\),
	\[
	1 + K\theta^{-1}G_*(\theta) - 2G_*'(\theta) - 2G_*(\theta)\tan\theta \geq  Kc(L-\l_2) - C
	\]
	for constants \(c, C\) depending on the profile. Thus, by choosing \(K\) large (relative to \(L - \l_2\)) we can obtain a strong coercive term. To control the nonlocal terms, we prove the following lemma.

\begin{lemma}\label{lemma:G_bound_low_bulk}
There exists \(C > 0\) such that for any  \(K > 10\),
\begin{align}
\int_{\l_1}^{\l_2}(\tG'(\theta)^2+\tG(\theta)^2)\theta^{-K}\d\theta &\leq C\l_1^{10-K}\int_{0}^{\l_1}\tM(\theta)^2 \theta^{-8}\d\theta + C\int_{\l_1}^{\l_2}\tM(\theta)^2 \theta^{-K}\d\theta.
\end{align}
\end{lemma}
\begin{proof}
 Note that for \(\theta \in [\l_1, \l_2]\), \(\tG(\theta) = \tG_{loc}(\theta)\). Using the formula \eqref{eq:G_loc_form_2}, and recalling that \(K_1, K_2\) are uniformly bounded,
\begin{align}
    \int_{\l_1}^{\l_2}\tG(\theta)^2\theta^{-K}\d\theta &\leq \frac12 \int_{\l_1}^{\l_2}\theta^{-K}\left(\int_{0}^{\theta}\tM(\phi)K_1(\phi)\d\phi\right)^2 + \theta^{-K}\left(\int_{0}^{\theta}\tM(\phi)K_1(\phi)\d\phi\right)^2 \d\theta \\
    &\leq C \int_{\l_1}^{\l_2}\theta^{-K}\theta\int_{0}^{\theta}\tM(\phi)^2\d\phi\d\theta.
\end{align}
Splitting the inner integral into our local and bulk regions,
\begin{align*}
    \int_{\l_1}^{\l_2}\tG(\theta)^2\theta^{-K}\d\theta &\leq C\int_{\l_1}^{\l_2}\theta^{-K}\theta\int_{0}^{\l_1}\tM(\phi)^2\d\phi + \theta^{-K}\theta\int_{\l_1}^{\theta}\tM(\phi)^2\d\phi\d\theta \\
    &\leq C\int_{\l_1}^{\l_2}\theta^{-K}\theta^9\int_{0}^{\l_1}\tM(\phi)^2\phi^{-8}\d\phi + \theta\int_{\l_1}^{\theta}\tM(\phi)^2\phi^{-K}\d\phi\d\theta \\
    &\leq \frac{C}{K-10}\l_1^{10-K}\int_{0}^{\l_1}\tM(\phi)^2\phi^{-8}\d\phi + C\int_{\l_1}^{\l_2}\tM(\phi)^2\phi^{-K}\d\phi.
\end{align*}
Proceeding similarly for \(\tG'\) using \eqref{eq:G_loc'_form},
\begin{align*}
    \int_{\l_1}^{\l_2}\tG'(\theta)^2\theta^{-K}\d\theta &\leq C\int_{\l_1}^{\l_2}\left[\tM(\theta)^2 + \left(\int_{0}^{\theta}\tM(\phi)K_1(\phi)\d\phi\right)^2 + \left(\int_{0}^{\theta}\tM(\phi)K_2(\phi)\d\phi\right)^2\right]\theta^{-K}\d\theta \\
    &\leq C\int_{\l_1}^{\l_2}\tM(\theta)^2 \theta^{-K} + \theta^{-K+1} \int_{0}^{\theta} \tM(\phi)^2(K_1(\phi)^2 + K_2(\phi)^2)\d\phi \d\theta.
\end{align*}
Again splitting the inner integral and using boundedness of the kernels \(K_1, K_2\) we conclude
\begin{align*}
     \int_{\l_1}^{\l_2}\tG(\theta)^2\theta^{-K}\d\theta &\leq C\int_{\l_1}^{\l_2}\tM(\theta)^2\theta^{-K} + \theta^{-K+1}\int_{0}^{\l_1}\tM(\phi)^2\d\phi + \theta^{-K+1}\int_{\l_1}^{\theta}\tM(\phi)^2\d\phi \d\theta \\
     &\leq C\int_{\l_1}^{\l_2}\tM(\theta)^2\theta^{-K} + \theta^{-K+9}\int_{0}^{\l_1}\tM(\phi)^2\phi^{-8}\d\phi +\theta\int_{\l_1}^{\l_2}\tM(\phi)^2\phi^{-K}\d\phi \d\theta \\
     &\leq C\int_{\l_1}^{\l_2}\tM(\theta)^2\theta^{-K}\d\theta + \frac{C}{K-10}\l_1^{10-K}\int_{0}^{\l_1}\tM(\phi)^2\phi^{-8}\d\phi .
\end{align*}
Summing the estimates for \(\tG, \tG'\) then gives the lemma.
\end{proof}    
Turning our attention to the terms generated by \(\Lb_{NL}\) we see first that since \(M_* \in C^{1/2}([0, L])\), we have \(|M_{*}'(\theta)| \leq C(L-\l_2)^{-1/2}\) and hence
\begin{align*}
    2\left|\int_{\l_1}^{\l_2} \theta^{-K}\tM(\theta) \tG(\theta) M_*'(\theta)\d\theta \right| &\leq C(L-\l_2)^{-1/2}\int_{\l_1}^{\l_2} (\tM(\theta)^2 + \tG(\theta)^2) \theta^{-K}\d\theta.
 \end{align*}
Similarly, since \(M_* \leq 4\) is bounded,
    \[
    \left|\int_{\l_1}^{\l_2}\theta^{-K}(\tM(\theta) \tG'(\theta) M_*(\theta) + \tM(\theta)\tG(\theta) M_*(\theta)\tan\theta)\d\theta \right| \leq C\int_{\l_1}^{\l_2}(\tM(\theta)^2 + \tG(\theta)^2 + \tG'(\theta)^2)\theta^{-K}\d\theta.
    \]
By Lemma \ref{lemma:G_bound_low_bulk}, we conclude
\[
\left|\int_{\l_1}^{\l_2}\tM(\theta)\Lb_{NL}(M)\theta^{-K}\d\theta \right| \leq  C(L-\l_2)^{-1/2}\left[\int_{\l_1}^{\l_2}\tM(\theta)^2\theta^{-K}\d\theta + \l_1^{10-K}\int_{0}^{\l_1}\tM(\theta)^2\theta^{-8}\d\theta\right].
\]
Altogether, we then have
\begin{multline*}
    \int_{\l_1}^{\l_2}\tM(\theta)^2\left[1 - 2G_*'(\theta) - \frac{\varphi'(\theta)}{\varphi(\theta)}G_*(\theta) - 2G_*(\theta)\tan\theta\right]\theta^{-K}\d\theta + \int_{\l_1}^{\l_2} \theta^{-K} \tM(\theta)\Lb_{NL}(M)\d\theta \\
     \geq  [Kc(L-\l_2) - C(L-\l_2)^{-1/2}]\int_{\l_1}^{\l_2}\tM(\theta)^2\theta^{-K}\d\theta - C(L-\l_2)^{-1/2}\l_1^{10-K}\int_{0}^{\l_1}\tM(\theta)^2\theta^{-8}\d\theta,
\end{multline*}
completing the proof of Proposition \ref{prop:bulk_low}.
\end{proof}
\subsection{Endpoint Estimates}
Finally, we establish coercivity estimates near the boundary \(\theta = L\). In this region, the coercivity can no longer come from the transport as particles flow towards the boundary rather than away. However, \(G_*\) vanishes at the boundary and thus any contribution of transport is necessarily small. However, near the boundary, the density is being depleted by the stretching term as \(G_*' \leq 0\) here. Thus, near the boundary the coercivity comes from the stretching term rather than the transport. We now prove the following proposition regarding the coercivity of \(\Lb\) near the boundary.
\begin{proposition}\label{prop:endpoint_low}
There exists \(C > 0\) such that for all \(M \in \tH^4\),
    \begin{multline}
    \int_{\l_2}^{L}\tM(\theta)^2\left[1 - 2G_*'(\theta)- 2G_*(\theta)\tan\theta\right]\d\theta + \int_{\l_2}^{L}\tM(\theta)\Lb_{NL}(M)\d\theta \\ \geq \left(1 - C(L-\l_2)^{1/2}\right)\int_{\l_2}^{L}\tM(\theta)^2\d\theta \notag 
     - C(L-\l_2)^{-1/2}\left[\l_1^8\int_{0}^{\l_1}\tM(\theta)^2\theta^{-8}\d\theta + \l_2^K\int_{\l_1}^{\l_2}\tM(\theta)^2\theta^{-K}\d\theta\right].
    \end{multline}
\end{proposition}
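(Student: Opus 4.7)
The plan is to split \(\L = \Lb_L + \Lb_{NL}\) on \([\l_2, L]\) and estimate each piece separately, exploiting the fact that \(\varphi\) is constant on this interval so that its logarithmic derivative contributes nothing. For the local part, integration by parts of the transport term produces no boundary contribution since \(G_*(L)=0\), and the problem reduces to estimating the pointwise multiplier \(1 - 2G_*'(\theta) - 2G_*(\theta)\tan\theta\). Combining \(G_*(L) = 0\), \(G_*'(L) < 0\) from \eqref{eq:G_vanishing}, and \(L < \pi/2\), this multiplier is bounded below by \(1 - C(L-\l_2)\) on \([\l_2, L]\). Thus the local contribution already delivers coercivity with coefficient \(1 - C(L-\l_2)\), and the remaining task is to absorb
\(\int_{\l_2}^L \tM \bigl(2\tG M_*' - \tG' M_* - 2 M_* \tG \tan\theta\bigr)\d\theta\).

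The main obstacle is the term \(\int_{\l_2}^L \tM\,\tG\,M_*'\d\theta\): because \(M_*\) is only \(C^{1/2}\) at \(\theta = L\), inserting the bound \(|M_*(\theta)| \leq C(L-\theta)^{1/2}\) into the profile ODE together with \(G_*(\theta) \geq c(L-\theta)\) yields only \(|M_*'(\theta)| \leq C(L-\theta)^{-1/2}\). This is precisely where the nonlocal correction \(\tG_{nl}\) pays off: by construction it enforces \(\tG(L) = 0\), which supplies the Hardy-type inequality
\[
\int_{\l_2}^L \frac{\tG(\theta)^2}{L-\theta}\,\d\theta \leq (L - \l_2)\int_{\l_2}^L \tG'(\theta)^2\,\d\theta.
\]
Cauchy--Schwarz then gives
\[
\left|\int_{\l_2}^L \tM\,\tG\,M_*'\,\d\theta\right| \leq C(L-\l_2)^{1/2}\|\tM\|_{L^2[\l_2,L]}\|\tG'\|_{L^2[\l_2,L]},
\]
and the two remaining nonlocal terms are easier because \(|M_*(\theta)| + |M_*(\theta)\tan\theta| \leq C(L-\theta)^{1/2}\), producing contributions already of size \((L-\l_2)^{1/2}\).

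To close, I would bound \(\|\tG'\|_{L^2[\l_2, L]}\) using the explicit formulas \eqref{eq:G_loc_form_2}--\eqref{eq:G_loc'_form} together with \(|\tG_{loc}(L)| \leq C \|\tM\|_{L^1}\) and the obvious \(|\tG_{nl}'(\theta)| \leq C\|\tM\|_{L^1}/(L-\l_2)\), giving \(\|\tG'\|^2_{L^2[\l_2, L]} \leq C\|\tM\|^2_{L^2[\l_2,L]} + C\|\tM\|^2_{L^1}/(L-\l_2)\). Substituting and applying Young's inequality calibrated to produce a \((L-\l_2)^{1/2}\) coefficient on \(\|\tM\|_{L^2[\l_2,L]}^2\) leaves a residual of size \((L-\l_2)^{-1/2}\|\tM\|_{L^1}^2\). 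Finally, Cauchy--Schwarz against the singular weights of \(\tH^4\) yields \(\|\tM\|_{L^1([0, \l_1])}^2 \leq C\l_1^9 \int_0^{\l_1}\tM^2\theta^{-8}\d\theta\), \(\|\tM\|_{L^1([\l_1, \l_2])}^2 \leq C\l_2^{K+1}\int_{\l_1}^{\l_2}\tM^2\theta^{-K}\d\theta\), and \(\|\tM\|_{L^1([\l_2, L])}^2 \leq C(L-\l_2)\int_{\l_2}^L \tM^2\d\theta\). The last piece is absorbed into the good term, degrading the endpoint coefficient from \(1 - C(L-\l_2)\) to \(1 - C(L-\l_2)^{1/2}\); the first two, after using \(\l_1^9 \leq \l_1^8\) and \(\l_2^{K+1} \leq \l_2^K\) (since \(\l_1, \l_2 < 1\)), give exactly the advertised transfer terms.
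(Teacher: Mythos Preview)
Your argument is correct and follows the same overall strategy as the paper: use \(G_*'(L)<0\) for the local multiplier, the pointwise bounds \(|M_*'(\theta)|\lesssim (L-\theta)^{-1/2}\) and \(|M_*(\theta)|\lesssim (L-\theta)^{1/2}\) for the nonlocal terms, and then transfer the residual to the weighted integrals on \([0,\l_1]\) and \([\l_1,\l_2]\).

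The one place where you organize things differently is the estimate on \(\int_{\l_2}^L \tG^2/(L-\theta)\). The paper proves this directly (Lemma~\ref{lemma:G_bound_low_endpoint}) by writing \(\tG = (\tG_{loc}-\tG_{loc}(L)) + \tG_{loc}(L)(1-\cos 2\theta/\cos 2L) + \cdots\) and estimating each piece from the explicit formula. Your route---observing that \(\tG(L)=0\) and applying the one-line Hardy-type inequality \(\int_{\l_2}^L \tG^2/(L-\theta)\,\d\theta \le (L-\l_2)\int_{\l_2}^L (\tG')^2\,\d\theta\), then bounding \(\|\tG'\|_{L^2[\l_2,L]}\) from \eqref{eq:G_loc'_form} and the crude \(|\tG_{nl}'|\lesssim \|\tM\|_{L^1}/(L-\l_2)\)---is a clean shortcut that yields the same conclusion with less bookkeeping. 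It nicely isolates \emph{why} the nonlocal correction \(\tG_{nl}\) is needed: precisely to make \(\tG(L)=0\) so that Hardy applies.

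One small correction: you assert \(\l_2<1\) to pass from \(\l_2^{K+1}\) to \(\l_2^K\), but the setup only guarantees \(\l_2<L<\pi/2\). This is harmless---\(\l_2\) is bounded by a universal constant and the extra factor is absorbed into \(C\)---but the justification as written is not quite right.
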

\noindent Note that in this region, \(\varphi'(\theta) \equiv 0\) and thus the \((\varphi'/\varphi)G_*\) term is absent.
\begin{proof}
Considering the local terms first, since \(G_*(\theta) \geq 0\) with \(G_*(L) = 0\), for \(\l_2\) sufficiently close to \(L\), \(G_*'(\theta) < 0\) for all \(\theta \in [\l_2, L]\) and moreover \(G_*\tan\theta \leq C(L-\l_2)\). Thus,
\[
\int_{\l_2}^{L}\tM(\theta)^2\left[1 - 2G_*'(\theta)- 2G_*(\theta)\tan\theta\right]\d\theta \geq (1 - C(L-\l_2))\int_{\l_2}^{L}\tM(\theta)^2\d\theta.
\]
Now, we examine the terms from \(\Lb_{NL}\) which require some additional care as \(\tG_{loc}(\theta) \neq \tG(\theta)\) for \(\theta \in [\l_2, L]\).
\begin{lemma}\label{lemma:G_bound_low_endpoint}
    There exists a constant \(C > 0\) such that
\begin{align*}
    \int_{\l_2}^{L}\tG(\theta)^2(L - \theta)^{-1}\d\theta &\leq C(L-\l_2)\int_{\l_2}^{L}\tM(\theta)^2\d\theta + C\int_{0}^{\l_2}\tM(\theta)^2\d\theta
\end{align*}
and
\begin{align*}
    \int_{\l_2}^{L}\tG'(\theta)^2\d\theta &\leq C\int_{\l_2}^{L}\tM(\theta)^2\d\theta + C(L-\l_2)^{-1}\int_{0}^{\l_2}\tM(\theta)^2\d\theta.
\end{align*}
\end{lemma}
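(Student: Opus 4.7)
The plan is to prove the two bounds in order, first deriving the estimate for $\tG'$ on $[\l_2, L]$ and then using the Dirichlet boundary condition $\tG(L) = 0$ to deduce the weighted $L^2$ bound on $\tG$ via a fundamental theorem of calculus argument. Throughout, we exploit the decomposition $\tG = \tG_{loc} - \tG_{nl}$ and treat the two pieces separately, since $\tG_{loc}$ is controlled by the explicit integral representations \eqref{eq:G_loc_form_2}, \eqref{eq:G_loc'_form} established in the proof of Lemma \ref{lemma:G_bound_low_loc}, while $\tG_{nl}$ is a rank-one cutoff term whose size is entirely determined by $\tG_{loc}(L)$.

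For the $\tG'$ estimate, I would split $\tG' = \tG_{loc}' - \tG_{nl}'$. For $\tG_{loc}'$, the formula \eqref{eq:G_loc'_form} gives the pointwise bound $|\tG_{loc}'(\theta)| \leq C|\tM(\theta)| + C\int_0^\theta|\tM|$, and splitting the inner integral at $\l_2$ and applying Cauchy--Schwarz yields
\[
\int_{\l_2}^L \tG_{loc}'(\theta)^2 \d\theta \leq C\int_{\l_2}^L \tM^2 \d\theta + C(L-\l_2)\int_0^{\l_2}\tM^2\d\theta,
\]
which is stronger than needed since $L - \l_2 < 1$. For $\tG_{nl}'$, differentiating the explicit definition of $\tG_{nl}$ produces a term from $\eta'$ of order $|\tG_{loc}(L)|/(L-\l_2)$ (this is where the singular prefactor $(L-\l_2)^{-1}$ enters). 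Splitting the formula for $\tG_{loc}(L)$ at $\l_2$ gives $|\tG_{loc}(L)|^2 \leq C\int_0^{\l_2}\tM^2 + C(L-\l_2)\int_{\l_2}^L\tM^2$ by Cauchy--Schwarz, and integrating $\tG_{nl}'(\theta)^2$ over the support of $\eta((L-\theta)/(L-\l_2))$, which has length at most $L-\l_2$, produces $|\tG_{loc}(L)|^2/(L-\l_2)$. Combining and using $(L-\l_2)^{-1} \geq 1$ to absorb the smaller terms yields the stated $\tG'$ bound.

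For the weighted bound on $\tG$, the key point is that by construction $\tG(L) = 0$ (since $\tG_{nl}(L) = \tG_{loc}(L)$ from $\eta(0) = 1$). Writing $\tG(\theta) = -\int_\theta^L \tG'(\phi)\d\phi$ and applying Cauchy--Schwarz gives $\tG(\theta)^2 \leq (L-\theta)\int_\theta^L \tG'(\phi)^2\d\phi$, so that
\[
\int_{\l_2}^L \frac{\tG(\theta)^2}{L-\theta}\d\theta \leq \int_{\l_2}^L \int_\theta^L \tG'(\phi)^2 \d\phi \d\theta = \int_{\l_2}^L (\phi - \l_2)\tG'(\phi)^2 \d\phi \leq (L-\l_2)\int_{\l_2}^L \tG'(\phi)^2 \d\phi
\]
after Fubini. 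Substituting the $\tG'$ bound just proved, the factor $(L-\l_2)$ exactly cancels the singular $(L-\l_2)^{-1}$ on the $\int_0^{\l_2}\tM^2$ term, giving the stated inequality.

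The main technical point to watch is the split of $\tG_{loc}(L)$: to end up with the coefficient $C(L-\l_2)\int_{\l_2}^L \tM^2$ (rather than $C\int_{\l_2}^L \tM^2$) in the first inequality of the lemma, one must use Cauchy--Schwarz on the endpoint piece as $|\int_{\l_2}^L \tM| \leq (L-\l_2)^{1/2}\|\tM\|_{L^2(\l_2,L)}$ and be careful not to waste this extra power when combining with the $1/(L-\l_2)^2$ from $\eta'$; the derivation is otherwise routine once the decomposition and the observation $\tG(L)=0$ are in hand.
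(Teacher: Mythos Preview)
Your proposal is correct. For the second inequality (the bound on $\tG'$) you proceed exactly as the paper does: split $\tG' = \tG_{loc}' - \tG_{nl}'$, bound $\tG_{loc}'$ pointwise via \eqref{eq:G_loc'_form}, and control $\tG_{nl}'$ through the estimate $|\tG_{loc}(L)|^2 \leq C\int_0^{\l_2}\tM^2 + C(L-\l_2)\int_{\l_2}^L\tM^2$, noting that the derivative of the cutoff contributes the factor $(L-\l_2)^{-1}$.

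For the first inequality your route is genuinely different and somewhat cleaner. The paper does \emph{not} use the observation $\tG(L)=0$; instead it writes
\[
\tG(\theta) = (\tG_{loc}(\theta)-\tG_{loc}(L)) + \tG_{loc}(L)\Bigl(1-\tfrac{\cos2\theta}{\cos2L}\Bigr) + \tfrac{\tG_{loc}(L)}{\cos2L}\cos2\theta\Bigl(1-\eta\bigl(\tfrac{L-\theta}{L-\l_2}\bigr)\Bigr)
\]
and estimates each piece separately in the weighted $L^2$ norm, using the explicit formula for $\tG_{loc}$ on the first term and the linear vanishing at $\theta=L$ of the two bracketed factors to handle the singular weight on the remaining terms. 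Your argument bypasses this decomposition entirely: once the $\tG'$ bound is in hand, the Hardy-type step $\tG(\theta)^2 \leq (L-\theta)\int_\theta^L \tG'(\phi)^2\,\d\phi$ together with Fubini gives the weighted estimate in one line, and the factor $(L-\l_2)$ produced by Fubini exactly cancels the $(L-\l_2)^{-1}$ in the $\tG'$ bound. This is shorter and makes the dependence between the two inequalities transparent; the paper's approach has the minor advantage that the first inequality is established independently of the second, but since both are needed anyway this is not a real gain.
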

\begin{proof}
Writing,
\[
\tG(\theta) =  (\tG_{loc}(\theta) - \tG_{loc}(L)) + \tG_{loc}(L)\left(1 - \frac{\cos2\theta}{\cos 2L}\right) + \frac{\tG_{loc}(L)}{\cos 2L}\cos2\theta\left(1-\eta\left(\frac{L-\theta}{L-\l_2}\right)\right)
\]
we estimate the three terms separately. First,
\begin{align*}
    \tG_{loc}(\theta) - \tG_{loc}(L) &= \frac12(\cos2\theta - \cos2L)\int_{0}^{\theta}\tM(\phi)\d\phi - \frac{1}{2}\cos2L\int_{\theta}^{L}\tM(\phi)K_2(\phi)\d\phi \\
    &\qquad - \frac{1}{2}(\sin2\theta - \sin 2L)\int_{0}^{\theta}\tM(\phi)K_2(\phi)\d\phi + \frac{1}{2}\sin(2L)\int_{\theta}^{L}\tM(\phi)K_2(\phi)\d\phi.
\end{align*}
Using the fact that \(|\cos2\theta - \cos2L| + |\sin2\theta - \sin 2L| \leq C|L-\theta|\) it follows that
\begin{align*}
    \int_{\l_2}^{L}(\tG_{loc}(\theta) - \tG_{loc}(L))^2(L-\theta)^{-1}\d\theta &\leq C\int_{\l_2}^{L} (L-\theta)\int_{0}^{\theta}\tM(\phi)^2\d\phi\d\theta + C\int_{\l_2}^{L}\int_{\theta}^{L}\tM(\phi)^2\d\phi\d\theta \\
    &\leq C(L-\l_2)^2\int_{0}^{L}\tM(\phi)^2\d\phi + C(L-\l_2)\int_{\l_2}^{L}\tM(\phi)^2\d\phi.
\end{align*}
Integrating the remaining two terms, it is easily seen from the vanishing of \(1 - \cos2\theta/\cos2L\) and \(1 - \eta\) at \(\theta = L\) that
\begin{equation*}
    \int_{\l_2}^{L} \left[\tG_{loc}(L)^2\left(1 - \frac{\cos2\theta}{\cos 2L}\right)^2 + \frac{\tG_{loc}(L)^2}{\cos^2 2L}\cos^22\theta\left(1-\eta\left(\frac{L-\theta}{L-\l_2}\right)\right)^2\right](L-\theta)^{-1} \d\theta \leq C\tG_{loc}(L)^2.
\end{equation*}
Finally,
\begin{equation*}\label{eq:GL_bound}
    \tG_{loc}(L)^2 \leq C\int_{0}^{\l_2}\tM(\theta)^2\d\theta + C(L - \l_2)\int_{\l_2}^{L}\tM(\theta)^2\d\theta,
\end{equation*}
which completes the proof of the first inequality. Looking now at \(\tG'\) we see
\begin{equation*}
    \int_{\l_2}^{L}\tG'(\theta)^2\d\theta \leq C \int_{\l_2}^{L}\tG_{loc}'(\theta)^2\d\theta + C\tG_{loc}(L)^2\int_{\l_2}^{L}(L-\l_2)^{-2}\left(\eta'\left(\frac{L-\theta}{L-\l_2}\right)^2 + \eta\left(\frac{L-\theta}{L-\l_2}\right)^2\right)\d\theta
\end{equation*}
Since \(|\eta| \leq 1\) and \(|\eta'| \leq 3\) it follows from \eqref{eq:GL_bound} that
\begin{align*}
    \int_{\l_2}^{L}\tG'(\theta)^2\d\theta &\leq C \int_{\l_2}^{L}\tG_{loc}'(\theta)^2\d\theta + C\tG_{loc}(L)^2(L-\l_2)^{-1} \\
    &\leq C \int_{\l_2}^{L}\tG_{loc}'(\theta)^2\d\theta + C(L-\l_2)^{-1}\int_{0}^{\l_2}\tM(\theta)^2\d\theta + C\int_{\l_2}^{L}\tM(\theta)^2\d\theta.
\end{align*}
Finally looking at the first term, from the explicit formula \eqref{eq:G_loc'_form} it follows that
\begin{align*}
    \int_{\l_2}^{L}\tG_{loc}'(\theta)^2\d\theta \leq C\int_{0}^{L}\tM(\theta)^2\d\theta
\end{align*}
from which the second inequality now follows.
\end{proof}
Now, we look at the terms generated by \(\Lb_{NL}\),
\begin{align*}
    \int_{\l_2}^{L}\tM(\theta) \Lb_{NL}(M)\d\theta &= \int_{\l_2}^{L}2\tG(\theta) \tM(\theta) M_*'(\theta) - M_*(\theta)\tM(\theta)(\tG'(\theta) - 2\tG(\theta)\tan\theta)\d\theta.
\end{align*}
Since \(M_* \in C^{\frac12}([0, L])\), there exists a constant \(C > 0\) such that \(M_*'(\theta)(L-\l_2)^{1/2} \leq C\). Then,
\begin{align*}
\left|\int_{\l_2}^{L}2\tG(\theta) \tM(\theta) M_*'(\theta)\d\theta\right| &\leq C\int_{\l_2}^{L}|\tG(\theta)\tM(\theta)|(L-\theta)^{-1/2}\d\theta \\
&\leq C\left(\int_{\l_2}^{L}\tG(\theta)^2(L-\theta)^{-1}\d\theta\right)^{1/2}\left( \int_{\l_2}^{L}\tM(\theta)^2\d\theta\right)^{1/2}.
\end{align*}
By Lemma \ref{lemma:G_bound_low_endpoint} we then have
\begin{align*}
    \left|\int_{\l_2}^{L}2\tG(\theta)\tM(\theta) M_*'(\theta)\d\theta\right| &\leq C\left((L-\l_2)\int_{\l_2}^{L}\tM(\theta)^2\d\theta + C\int_{0}^{\l_2}\tM(\theta)^2\d\theta\right)^{1/2}\left( \int_{\l_2}^{L}\tM(\theta)^2\d\theta\right)^{1/2} \\
    &\leq C(L-\l_2)^{1/2}\int_{\l_2}^{L}\tM(\theta)^2\d\theta + C(L-\l_2)^{-1/2}\int_{0}^{\l_2}\tM(\theta)^2\d\theta.
\end{align*}
In the final term, since \(M_*(L) = 0\) and \(M_* \in C^{\frac12}([0, L])\) we have \(M_*(\theta) \leq C(L-\theta)^{1/2}\) and hence
\begin{align*}
    \left|\int_{\l_2}^{L}M_*(\theta) \tM(\theta)(\tG'(\theta) - 2\tG(\theta)\tan\theta)\d\theta\right| &\leq C(L-\l_2)^{1/2}\int_{\l_2}^{L}|\tM(\theta)||\tG' - 2\tG(\theta)\tan\theta|\d\theta \\
    &\leq C(L-\l_2)^{1/2}\int_{\l_2}^{L}\tM(\theta)^2 + \tG'(\theta)^2 + \tG(\theta)^2\d\theta.
\end{align*}
Applying Lemma \ref{lemma:G_bound_low_endpoint} (note that \((L-\theta) < 1\) for \(\theta \in [\l_2, L]\) so the inequality only improves without the singular weight) we then obtain
\begin{align*}
    \left|\int_{\l_2}^{L}M_*(\theta) \tM(\theta)(\tG'(\theta) - 2\tG(\theta)\tan\theta)\d\theta\right| &\leq C(L-\l_2)^{1/2}\int_{\l_2}^{L}\tM(\theta)^2\d\theta + C(L-\l_2)^{-1/2}\int_{0}^{\l_2}\tM(\theta)^2\d\theta.
\end{align*}
Altogether, we then have
\begin{multline*}
    \int_{\l_2}^{L}\tM(\theta)^2\left[1 - 2G_*'(\theta)- 2G_*(\theta)\tan\theta\right]\d\theta + \int_{\l_2}^{L}\tM(\theta)\Lb_{NL}(M)\d\theta \\
     \geq \left(1- C(L-\l_2)^{1/2}\right)\int_{\l_2}^{L}\tM(\theta)^2\d\theta - C(L-\l_2)^{-1/2}\int_{0}^{\l_2}\tM(\theta)^2\d\theta.
\end{multline*}
Finally, since
\[
\int_{0}^{\l_2}\tM(\theta)\d\theta \leq \l_1^8\int_{0}^{\l_1}\tM(\theta)^2\theta^{-8}\d\theta + \l_2^K\int_{\l_1}^{\l_2}\tM(\theta)^2\theta^{-K}\d\theta,
\]
Proposition \ref{prop:endpoint_low} now follows. 
\end{proof}
\subsection{Coercivity}
 Now, we prove Proposition \ref{prop:low_norm_coercive} to obtain coercivity at low order by summing the inequalities in Propositions \ref{prop:local_low}, \ref{prop:bulk_low}, and \ref{prop:endpoint_low} after multiplying each by \(1, \l_1^{K-8}\) and \(\l_1^{K-8}\l_2^{-K}\) respectively. Indeed, considering first all terms integrated on \([0, \l_1]\) we obtain
\begin{multline*}
    \left\{3 - C\l_1^2 - C\l_1^{K-8}(L-\l_2)^{-1/2}\l_1^{10-K} - C\l_1^{K-8}\l_2^{-K}(L-\l_2)^{-1/2}\l_1^8\right\}\int_{0}^{\l_1}\tM(\theta)^2\theta^{-8}\d\theta  \\
    =  \left\{3 - C\l_1^2 - C\l_1^{2}(L-\l_2)^{-1/2} - C\left(\frac{\l_1}{\l_2}\right)^{K}(L-\l_2)^{-1/2}\right\}\int_{0}^{\l_1}\tM(\theta)^2\theta^{-8}\d\theta.
\end{multline*}
Choosing \(L - \l_2 = \l_1^2\) and \(K = \l_1^{-4}\) gives
\begin{align*}
     &\left\{3 - C\l_1^2 - C\l_1 - C\l_1^{-1}\left(\frac{\l_1}{\l_2}\right)^{\l_1^{-4}}\right\}\int_{0}^{\l_1}\tM(\theta)^2\theta^{-8}\d\theta \geq 2\int_{0}^{\l_1}\tM(\theta)^2\theta^{-8}\d\theta
\end{align*}
for \(\l_1\) sufficiently small. For the bulk terms, we obtain
\begin{multline*}
    \left\{\l_1^{K-8}Kc(L-\l_2) - C\l_1^{K-8}(L-\l_2)^{-1/2} - C\l_1^{K-8}\l_2^{-K}(L-\l_2)^{-1/2}\l_2^K\right\}\int_{\l_1}^{\l_2}\tM(\theta)\theta^{-K}\d\theta \\
    = \l_1^{K-8}\left\{Kc\l_1^{2} - C\l_1^{-1}\right\}\int_{\l_1}^{\l_2}\tM(\theta)^2\theta^{-K}\d\theta
\end{multline*}
where we have recalled that \(L - \l_2 = \l_1^2\). Recalling \(K = \l_1^{-4}\) we find
\[
\l_1^{K-10}(c - C\l_1) \geq \frac{c_*\l_1^{K-10}}{2}
\]
for \(\l_1\) sufficiently small. Finally, for the endpoint terms we have
\begin{align*}
    \l_1^{K-8}\l_2^{-K}\left(1 - C(L-\l_2)^{1/2}\right)\int_{\l_2}^{L}\tM(\theta)^2\d\theta \geq \frac{\l_1^{K-8}\l_2^{-K}}{2}\int_{\l_2}^{L}\tM(\theta)^2\d\theta 
\end{align*}
for \(\l_1\) sufficiently small. Thus, we conclude that
\begin{align*}
    \int_{0}^{L}\tM(\theta)\Lb(M)\varphi(\theta)\d\theta \geq \min\left\{2, \frac{c\l_1^{K-10}}{2}, \frac{\l_1^{K-8}\l_2^{-K}}{2}\right\}\int_{0}^{L}\tM(\theta)\varphi(\theta)\d\theta
\end{align*}
which proves Proposition \ref{prop:low_norm_coercive}. 
\section{High-Norm Coercivity}
 In this section we prove the following proposition regarding the coercivity of \(\Lb\) for the fourth order portion of the inner product \(\inner{\cdot}{\cdot}_{\tH^4}\). The proof is largely analogous to the low norm case.
\begin{proposition}\label{prop:high_norm_coercive}
    There exists \(c_2, C > 0\) such that for all \(M \in \tH^4\),
    \[
    \int_{0}^{L}\tM^{(4)}(\theta)\Lb(M)^{(4)}(\theta)\psi(\theta) \d\theta \geq c_2 \int_{0}^{L}\tM^{(4)}(\theta)^2\psi(\theta) \d\theta - C\int_{0}^{L}\tM(\theta)^2\varphi(\theta)\d\theta.
    \]
    Here, the constants \(c_2, C\) depend on the profile and \(\l_1\). 
\end{proposition}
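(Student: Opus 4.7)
My plan is to mirror the low-norm proof of Proposition \ref{prop:low_norm_coercive} region by region, treating the top-derivative piece of the local part $\bar{\mathcal{L}}_L$ as the main source of coercivity and regarding everything else as an error absorbed either by interpolation (Lemma \ref{lemma:weighted_interpolation}) or by the low-norm contribution $\int \tilde M^2 \varphi\,d\theta$ on the right-hand side. Because $\mathbb{P}_2 M$ is a polynomial of degree two, $\tilde M^{(4)} = M^{(4)}$, so differentiating $\bar{\mathcal{L}}(M)$ four times makes sense and can be expanded via Leibniz. The unique ``top-derivative'' contribution is $2G_*\tilde M^{(5)}$ coming from $(2G_*\tilde M')^{(4)}$; integrating it against $\tilde M^{(4)}\psi$ and integrating by parts once produces the familiar coefficient
\[
-\tfrac12(2G_*\psi)'/\psi = -G_*' - G_*\,\psi'/\psi.
\]
Combining this with the zero-th order Leibniz contributions ($1$ from $\tilde M$, $-G_*'$ from $-G_*'\tilde M$, $-2G_*\tan\theta$ from $-2G_*\tilde M\tan\theta$, and $+8G_*'$ from $(2G_*\tilde M')^{(4)}$ acting as $8G_*'\tilde M^{(4)}$) gives the effective coercivity factor
\[
\mathcal{A}(\theta) := 1 + 6G_*'(\theta) - 2G_*(\theta)\tan\theta - \frac{\psi'(\theta)}{\psi(\theta)}G_*(\theta),
\]
which is the analogue of the low-norm factor with $\varphi$ replaced by $\psi$.

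The second step is to check that $\mathcal A$ is positive in each of the three regions. On $[0,\ell_1]$, $\psi'/\psi\equiv 0$ and $G_*'(0)=1$, so $\mathcal A \geq 7-C\ell_1^2$. On $[\ell_1,\ell_2]$, $\psi'/\psi = -K/\theta$ and $G_*(\theta)/\theta$ is bounded below, giving $\mathcal A \geq Kc(L-\ell_2) - C$ exactly as in Proposition \ref{prop:bulk_low}. On $[\ell_2,L]$, $\psi(\theta)\propto (L-\theta)^{13/2}$ so $-\psi'/\psi = (13/2)(L-\theta)^{-1}$, and since $G_*(\theta)/(L-\theta)\to -G_*'(L)>0$ by \eqref{eq:G_vanishing}, this contributes a positive term of order $-(13/2)G_*'(L)$ which dominates the negative $6G_*'$ for $\ell_2$ close enough to $L$. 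Thus the same choice $L-\ell_2 = \ell_1^2$, $K=\ell_1^{-4}$ used for the low-norm estimate yields positivity of $\mathcal A$ with quantitative lower bounds in each region.

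The remaining commutator contributions from Leibniz are finite sums $G_*^{(j)}\tilde M^{(k)}$ with $j+k\leq 5$ and $k\leq 3$, paired with $\tilde M^{(4)}\psi$. Away from $L$ the coefficients $G_*^{(j)}$ are smooth and bounded, so Cauchy--Schwarz plus the interpolation Lemma \ref{lemma:weighted_interpolation} bounds each term by $\varepsilon\int (\tilde M^{(4)})^2\psi + C_\varepsilon\int \tilde M^2\varphi$, using that $\psi\leq C\varphi$ on $[0,\ell_1]$ and that $\psi$ is equivalent to a positive power of $(L-\theta)$ on $[\ell_1,L]$. Near the endpoint $G_*^{(j)}(\theta)\lesssim (L-\theta)^{1/2-j}$, but because $\psi$ vanishes at $L$ to order $13/2 > 2(j-1/2)$ for $j\leq 3$, every such product is integrable after one application of Lemma \ref{lemma:weighted_interpolation}. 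The $\varepsilon$ can be absorbed into the $c_2$ in the statement.

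Finally, for the nonlocal piece $\bar{\mathcal{L}}_{NL}(M)^{(4)}$, the plan is to derive weighted bounds on derivatives of $\tilde G$ in terms of $\tilde M$, by repeatedly using the equation $\tilde G_{loc}'' = \tilde M'\cos^2\theta - 4\tilde G_{loc}$ to trade two derivatives of $\tilde G$ for one derivative of $\tilde M$, combined with the explicit formulas \eqref{eq:G_loc_form_2}, \eqref{eq:G_loc'_form} used in Lemmas \ref{lemma:G_bound_low_loc}, \ref{lemma:G_bound_low_bulk}, \ref{lemma:G_bound_low_endpoint}. This yields higher-order analogues of those lemmas of the form $\|\tilde G^{(j+1)}\|_{L^2_\psi}\lesssim \|\tilde M^{(j)}\|_{L^2_\psi}+\text{low-order}$. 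Pairing with the smooth coefficients $M_*, M_*', M_*\tan\theta$ (which are $C^\infty$ on $[0,\ell_2]$ and controlled by powers of $(L-\theta)^{1/2}$ near $L$) gives estimates of the form $\varepsilon\int (\tilde M^{(4)})^2\psi + C\int \tilde M^2\varphi$ on the nonlocal error. Summing the three regional estimates with the same weights $1,\ell_1^{K-8},\ell_1^{K-8}\ell_2^{-K}$ as in Proposition \ref{prop:low_norm_coercive} then yields the claim. The main technical obstacle will be the endpoint region, where the truncation term $\tilde G_{nl}$ and the merely $C^{1/2}$ regularity of $M_*$ at $L$ conspire to make the higher-derivative bounds for $\tilde G$ delicate; controlling $\tilde G^{(k)}$ via the differentiated formula while keeping track of the cancellation $\tilde G(L)=0$ built into $\tilde G_{nl}$ is the step where one must be most careful.
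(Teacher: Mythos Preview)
Your outline matches the paper's proof almost exactly: the same effective coefficient $\mathcal A(\theta)=1+6G_*'-2G_*\tan\theta-(\psi'/\psi)G_*$ obtained from integrating the top-order transport term by parts, the same three-region analysis with the same positivity mechanisms, and higher-order analogues of the $\tilde G$ lemmas for the nonlocal piece.

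Two places where your sketch is imprecise and would not close as written:

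\emph{Local region nonlocal term.} You claim the nonlocal contribution gives $\varepsilon\int(\tilde M^{(4)})^2\psi+C\int\tilde M^2\varphi$. This is false on $[0,\ell_1]$: the top-order nonlocal term is $-M_*\tilde G^{(5)}\tilde M^{(4)}$, and since $M_*(0)=4$ while $\tilde G^{(5)}=\tilde M^{(4)}\cos^2\theta+\text{l.o.t.}$ (differentiate \eqref{eq:G_loc'_form}), the sharp estimate is $\int_0^{\ell_1}\tilde G^{(5)\,2}\leq(1+C\ell_1^2)\int_0^{\ell_1}\tilde M^{(4)\,2}+\text{low}$, and Cauchy--Schwarz with $|M_*|\leq 4$ costs you a fixed $4$, not $\varepsilon$. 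The local coercivity constant is therefore $7-4-C\ell_1^2=3-C\ell_1^2$, exactly mirroring the low-norm case. Your ``absorb $\varepsilon$ into $c_2$'' reasoning does not apply here; you need the explicit margin $7>4$.

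\emph{Endpoint numerology.} You write $G_*^{(j)}\lesssim(L-\theta)^{1/2-j}$ and check $13/2>2(j-1/2)$ for $j\leq 3$. Both are off. First, the relevant Leibniz terms $G_*^{(5-k)}\tilde M^{(k)}$ with $k\leq 3$ involve $j=5-k$ up to $5$, not $3$. Second, and more importantly, $G_*\in C^{1,1/2}$ (not merely $C^{1/2}$), giving the sharper bound $G_*^{(j)}\lesssim(L-\theta)^{3/2-j}$; with your weaker exponent the weighted interpolation (Lemma \ref{lemma:weighted_interpolation}) lands at $(L-\theta)^{11/2}$ instead of the needed $(L-\theta)^{13/2}$ and the estimate does not close. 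With the correct $3/2-j$ bound the paper's argument (Proposition \ref{prop:endpoint_high_coercivity}) goes through, producing the extra factor $(L-\ell_2)^{1/2}$ that makes the endpoint lower-order terms small.
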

Isolating the top order terms of the local part of the operator \(\Lb_{L}\)
\begin{align*}
\int_{0}^{L}\tM^{(4)}(\theta)\Lb_{L}(M)^{(4)}(\theta)\psi(\theta)\d\theta &= \int_{0}^{L}\tM^{(4)}(\theta)\left[\tM^{(4)}(\theta) + 2G_*(\theta)\tM^{(5)}(\theta) + 7G_*'(\theta)\tM^{(4)}(\theta)\right]\psi(\theta)\d\theta \\
& \qquad - \int_{0}^{L}\tM^{(4)}(\theta)\left[2G_*(\theta)\tM^{(4)}(\theta)\tan\theta\right]\psi(\theta)\d\theta + \int_{0}^{L}I_{L}(\tM)\psi(\theta)\d\theta
\end{align*}
where \(I_L(\tM)\) are the lower order terms from \(\Lb_L\)
\[
I_{L}(\tM) := \tM^{(4)}\left[\sum_{k=0}^{2}\binom{4}{k}2G_*^{(4-k)}\tM^{(1+k)}- \sum_{k=0}^{3}\binom{4}{k}\left(G_*^{(5-k)}\tM^{(k)} -2\tM^{(k)}(G_*\tan\theta)^{(4-k)}\right)\right].
\]
Integrating by parts, and noting that the boundary terms vanish since \(G_*(0) = G_*(L) = 0\), the top order part becomes
\begin{align}\label{eq:top_order_local}
\int_{0}^{L} \tM^{(4)}(\theta)^2\left[1+ 6G_*'(\theta) - \frac{\psi'(\theta)}{\psi(\theta)}G_*(\theta) - 2G_*\tan\theta\right]\psi(\theta)\d\theta.
\end{align}
As in the low-norm case, we treat the three regions \([0, \l_1], [\l_1, \l_2], [\l_2, L]\) separately. First, to deal with the lower order terms, for convenience we recall a basic interpolation inequality,
\begin{lemma}\label{lemma:interp}
Let \(k \geq 1\) be an integer. Then, there exists a constant \(C = C(k) > 0\) such that for every \(f \in \H^k, j < k\) and \(\epsilon > 0\)
\begin{equation}
    \|f^{(j)}\|_{L^2}^2 \leq \frac{C_k}{\epsilon^2}\|f\|_{L^2} + C_k\epsilon^2\|f^{(k)}\|_{L^2}.
\end{equation}
\end{lemma}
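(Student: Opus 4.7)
The plan is to prove this standard interpolation inequality in two stages: first establish the multiplicative Gagliardo--Nirenberg bound
\[
\|f^{(j)}\|_{L^2}^2 \leq C_k \|f\|_{L^2}^{2(1 - j/k)} \|f^{(k)}\|_{L^2}^{2j/k} + C_k \|f\|_{L^2}^2
\]
on the interval \([0,L]\), then convert to the additive form stated in the lemma via Young's inequality. (I am reading the stated inequality as having \(L^2\) norms squared on both sides, which is the scale-consistent statement and the only form the proof produces.)

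For the base case \(k = 2\), \(j = 1\), I would integrate by parts:
\[
\int_0^L (f'(\theta))^2\, d\theta = f(L)f'(L) - f(0)f'(0) - \int_0^L f(\theta) f''(\theta)\, d\theta.
\]
The integral term is controlled by \(\|f\|_{L^2}\|f''\|_{L^2}\) via Cauchy--Schwarz. To handle the boundary terms without appealing to a trace theorem, I would pick \(\theta_* \in [0,L]\) with \(|f'(\theta_*)| \leq L^{-1/2}\|f'\|_{L^2}\) (mean value) and write \(f'(0) = f'(\theta_*) - \int_0^{\theta_*} f''\); a similar expansion controls \(|f(0)|\) and \(|f(L)|\) by \(\|f\|_{L^2}\) plus one derivative. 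Cauchy--Schwarz on these pointwise identities and a second application of the integration-by-parts bound (now as a small absorbable term) yield \(\|f'\|_{L^2}^2 \leq C(L)(\|f\|_{L^2}\|f''\|_{L^2} + \|f\|_{L^2}^2)\), which is the base case.

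Next I would iterate. By applying the base case to \(f^{(i)}\) for \(0 \leq i \leq k-2\), I obtain the chain of inequalities \(\|f^{(i+1)}\|_{L^2}^2 \leq C(\|f^{(i)}\|_{L^2}\|f^{(i+2)}\|_{L^2} + \|f^{(i)}\|_{L^2}^2)\). Setting \(a_i := \log\|f^{(i)}\|_{L^2}\) (after absorbing the lower-order term into a slightly larger right-hand side), these give the discrete log-convexity \(2a_{i+1} \leq a_i + a_{i+2} + O(1)\), which by induction on \(k\) yields \(a_j \leq (1-j/k)a_0 + (j/k)a_k + O(1)\). Exponentiating produces the multiplicative interpolation bound above.

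Finally, I would apply Young's inequality \(xy \leq \tfrac{x^p}{p} + \tfrac{y^q}{q}\) with exponents \(p = k/(k-j)\), \(q = k/j\) to the product \(x \cdot y = (\epsilon^{-a}\|f\|_{L^2}^{2(1-j/k)}) \cdot (\epsilon^{a}\|f^{(k)}\|_{L^2}^{2j/k})\), choosing the weight \(a\) so that after raising to powers \(p\) and \(q\) the factors of \(\epsilon\) become \(\epsilon^{-2}\) and \(\epsilon^{2}\) respectively; the choice \(a = 2j/k\) combined with \(p(1-j/k) = 1\) and \(qj/k = 1\) gives \(\epsilon^{-2j/(k-j)}\) and \(\epsilon^{2}\), and one then adjusts the role of \(\epsilon\) by a harmless rescaling to match the exact form in the statement. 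The main technical obstacle is purely bookkeeping -- handling the boundary terms in the base case so the proof goes through on \([0,L]\) without a trace inequality, and tracking the Young exponents through the chain so that the constant depends only on \(k\) (with \(L\) fixed). Everything else is routine.
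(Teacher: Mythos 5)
The paper does not actually prove this lemma --- it is invoked as a standard interpolation inequality without proof --- so the only question is whether your argument is sound. Your overall route (multiplicative Gagliardo--Nirenberg on \([0,L]\) plus Young, with the sensible reading that all norms are squared) is a standard and legitimate way to obtain it, but the base case as you describe it does not close. After integrating by parts you bound \(|f'(0)|,|f'(L)|\le L^{-1/2}\|f'\|_{L^2}+L^{1/2}\|f''\|_{L^2}\) and \(|f(0)|,|f(L)|\le L^{-1/2}\|f\|_{L^2}+L^{1/2}\|f'\|_{L^2}\); the product of these two bounds contains the cross term \(L^{1/2}\|f'\|_{L^2}\cdot L^{-1/2}\|f'\|_{L^2}=\|f'\|_{L^2}^2\) with an absolute constant at least \(1\) (in fact \(2\) after summing both endpoints). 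This term is not small and cannot be absorbed into the left-hand side, and shrinking the interval used for the mean-value point does not help: if you localize to length \(\delta\) the offending coefficient is \(\delta^{1/2}\cdot\delta^{-1/2}=1\) again, because the term is scale invariant. So the phrase ``a second application of the integration-by-parts bound (now as a small absorbable term)'' hides a step that genuinely fails as written. A clean fix that stays elementary: partition \([0,L]\) into intervals \(I\) of length \(h\sim\epsilon\); on each \(I\) choose points \(y_1\) in the left third and \(y_2\) in the right third with \(|f(y_i)|\le (3/h)^{1/2}\|f\|_{L^2(I)}\), apply the mean value theorem to the difference quotient \((f(y_2)-f(y_1))/(y_2-y_1)\) (denominator \(\ge h/3\)) to get a point \(\xi\in I\) with \(|f'(\xi)|\le C h^{-3/2}\|f\|_{L^2(I)}\), and then write \(f'(x)=f'(\xi)+\int_\xi^x f''\). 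Squaring and summing over the partition gives \(\|f'\|_{L^2}^2\le C\epsilon^{-2}\|f\|_{L^2}^2+C\epsilon^{2}\|f''\|_{L^2}^2\) with no absorption needed; iterating this additive form (or, alternatively, using an \(H^2\) extension to \(\R\) and Plancherel, absorbing a small \(\epsilon^2\|f'\|^2\)) then yields the general \(j<k\) case.

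Two smaller points. First, your Young-inequality conversion of the multiplicative bound produces \(\epsilon^{-2j/(k-j)}\) on the low-order term (plus a constant multiple of \(\|f\|_{L^2}^2\)), not \(\epsilon^{-2}\); this matches the stated form only for \(\epsilon\) in a bounded range, and no ``harmless rescaling'' can do better, because the inequality as literally stated for every \(\epsilon>0\) is false (take \(f\) affine and \(\epsilon\to\infty\): the right-hand side tends to \(0\) while the left does not). This is harmless for the paper, which only ever applies the lemma with small \(\epsilon\), but your write-up should restrict to \(0<\epsilon\lesssim 1\) rather than claim the exact stated form. Second, the induction via discrete log-convexity needs the usual care with the inhomogeneous \(\|f^{(i)}\|^2\) term (the two-branch argument), which is routine but should be spelled out.
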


\subsection{Local Coercivity}
Here, we prove that the following proposition about the coercivity of \(\Lb\) on \([0, \l_1]\). We will obtain coercivity up to a low norm term which can be overcome by choosing \(B\) sufficiently large in the definition of \(\inner{\cdot}{\cdot}_{\tH^4}\) \eqref{eq:inner_product_1}.

\begin{proposition}\label{prop:local_high}
	There exists \(C, \l_1^* > 0\) such that for all \(\l_1 < \l_1^*\), we have
	\begin{multline*}
	\int_{0}^{\l_1} \tM^{(4)}(\theta)^2\left[1+ 6G_*'(\theta) - 2G_*(\theta)\tan\theta\right]\d\theta + \int_{0}^{\l_1}I_{L}(\tM)\d\theta + \int_{0}^{\l_1}\tM^{(4)}(\theta)\Lb_{NL}(M)\d\theta \\
    \geq \left(3 - C\l_1^2\right) \int_{0}^{\l_1}\tM^{(4)}(\theta)^2\d\theta - C\l_1^2\int_{0}^{\l_1}\theta^{-8}\tM(\theta)^2\d\theta.
	\end{multline*}
\end{proposition}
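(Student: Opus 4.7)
The strategy mirrors the low-norm local estimate of Proposition~\ref{prop:local_low}: isolate the top-order coercive contribution from the local transport, extract a single leading piece from the non-local operator that reduces the effective coefficient, and absorb every remaining term by $\l_1$-smallness together with a controlled portion of the low norm. Two structural facts will be used throughout. First, $\psi\equiv 1$ on $[0,\l_1]$, so no weight-derivative appears. Second, since $\tM = M - \mathbb{P}_2 M$ vanishes to second order at the origin, iterated Hardy inequalities (as in Lemma~\ref{lemma:equiv_norm}) let us trade every derivative of $\tM$ below order four for a factor of $\l_1$, up to a controlled contribution of $\int_0^{\l_1}\theta^{-8}\tM^2\,d\theta$.

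For the top-order local contribution \eqref{eq:top_order_local} I would Taylor expand near $0$: since $G_*'(0)=1$ and $G_*$ is odd (so $G_*''(0)=0$), one has $G_*(\theta)=\theta + O(\theta^3)$, and hence
\[
1 + 6G_*'(\theta) - 2G_*(\theta)\tan\theta \;\geq\; 7 - C\l_1^2 \quad \text{on } [0,\l_1],
\]
producing the basic estimate $(7-C\l_1^2)\int_0^{\l_1}\tM^{(4)}(\theta)^2\,d\theta$. The critical observation is that the non-local piece gives back a leading $-4$. Of the terms in $\Lb_{NL}(M)^{(4)}$, only $-M_*\tG^{(5)}$ carries a fifth derivative of $\tG$, and differentiating the Biot–Savart identity $\tG'' + 4\tG = \tM'\cos^2\theta$ three times gives
\[
\tG^{(5)} = \tM^{(4)}\cos^2\theta - 3\tM'''\sin 2\theta - 6\tM''\cos 2\theta + 4\tM'\sin 2\theta - 4\tG'''.
\]
Testing against $\tM^{(4)}$ and using $M_*(0)\cos^2(0) = 4$ with smoothness of $M_*\cos^2\theta$ near $0$, the top piece contributes
\[
-\int_0^{\l_1} M_*(\theta)\cos^2(\theta)\,\tM^{(4)}(\theta)^2\,d\theta \;\leq\; -(4 - C\l_1^2)\int_0^{\l_1}\tM^{(4)}(\theta)^2\,d\theta.
\]
Combined with the local bound this already yields $7-4-C\l_1^2 = 3-C\l_1^2$, matching the claim.

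It remains to verify that the sub-leading terms are absorbable. For $I_L(\tM)$, every summand is a product $G_*^{(j)}\tM^{(k)}$ with $k\leq 3$; since $G_*$ and its derivatives are bounded on $[0,\l_1]$ and Hardy applied to the vanishing-to-second-order $\tM$ gains a power of $\l_1$ per missing derivative, each such term is bounded by $C\l_1^2\int_0^{\l_1}\tM^{(4)2}\,d\theta + C\l_1^2\int_0^{\l_1}\theta^{-8}\tM^2\,d\theta$, using Lemma~\ref{lemma:interp} to interpolate if needed. For the sub-leading non-local terms, one extends the mechanism of Lemma~\ref{lemma:G_bound_low_loc} to higher derivatives: either differentiating the explicit representations \eqref{eq:G_loc_form_2}, \eqref{eq:G_loc'_form}, or using $\tG^{(j)} = -4\tG^{(j-2)} + (\tM'\cos^2\theta)^{(j-2)}$ for $j\geq 2$, one shows
\[
\int_0^{\l_1}(\tG^{(j)}(\theta))^2\,d\theta \;\lesssim\; \int_0^{\l_1}(\tM^{(j-1)}(\theta))^2\,d\theta + \l_1^{\sigma}\!\int_0^{\l_1}\theta^{-8}\tM^2\,d\theta \quad (2\leq j\leq 4),
\]
for some $\sigma > 0$, together with the $j\in\{0,1\}$ estimates already in Lemma~\ref{lemma:G_bound_low_loc}. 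Pairing these $\tG^{(j)}$ bounds against $\tM^{(4)}$ (with the factors $M_*^{(4-j)}$ or $(M_*\tan\theta)^{(4-j)}$, all bounded on $[0,\l_1]$) via Cauchy–Schwarz and the Hardy trade, every sub-leading non-local term is absorbed with an $\l_1$-small constant.

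The main obstacle is bookkeeping: one must confirm that every occurrence of $\tM^{(4)}$ inside $\Lb_{NL}(M)^{(4)}$ is accounted for in the leading $-\int M_*\cos^2\theta\,\tM^{(4)2}$ contribution above, and that no sub-leading cross term secretly hides a top-order piece without an $\l_1$-gain. The structural reason the estimate closes is clean: $-M_*\tG^{(5)}$ is the \emph{only} non-local term that reaches order four in $\tM$, and its leading symbol $M_*(0)\cos^2(0) = 4$ is exactly balanced against the coefficient $7$ from the local transport, leaving a positive remainder of $3$ modulo $O(\l_1^2)$ corrections and the low-norm piece that the final coercivity argument (via the constant $B$ in the $\tH^4$ inner product) will absorb.
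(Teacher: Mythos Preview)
Your proposal is correct and tracks the paper's argument closely: same top-order local bound $7-C\l_1^2$, same identification of $-M_*\tG^{(5)}$ as the sole top-order nonlocal piece, and the same interpolation/Hardy treatment of $I_L$ and the sub-leading nonlocal terms. The one substantive difference is how the constant $4$ is extracted from $-\int_0^{\l_1} M_*\tG^{(5)}\tM^{(4)}$. The paper applies $M_*\leq 4$, Cauchy--Schwarz, and a sharp estimate $\int_0^{\l_1}(\tG^{(5)})^2\leq (1+\l_1^2)\int_0^{\l_1}(\tM^{(4)})^2+\text{l.o.t.}$ (Lemma~\ref{G_loc_lemma}/Corollary~\ref{cor:G5_est}); you instead expand $\tG^{(5)}$ pointwise from the Biot--Savart ODE and read off the leading symbol $M_*(0)\cos^2(0)=4$. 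Both routes give $7-4=3$; yours is arguably more transparent about the origin of the constant, while the paper's packages the $\tG^{(j)}$ estimates into a reusable lemma.

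One slip to fix: your displayed inequality
\[
-\int_0^{\l_1} M_*\cos^2\theta\,(\tM^{(4)})^2\,d\theta \;\leq\; -(4-C\l_1^2)\int_0^{\l_1}(\tM^{(4)})^2\,d\theta
\]
points the wrong way. For the lower bound on the full expression you need $M_*\cos^2\theta\leq 4$, hence $-\int M_*\cos^2\theta\,(\tM^{(4)})^2\geq -4\int(\tM^{(4)})^2$. Your conclusion ``$7-4-C\l_1^2$'' uses the correct direction, so this is a typographical inversion rather than a gap in the argument.
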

\begin{proof}
    Note that \(\psi \equiv 1\) so \(\psi' \equiv 0\) on \([0, \l_1]\) so the first integral consists of exactly the top order terms identified in \eqref{eq:top_order_local}. As in the low norm case, the coercivity comes from the highest order derivatives in the transport term and the remaining lower order terms can be interpolated, costing only lower order norm which has been controlled in the previous section. As in the low norm case, for \(\l_1\) sufficiently small, since \(G_*'(0) = 1\) and \(G_*(0) = 0\) we have
    \[
    1 + 6G_*' - 2G_*\tan\theta \geq 7 - C\l_1^2.
    \]
    The remaining lower order terms in \(I_L(\tM)\) can be controlled through interpolation. To consider the terms generated by \(\Lb_{NL}\) we first prove the following high norm analog of Lemma \ref{lemma:G_bound_low_loc}.
    \begin{lemma}\label{lemma:G_high_loc}
	For any \(1 \leq k  \leq 5\) there exists \(C_k > 0\) such that for all \(\delta > 0\), 
	\begin{align*}
	\int_{0}^{\l_1}\tG^{(k+1)}(\theta)^2 \d\theta \leq (1+\delta^2)\int_{0}^{\l_1}\tM^{(k)}(\theta)^2\d\theta + C_k\delta^{-2}\sum_{j=0}^{k-1}\int_{0}^{\l_1}\tM^{(j)}(\theta)^2\d\theta.
	\end{align*}
    \label{G_loc_lemma}
    \end{lemma}
    \begin{proof}
	Differentiating \eqref{eq:G_loc_form_2} \(k+1\) times, the highest derivative term in \(\tM\) appearing is \(\tM^{(k)}\) which appears with the coefficient
	\[
		\frac{1}{2}\left(\cos2\theta K_1(\theta) + \sin2\theta K_2(\theta)\right) = \cos^2\theta.
	\]
	Thus, it is readily seen that we have the pointwise bound
	\begin{equation}\label{eq:G_pointwise}
		|\tG^{(k+1)}(\theta)| \leq |\tM^{(k)}(\theta)| + C_k\sum_{j=1}^{k-1}|\tM^{(j)}(\theta)| + C_k\int_{0}^{\theta}|\tM(\phi)|\d\phi
	\end{equation}
	for some constant \(C_k\) depending only on \(k\). Hence, for any \(\delta > 0\),
		\[
		\tG^{(k+1)}(\theta)^2 \leq(1+\delta^2) \tM^{(k)}(\theta)^2 + C_k\delta^{-2}\left(\sum_{j=1}^{k-1}\tM^{(j)}(\theta)^2 + \left(\int_{0}^{\theta}|\tM(\phi)|\d\phi\right)^2\right).
	\]
	Integrating the above and applying Cauchy--Schwarz on the integral term completes the proof of the lemma.
\end{proof}
\begin{cor}\label{cor:G5_est} In the case \(k = 5\), choosing \(\delta = \l_1\), we obtain the following local, high-norm estimate from the preceding lemma and interpolation (Lemma \ref{lemma:interp}). 
	\begin{align}
		\int_{0}^{\l_1}\tG^{(5)}(\theta)^2 \d\theta &\leq (1+\l_1^2)\int_{0}^{\l_1}\tM^{(4)}(\theta)^2\d\theta + C\l_1^{2}\int_{0}^{\l_1}\tM(\theta)^2 \theta^{-8}\d\theta
    \end{align}
\end{cor}
Considering now the terms generated by \(\Lb_{NL}\), there is only one top order term,
    \[
       \int_{0}^{\l_1}\tM^{(4)}(\theta)\Lb_{NL}(M)^{(4)}\d\theta = - \int_{0}^{\l_1}M_*(\theta)\tG^{(5)}(\theta)\tM^{(4)}(\theta)\d\theta + \int_{0}^{\l_1}I_{NL}(\tM)\d\theta
    \]
    where \(I_{NL}\) denotes the lower order terms,
    \[
        I_{NL}(\tM) := \tM^{(4)}\left[2\sum_{k=0}^{4}\binom{4}{k}\tG^{(4)}\left[M_*^{(5-k)} - (M_*\tan\theta)^{(4-k)}\right]- \sum_{k=0}^{3}\binom{4}{k}\tG^{(k+1)}M_*^{(4-k)}\right].
    \]
	Since \(M_*(\theta) \leq 4\), it follows that
	\begin{align*}
	\left|\int_{0}^{\l_1}M_*(\theta)\tG^{(5)}(\theta)\tM^{(4)}(\theta)\d\theta\right| &\leq  2 \int_{0}^{\l_1}\tG^{(5)}(\theta)^2 + \tM^{(4)}(\theta)^2 \d\theta \\
    &\leq (4+C\l_1^2)\int_{0}^{\l_1}\tM^{(4)}(\theta)^2\d\theta + C\l_1^2\int_{0}^{\l_1}\tM(\theta)^2\theta^{-8}\d\theta,
	\end{align*}
    where we have used Corollary \ref{cor:G5_est} in the final inequality. It now remains to control the lower order terms \(I_{L}, I_{NL}\) by interpolation. Using H\"older's inequality,
    \begin{align*}
        \int_{0}^{\l_1}|(I_L + I_{NL})(\tM)|\d\theta \leq C\l_1^2\int_{0}^{\l_1}\tM^{(4)}(\theta)^2\d\theta + \frac{C}{\l_1^2}\left[\sum_{j=0}^{3}\int_{0}^{\l_1}\tM^{(j)}(\theta)^2\d\theta + \sum_{j=0}^{4}\int_{0}^{\l_1}\tG^{(j)}(\theta)^2\d\theta\right]
    \end{align*}
    for some constant \(C = C(\|M_*\|_{C^5([0, \l_1])}, \|G_*\|_{C^5([0, \l_1])})\). Then, by Lemma \ref{lemma:G_high_loc} (with \(\delta = 1\)) and interpolation (Lemma \ref{lemma:interp}), 
    \begin{align*}
        \int_{0}^{\l_1}|(I_L + I_{NL})(\tM)|\d\theta &\leq C \l_1^2\int_{0}^{\l_1}\tM^{(4)}(\theta)^2\d\theta + \frac{C}{\l_1^2}\sum_{j=0}^{3}\int_{0}^{\l_1}\tM^{(j)}(\theta)^2\d\theta \\
        &\leq C \l_1^2\int_{0}^{\l_1}\tM^{(4)}(\theta)^2\d\theta + C\l_1^2\int_{0}^{\l_1}\tM(\theta)^2\theta^{-8}\d\theta.
    \end{align*}
    Altogether, we then have that
    \begin{multline*}
        \int_{0}^{\l_1} \tM^{(4)}(\theta)^2\left[1+ 6G_*' - 2G_*\tan\theta\right]\psi(\theta)\d\theta + \int_{0}^{\l_1}I_{L}(\tM)\d\theta + \int_{0}^{\l_1}\Lb_{NL}(M)\d\theta  \\
        \geq \left(7 - 4 - C\l_1^2\right)\int_{0}^{\l_1}\tM^{(4)}(\theta)^2\d\theta - C\l_1^2\int_{0}^{\l_1}\tM(\theta)^2\theta^{-8}\d\theta,
    \end{multline*}
    completing the proof.
    \end{proof}

\subsection{Bulk Coercivity}
Now, we prove coercivity in the bulk region \([\l_1, \l_2]\) up to lower order terms which have already been controlled.
\begin{proposition}\label{prop:bulk_high}
	There exists \(C  > 0\) such that for any \(K > 10\),
	\begin{align*}
		&\int_{\l_1}^{\l_2} \tM^{(4)}(\theta)^2\left[1+ 6G_*'(\theta) - \frac{\psi'(\theta)}{\psi(\theta)}G_*(\theta) - 2G_*(\theta)\tan\theta\right]\theta^{-K}\d\theta + \int_{\l_1}^{\l_2}I_{L}(\tM)\theta^{-K}\d\theta  \\
        &\hspace{23em} + \int_{\l_1}^{\l_2}\tM^{(4)}(\theta)\Lb_{NL}(M)\theta^{-K}\d\theta  \\
        &\geq [Kc(L-\l_2) - C]\int_{\l_1}^{\l_2}\tM^{(4)}(\theta)^2 \theta^{-K}\d\theta - C_{\l_1}\left[\int_{\l_1}^{\l_2}\tM(\theta)^2\theta^{-K}\d\theta + \int_{0}^{\l_1}\tM(\theta)^2\theta^{-8}\d\theta\right]
	\end{align*}
\end{proposition}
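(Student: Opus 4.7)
The plan is to mirror the structure of Proposition \ref{prop:local_high} but with two modifications dictated by the bulk weight $\psi(\theta)=\ell_1^K\theta^{-K}$. First, I would integrate by parts the top-order transport term $\int \tM^{(4)}\,2G_*\tM^{(5)}\psi\,\d\theta$ to arrive at the expression \eqref{eq:top_order_local} and then restrict to $[\ell_1,\ell_2]$. On this interval $\psi'(\theta)/\psi(\theta)=-K/\theta$, so the coercive contribution is
\[
\int_{\ell_1}^{\ell_2}\tM^{(4)}(\theta)^2\Bigl[1+6G_*'(\theta)+\tfrac{K}{\theta}G_*(\theta)-2G_*(\theta)\tan\theta\Bigr]\theta^{-K}\d\theta.
\]
Since $G_*(\theta)\ge 0$ on $[0,L]$, vanishes linearly at $\theta=L$ (by \eqref{eq:G_vanishing}) and $G_*(0)=0,G_*'(0)=1$, one has $\theta^{-1}G_*(\theta)\ge c(L-\ell_2)$ uniformly on $[\ell_1,\ell_2]$ for a constant $c>0$ depending only on the profile, while the remaining bracketed terms are uniformly bounded by some constant $C$ on this interval. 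This delivers the leading lower bound $[Kc(L-\ell_2)-C]\int_{\ell_1}^{\ell_2}\tM^{(4)}(\theta)^2\theta^{-K}\d\theta$.

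Second, I would bound the lower-order local part $I_L(\tM)$. Every summand is of the form $\tM^{(4)}\cdot\tM^{(j)}\cdot h(\theta)$ with $0\le j\le 3$ and $h$ smooth and uniformly bounded by a constant depending on $\|G_*\|_{C^5([\ell_1,\ell_2])}$. Applying Cauchy--Schwarz with a small parameter $\epsilon>0$ and using that $\theta^{-K}$ varies only by the $\ell_1$-dependent factor $(\ell_1/\ell_2)^K$ on $[\ell_1,\ell_2]$, these terms are dominated by $\epsilon\int_{\ell_1}^{\ell_2}\tM^{(4)}(\theta)^2\theta^{-K}\d\theta + C_{\ell_1}\sum_{j<4}\int_{\ell_1}^{\ell_2}\tM^{(j)}(\theta)^2\d\theta$, and the standard interpolation inequality (Lemma \ref{lemma:interp}) converts the lower derivatives into $\int_{\ell_1}^{\ell_2}\tM(\theta)^2\d\theta$ plus an arbitrarily small multiple of $\int_{\ell_1}^{\ell_2}\tM^{(4)}(\theta)^2\d\theta$, which by the equivalence of $\theta^{-K}$ with its extremes on $[\ell_1,\ell_2]$ produces the advertised $C_{\ell_1}\int_{\ell_1}^{\ell_2}\tM(\theta)^2\theta^{-K}\d\theta$ penalty.

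The main work is the nonlocal piece $\int_{\ell_1}^{\ell_2}\tM^{(4)}\,\Lb_{NL}(M)^{(4)}\theta^{-K}\d\theta$. Since $\tG_{nl}\equiv 0$ on $[0,\ell_2)$, one has $\tG=\tG_{loc}$ throughout the bulk, so the pointwise bound \eqref{eq:G_pointwise} gives, for each $1\le k\le 5$,
\[
|\tG^{(k+1)}(\theta)|\le |\tM^{(k)}(\theta)|+C_k\sum_{j=0}^{k-1}|\tM^{(j)}(\theta)|+C_k\int_0^\theta |\tM(\phi)|\d\phi.
\]
Squaring, integrating against $\theta^{-K}$ on $[\ell_1,\ell_2]$, and splitting $\int_0^\theta=\int_0^{\ell_1}+\int_{\ell_1}^\theta$, I would prove the bulk analog of Lemma \ref{lemma:G_high_loc},
\[
\int_{\ell_1}^{\ell_2}\tG^{(k+1)}(\theta)^2\theta^{-K}\d\theta\le (1+\delta^2)\!\int_{\ell_1}^{\ell_2}\tM^{(k)}(\theta)^2\theta^{-K}\d\theta+C_{k,\delta,\ell_1}\!\sum_{j<k}\!\int_{\ell_1}^{\ell_2}\tM^{(j)}(\theta)^2\d\theta+C_{k,\delta}\ell_1^{10-K}\!\int_0^{\ell_1}\!\tM(\theta)^2\theta^{-8}\d\theta,
\]
where the last term arises from estimating $(\int_0^{\ell_1}|\tM|)^2\le C\ell_1^9\int_0^{\ell_1}\tM^2\theta^{-8}$ by Cauchy--Schwarz and combining with the bulk integral $\int_{\ell_1}^{\ell_2}\theta^{-K}\d\theta\lesssim \ell_1^{1-K}$. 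With this estimate in hand, the only top-order nonlocal contribution is $-\int_{\ell_1}^{\ell_2}M_*\tG^{(5)}\tM^{(4)}\theta^{-K}\d\theta$, which since $M_*\le 4$ is controlled by $(4+\epsilon)\int_{\ell_1}^{\ell_2}\tM^{(4)}(\theta)^2\theta^{-K}\d\theta$ plus lower-order terms absorbed as in the previous paragraph; the remaining lower-order nonlocal terms $I_{NL}(\tM)$ are handled by H\"older with an $\epsilon/\epsilon^{-1}$ split and the same bulk $\tG$ estimate. Summing, the total coercive constant is $Kc(L-\ell_2)-(C+4+\epsilon)-\epsilon$, which after absorbing $\epsilon$ into $C$ gives the stated inequality.

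The main obstacle is the nonlocal dependence of $\tG^{(k)}$ on the values of $\tM$ over the whole interval $[0,\theta]$. Because the weight on the local region is much stronger ($\theta^{-8}$) than on the bulk ($\theta^{-K}$ with $K\gg 8$), the contribution of $\int_0^{\ell_1}|\tM|$ to a bulk integral must be bookkept carefully; this is exactly what produces the harmless $\ell_1^{10-K}$ factor in the auxiliary $\tG$ estimate and what allows the local low-norm term $\int_0^{\ell_1}\tM(\theta)^2\theta^{-8}\d\theta$ to appear in the final bound with a coefficient that will later be defeated by the $B$ factor in the definition of $\|\cdot\|_{\tH^4}$.
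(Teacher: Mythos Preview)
Your proposal is correct and follows essentially the same route as the paper: use $\psi'/\psi=-K/\theta$ together with the linear vanishing of $G_*$ at $L$ to extract the $Kc(L-\ell_2)$ coercive term, prove a bulk analogue of Lemma~\ref{lemma:G_high_loc} via the pointwise bound \eqref{eq:G_pointwise} (splitting $\int_0^\theta=\int_0^{\ell_1}+\int_{\ell_1}^\theta$ to produce the $\ell_1^{10-K}\int_0^{\ell_1}\tM^2\theta^{-8}$ term), control the sole top-order nonlocal contribution $-\int M_*\tG^{(5)}\tM^{(4)}\theta^{-K}$ using $M_*\le 4$ and Cauchy--Schwarz, and absorb all lower-order local and nonlocal terms by unweighted interpolation on $[\ell_1,\ell_2]$ followed by the equivalence $\ell_2^{-K}\le\theta^{-K}\le\ell_1^{-K}$. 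The paper is slightly more explicit about the choice of interpolation parameter (it takes $\delta^2=(L-\ell_2)^7\ell_2^{-K}\ell_1^K$ to make the high-norm coefficient exactly $C$ independent of $K,\ell_1,\ell_2$), but your $\epsilon$-splitting achieves the same outcome since any fixed constant in front of $\int\tM^{(4)}(\theta)^2\theta^{-K}\d\theta$ is beaten by $Kc(L-\ell_2)$.
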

\begin{proof}
    In the region \([\l_1, \l_2]\), we have \(\psi'(\theta)/\psi(\theta) = -K\theta^{-1}\). Since \(G_* \geq 0\) and vanishes linearly at \(\theta = 0, L\) (see \eqref{eq:G_vanishing}), we have \(\theta^{-1}G_*(\theta) \geq KC(L-\l_2)\) for all \(\theta \in [0, \l_2]\)  and \(G_*, G_*'\) are bounded, it follows that
        \[
        1 + 6G_*' - 2G_*\tan\theta -\frac{\psi'(\theta)}{\psi(\theta)}G_* \geq K(L-\l_2) - C
        \]
        for some constant \(C > 0\) depending on the profile. Therefore,
        \begin{align*}
           &\int_{\l_1}^{\l_2} \tM^{(4)}(\theta)^2\left[1+ 6G_*'(\theta) - \frac{\psi'(\theta)}{\psi(\theta)}G_*(\theta) - 2G_*(\theta)\tan\theta\right]\theta^{-K}\d\theta 
           \geq [K(L-\l_2)-C]\int_{\l_1}^{\l_2} \tM^{(4)}(\theta)^2\theta^{-K}\d\theta.
        \end{align*}
        To deal with the nonlocal terms, we prove the following analog of Lemma \ref{lemma:G_bound_low_bulk}.
    \begin{lemma}\label{lemma:G_bound_high_bulk}
    For any \(0 \leq k  \leq 4\), there exists constants \(C_k, C_{k, \l_1} > 0\) such that
        \[
        \int_{\l_1}^{\l_2}\tG^{(k+1)}(\theta)^2\theta^{-K}\d\theta \leq C_k\int_{\l_1}^{\l_2}\tM^{(k)}(\theta)^2\theta^{-K}\d\theta + C_{k, \l_1}\int_{\l_1}^{\l_2}\tM(\theta)^2\theta^{-K}\d\theta + C_{k, \l_1}\int_{0}^{\l_1}\tM(\theta)^2\theta^{-8}\d\theta.
        \]
        Moreover,
        \[
        \int_{\l_1}^{\l_2}\tG(\theta)^2\theta^{-K}\d\theta \leq C\int_{\l_1}^{\l_2}\tM(\theta)^2\theta^{-K}\d\theta + C\l_1^{-K+10}\int_{0}^{\l_1}\tM(\theta)^2\theta^{-8}\d\theta.
        \]
    \end{lemma}
    \begin{proof}
        Integrating the pointwise bound \eqref{eq:G_pointwise} gives
        \begin{align*}
            \int_{\l_1}^{\l_2}\tG^{(k+1)}(\theta)^2\theta^{-K}\d\theta &\leq C\sum_{j=0}^{k} \int_{\l_1}^{\l_2}\tM^{(j)}(\theta)^2\theta^{-K}\d\theta + C\int_{\l_1}^{\l_2}\left(\int_{0}^{\theta}\tM(\phi)\d\phi\right)^2\theta^{-K}\d\theta \\
            &\hspace{-7em} \leq C\sum_{j=0}^{k} \int_{\l_1}^{\l_2}\tM^{(j)}(\theta)^2\theta^{-K}\d\theta + C\int_{\l_1}^{\l_2}\theta^{-K+9}\int_{0}^{\l_1}\tM(\phi)^2\phi^{-8}\d\phi + \theta\int_{\l_1}^{\l_2}\tM(\phi)^2\phi^{-K}\d\phi\d\theta \\
            &\hspace{-7em} \leq C\sum_{j=0}^{k} \int_{\l_1}^{\l_2}\tM^{(j)}(\theta)^2\theta^{-K}\d\theta + C\l_1^{-K+10}\int_{0}^{\l_1}\tM(\phi)^2\phi^{-8}\d\phi + C\int_{\l_1}^{\l_2}\tM(\phi)^2\phi^{-K}\d\phi
        \end{align*}
        where we have split the inner integral into the local and bulk range and applied Cauchy--Schwarz in the penultimate inequality. The first inequality in the lemma then follows by interpolation. The second inequality follows similarly.
    \end{proof}
    Considering now the terms generated by \(\Lb_{NL}\), there is only one top order term generated, and the remaining terms can be handled by interpolation. 
        \begin{align*}
            \int_{\l_1}^{\l_2}\tM^{(4)}(\theta)\Lb_{NL}(M)\theta^{-K}\d\theta &= \int_{\l_1}^{\l_2}I_{NL}(\tM)\theta^{-K}\d\theta - \int_{\l_1}^{\l_2}\tG^{(5)}(\theta)\tM^{(4)}(\theta)M_*(\theta)\theta^{-K}\d\theta
        \end{align*}
        Since \(M_*(\theta) \leq 4\), by Cauchy--Schwarz,
        \begin{align*}
        \left|\int_{\l_1}^{\l_2}M_*(\theta)\tG^{(5)}(\theta)\tM^{(4)}(\theta)\theta^{-K}\d\theta\right| &\leq 2 \int_{\l_1}^{\l_2}\tG^{(5)}(\theta)^2\theta^{-K}\d\theta +2\int_{\l_1}^{\l_2}\tM^{(4)}(\theta)^2\theta^{-K}\d\theta \\
        &\leq C\int_{\l_1}^{\l_2}\tM^{(4)}(\theta)^2\theta^{-K}\d\theta + C_{\l_1}\left[\int_{\l_1}^{\l_2}\tM(\theta)^2\theta^{-K}\d\theta + \int_{0}^{\l_1}\tM(\theta)^2\theta^{-8}\d\theta\right]
        \end{align*}
        where we have used Lemma \ref{lemma:G_bound_high_bulk} in the final inequality. Using H\"older's inequality and that \(G_* \in C^{\infty}([0, L)) \cap C^{1, \frac12}([0, L])\) we have
        \begin{align*}
             \left|\int_{\l_1}^{\l_2}I_{L}(\tM)\theta^{-K}\d\theta\right| &\leq C\int_{\l_1}^{\l_2}\tM^{(4)}(\theta)^2\theta^{-K}\d\theta + C(L-\l_2)^{-7}\sum_{k=0}^{3}\int_{\l_1}^{\l_2}\tM^{(k)}(\theta)^2\theta^{-K}\d\theta \\
            &\leq  C\int_{\l_1}^{\l_2}\tM^{(4)}(\theta)^2\theta^{-K}\d\theta + C(L-\l_2)^{-7}\l_1^{-K}\sum_{k=0}^{3}\int_{\l_1}^{\l_2}\tM^{(k)}(\theta)^2\d\theta.
        \end{align*}
       By the basic interpolation Lemma \ref{lemma:interp}, it follows that for \(\delta > 0\),
        \begin{align*}
             \left|\int_{\l_1}^{\l_2}I_{L}(\tM)\theta^{-K}\d\theta\right| &\leq C\int_{\l_1}^{\l_2}\tM^{(4)}(\theta)^2\theta^{-K}\d\theta \\
             &\qquad + C(L-\l_2)^{-7}\l_1^{-K}\left[\delta^{-2}\int_{\l_1}^{\l_2}\tM(\theta)^2\d\theta + \delta^{2}\int_{\l_1}^{\l_2}\tM^{(4)}(\theta)^2\d\theta\right],
        \end{align*}
        and choosing \(\delta^2 = (L-\l_2)^{7}\l_2^{-K}\l_1^{K}\) gives
        \begin{align*}
            \left|\int_{\l_1}^{\l_2}I_{L}(\tM)\theta^{-K}\d\theta\right| &\leq C\int_{\l_1}^{\l_2}\tM^{(4)}(\theta)^2\theta^{-K} + \l_2^{-K}\tM^{(4)}(\theta)^2\d\theta + C(L-\l_2)^{-14}\l_1^{-2K}\l_2^{K}\int_{\l_1}^{\l_2}\tM(\theta)^2\d\theta \\
            &\leq C\int_{\l_1}^{\l_2}\tM^{(4)}(\theta)^2\theta^{-K}\d\theta + C(L-\l_2)^{-14}\left(\frac{\l_2}{\l_1}\right)^{2K}\int_{\l_1}^{\l_2}\tM(\theta)^2\theta^{-K}\d\theta.
        \end{align*}
        Proceeding similarly for \(I_{NL}\), we find
        \begin{align*}
            \left| \int_{\l_1}^{\l_2}I_{NL}(\tM)\theta^{-K}\d\theta\right| &\leq C\int_{\l_1}^{\l_2}\tM^{(4)}(\theta)^2\theta^{-K}\d\theta + C(L-\l_2)^{-9}\sum_{k=0}^{4}\int_{\l_1}^{\l_2}\tG^{(k)}(\theta)^2\theta^{-K}\d\theta \\
            &\leq C\int_{\l_1}^{\l_2}\tM^{(4)}(\theta)^2\theta^{-K}\d\theta  + C_{\l_1}\left[\int_{\l_1}^{\l_2}\tM(\theta)^2\theta^{-K} \d\theta+ \int_{0}^{\l_1}\tM(\theta)^2\theta^{-8}\d\theta\right] 
        \end{align*}
        where we have applied Lemma \ref{lemma:G_bound_high_bulk} in the final inequality. This completes the proof of Proposition \ref{prop:bulk_high}. 
\end{proof}

\subsection{Endpoint Coercivity}
In this region, the key is to exploit the fact that we are working in a space with lower regularity near the boundary than the profile. Indeed, since \(M_* \in C^{\frac12}([0, L]) \cap C^{\infty}([0, L))\), we have \(M_*^{(4)}(\theta) \leq C(L-\theta)^{-7/2}\) and thus
\[
\int_{\l_2}^{L}M_*^{(4)}(\theta)^2(L-\theta)^{13/2}\d\theta \leq C\int_{\l_2}^{L}(L-\theta)^{-7+13/2}\d\theta \leq C(L-\l_2)^{1/2}
\]
and so we see that the profile is less singular than the weight \((L-\theta)^{13/2}\) allows for. This fact allows us to gain necessary small factors to prove coercivity near the boundary.
\begin{proposition}\label{prop:endpoint_high_coercivity}
There exists \(C_1 > 0\) depending only on the profile and \(C_2 = C_2(\l_1) > 0\) such that for all \(M \in \tH^4\),
    \begin{align*}
    &\int_{\l_2}^{L} \tM^{(4)}(\theta)^2\left[1+ 6G_*'(\theta) - \frac{\psi'(\theta)}{\psi(\theta)}G_*(\theta) - 2G_*(\theta)\tan\theta\right](L-\theta)^{13/2}\d\theta + \int_{\l_2}^{L}I_{L}(\tM)(L-\theta)^{13/2}\d\theta \\
    &\qquad+ \int_{\l_2}^{L}\tM^{(4)}(\theta)\Lb_{NL}(M)(L-\theta)^{13/2}\d\theta \\
    &\geq [1 - C_1(L-\l_2)^{1/2}]\int_{\l_2}^{L}\tM^{(4)}(\theta)^2(L-\theta)^{13/2}\d\theta - C_2\int_{0}^{L}\tM(\theta)^2\d\theta.
    \end{align*}
\end{proposition}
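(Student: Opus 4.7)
\textbf{Proof plan for Proposition \ref{prop:endpoint_high_coercivity}.} The strategy mirrors the low-norm endpoint estimate (Proposition \ref{prop:endpoint_low}), with all small parameters ultimately coming from the boundary regularity $M_* \in C^{1/2}([0,L])$ and $G_* \in C^{1,1/2}([0,L])$. First I would handle the top-order local contribution. On $[\ell_2,L]$ the weight is $\psi(\theta) = C_0(L-\theta)^{13/2}$ for a fixed constant $C_0 = \ell_1^{K}\ell_2^{-K}(L-\ell_2)^{-13/2}$, so $\psi'(\theta)/\psi(\theta) = -13/[2(L-\theta)]$. Using the expansions $G_*(\theta) = -G_*'(L)(L-\theta) + O((L-\theta)^{3/2})$ and $G_*'(\theta) = G_*'(L) + O((L-\theta)^{1/2})$ coming from $G_*\in C^{1,1/2}$, together with $G_*'(L)<0$ by \eqref{eq:G_vanishing}, the coefficient in \eqref{eq:top_order_local} satisfies
\[
1 + 6G_*'(\theta) - \tfrac{\psi'(\theta)}{\psi(\theta)}G_*(\theta) - 2G_*(\theta)\tan\theta \;=\; 1 - \tfrac{1}{2}G_*'(L) + O\!\big((L-\theta)^{1/2}\big) \;\geq\; 1 - C_1(L-\ell_2)^{1/2}
\]
uniformly on $[\ell_2,L]$, producing the desired top-order coercive term. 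The $-\tfrac{1}{2}G_*'(L)>0$ contribution is actually additional positivity but is wasted for the stated bound.

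Next, I would address the one top-order nonlocal term $-\int_{\ell_2}^L M_*(\theta)\tG^{(5)}(\theta)\tM^{(4)}(\theta)(L-\theta)^{13/2}\,\d\theta$ from $\Lb_{NL}$. Since $M_*(L)=0$ and $M_* \in C^{1/2}$, we have $|M_*(\theta)|^2 \leq C(L-\theta)$, and Cauchy--Schwarz bounds this term by
\[
C\Big(\!\int_{\ell_2}^L \!\!\tG^{(5)}(\theta)^2(L-\theta)^{15/2}\d\theta\Big)^{1/2}\Big(\!\int_{\ell_2}^L \!\!\tM^{(4)}(\theta)^2(L-\theta)^{13/2}\d\theta\Big)^{1/2}.
\]
Writing $\tG = \tG_{loc} - \tG_{nl}$, the pointwise bound for $\tG_{loc}^{(5)}$ analogous to \eqref{eq:G_pointwise} gives $\tG_{loc}^{(5)}(\theta) = \tM^{(4)}(\theta)\cos^2\theta + \sum_{j\le 3}O(|\tM^{(j)}(\theta)|) + O(\|\tM\|_{L^1})$, while $\tG_{nl}^{(5)}$ is a rank-one contribution proportional to $\tG_{loc}(L)$ (which is controlled by $\|\tM\|_{L^2}$) multiplied by the bump derivatives, producing only a low-norm remainder. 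Since $(L-\theta)^{15/2}\leq (L-\ell_2)(L-\theta)^{13/2}$ on $[\ell_2,L]$, the first factor is at most $C(L-\ell_2)^{1/2}\big(\int_{\ell_2}^L \tM^{(4)}(\theta)^2(L-\theta)^{13/2}\,\d\theta\big)^{1/2}$ plus lower order, yielding an overall bound $C(L-\ell_2)^{1/2}$ times the high-norm plus $C_2\!\int_0^L \tM^2$.

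For the remaining pieces of $I_L(\tM)$ and $I_{NL}(\tM)$, each summand has the form $\tM^{(4)}(\theta)A(\theta)B^{(j)}(\theta)$ with $B^{(j)}\in\{\tM^{(k)}\}_{k\le 3}\cup\{\tG^{(k)}\}_{k\le 4}$ and $A$ built from $G_*^{(k)}$ or $M_*^{(k)}$ (possibly with $\tan\theta$ factors). Using the boundary regularity $|G_*^{(k)}(\theta)|\leq C(L-\theta)^{3/2-k}$ for $k\geq 2$ and $|M_*^{(k)}(\theta)|\leq C(L-\theta)^{1/2-k}$ for $k\geq 1$, a weighted Cauchy--Schwarz reduces each integral to a product of $\int_{\ell_2}^L \tM^{(4)2}(L-\theta)^{13/2}$ with an integral $\int_{\ell_2}^L (\tM^{(k)})^2(L-\theta)^{p}$ (for $k\le 3$) or $\int_{\ell_2}^L (\tG^{(j)})^2(L-\theta)^{p}$ where $p$ is chosen so the power counting balances. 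Applying Lemma \ref{lemma:weighted_interpolation} iteratively and a pointwise bound for $\tG^{(j)}$ in terms of $\tM^{(j-1)}$ (analog of Lemma \ref{lemma:G_high_loc} adapted to the endpoint region) trades each such integral for $(L-\ell_2)^{\alpha}$ times the high-norm plus a constant multiple (possibly depending on $\ell_1$) of $\int_0^L \tM^2$. Summing, all these terms are absorbed into the $(L-\ell_2)^{1/2}$ correction and the $C_2\int_0^L\tM^2$ remainder.

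\textbf{The main obstacle} is the careful weight bookkeeping. Each application of Lemma \ref{lemma:weighted_interpolation} raises the weight exponent by $2$, so one must verify that after $4-k$ iterations starting from $\tM^{(k)}$ against a singular coefficient like $(L-\theta)^{3/2-k}$ the resulting weight on $\tM^{(4)}$ is at least $(L-\theta)^{13/2}$ times a positive power of $(L-\ell_2)$. Moreover, the rank-one piece $\tG_{nl}$ has derivatives scaling like $(L-\ell_2)^{-k}$, so it is essential that these always pair with the scalar factor $\tG_{loc}(L) = O(\|\tM\|_{L^2})$ so the $\ell_2$-dependent blowup lands on a constant and feeds only the low-norm remainder $C_2\int_0^L \tM^2$, with $C_2$ allowed to depend on $\ell_1$.
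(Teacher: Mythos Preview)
Your proposal is correct and follows essentially the same route as the paper: the top-order local bracket is handled via $G_*'(L)<0$ and the identity $\psi'/\psi=-\tfrac{13}{2}(L-\theta)^{-1}$, the single top-order nonlocal term is made small through $|M_*(\theta)|\le C(L-\theta)^{1/2}$ combined with a $\tG^{(5)}$ bound that splits $\tG=\tG_{loc}-\tG_{nl}$ exactly as you describe, and the lower-order $I_L,I_{NL}$ terms are controlled by the profile regularity $G_*\in C^{1,1/2}$, $M_*\in C^{1/2}$ together with the weighted interpolation Lemma~\ref{lemma:weighted_interpolation}. The only cosmetic difference is that the paper simply asserts $6G_*'+\tfrac{13}{2}(L-\theta)^{-1}G_*-2G_*\tan\theta\ge 0$ for $\ell_2$ close to $L$ rather than computing the limit $-\tfrac12 G_*'(L)$ as you do, and it pulls the factor $(L-\ell_2)^{1/2}$ out as a sup-norm bound on $M_*$ before Cauchy--Schwarz rather than absorbing $|M_*|^2$ into the weight; both lead to the same estimate.
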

\begin{proof}
In the region \([\l_2, L]\) we have \(\psi'/\psi = -\frac{13}{2}(L-\theta)^{-1}\). Since \(G_*(L) = 0\), and 
\[
\lim_{\theta \to L}(L-\theta)^{-1}G_*(\theta) = -G_*'(L) > 0,
\]
by choosing \(\l_2\) sufficiently small we may ensure that
\[
6G_*'(\theta) + \frac{13}{2}(L-\theta)^{-1}G_*(\theta) - G_*(\theta)\tan\theta \geq 0
\]
for all \(\theta \in [\l_2, L]\). To estimate the nonlocal terms, we require the following lemma which is the high norm analog of Lemma \ref{lemma:G_bound_low_endpoint}
\begin{lemma}\label{lemma:G_bound_high_endpoint} For all \(0 \leq k \leq 4\), there exists \(C, p > 0\) such that
    \[
    \int_{\l_2}^{L}\tG^{(k+1)}(\theta)^2(L-\theta)^{13/2}\d\theta \leq C\sum_{j=0}^{k}\int_{\l_2}^{L}\tM^{(j)}(\theta)^2(L-\theta)^{13/2}\d\theta + C(L-\l_2)^{-p}\int_{0}^{L}\tM(\theta)^2\d\theta.
    \]
\end{lemma}
\begin{proof}
    First, we observe that from the definition of \(\tG\) it follows that
    \begin{align*}
    \int_{\l_2}^{L} \tG^{(k+1)}(\theta)^2(L-\theta)^{13/2}\d\theta &\leq C \int_{\l_2}^{L}\tG_{loc}^{(k+1)}(\theta)^2(L-\theta)^{13/2} + \frac{1}{(L-\l_2)^{2k+2}}\tG_{loc}(L)^2(L-\theta)^{13/2}\d\theta \\
    &\leq C \int_{\l_2}^{L}\tG_{loc}^{(k+1)}(\theta)^2(L-\theta)^{13/2}\d\theta + C(L-\l_2)^{11/2 - 2k}\tG_{loc}(L)^2
    \end{align*}
    where we have used that \(\frac{d^k}{d\theta^k}\eta((L-\theta)/(L-\l_2)) \leq C(L-\l_2)^{-k}\). Using the pointwise bound \eqref{eq:G_pointwise}, we see the first term satisfies
    \begin{align*}
        \int_{\l_2}^{L}\tG_{loc}^{(k+1)}(\theta)^2(L-\theta)^{13/2}\d\theta &\leq C \sum_{j=0}^{k}\int_{\l_2}^{L}\tM^{(j)}(\theta)^2(L-\theta)^{13/2} + (L-\theta)^{13/2}\int_{0}^{\theta}\tM(\phi)^2\d\phi\d\theta \\
        &\leq  C \sum_{j=0}^{k}\int_{\l_2}^{L}\tM^{(j)}(\theta)^2(L-\theta)^{13/2}\d\theta + C\int_{0}^{L}\tM(\phi)^2\d\phi.
    \end{align*}
    Finally, since
    \begin{align*}
        \tG_{loc}(L)^2 \leq C \int_{0}^{L}\tM(\theta)^2\d\theta
    \end{align*}
    the lemma follows.
\end{proof}

Considering now the terms generated by \(\Lb_{NL}\)
\begin{align*}
    \int_{\l_2}^{L}\tM^{(4)}(\theta)\Lb_{NL}(M)^{(4)} &= \int_{\l_2}^{L}I_{NL}(\tM)(L-\theta)^{13/2}\d\theta-\int_{\l_2}^{L}\tM^{(4)}(\theta)\tG^{(5)}(\theta)M_*(\theta)(L-\theta)^{13/2}\d\theta
\end{align*}
we see the top order term provides only a small contribution due to the vanishing of \(M_*\) near the boundary. Indeed, since \(M_* \in C^{1/2}([0, L])\) vanishes at \(\theta = L\),
\begin{multline*}
    \int_{\l_2}^{L}|\tM^{(4)}(\theta)| |\tG^{(5)}(\theta)||M_*(\theta)|(L-\theta)^{13/2}\d\theta \leq C(L-\l_2)^{1/2}\int_{\l_2}^{L}|\tM^{(4)}(\theta)||\tG^{(5)}(\theta)|(L-\theta)^{13/2}\d\theta \\
    \leq C(L-\l_2)^{1/2}\left[\sum_{j=0}^{4}\int_{\l_2}^{L}\tM^{(j)}(\theta)^2(L-\theta)^{13/2}\d\theta + C(L-\l_2)^{-p}\int_{0}^{L}\tM(\theta)^2\d\theta\right]
\end{multline*}
where we have used Lemma \ref{lemma:G_bound_high_endpoint}. It now remains to control the lower order terms \(I_{L}, I_{NL}\). The terms in \(I_L\) generated by derivatives of the transport and stretching terms \(2G_*\tM', G_*'\tM\) are all of the form
\[
\int_{\l_2}^{L}\tM^{(4)}(\theta)\tM^{(k)}(\theta)G_*^{(5-k)}(\theta)(L-\theta)^{13/2}\d\theta
\]
for \(0 \leq k \leq 3\). Since \(G_* \in C^{1, \frac12}([0, L])\) it follows that \((L-\theta)^{7/2-k}G_{*}^{(5-k)} \in L^{\infty}([0, L])\) for all \(0 \leq k \leq 3\) and thus by H\"older's inequality, for all \(\delta > 0\),
\begin{multline*}
\int_{\l_2}^{L}\tM^{(4)}(\theta)\tM^{(k)}(\theta)G_*^{(5-k)}(\theta)(L-\theta)^{13/2}\d\theta \\
\leq C\delta \int_{\l_2}^{L}\tM^{(4)}(\theta)^2(L-\theta)^{13/2}\d\theta + C\delta^{-1}\int_{\l_2}^{L}\tM^{(k)}(\theta)^2(L-\theta)^{-7+2k}(L-\theta)^{13/2}\d\theta.
\end{multline*}
Now applying Lemma \ref{lemma:weighted_interpolation}, it follows that
\begin{align*}
\int_{\l_2}^{L}\tM^{(k)}(\theta)^2(L-\theta)^{-7+2k}(L-\theta)^{13/2}\d\theta &\leq C\int_{\l_2}^{L}\tM^{(4)}(\theta)^2(L-\theta)(L-\theta)^{13/2}\d\theta + C\int_{\l_2}^{L}\tM(\theta)^2\d\theta\\
&\leq C(L-\l_2)\int_{\l_2}^{L}\tM^{(4)}(\theta)^2\d\theta + C\int_{\l_2}^{L}\tM(\theta)^2\d\theta.
\end{align*}
Choosing \(\delta = (L-\l_2)^{1/2}\), altogether we have
\[
\int_{\l_2}^{L}\tM^{(4)}\tM^{(k)}G_*^{(5-k)}(L-\theta)^{13/2}\d\theta \leq C(L-\l_2)^{1/2}\int_{\l_2}^{L}\tM^{(4)}(\theta)^2(L-\theta)^{13/2}\d\theta + C(L-\l_2)^{-1/2}\int_{\l_2}^{L}\tM(\theta)^2\d\theta
\]
The other terms in \(I_L\) coming from \(2G_*\tM\tan\theta\) are even less singular and can be dealt with similarly. Proceeding similarly for \(I_{NL}\) we find that,
\begin{align*}
   \int_{\l_2}^{L} I_{NL}(\tM)(L-\theta)^{13/2}\d\theta &\leq C(L-\l_2)^{1/2}\int_{\l_2}^{L}\tM^{(4)}(\theta) \tG^{(5)}(\theta)(L-\theta)^{13/2}\d\theta \\
   &\qquad + (L-\l_2)^{-1/2}\int_{\l_2}^{L}\tG(\theta)^2\d\theta.
\end{align*}
Applying Lemma \ref{lemma:G_bound_high_endpoint} gives
\begin{align*}
    \int_{\l_2}^{L}I_{NL}(\tM)(L-\theta)^{13/2}\d\theta \leq C(L-\l_2)^{1/2}\sum_{j=0}^{4}\int_{\l_2}^{L}\tM^{(j)}(\theta)^2(L-\theta)^{13/2}\d\theta + C(L-\l_2)^{-p}\int_{0}^{L}\tM(\theta)^2\d\theta.
\end{align*}
Altogether, we have
\begin{align*}
    &\int_{\l_2}^{L} \tM^{(4)}(\theta)^2\left[1+ 6G_*'(\theta) - \frac{\psi'(\theta)}{\psi(\theta)}G_*(\theta) - 2G_*(\theta)\tan\theta\right](L-\theta)^{13/2}\d\theta + \int_{\l_2}^{L}I_{L}(\tM)(L-\theta)^{13/2}\d\theta \\
    &\qquad  + \int_{\l_2}^{L}\tM^{(4)}(\theta)\Lb_{NL}(M)(L-\theta)^{13/2}\d\theta \\
    & \geq \left[1 - C(L-\l_2)^{1/2}\right]\int_{\l_2}^{L}\tM^{(4)}(\theta)^2(L-\theta)^{13/2}\d\theta - C\sum_{j=0}^{3}\int_{\l_2}^{L}\tM^{(j)}(\theta)^2(L-\theta)^{13/2}\d\theta  - C_{\l_1}\int_{0}^{L}\tM(\theta)^2\d\theta.
\end{align*}
Using Lemma \ref{lemma:weighted_interpolation}, we have
\begin{align*}
    \sum_{j=0}^{3}\int_{\l_2}^{L}\tM^{(j)}(\theta)^2(L-\theta)^{13/2}\d\theta  \leq C \int_{\l_2}^{L}\tM(\theta)^2\d\theta + C(L-\l_2)^2\int_{\l_2}^{L}\tM^{(4)}(\theta)^2(L-\theta)^{13/2}\d\theta,
\end{align*}
and the proposition follows.
\end{proof}
\subsection{Coercivity and Decomposition of \texorpdfstring{\(\L\)}{L}}
Proposition \ref{prop:high_norm_coercive} now follows. Indeed, summing Propositions \ref{prop:local_high}, \ref{prop:bulk_high}, \ref{prop:endpoint_high_coercivity} we have that
    \begin{align*}
        \int_{0}^{L}\tM^{(4)}(\theta)\Lb(M)^{(4)}(\theta)\psi(\theta) \d\theta &\geq \left[3 - C\l_1^2\right]\int_{0}^{L}\tM^{(4)}(\theta)^2\psi(\theta) \d\theta + \left[Kc(L-\l_2) - C\right]\int_{\l_1}^{\l_2}\tM^{(4)}(\theta)^2\theta^{-K}\d\theta \\
        &\quad +\left[1 - C(L-\l_2)^{1/2}\right]\int_{\l_2}^{L}\tM^{(4)}(\theta)^2(L-\theta)^{13/2}\d\theta- C_{\l_1}\int_{0}^{L}\tM(\theta)^2\varphi(\theta)\d\theta.
    \end{align*}
    Recalling that \(K = \l_1^{-4}, L-\l_2 = \l_1^2\) we conclude
    \begin{align*}
        \int_{0}^{L}\tM^{(4)}(\theta)\Lb(M)^{(4)}(\theta)\psi(\theta) \d\theta &\geq \min\left\{3-C\l_1^2, c\l_1^{-2} - C, 1-C\l_1\right\}\int_{0}^{L}\tM^{(4)}(\theta)^2\psi(\theta)\d\theta \\
        &\qquad - C_{\l_1}\int_{0}^{L}\tM(\theta)^2\varphi(\theta)\d\theta.
    \end{align*}
    Choosing \(\l_1\) sufficiently small we have \(\min\left\{3-C\l_1^2, c\l_1^{-2} - C, 1-C\l_1\right\} = c_2 > 0\) which completes the proof of Proposition \ref{prop:high_norm_coercive}. 
\begin{proposition}\label{prop:coercive}
    There exists \(c_* > 0\) such that for all \(M \in \tH^4\),
    \[
    \inner{M}{\Lb(M)}_{\tH^4} \geq c_*\|M\|_{\tH^4}^2.
    \]
\end{proposition}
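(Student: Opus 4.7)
The plan is to decompose $\inner{M}{\Lb(M)}_{\tH^4}$ into its four constituent pieces from Definition \ref{def:tH4}, estimate each using the pointwise structure of $\Lb$ at the origin together with Propositions \ref{prop:low_norm_coercive} and \ref{prop:high_norm_coercive}, and then choose the remaining free parameter $B$ in the inner product to absorb the lower-order loss from the high-norm estimate.

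The first step is to verify the pointwise identities $\Lb(M)(0) = M(0)$ and $\Lb(M)''(0) = M''(0)$. Since $M$ is even, the subtraction $\tM = M - \P_2 M$ cancels Taylor coefficients through order $3$ at $\theta = 0$, so $\tM = O(\theta^4)$. The equation $\tG_{loc}'' + 4\tG_{loc} = \tM'\cos^2\theta$ with $\tG_{loc}(0) = \tG_{loc}'(0) = 0$ propagates this vanishing to $\tG_{loc} = O(\theta^4)$, and $\tG_{nl} \equiv 0$ on $[0, \l_2]$, so $\tG = O(\theta^4)$ near $0$. Combined with $G_*(0) = 0$, every summand of $\Lb(M)$ other than $\P_2 M$ has vanishing $0$th and $2$nd derivatives at $\theta = 0$, which yields the claimed identities and the corollary $\P_2\Lb(M) = \P_2 M$. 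Consequently, using also $M^{(4)} = \tM^{(4)}$ and $\Lb(M)^{(4)} = (\Lb(M) - \P_2 M)^{(4)}$,
\[
\inner{M}{\Lb(M)}_{\tH^4} = M(0)^2 + M''(0)^2 + B \int_{0}^{L} \tM\, (\Lb(M) - \P_2 M)\, \varphi \, \d\theta + \int_{0}^{L} \tM^{(4)}\, \Lb(M)^{(4)}\, \psi \, \d\theta.
\]

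The second step invokes the two coercivity estimates. Proposition \ref{prop:low_norm_coercive}, whose proof directly handles the shifted integrand $\int \tM(\Lb(M) - \P_2 M)\varphi$, gives
\[
B \int_{0}^{L} \tM\, (\Lb(M) - \P_2 M)\, \varphi \, \d\theta \geq B c_1 \int_{0}^{L} \tM^2\, \varphi \, \d\theta,
\]
while Proposition \ref{prop:high_norm_coercive} gives
\[
\int_{0}^{L} \tM^{(4)}\, \Lb(M)^{(4)}\, \psi \, \d\theta \geq c_2 \int_{0}^{L} (\tM^{(4)})^2\, \psi \, \d\theta - C \int_{0}^{L} \tM^2\, \varphi \, \d\theta.
\]
Summing all four contributions yields
\[
\inner{M}{\Lb(M)}_{\tH^4} \geq M(0)^2 + M''(0)^2 + (Bc_1 - C) \int_{0}^{L} \tM^2\, \varphi \, \d\theta + c_2 \int_{0}^{L} (\tM^{(4)})^2\, \psi \, \d\theta.
\]
Since the constant $C$ from Proposition \ref{prop:high_norm_coercive} depends only on the previously fixed parameters $\l_1, \l_2, K$ and not on $B$, choosing $B \geq 2C/c_1$ ensures $Bc_1 - C \geq Bc_1/2$. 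Thus
\[
\inner{M}{\Lb(M)}_{\tH^4} \geq c_*\, \|M\|_{\tH^4}^2
\]
with $c_* := \min\{1,\, c_1/2,\, c_2\}$.

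The only delicate step is the pointwise computation at $\theta = 0$: the summand $\P_2 M$ inserted in the definition of $\Lb$ exists precisely to enforce $\Lb(M)(0) = M(0)$ and $\Lb(M)''(0) = M''(0)$ so that the pointwise parts of the inner product contribute as perfect squares rather than as indefinite cross-terms. Once this identity is in place, the heavy lifting has already been carried out in the two preceding sections, and the remainder of the argument is summation together with the large-$B$ choice that absorbs the loss in the high-norm estimate.
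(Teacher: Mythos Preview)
Your proof is correct and follows essentially the same route as the paper: invoke Propositions \ref{prop:low_norm_coercive} and \ref{prop:high_norm_coercive}, use the identities $\Lb(M)(0)=M(0)$ and $\Lb(M)''(0)=M''(0)$ for the pointwise terms, and then choose $B$ large to absorb the low-norm loss from the high-norm estimate. Your explicit verification of the pointwise identities via the $O(\theta^4)$ vanishing of $\tM$ and $\tG$ is a welcome elaboration of what the paper merely asserts.
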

\begin{proof}
    From Propositions \ref{prop:low_norm_coercive} and \ref{prop:high_norm_coercive} we have
    \begin{align*}
        \inner{M}{\Lb(M)}_{\tH^4} \geq M(0)^2 + M''(0)^2 + (B c_1 - C)\int_{0}^{L}\tM(\theta)^2\varphi(\theta)\d\theta + c_2\int_{0}^{L}\tM^{(4)}(\theta)^2\psi(\theta)\d\theta 
    \end{align*}
    where we have noted \(\Lb(M)(0) = M(0), \Lb(M)''(0) = M''(0)\). Choosing \(B\) sufficiently large that \(Bc_1 > C\) then completes the proof.
\end{proof}

Now we prove Proposition \ref{prop:decomp}.
\begin{proof}
    To show \(\Lb\) is maximally accretive, it suffices to show \(\Lb - \lambda I\) is maximally accretive for some \(\lambda > 0\). By Proposition \ref{prop:coercive}, it then suffices to show that \(\Lb - \lambda I\) is maximal. To prove that \(\Lb - \lambda I\) is maximal, we prove that \(\Lb\) is surjective. First observe that the transport operator \(M \mapsto G_*M'\) is closed on \(\mathcal{D}(\L)\). Since \(\Lb\) is a bounded perturbation of this operator, it follows that \(\Lb\) is also closed on the same domain \(\mathcal{D}(L) = \mathcal{D}(\Lb)\). Thus, by the closed range theorem, \(\text{Range}(\Lb) = \text{Ker}(\Lb^{\dag})^{\perp}\). Since \(\L + \L^{\dag}\) is bounded, we have \(\mathcal{D}(\mathcal{L}^{\dag}) = \mathcal{D}(\mathcal{L})\) and thus
    \[
    \inner{f}{\Lb^{\dag}f} = \inner{\Lb f}{f} \geq c_* \|f\|_{\tH^4}^2
    \]
    from which it follows that \(\text{Ker}(\L^{\dag}) = \{0\}\) and hence \(\text{Range}(\Lb) = \tH^4\) completing the proof.
\end{proof}

\section{Finite Codimension Stability}
As the profile \(M_*\) is non-smooth at \(\theta = \pi/2\), we aim to show there exists a small perturbation of \(M_*\) capable of truncating \(M_*\) near the boundary, which still blows-up in finite time. First, we use classical semigroup methods to obtain finite codimension stability for the linear problem. Then, performing suitable estimates on the nonlinear terms, we are able to utilize a stable manifold theorem argument which produces decaying solutions.
\subsection{Linear Theory}
We now study the linearized operator \(\L\) using the well-understood linear semigroup theory from \cite{EN}. We first prove that \(\L\) generates a strongly continuous semigroup.
\begin{proposition}
    \(\L: D(\L) \to \tH^4, \Lb: D(\Lb) \to \tH^4\) generate strongly continuous semigroups \(e^{t\L}, e^{t\Lb}\) respectively.
\end{proposition}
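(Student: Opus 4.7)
The plan is to obtain both generation results by appealing to the classical Hille--Yosida / Lumer--Phillips theory combined with a bounded perturbation argument, leveraging the structural decomposition \(\L = \Lb + \Lk\) already established in \Cref{thm:coercivity}. The operator \(e^{t\Lb}\) is to be understood as the propagator of the linear equation \(\p_s f + \Lb f = 0\), equivalently the \(C_0\) semigroup generated by \(-\Lb\); similarly for \(e^{t\L}\). Most of the heavy lifting is already done: \Cref{prop:coercive} yields the accretivity estimate \(\inner{\Lb f}{f}_{\tH^4} \geq c_\ast \|f\|_{\tH^4}^2\), while the proof of \Cref{prop:decomp} supplies the range condition \(\mathrm{Range}(\lambda I + \Lb) = \tH^4\) for some \(\lambda > 0\), so \(\Lb\) is maximally accretive.

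First I would verify that \(\mathcal{D}(\Lb)\) is dense in \(\tH^4\). The natural candidate is the space of smooth, even functions compactly supported in \((-L, L)\) which additionally vanish to high order at \(\theta = 0\): for any such \(f\), the local part \(\Lb_L f\) is smooth and supported away from \(\theta = 0, L\), while the explicit formulas \eqref{eq:G_loc_form_2} and \eqref{eq:G_loc'_form} for \(\tG\), together with the cutoff built into the definition of \(\tG_{nl}\), show that \(\Lb_{NL} f\) is smooth on \([0, L]\). Hence such \(f\) lie in \(\mathcal{D}(\Lb)\). Density in \(\tH^4\) then follows from a mollification-and-truncation argument, adapted so that the point values \(f(0), f''(0)\) and the weighted \(L^2\) pieces of the norm \eqref{eq:inner_product_1} all converge simultaneously; the polynomial nature of the weights \(\varphi, \psi\) together with the algebra property of \(\tH^4\) (\Cref{lemma:algebra}) make this routine.

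With density in hand, the Lumer--Phillips theorem (\cite{EN}, Ch.~II) yields that \(-\Lb\) generates a strongly continuous contraction semigroup on \(\tH^4\). Since \(\Lk\) is bounded on \(\tH^4\) (being finite rank), the bounded perturbation theorem for generators (\cite{EN}, Ch.~III) then gives that \(-\L = -\Lb - \Lk\) also generates a strongly continuous semigroup on \(\mathcal{D}(\L) = \mathcal{D}(\Lb)\), satisfying the operator-norm bound \(\|e^{t\L}\|_{\tH^4 \to \tH^4} \leq e^{(\|\Lk\|_{\tH^4 \to \tH^4} - c_\ast)t}\).

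The main obstacle is the density check, rather than the generation step itself. One must verify that truncating approximating functions near \(\theta = L\) and mollifying them does not destroy convergence in the singular weight \(\varphi(\theta) \sim \theta^{-8}\) at the origin nor in the boundary weight \(\psi(\theta) \sim (L-\theta)^{13/2}\) at \(\theta = L\); the former is handled by requiring approximants to vanish to sufficiently high order at \(\theta = 0\), and the latter by using smooth cutoffs compatible with the polynomial weight. Once density is in place, the conclusion follows mechanically from standard semigroup theory with no additional analytic input beyond \Cref{thm:coercivity}.
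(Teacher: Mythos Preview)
Your proposal is correct and follows the same route as the paper: Lumer--Phillips for \(-\Lb\) via the maximal accretivity established in Theorem~\ref{thm:coercivity}, then the bounded perturbation theorem to pass to \(-\L\). The density verification you add is unnecessary---in a Hilbert (hence reflexive) space an m-accretive operator automatically has dense domain---so the paper omits it, but the extra care does no harm.
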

\begin{proof}
    By Proposition \ref{prop:decomp}, \(\Lb\) is maximally accretive and thus by Lumer--Phillips theorem (see Theorem 3.15, Chapter II, \cite{EN}) \(\Lb\) generates a strongly continuous contraction semigroup. By the bounded perturbation theorem (see Theorem 1.3 Chapter III of \cite{EN}) it then follows that \(\L\) generates a strongly continuous semigroup.
\end{proof}
We now recall some classical semigroup definitions from \cite{EN}. Throughout, \(A: D(A) \to \tH^4\) denotes a closed operator which generates a strongly continuous semigroup \(e^{t A}\). We denote the spectrum of \(A\) by \(\sigma(A)\) and the resolvent by \(\rho(A)\). The spectral bound of \(A\) is defined by
\[
 s(A) := \sup\big\{\mathfrak{Re}(\lambda) : \lambda \in \sigma(A)\big\},
\]
and the growth bound of \(A\) by
\[
\omega_0(A) := \inf\big\{w \in \R : \text{there exists } M_{w} > 1 \text{ such that } \|e^{tA}\|_{\tH^4 \to \tH^4} \leq M_{w}e^{wt} \text{ for all } t \geq 0\big\}.
\]
Moreover, we consider the essential norm
\[
\|e^{tA}\|_{\text{ess}} := \inf_{K \in \mathcal{K}(\tH^4)}\|e^{tA} - K\|_{\tH^4 \to \tH^4},
\]
where \(\mathcal{K}(\tH^4)\) denotes the ideal of compact operators on \(\tH^4\). The essential growth bound is then defined by
\begin{align*}
    \omega_{\text{ess}}(A) &:= \inf_{t > 0} t^{-1}\log\|e^{tA}\|_{\text{ess}}.
\end{align*}
We now consider the essential growth rate of \(\L\).
\begin{proposition}\label{prop:L_ess_spec}
    There exists \(\eta > 0\) such that \(\omega_{\text{ess}}(-\L) < -\eta\).
\end{proposition}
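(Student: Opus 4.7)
The plan is to exploit the decomposition $\L = \Lb + \Lk$ from Theorem~\ref{thm:coercivity} together with the coercivity estimate of Proposition~\ref{prop:coercive}. Since $\Lb$ is maximally accretive and satisfies $\inner{M}{\Lb M}_{\tH^4} \geq c_*\|M\|_{\tH^4}^2$, the Lumer--Phillips theorem (applied to $-\Lb + c_* I$) yields that $-\Lb$ generates a $C_0$-semigroup obeying the exponentially decaying bound
\[
\|e^{-t\Lb}\|_{\tH^4 \to \tH^4} \leq e^{-c_* t}, \qquad t \geq 0.
\]
In particular $\omega_0(-\Lb) \leq -c_*$, and a fortiori $\omega_{\mathrm{ess}}(-\Lb) \leq -c_*$.

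Next I would reduce the essential growth bound of $-\L$ to that of $-\Lb$ via a Duhamel identity. Differentiating $s \mapsto e^{-(t-s)\L}\,e^{-s\Lb}$ and using that $\L - \Lb = \Lk$ gives
\[
e^{-t\L} - e^{-t\Lb} = -\int_0^t e^{-(t-s)\L}\,\Lk\,e^{-s\Lb}\,ds.
\]
Since $\Lk$ is finite-rank by Theorem~\ref{thm:coercivity}, and both semigroups are strongly continuous, the integrand is a norm-continuous family of finite-rank operators, so the Bochner integral is a norm limit of finite-rank operators and hence compact. Because the essential norm $\|\cdot\|_{\mathrm{ess}}$ is the quotient norm on the Calkin algebra $\mathcal{B}(\tH^4)/\K(\tH^4)$, compactness of $e^{-t\L} - e^{-t\Lb}$ for every $t \geq 0$ gives $\|e^{-t\L}\|_{\mathrm{ess}} = \|e^{-t\Lb}\|_{\mathrm{ess}} \leq e^{-c_* t}$. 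Taking logarithms and infimum in $t$ yields $\omega_{\mathrm{ess}}(-\L) \leq -c_*$, and the claim follows with any $0 < \eta < c_*$, say $\eta = c_*/2$.

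The potential obstacle here is not a conceptual difficulty but rather a verification one: one must check that the operator-valued map $s \mapsto e^{-(t-s)\L}\,\Lk\,e^{-s\Lb}$ is genuinely norm-continuous on $[0,t]$ (not just strongly continuous), which is where finite-rankness of $\Lk$ is essential; for a merely compact $\Lk$ the Duhamel argument would require an additional approximation step. All of the analytic heavy lifting, namely the coercivity and the compact/coercive splitting, has already been carried out in Theorem~\ref{thm:coercivity} and Proposition~\ref{prop:coercive}, so nothing beyond standard semigroup bookkeeping is required.
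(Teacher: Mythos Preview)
Your argument is correct and follows essentially the same route as the paper: both use the coercivity of $\Lb$ to obtain $\|e^{-t\Lb}\|\le e^{-c_* t}$ and hence $\omega_{\mathrm{ess}}(-\Lb)\le -c_*$, then transfer this to $-\L$ via the compact (finite-rank) perturbation $\Lk$. The only difference is cosmetic: the paper invokes the black-box invariance result (Proposition~2.12, Chapter~IV of \cite{EN}) for $\omega_{\mathrm{ess}}$ under compact perturbations, whereas you reprove that invariance by hand through the Duhamel identity and the compactness of $e^{-t\L}-e^{-t\Lb}$.
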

\begin{proof}
    By Theorem \ref{thm:coercivity}, there exists \(\eta > 0\) such that
    \[
    \|e^{-t\Lb}f\|_{\tH^4} \leq e^{-\eta t} \|f\|_{\tH^4}
    \]
    for all \(f \in \tH^4\). By Theorem 1.10, Chapter II of \cite{EN} this implies \(\{\lambda \in \C: \mathfrak{Re}(\lambda) < \eta\} \subset \rho(\Lb)\). Thus, \(\omega_{\text{ess}}(-\Lb) < -\eta\) and since \(\Lb\) is a compact perturbation of \(\L\), we have \(\omega_{\text{ess}}(-\Lb) = \omega_{\text{ess}}(-\L)\) by Proposition 2.12 of \cite{EN}.
\end{proof}

We now recall the following proposition regarding the growth rate \(\omega_0(A)\).
\begin{proposition}{(Corollary 2.11, Chapter IV, \cite{EN})}\label{prop:ess_spec}
    Let \(e^{tA}\) be a strongly continuous semigroup on \(\tH^4\) with generator \(A\). Then,
    \[
    \omega_0(A) = \max\{\omega_{\text{ess}}(A), s(A)\}.
    \]
    Moreover, for every \(w > \omega_{\text{ess}}(A),\) the set \(\sigma_{\geq w} := \sigma(A) \cap \{\lambda \in \C: \mathfrak{Re}(\lambda) \geq w\}\) is finite, and the corresponding spectral projection has finite-rank.
\end{proposition}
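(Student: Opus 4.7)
The plan is to reconstruct the standard Fredholm/Calkin-algebra proof of this Engel--Nagel corollary. Both assertions follow from a single observation: $\omega_{\text{ess}}(A)$ controls the spectral radius of the image of $e^{tA}$ in the Calkin algebra $\mathcal{B}(\tH^4)/\mathcal{K}(\tH^4)$, so any $\mu \in \sigma(e^{tA})$ with $|\mu| > e^{\omega_{\text{ess}}(A)t}$ is Fredholm-regular and hence, by Riesz--Schauder theory, an isolated eigenvalue of finite algebraic multiplicity. Combined with the spectral inclusion $e^{t\sigma(A)} \subset \sigma(e^{tA})$ for $C_0$-semigroups, this is enough to extract both the finiteness of $\sigma_{\geq w}$ and the growth bound identity.

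First I would establish the finiteness and finite-rank claim. Fix $w > \omega_{\text{ess}}(A)$ and, by definition of $\omega_{\text{ess}}$, choose $t_0 > 0$ together with a compact $K$ such that $\|e^{t_0 A} - K\|_{\tH^4 \to \tH^4} < e^{w t_0}$. For every $\mu \in \C$ with $|\mu| \geq e^{wt_0}$, the operator $\mu I - (e^{t_0 A} - K)$ is invertible by Neumann series, so $\mu I - e^{t_0 A}$ is a compact perturbation of an invertible operator and therefore Fredholm of index zero. By Riesz--Schauder, $\sigma(e^{t_0 A}) \cap \{|\mu| \geq e^{wt_0}\}$ consists of finitely many isolated eigenvalues of finite algebraic multiplicity. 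By spectral inclusion, every $\lambda \in \sigma_{\geq w}$ maps to one of these finitely many $\mu$; distinct preimages of a fixed $\mu$ differ by elements of $\tfrac{2\pi i}{t_0}\mathbb{Z}$, and within a single horizontal strip of height $2\pi/t_0$ the Fredholm property of $\lambda I - A$ (obtained similarly as a compact perturbation of an invertible operator once one stays outside $\sigma_{\text{ess}}(A)$) forces discreteness and finite multiplicity. Summing the associated Riesz eigenprojections yields a finite-rank spectral projection for $\sigma_{\geq w}$.

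Next I would extract the growth bound identity. The inequalities $\omega_0(A) \geq s(A)$ and $\omega_0(A) \geq \omega_{\text{ess}}(A)$ are immediate from the definitions. For the reverse, fix $w > \max\{\omega_{\text{ess}}(A), s(A)\}$ and let $P$ be the finite-rank spectral projection onto $\sigma_{\geq w}$ produced above. Then $P$ commutes with $e^{tA}$ and gives an invariant splitting $\tH^4 = P\tH^4 \oplus (I-P)\tH^4$. On the finite-dimensional range $P\tH^4$ the semigroup is the exponential of a finite matrix with eigenvalues in $\sigma_{\geq w}$, so $\|e^{tA}|_{P\tH^4}\| \leq C_\epsilon e^{(s(A)+\epsilon)t}$. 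On $(I-P)\tH^4$, all isolated points of $\sigma(e^{tA})$ outside the essential disc have been removed, so the spectral radius of $e^{t_0 A}|_{(I-P)\tH^4}$ coincides with its essential spectral radius, which is at most $e^{(\omega_{\text{ess}}(A)+\epsilon)t_0}$. Since the semigroup growth bound of $e^{tA}|_{(I-P)\tH^4}$ is computed from its spectral radius by the formula $\omega_0 = t_0^{-1}\log r(e^{t_0 A}|_{(I-P)\tH^4})$ (Gelfand), this gives a growth bound $\leq \omega_{\text{ess}}(A) + \epsilon$ on the stable subspace. Combining the two pieces and letting $\epsilon \downarrow 0$ yields $\omega_0(A) \leq \max\{\omega_{\text{ess}}(A), s(A)\}$.

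The main obstacle is the well-known failure of the full spectral mapping theorem $\sigma(e^{tA}) = e^{t\sigma(A)}$ for general $C_0$-semigroups, which prevents the naive conclusion $\omega_0(A) = s(A)$ and forces the entire argument to pass through the Calkin algebra, where the spectral mapping property does hold for the essential spectrum. A secondary technicality is verifying that on the stable subspace the spectral radius of $e^{t_0 A}$ truly drops to the essential value once the finitely many isolated unstable eigenvalues have been split off; this follows from the standard functional calculus for isolated spectral points applied to each eigenvalue in $\sigma_{\geq w}$.
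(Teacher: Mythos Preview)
The paper does not prove this proposition; it is quoted verbatim as Corollary 2.11, Chapter IV of Engel--Nagel and invoked as a black box. Your sketch is essentially the standard argument from that reference and is correct in outline.

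One point deserves care. In the step where you pass from the finitely many isolated eigenvalues of $e^{t_0 A}$ back to finiteness of $\sigma_{\geq w}$, you invoke ``the Fredholm property of $\lambda I - A$, obtained similarly as a compact perturbation of an invertible operator.'' But $A$ is unbounded, so the Calkin-algebra/Neumann-series argument you used for $e^{t_0 A}$ does not apply to $\lambda I - A$ directly; there is no a priori notion of $\sigma_{\text{ess}}(A)$ in play here, only $\omega_{\text{ess}}(A)$, which is defined through the semigroup. The clean way to close this is the one you already set up for the growth-bound identity: take the finite-rank Riesz projection $P$ associated with the finitely many eigenvalues of $e^{t_0 A}$ outside the disc of radius $e^{wt_0}$, observe that $P$ commutes with the whole semigroup and hence with $A$, and note that $A|_{P\tH^4}$ is a finite matrix. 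Its eigenvalues are exactly the $\lambda \in \sigma(A)$ whose exponentials land among the isolated eigenvalues of $e^{t_0 A}$, and on $(I-P)\tH^4$ the spectral-radius/Gelfand argument you give shows $\sigma(A|_{(I-P)\tH^4}) \subset \{\mathfrak{Re}\,\lambda \leq w\}$. This yields finiteness of $\sigma_{\geq w}$ without any strip-by-strip bookkeeping.
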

Applying Proposition \ref{prop:ess_spec} to \(A = -\L\) , by Proposition \ref{prop:L_ess_spec} we conclude that \(\sigma_{\geq -\eta/2} = \{\lambda_1, \dots, \lambda_n\}\) is finite and the associated spectral projection, which we henceforth denote \(\P_U\) is of finite-rank. Note that \(\P_U\) includes the projection onto any potentially unstable (positive) elements of \(\sigma(-\L)\). Define \(\P_S = \text{Id}-\P_U\) to be the orthogonal projection. We then obtain the orthogonal decomposition \(\tH^4 = \tH^4_{U} \bigoplus \tH^4_{S}\) corresponding to \(\P_{u}, \P_{s}\) respectively. Now, considering \(\L|_{\tH^4_S}\), since \(s(-\L|_{\tH^4_S}) \leq -\eta/2\) and \(\omega_{\text{ess}}(-\L) \leq -\eta\) so by Proposition \ref{prop:ess_spec} we have \(\omega_{0}(-\L|_{\tH^4_{S}}) \leq -\eta/2\) and thus the semigroup estimate
\begin{equation}
\|e^{-s\L}f\|_{\tH^4} \lesssim e^{-\eta s/2}\|f\|_{\tH^4}, \quad \text{for all } f \in \tH^4_S, s \geq 0. \label{eq:stable_semi}
\end{equation}
On the unstable part \(\tH^4_{U}\), the dynamics are finite-dimensional with all eigenvalues \(\lambda_i\) satisfying \(\mathfrak{Re}(\lambda_i) \geq -\eta/2\) and thus
\begin{equation}
    \|e^{s\L}f\|_{\tH^4} \lesssim e^{3 \eta s/4}\|f\|_{\tH^4}, \quad \text{for all } f \in \tH^4_U, s \geq 0. \label{eq:unstable_semi}
\end{equation}
Finally, we prove any unstable directions are smooth away from the boundary.
\begin{proposition}\label{prop:eigenfunctions_regularity}
    Consider the orthogonal decomposition \(\tH^4 = \tH^4_{U} \bigoplus \tH^4_S\) obtained above. If \(\Psi \in \tH^4_U\), then \(\Psi \in C^{\infty}([0, L))\).
\end{proposition}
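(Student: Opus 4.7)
The plan is to proceed in three steps. First, Proposition \ref{prop:ess_spec} applied to $-\L$ implies that $\tH^4_U$ is finite-dimensional and spanned by generalized eigenvectors of $-\L$ corresponding to the finitely many eigenvalues in $\sigma_{\geq -\eta/2}(-\L)$. By linearity and induction on the Jordan block height, the claim reduces to the following: \emph{if $\Psi \in \tH^4$ satisfies $(\L - \lambda)\Psi = h$ for some $\lambda \in \C$ and $h \in C^\infty([0, L))$, then $\Psi \in C^\infty([0, L))$.}

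The equation rearranges to
\[
2G_*(\theta)\Psi'(\theta) = [\lambda - 1 + G_*'(\theta) + 2G_*(\theta)\tan\theta]\Psi(\theta) - 2F(\theta)M_*'(\theta) + F'(\theta)M_*(\theta) + 2M_*(\theta)F(\theta)\tan\theta + h(\theta),
\]
where $F$ solves $F'' + 4F = \Psi'\cos^2\theta$ with $F(0) = F(L) = 0$. By Proposition \ref{prop:profile_existence}, $M_*, G_* \in C^\infty([0, L))$ and $G_* > 0$ on $(0, L)$. For interior regularity on any $[a, b] \subset (0, L)$, I would divide by $2G_*$ and bootstrap: Lemma \ref{lemma:equiv_norm} gives $\Psi$ restricted to $[a, b]$ in the classical fourth order Sobolev space; standard ODE regularity for the second-order problem defining $F$ gains two derivatives on $\Psi'$, while the rearranged formula for $\Psi'$ gains one derivative on the input data $F, F', \Psi, h$. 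Iterating the bootstrap indefinitely yields $\Psi \in C^\infty_{loc}((0, L))$.

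The main obstacle is smoothness at $\theta = 0$, where $G_*$ vanishes, making the ODE singular. By Proposition \ref{prop:profile_local}, $G_*(\theta) = \theta + O(\theta^3)$ with $G_*$ odd, while $\Psi$ is even and $F$ is odd by the propagated symmetries of the 1D system. The plan is to proceed by formal Taylor expansion: differentiating the equation $m$ times at $\theta = 0$ and using $G_*(0) = 0$, $G_*'(0) = 1$, $G_*''(0) = 0$, $M_*(0) = 4$ produces a recursion of the form
\[
(2m - \lambda - 4)\Psi^{(m)}(0) = R_m\big(\Psi^{(j)}(0), F^{(j)}(0), h^{(j)}(0) : j < m\big),
\]
where the leading coefficient $2m - \lambda - 4$ combines the contribution $2m\Psi^{(m)}(0)$ from the transport term on the left, $\lambda\Psi^{(m)}(0)$ from the linear multiplier, and $4\Psi^{(m)}(0)$ from $M_*(0) F^{(m+1)}(0)$ once $F^{(m+1)}(0)$ is expressed in terms of $\Psi^{(m)}(0)$ through the Biot--Savart ODE $F'' = -4F + \Psi'\cos^2\theta$. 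For all but finitely many $m$ this leading coefficient is nonzero, so $\Psi^{(m)}(0)$ is uniquely determined by the lower-order Taylor data. The base data $\Psi(0)$, $\Psi''(0)$, $\Psi^{(4)}(0)$ are encoded directly in the $\tH^4$ inner product (with odd-order coefficients vanishing by parity), and the finitely many resonant orders $m = (\lambda + 4)/2$ are resolved by observing that the strong singular weight $\theta^{-8}$ in the $\tH^4$ norm near $0$ is incompatible with logarithmic branches and thus forces the Fredholm solvability condition $R_{m} = 0$ at any resonance. Combined with the interior smoothness and local uniqueness of ODE solutions, the resulting formal Taylor series matches $\Psi$ near the origin, yielding $\Psi \in C^\infty([0, L))$.
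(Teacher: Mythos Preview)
Your overall approach---reducing to generalized eigenvectors by induction on Jordan height, bootstrapping interior regularity on $(0,L)$ by dividing through by $G_*$, and treating $\theta=0$ separately---matches the paper exactly. The gap is at $\theta=0$. Determining the formal Taylor coefficients recursively does not by itself prove that $\Psi$ is smooth there: you have shown that \emph{if} $\Psi$ were smooth its Taylor coefficients would be uniquely determined (away from resonances), but you have not shown that the given $\tH^4$ solution actually possesses derivatives of all orders at $0$. Your final sentence appeals to ``local uniqueness of ODE solutions,'' but for a singular ODE with leading coefficient $G_*$ vanishing at $0$ this uniqueness is precisely the nontrivial point, and it is not supplied by classical Cauchy--Lipschitz theory.

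The paper closes this gap with Lemma~\ref{lemma:sing_ode}. After subtracting the second-order Taylor polynomial and writing $\Psi = \Psi(0) + \tfrac{1}{2}\Psi''(0)\theta^2 + \theta^4\psi$ (and similarly $\theta^5\xi$ for the stream-function remainder), the equation becomes a first-order system $\theta y' + Ay = R(y,\theta)$ with constant matrix $A$ whose eigenvalues are $(\lambda+4)/2,\,4,\,5$. The spectral constraint $\Re\lambda \geq -\eta/2$ ensures all three have positive real part, which makes the associated integral equation a contraction on $C^0$ near $0$; this yields both existence of a $C^\infty$ solution and uniqueness in $C^0$, so the constructed smooth solution must coincide with the given $\Psi$. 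Your recursion coefficient $2m-\lambda-4$ encodes the same indicial structure, and your remark that the weight $\theta^{-8}$ excludes logarithmic branches is in fact correct for the only orders $m\le 2$ at which a resonance can occur under the spectral constraint---but the constructive fixed-point step is still needed to pass from the formal series to genuine smoothness of $\Psi$.
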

\begin{remark}
    It is not true in general that the unstable modes are smooth at the boundary. Indeed, the profile \(M_*\) is itself an unstable mode corresponding to time translation of the blow-up which is non-smooth at the boundary.
\end{remark}
\begin{proof}
    The result follows from the following lemma,
    \begin{lemma}\label{lemma:eig_smooth_lemma}
        If \(\lambda \in \sigma_{\geq -\eta/2}, F \in C^{\infty}([0, L))\) and \(\Psi \in \tH^4\) solves
        \begin{align}\label{eq:eig}\begin{cases}
            \Psi + 2 \Xi M_*' + 2G_*\Psi' - \Xi' M_* - G_*' \Psi - 2M_*\Xi\tan\theta - 2 \Psi G_*\tan\theta + \lambda \Psi = F \\
            \Xi'' + 4\Xi = \Psi'\cos^2\theta \\
            \Xi(0) = \Xi(L) = 0
        \end{cases}
        \end{align}
       then, \(\Psi \in C^{\infty}([0, L))\).
    \end{lemma}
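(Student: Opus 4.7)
The plan is a classical elliptic regularity bootstrap that exploits two structural features of the system: the first equation is a first-order transport ODE in $\Psi$ with velocity $2G_*$, and the Biot--Savart-type map $\Psi \mapsto \Xi$ gains one derivative. Rewriting the first equation algebraically as
\[
2G_*(\theta)\Psi'(\theta) = F(\theta) - \bigl(1 + \lambda - G_*'(\theta) - 2G_*(\theta)\tan\theta\bigr)\Psi(\theta) - 2M_*'(\theta)\Xi(\theta) + M_*(\theta)\Xi'(\theta) + 2M_*(\theta)\Xi(\theta)\tan\theta,
\]
we observe that on any compact subinterval $[a, b] \subset (0, L)$, $G_*$ is smooth and bounded below by Proposition \ref{prop:profile_existence}, and the weights $\varphi, \psi$ are bounded below there. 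Combined with Lemma \ref{lemma:equiv_norm}, this implies $\Psi \in \tH^4$ gives $\Psi \in H^4([a, b])$.

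I would first establish the derivative-gaining property of the map $\Psi \mapsto \Xi$. Using the explicit formula \eqref{eq:G_loc_form_2} for $\tG_{loc}$ together with the affine correction needed to enforce $\Xi(L) = 0$, and integrating by parts once to trade $\Psi'$ for $\Psi$ against smooth kernels, one obtains pointwise that $\Psi \in H^k([0, L']) \Rightarrow \Xi \in H^{k+1}([0, L'])$ for any $L' < L$ and any $k \geq 0$, hence in particular $\Xi' \in H^k([0, L'])$. Feeding this back into the displayed equation and dividing by $2G_*$ on $[a,b]$ yields $\Psi' \in H^k([a, b])$, i.e.\ $\Psi \in H^{k+1}([a,b])$. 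Starting from $k = 4$ and iterating, we conclude $\Psi \in C^\infty([a, b])$ for every $[a, b] \subset (0, L)$.

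The remaining step is regularity up to $\theta = 0$. Here I would use the evenness of $\Psi$ (inherent in $L^2_*$) together with the Fuchsian structure of the ODE at the characteristic point $\theta = 0$. Differentiating the first equation $k$ times and evaluating at $\theta = 0$, the identities $G_*(0) = 0$ and $G_*'(0) = 1$ force the coefficient of $\Psi^{(k)}(0)$ to equal $2k + \lambda$, which is nonzero for every $k \geq 1$ by the spectral assumption $\mathfrak{Re}(\lambda) \geq -\eta/2$ (taking $\eta$ small). This lets me solve recursively for $\Psi^{(k)}(0)$ in terms of $F^{(j)}(0)$, the Taylor coefficients of $\Xi, M_*, G_*$ at $0$, and lower Taylor coefficients of $\Psi$; the latter all exist because the strong weight $\theta^{-8}$ in $\varphi$, via Hardy's inequality (Lemma \ref{lemma:equiv_norm}), forces $\Psi - \P_2\Psi$ to vanish to high order at $0$. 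Combining this Taylor recursion with the interior $C^\infty$ regularity already established, a standard matching argument then gives $\Psi \in C^\infty([0, L))$.

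The main obstacle is the degeneracy of the transport coefficient $G_*$ at $\theta = 0$: on $[a, b] \subset (0, L)$ one simply divides by $2G_*$, but at the left endpoint this is not allowed and the interior bootstrap fails. The saving structural fact is that this degeneracy is Fuchsian with indicial coefficients $2k + \lambda$, all nonzero for $k \geq 1$ by the spectral hypothesis on $\lambda$, so the Taylor-coefficient recursion at the origin closes and yields the smoothness up to the origin that the interior argument cannot see.
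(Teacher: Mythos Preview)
Your overall strategy matches the paper's: an interior bootstrap on $(0,L)$ where $G_*>0$ (identical to what the paper does), followed by a Fuchsian-type analysis at the degenerate point $\theta=0$.

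Your indicial computation at the origin has a gap. You claim the coefficient of $\Psi^{(k)}(0)$ in the $k$-times differentiated equation is $2k+\lambda$, coming from $(1+\lambda)\Psi+2G_*\Psi'-G_*'\Psi$. But you have dropped the top-order contribution of $-\Xi'M_*$: its $k$th derivative at $0$ yields $-M_*(0)\,\Xi^{(k+1)}(0)=-4\,\Xi^{(k+1)}(0)$, and the Biot--Savart relation $\Xi''+4\Xi=\Psi'\cos^2\theta$ forces $\Xi^{(k+1)}(0)=\Psi^{(k)}(0)+(\text{lower order})$. Thus a scalar recursion in $\Psi$ alone does not close with coefficient $2k+\lambda$; the actual value is $2k+\lambda-4$, and the ``Taylor coefficients of $\Xi$'' you treat as data in fact depend on $\Psi^{(k)}(0)$ at the same order. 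The paper handles this by treating the \emph{coupled} system for the fourth-order remainders $(\theta\xi',\psi,\xi)$ as a single Fuchsian system $\theta y'+Ay=R$ with a $3\times3$ indicial matrix $A$ having eigenvalues $(\lambda+4)/2,\,4,\,5$; these all have positive real part for $\lambda\in\sigma_{\geq-\eta/2}$ with $\eta$ small, and Lemma~\ref{lemma:sing_ode} then produces a unique smooth local solution. Your conclusion is salvageable because $\Psi(0),\Psi''(0)$ are fixed by $\tH^4$ membership and the corrected coefficient $2k+\lambda-4$ is still nonzero for even $k\geq4$, but the argument as written is incorrect and the coupling cannot be ignored. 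A secondary point: your ``standard matching argument'' is exactly where the uniqueness built into Lemma~\ref{lemma:sing_ode} enters---it identifies the given $\tH^4$ solution with the smooth one from the fixed point---and should be made explicit rather than invoked by name.
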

    Before proving the result we show how Proposition \ref{prop:eigenfunctions_regularity} follows. By Proposition \ref{prop:ess_spec}, \(\tH^4_{U}\) can be decomposed into a finite number of eigenspaces with eigenvalues \(\lambda \in \sigma_{\geq - \eta/2}\). Fixing an eigenvalue \(\lambda \in \sigma_{\geq -\eta/2}\) we obtain an eigenspace \(\psi_1, \dots, \psi_n\) with \(\psi_k\) solving
    \[
        -(\L + \lambda)\psi_1 = 0, \quad -(\L + \lambda)\psi_{k} = \psi_{k-1}.
    \]
    By Lemma \ref{lemma:eig_smooth_lemma}, it follows that \(\psi_1 \in C^{\infty}([0, L))\) and then inductively applying Lemma \ref{lemma:eig_smooth_lemma} we conclude \(\psi_k \in C^{\infty}([0, L))\) for all \(k \leq n\) completing the proof of Proposition \ref{prop:eigenfunctions_regularity}. Now, we prove the lemma. The equation \eqref{eq:eig} is singular at \(\theta = 0\) since \(G_*(0) = 0\). Let \(\Psi\) be as in Lemma \ref{lemma:eig_smooth_lemma}. We proceed as in Proposition \ref{prop:profile_local} to first show that \(\Psi\) is smooth locally near zero. To this end, we write
    \[
    \Psi(\theta) = \Psi(0) + \frac{\theta^2}{2}\Psi''(0) + \theta^4 \psi, \quad \Xi(\theta) = \Xi'(0)\theta + \frac{\Xi'''(0)}{6}\theta^3 + \theta^5 \xi(\theta).
    \]
    Then, \(\psi, \xi\) solve
    \begin{align*}
    \begin{cases}
        (8+\lambda)\psi + 2\theta\psi' = 20\xi + 4\theta \xi' + R_1 \\
         20\xi + 10\theta \xi' + \theta^2\xi'' = 4\psi + \theta \psi'  + R_2.
    \end{cases}
    \end{align*}
    Here, \(R_1, R_2\) are consist of the remaining higher order (in \(\theta\)) terms. Explicitly, we have
    \begin{align*}
        \theta^4 R_1 & = R_1(2 \Xi M_*') + R_1(2G_*\Psi') + R_1(\Xi' M_*) + R_1(G_*' \Psi) + R_1(2M_*\Xi\tan\theta) + R_1(2 \Psi G_*\tan\theta) +  R_1(F) \\
        \theta^3 R_2 &= \theta\sin^2\theta\Psi''(0) - 4\theta^3\sin^2\theta \psi - \theta^4 \sin^2\theta \psi'
    \end{align*}
    where the remainders comprising \(R_1\) are defined by 
    \begin{align*}
        \begin{dcases}
            R_1(2\Xi M_*') = -2\Xi'(0)\theta(M_*' - \P_1M_*') - 2\left(\frac{\Xi'''(0)}{6}\theta^3 + \theta^5\xi\right)M_*' \\
            R_1(2G_*\Psi') = -2(G_* - \P_1 G_*)(\theta\Psi''(0) + 4\theta^3\psi + 2\theta^4\psi') \\
            R_1(\Xi' M_*) = \Xi'(0)(M_* - \P_2M_*) + \left(\frac{\Xi'''(0)}{2}\theta^2 + 5\theta^4\xi + \theta^5\xi'\right)(M_* - \P_0 M_*) \\
            R_1(G_*' \Psi) = (G_*'-\P_2 G_*')\Psi(0) + (G_*' - \P_0 G_*')\left(\frac{\theta^2}{2}\Psi''(0)+\theta^4\psi\right) \\
            R_1(2M_*\Xi\tan\theta) = 2\Xi'(0)\theta(M_*\tan\theta - \P_1(M_*\tan\theta)) + 2M_*\tan\theta\left(\frac{\Xi'''(0)}{6}\theta^3 + \theta^5 \xi\right) \\
            R_1(2 \Psi G_*\tan\theta) = 2\Psi(0)(G_*\tan\theta - \P_2(G_*\tan\theta)) + 2G_*\tan\theta\left(\frac{\theta^2}{2}\Psi''(0) + \theta^4\psi(\theta)\right)\\
            R_1(F) = F - \P_2 F.
        \end{dcases}
    \end{align*}
    Note that both remainders \(R_i\) have the form
    \begin{align}\label{eq:eig_remainder}
        R_i(\theta) = f_{1, i}(\theta) + \theta^3\psi'(\theta)f_{2, i}(\theta) + \theta^2 \psi(\theta) f_{3, i}(\theta) + \theta^2 \xi(\theta)f_{4, i}(\theta) + \theta^3 \xi'(\theta)f_{5, i}(\theta)
    \end{align}
    where \(f_{i, j} \in C^{\infty}([0, L))\), \(1 \leq j \leq 5\) are smooth functions away from the boundary depending on the profile. 
    Setting \(q = \theta \xi'\) we obtain the singular first order system
    \[
    \theta y'(\theta) + Ay = R
    \]
    where \(y = (q, \psi, \xi), R = (R_2+\frac12 R_1, \frac12 R_1, 0)\) and
    \[
    A = \begin{pmatrix}
        7 & \frac{\lambda}{2} & 10 \\
        -2 & 4 + \frac{\lambda}{2} & -10 \\
        -1 & 0 & 0
    \end{pmatrix}.
    \]
    It is easily verified that \(A\) is positive definite for \(\eta\) sufficiently small with eigenvalues  \((\lambda + 4)/2, 4, 5\). If \(v_1, v_2, v_3\) denote the eigenvectors of \(A\) corresponding to eigenvalues \(\lambda_1, \lambda_2, \lambda_3 > 0\) respectively then using variation of parameters we write obtain the following fixed point problem
    \[
    \alpha_j(\theta) = \frac{1}{\theta^{\lambda_j}}\int_{0}^{\theta}\phi^{\lambda_j - 1}\left(Q^{-1}F\left(\sum_{k=1}^{n}\alpha_k(\theta)v_k, \phi\right)\right)_{j}\d\phi
    \]
    where \(Q\) diagonalizes \(A\). From the form of the remainder \eqref{eq:eig_remainder}, after integrating by parts the terms \(\theta^3\psi'(\theta)f_{2, i}(\theta), \theta^3 \xi'(\theta)f_{5, i}(\theta)\) we obtain a fixed point problem of the form in Lemma \ref{lemma:sing_ode}. Thus, by Lemma \ref{lemma:sing_ode}, \(\Psi, \Xi\) are smooth in a neighbourhood of \(0\). Finally, to see that the \(\Psi, \Xi\) are smooth away from zero and the boundary, for all \(\theta \neq 0, L\), we have \(G_*(\theta) > 0\) and thus we can divide to have
    \begin{equation}\label{eq:unstable_mode_bulk}
    \Psi' = \frac{1}{2G_*}\big(-\Psi - 2 \Xi M_*'  + \Xi' M_* + G_*' \Psi + 2M_*\Xi\tan\theta + 2 \Psi G_*\tan\theta + \lambda \Psi + F\big).
    \end{equation}
    Since \(\Psi \in \H^4\) it follows by Sobolev embedding that \(\Psi \in C^{3, \frac12-}((0, L))\) and thus that \(\Xi \in C^{2, \frac12-}((0, L))\). However \eqref{eq:unstable_mode_bulk} then implies that \(\Psi \in C^{\infty}((0, L))\).
\end{proof}

\subsection{Nonlinear Estimates}
In this section, we prove the necessary estimates on the nonlinear terms to establish local well-posedness in \(\tH^4\) and to truncate the profile \(M_*\) in a neighbourhood of \(L\). Consider the bilinear operator \(N(f_1, f_2)\) defined by 
\[
N(f_1, f_2) = 2F_1\p_\theta f_2 - (\p_\theta F_1) f_2 - 2 f_2 F_1\tan\theta,
\]
where \(F_1\) solves
\[
F_1''(\theta) + 4F_1(\theta) = F_1'(\theta)\cos^2\theta, \quad F_1(0) = F_1(L) = 0.
\]
\begin{lemma}\label{lemma:nl_est}
    Let \(f_1, f_2, f_3 \in \tH^4\). Then,
    \begin{align}
        |\inner{N(f_1, f_2)}{f_2}_{\tH^4}| \lesssim \|f_1\|_{\tH^4}\|f_2\|_{\tH^4}^2 \quad \text{and} \quad |\inner{N(f_1, f_2)}{f_3}| \lesssim \|f_1\|_{\tH^4}(\|\p_\theta f_2\|_{\tH^4} + \|f_2\|_{\tH^4})\|f_3\|_{\tH^4}.
    \end{align}
\end{lemma}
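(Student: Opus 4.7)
My plan is to expand $\inner{N(f_1, f_2)}{\cdot}_{\tH^4}$ into its four constituents---the pointwise values at $0$, the value of the second derivative at $0$, the low-norm weighted integral against $\varphi$, and the high-norm fourth-derivative integral against $\psi$---and bound each piece separately. The structural observation is that the top-order occurrence of $f_2$ in $N(f_1, f_2) = 2F_1 \p_\theta f_2 - F_1' f_2 - 2 f_2 F_1 \tan\theta$ is in the transport term $2F_1 \p_\theta f_2$. For the first (symmetric) estimate we pair this with $f_2$ itself and integrate by parts to avoid losing a derivative; for the second (non-symmetric) estimate no such cancellation is available, which forces the extra factor $\|\p_\theta f_2\|_{\tH^4}$ on the right-hand side.

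The pointwise contributions at $0$ are handled by direct computation: since $F_1(0)=0$ and $\tan 0 = 0$, each of $N(f_1, f_2)(0)$ and $N(f_1, f_2)''(0)$ is a finite sum of products of derivatives of $F_1$ and $f_2$ evaluated at $0$. Applying Lemma \ref{lemma:infty_embed} to $f_2$ and to $F_1$ (where the latter is controlled in $\H^4$-type norms by $f_1$ via a Biot--Savart estimate analogous to Lemma \ref{lemma:G_bound_lwp}), these are dominated by $\|f_1\|_{\tH^4}\|f_2\|_{\tH^4}$.

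For the low-norm integral, after subtracting $\P_2$ from each factor, I reduce the transport contribution to $\int F_1 \p_\theta((f_2 - \P_2 f_2)^2)\varphi \, d\theta$ and integrate by parts, producing $-\int(F_1'\varphi + F_1 \varphi')(f_2 - \P_2 f_2)^2 \, d\theta$. The ratio $|\varphi'/\varphi|$ is of order $\theta^{-1}$ on $[0,\ell_2]$ and vanishes on $[\ell_2,L]$; the singular factor $\theta^{-1}$ is absorbed by writing $F_1 = \theta \cdot (F_1/\theta)$ and using that $F_1(0)=0$ implies $\|F_1/\theta\|_{L^\infty} \lesssim \|f_1\|_{\tH^4}$. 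The zeroth-order pieces $F_1' f_2$ and $F_1 f_2 \tan\theta$ are controlled using the algebra-type estimate (Lemma \ref{lemma:algebra}) and the Hardy-type inequalities of Section 4. For the top-order integral, Leibniz gives $N(f_1,f_2)^{(4)} = 2 F_1 f_2^{(5)} + (\text{lower order})$. In the first estimate, pairing $2F_1 f_2^{(5)}$ with $f_2^{(4)}\psi$ and integrating by parts yields $-\int(F_1'\psi + F_1 \psi')(f_2^{(4)})^2 \, d\theta$; the dangerous factor $\psi'/\psi \sim (L-\theta)^{-1}$ near the boundary is tamed because $F_1(L)=0$ makes $F_1 \psi'/\psi$ bounded, and the same mechanism at $\theta=0$ handles the bulk singularity. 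The remaining Leibniz terms $F_1^{(j)} f_2^{(k)}$ with $j+k=5$ and $j\geq 1$, $k\leq 4$, are controlled by the algebra property (Lemma \ref{lemma:algebra}) and Lemma \ref{lemma:weighted_interpolation}, using a shifted Biot--Savart estimate to bound $F_1$ and its derivatives in a weighted norm by $\|f_1\|_{\tH^4}$. For the second estimate, the top-order term $\int 2F_1 f_2^{(5)} f_3^{(4)}\psi \, d\theta$ is no longer eligible for the IBP trick; Cauchy--Schwarz produces $\|F_1\|_{L^\infty}\|f_2^{(5)}\|_{L^2(\psi)}\|f_3^{(4)}\|_{L^2(\psi)}$, and $\|f_2^{(5)}\|_{L^2(\psi)} = \|(\p_\theta f_2)^{(4)}\|_{L^2(\psi)} \lesssim \|\p_\theta f_2\|_{\tH^4}$, which accounts precisely for the extra term appearing on the right-hand side.

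The main technical obstacle will be the careful bookkeeping of the piecewise weights $\varphi, \psi$ across the three regions $[0,\ell_1]$, $[\ell_1,\ell_2]$, $[\ell_2,L]$, and the control of the logarithmic derivatives $\varphi'/\varphi$ and $\psi'/\psi$ at their singular ends. The structural rescue throughout is the vanishing of $F_1$ at both $\theta = 0$ and $\theta = L$, which dominates every dangerous product $F_1 \cdot (\text{singular weight factor})$ that arises after integration by parts.
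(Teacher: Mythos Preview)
Your proposal is correct and follows essentially the same approach as the paper: treat the stretching terms $F_1' f_2$ and $2f_2 F_1\tan\theta$ via Cauchy--Schwarz together with the algebra property (Lemma~\ref{lemma:algebra}) and the Biot--Savart bound (Lemma~\ref{lemma:G_bound_lwp}), and handle the transport term $2F_1\p_\theta f_2$ by integrating by parts in both the low- and high-norm integrals, using $F_1(0)=F_1(L)=0$ to render $F_1\varphi'/\varphi$ and $F_1\psi'/\psi$ bounded by $\|F_1'\|_{L^\infty}$. One bookkeeping point to watch in the low-norm computation: the reduction you describe is not quite complete, since $2F_1 f_2' - \P_2(2F_1 f_2') = 2F_1(f_2-\P_2 f_2)' + 2\theta f_2''(0)(F_1 - \P_1 F_1)$ carries an extra cross term, which the paper bounds separately via a Hardy-type estimate on $\int \theta^2(F_1-\P_1 F_1)^2\varphi$.
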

\begin{proof}
     The stretching terms are easily dealt with as by Cauchy--Schwarz and Lemmas \ref{lemma:G_bound_lwp}, \ref{lemma:algebra} we have
     \[
        |\inner{F_1'f_2 + 2f_2F_1\tan\theta}{f_2}_{\tH^4}| \lesssim \|f_2\|_{\tH^4}^2(\|F_1'\|_{\tH^4} + \|F_1\|_{\tH^4}) \lesssim \|f_2\|_{\tH^4}^2\|f_1\|_{\tH^4}.
     \]
     The transport term requires additional care however to avoid derivative loss. First, we deal with the low norm. A direct computation using the definition of \(\P_k\) and integration by parts shows that
     \begin{align*}
         &2\int_{0}^{L}[F_1f_2' - \P_2(F_1f_2')][f_2 - \P_2 f_2]\varphi(\theta) \\
        &= \int_{0}^{L}F_1(\theta)\p_\theta(f_2-\P_2 f_2)^2\varphi(\theta) + 2\theta f_2''(0)(F_1 - \P_1 F_1)(f_2 - \P_2 f_2)\varphi(\theta) \\
         &= \int_{0}^{L}\left[F_1'(\theta) + F_1(\theta) \frac{\varphi'(\theta)}{\varphi(\theta)}\right](f_2-\P_2 f_2)^2\varphi(\theta) + 2\theta f_2''(0)(F_1 - \P_1 F_1)(f_2 - \P_2 f_2)\varphi(\theta).
     \end{align*}
     Since \(\varphi'/\varphi \leq C\theta^{-1}(L-\theta)^{-1}\), using that \(F_1(0) = F_1(L) = 0\) it follows that \(F_1 + F_1 \varphi'/\varphi \leq C\|F'\|_{L^\infty}\) for some constant \(C > 0\). Applying this pointwise bound on the first term and H\"older's inequality on the second, we~find
     \begin{align*}
         \int_{0}^{L}[F_1f_2' - \P_2(F_1f_2')][f_2 - \P_2 f_2]\varphi(\theta) \lesssim &\|F_1'\|_{L^\infty}\|f_2\|_{\tH^4}^2 + |f_2''(0)|\left(\int_{0}^{L}\theta^2 (F_1 - \P_1 F_1)^2\varphi(\theta)\right)^{1/2}\|f_2\|_{\tH^4}.
     \end{align*}
     It follows as in Lemma \ref{lemma:equiv_norm} that
     \[
     \int_{0}^{\l_1}\frac{(F_1 - \P_1 F_1)^2}{\theta^6} \lesssim \int_{0}^{\l_1} F_1'''(\theta)^2 \lesssim \|f_1\|_{\tH^4}^2
     \]
     and since \(c_1 < \varphi(\theta) < c_2\) for some \(c_1, c_2 > 0\) for all \(\theta \in [\l_1, L]\) it follows that
     \[
     \int_{\l_1}^{L}\theta^2(F-\P_1 F_1)^2\varphi(\theta) \leq  C|F_1'(0)|^2 + C\int_{0}^{\l_1}F_1(\theta)^2\varphi(\theta).
     \]
     Since \(\|F_1'\|_{L^\infty} \lesssim \|f_1\|_{L^\infty}\), from the \(L^\infty\) embedding of Lemma \ref{lemma:infty_embed} it then follows that 
     \[
     \int_{0}^{L}[F_1f_2' - \P_2(F_1f_2')][f_2 - \P_2 f_2]\theta^{-8} \lesssim \|f_1\|_{\tH^4}\|f_2\|_{\tH^4}^2.
     \]
     In the high norm we deal only with the highest order term as the rest can be dealt with similarly by interpolating using Lemma \ref{lemma:weighted_interpolation}. The highest order terms gives
     \[
     \int_{0}^{L}2F_1(\theta)f_2^{(5)}(\theta)f_2^{(4)}(\theta)\psi(\theta) = \int_{0}^{L}\left[F_1' + \frac{\psi'}{\psi}F_1\right]f_2^{(4)}(\theta)^2\psi(\theta).
     \]
     Since \(\psi'/\psi \leq  C(L-\theta)^{-1}\) and \(F_1(L) = 0\), we have \(|F_1' + F_1\psi'/\psi| \lesssim \|F_1'\|_{L^\infty}\) and thus
     \[
     \int_{0}^{L}2F_1(\theta)f_2^{(5)}(\theta)f_2^{(4)}(\theta)\psi(\theta) \lesssim \|F_1'\|_{L^\infty}\|f_2\|_{\tH^4}^2 \lesssim C\|f_1\|_{\tH^4}\|f_2\|_{\tH^4}^2.
     \]
     This gives the first inequality of the lemma. The second inequality follows easily from Cauchy--Schwarz and the algebra property.
\end{proof}

\subsection{Existence of Decaying Solutions}
In this section, we construct a decaying solution of \eqref{eq:ipm_pert} such that \(M_0 + M_*\) is smooth and compactly supported away from the boundary completing the proof of Theorem \ref{theorem:ipm_1d}. First, we decompose the solution into a stable and unstable piece. This decomposition for finite codimension stability was developed by Elgindi and Pasqualotto in \cite{EP1}. A similar strategy was employed in the work of Chen and Hou \cite{CH1, CH2} in the case where full stability of the profile was known and has been used in the finite codimension case in the later works \cite{CH1, EP1, CCSV_CompressVorticity}. Ordinarily, when working with an approximate profile, one must rule out the presence of a centre subspace for the linearized operator which can be challenging. Fortunately, as seen in \cite{CCSV_CompressVorticity}, this is not necessary when an exact profile is available. We now consider the full evolution
\[
\p_s M + \L(M) = N(M, M), \quad M(0) = M_0 \in \tH^4.
\]
We decompose \(M = f + g\) where \(f, g\) solve
\begin{align}\label{eq:decomp_sys}
\begin{cases}
    \p_s f + \Lb f = N(f+g, f+g) \\
    \p_s g + \mathcal{L}g = - \Lk f.
\end{cases}
\end{align}
Here, \(f\) is the stable portion of the solution which solves an equation with a fully coercive linear part, and \(g\) is the unstable portion. Coercivity of \(\Lb\) and favourable estimates on the nonlinear terms will then allow us to solve the first equation in \eqref{eq:decomp_sys} for \(f\). The second equation can then be solved by an unstable manifold theorem argument to obtain a decaying solution \(g\).
\begin{proposition}\label{prop:decay}
    There exists \(\gamma, a > 0\) and \(f_0 \in \tH^4\), \(g_0 \in \tH^4_U\) such that
    \begin{enumerate}
        \item \(\text{supp}(f_0 + g_0 + M_*) \subset [0, L-a]\)

        \item \(f_0 + g_0 + M_*   \in C^{\infty}([0, L])\)

        \item There exists a global solution \(f, g \in C^0([0, \infty); \tH^4)\) such that
        \begin{equation}
            \|f(s)\|_{\tH^4} + \|g(s)\|_{\tH^4} \leq Ce^{-s\gamma}\|f_0\|_{\tH^4}.
        \end{equation}
    \end{enumerate}
\end{proposition}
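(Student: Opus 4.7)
The plan is a Lyapunov--Perron-type fixed point on the split system \eqref{eq:decomp_sys}, with the boundary truncation encoded in the initial datum $f_0 + g_0$ and the unstable component $g_0 \in \tH^4_U$ determined by requiring global exponential decay. Fix $a > 0$ small and pick $\chi_a \in C^\infty([0, L])$ with $\chi_a \equiv 1$ on $[L-a, L]$ and $\chi_a \equiv 0$ on $[0, L-2a]$, and set $M_0 := -\chi_a M_*$. Then $M_* + M_0 = (1-\chi_a)M_* \in C^\infty([0, L])$ is supported in $[0, L-a]$, and the $C^{1/2}$ boundary regularity from Proposition \ref{prop:profile_existence} combined with the $(L-\theta)^{13/2}$ weight in $\|\cdot\|_{\tH^4}$ gives $\|M_0\|_{\tH^4} \to 0$ as $a \to 0$. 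By Theorem \ref{thm:coercivity} fix $\eta > 0$ with $\|e^{-s\Lb}\|_{\tH^4 \to \tH^4} \leq e^{-\eta s}$, and combine with \eqref{eq:stable_semi}, \eqref{eq:unstable_semi} to get $\|e^{-s\L}|_{\tH^4_S}\| \lesssim e^{-\eta s/2}$ and $\|e^{s\L}|_{\tH^4_U}\| \lesssim e^{3\eta s/4}$. Choose $\gamma_f \in (3\eta/4, \eta)$, $\gamma_g := \eta/2$, and work in
\begin{equation*}
\mathcal{X} := \bigl\{(f, g) \in C([0,\infty); \tH^4 \times \tH^4) : \sup_{s \geq 0}\bigl(e^{\gamma_f s}\|f(s)\|_{\tH^4} + e^{\gamma_g s}\|g(s)\|_{\tH^4}\bigr) < \infty\bigr\}.
\end{equation*}

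For $(\tilde f, \tilde g)$ in a ball $B_R \subset \mathcal{X}$, I would define $\Psi(\tilde f, \tilde g) := (f, g)$ by first prescribing
\begin{equation*}
g_0 := \int_0^\infty e^{\tau \L}\,\P_U\Lk \tilde f(\tau)\,d\tau \in \tH^4_U, \qquad f_0 := M_0 - g_0
\end{equation*}
(the integral converges since $\gamma_f > 3\eta/4$), then solving the $f$-equation in \eqref{eq:decomp_sys} forward in time with initial datum $f_0$ (using Proposition \ref{prop:LWP_X} for local existence and the coercive energy identity for continuation), and writing $g$ via the Lyapunov--Perron splitting
\begin{equation*}
g(s) := -\int_0^s e^{-(s-\tau)\L}\,\P_S\Lk\tilde f(\tau)\,d\tau + \int_s^\infty e^{(\tau - s)\L}\,\P_U\Lk \tilde f(\tau)\,d\tau.
\end{equation*}
Coercivity of $\Lb$ together with Lemma \ref{lemma:nl_est} yields $\|f(s)\|_{\tH^4} \lesssim (\|M_0\|_{\tH^4} + R^2)e^{-\gamma_f s}$; the stable semigroup bound gives $\|g_S(s)\|_{\tH^4} \lesssim R\,e^{-\gamma_g s}$; and the backward integral gives $\|g_U(s)\|_{\tH^4} \lesssim R\,e^{-\gamma_f s}$. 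For $R$ of order $\|M_0\|_{\tH^4}$ and $a$ sufficiently small, $\Psi$ maps $B_R$ into itself and contracts, so a fixed point exists. By construction $f_0 + g_0 = M_0$, $g_0 \in \tH^4_U$, and $f_0 + g_0 + M_* = (1-\chi_a)M_*$ is smooth on $[0, L]$ and supported in $[0, L-a]$; the decay bound of the proposition holds with $\gamma := \gamma_g = \eta/2$.

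The main technical obstacle is the mismatch of decay rates imposed by the spectral decomposition: convergence of the backward integral defining $g_0$ forces $\gamma_f > 3\eta/4$, while the forward Duhamel integral against the stable semigroup caps the decay of $g_S$ at the rate $\eta/2 < 3\eta/4$, so $f$ and $g$ cannot share a single exponential weight and must be tracked at two distinct rates in $\mathcal{X}$. A secondary subtlety is that the transport piece $F\p_\theta(\cdot)$ in $N$ would lose a derivative in a naive $\tH^4 \to \tH^4$ bound; the resolution is to close the $f$-estimate through the differential energy identity $\tfrac{d}{ds}\|f\|_{\tH^4}^2 = -2\inner{f}{\Lb f}_{\tH^4} + 2\inner{N(\tilde f + \tilde g, \tilde f + \tilde g)}{f}_{\tH^4}$, absorb the top-order transport term by integration by parts via the diagonal bound in Lemma \ref{lemma:nl_est}, and handle the cross-pairing using the smallness of $\|\tilde g\|_{\tH^4}$ provided by the Duhamel formulas for $g$.
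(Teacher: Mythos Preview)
Your overall Lyapunov--Perron strategy matches the paper's, and your choice $f_0+g_0 = -\chi_a M_*$ (so that $f_0+g_0+M_* = (1-\chi_a)M_*$) is a clean variant of the paper's initial data $f_n(0) = -(M_*+g_n(0))\chi_a$. However, the nonlinear energy argument as you have written it has a gap. You pair $N(\tilde f + \tilde g, \tilde f + \tilde g)$ against the \emph{new} iterate $f$ and then invoke the diagonal bound of Lemma~\ref{lemma:nl_est}; but that bound reads $|\inner{N(f_1, f_2)}{f_2}| \lesssim \|f_1\|\,\|f_2\|^2$ and requires the second slot of $N$ to coincide with the test function. Since $f \neq \tilde f + \tilde g$ away from the fixed point, the diagonal bound is unavailable, and the off-diagonal bound forces $\|\p_\theta \tilde f\|_{\tH^4}$ into the estimate---a quantity that your space $\mathcal{X}$ does not control. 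Your proposed cure, ``smallness of $\|\tilde g\|_{\tH^4}$,'' addresses the wrong term: the derivative loss sits on $\tilde f$, not on $\tilde g$.

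The paper avoids this by keeping the $f$-equation genuinely nonlinear in $f$ at each step: one solves $\p_s f_n + \Lb f_n = N(f_n+g_n, f_n+g_n)$ with $g_n$ the Lyapunov--Perron expression built from the \emph{previous} iterate $f_{n-1}$. In the resulting energy identity the second slot of $N$ is $f_n+g_n$, so the diagonal bound disposes of all terms of the form $\inner{N(\,\cdot\,,f_n)}{f_n}$, while the remaining cross terms $\inner{N(\,\cdot\,,g_n)}{f_n}$ require only $\|\p_\theta g_n\|_{\tH^4}$. The latter is controlled not by mere smallness but by the fact that $\Lk$ is finite-rank and smoothing: $\p_\theta\Lk$ is bounded on $\tH^4$, so the same Duhamel estimates that bound $\|g_n\|_{\tH^4}$ also bound $\|\p_\theta g_n\|_{\tH^4}$ (this is the step \eqref{eq:g_n_decay} in the paper). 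Once you replace $N(\tilde f+\tilde g,\tilde f+\tilde g)$ by $N(f+g,f+g)$ with $g = g(\tilde f)$ and invoke the boundedness of $\p_\theta\Lk$ for the cross terms, your scheme becomes the paper's iteration.
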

\begin{proof}
To solve for the stable part \(f\), we will use the decay of solutions of the first equation in \eqref{eq:decomp_sys} due to the coercivity of \(\Lb\). Then, following classical stable manifold theorem constructions, we must construct \(g\) by integrating from \(s = +\infty\). Given \(f\), if we take \(\P_S g(0) \equiv 0\), then \(g\) is determined by
\begin{equation}
    g(s) = \int_{s}^{\infty}e^{(s'-s)\L}\P_U \Lk f(s') \d s'- \int_{0}^{s}e^{(s'-s)\L}\P_S \Lk f(s') \d s'.
\end{equation}
Here, we recall \(\P_U\) denotes the projection onto the unstable subspace \(\tH^4_U\) and \(\P_S\) its orthogonal projection \(\P_S = \text{Id} - \P_U\). Now we define the iterates \((f_n, g_n)\) and show the sequence converges to a solution of \eqref{eq:decomp_sys} with the properties demanded by Proposition \ref{prop:decay}. Define \(f_0 \equiv 0\) and then for \(n \geq 1\) set
\begin{equation}
    g_{n}(s) = \int_{s}^{\infty}e^{(s'-s)\L}\P_U \Lk f_{n-1}(s') \d s' - \int_{0}^{s}e^{(s'-s)\L}\P_S \Lk f_{n-1}(s') \d s' \\
\end{equation}
and let \(f_{n}\) solve
\begin{equation}
        \p_s f_{n} + \Lb f_n = N(f_n+g_n, f_n+g_n), \quad f_{n}(0) = -(M_{*} + g_n(0))\chi_a \label{eq:f_iter}.
\end{equation}
Here \(\chi_a(\theta) = \chi((L-\theta)/a)\) and \(\chi\) is a compactly supported, non-negative bump function such that \(\chi \equiv 1\) on \([0, 1]\) and vanishes on \([2, \infty)\). We have chosen our initial data such that
\[
f_n(0) + g_n(0) = -M_*\chi_{a} + (1-\chi_a)g_n(0)
\]
where we note that \(g_n(0) \in \tH^4_U\) for all \(n\). We have that \(-M_*\chi_a\) is smooth and by Lemma \ref{lemma:eig_smooth_lemma} \((1-\chi_a)\psi\) is smooth for all \(\psi \in \tH^4_U\) and thus it suffices to prove that \(f_n, g_n\) converge in \(\tH^4\) and have exponentially decaying limits. We prove that \(\|f_n(s)\|_{\tH^4} \leq \epsilon e^{-\eta s}\) for all \(n\) using induction and a bootstrapping. Clearly the result holds for \(n = 0\). Now, we make the bootstrap assumption that \(\|f_n(s)\|_{\tH^4} \leq \epsilon e^{-\eta s}\) and assume further that \(\|f_{n-1}(s)\|_{\tH^4} \leq \epsilon e^{-\eta s}\) for some \(\epsilon > 0\) to be chosen later. Using the semigroup estimates \eqref{eq:stable_semi}, \eqref{eq:unstable_semi} and the induction hypothesis yields the following decay on the unstable part \(g_n\)
\begin{align*}
    \|g_n(s)\|_{\tH^4} &\lesssim \int_{s}^{\infty}e^{3\eta (s'-s)/4}\|\L_K f_{n-1}(s')\|_{\tH^4}\d s' + \int_{0}^{s}e^{(s'-s)\eta/2}\|\L_K f_{n-1}(s')\|_{\tH^4}\d s' \\
    &\lesssim \epsilon\int_{s}^{\infty}e^{3(s'-s)\eta/4}e^{-\eta s'}\d s' + \epsilon \int_{0}^{s}e^{(s'-s)\eta/2}e^{-\eta s'}\d s'\\
    &\lesssim \epsilon e^{-\eta s/2}.
\end{align*}
Repeating the same argument and noting that \(\p_\theta\L_K\) is bounded on \(\tH^4\) we conclude that we actually have
\begin{equation}
\|g_n(s)\|_{\tH^4} + \|\p_\theta g_n(s)\|_{\tH^4} \lesssim \epsilon e^{-\eta s/2}. \label{eq:g_n_decay}
\end{equation}
Now, taking the inner product of \eqref{eq:f_iter} with \(f_n\),
\begin{align*}
    \frac12 \p_s \|f_n\|_{\tH^4}^2 + \eta \|f_n\|_{\tH^4}^2 = \inner {N(f_n + g_n, f_n+g_n)}{f_n}_{\tH^4}.
\end{align*}
From bilinearity and the nonlinear estimates in Lemma \ref{lemma:nl_est},
\[
|\inner{N(f_n+g_n, f_n+g_n)}{f_n}_{\tH^4}| \lesssim \|f_n\|_{\tH^4}\left(\|f_n\|_{\tH^4}^2 + \|g_n\|_{\tH^4}^2 + \|\p_\theta g_n\|_{\tH^4}^2\right).
\]
It then follows from the induction hypothesis and \eqref{eq:g_n_decay} that
\[
\p_s \|f_n(s)\|_{\tH^4} + \eta\|f_n(s)\|_{\tH^4} \leq C(\epsilon^2 e^{-\eta s} + \|f_n(s)\|_{\tH^4}^2).
\]
By Gr\"onwall's inequality and the bootstrap assumption,
\[
\|f_n(s)\|_{\tH^4} \leq C e^{-\eta s}(\|f_n(0)\|_{\tH^4} + C\epsilon^2\eta^{-1}).
\]
Finally, noting that
\[
\|f_n(0)\|_{\tH^4} \leq \|g_n(0)\|_{\tH^4} + Ca \leq Ca(1 + \|f_{n-1}(0)\|_{\tH^4}) \leq Ca(1 + \epsilon)
\]
choosing \(\epsilon, a\) sufficiently small we conclude
\[
\|f_n(s)\|_{\tH^4} \leq \frac{\epsilon}{2}e^{-\eta s}
\]
concluding the bootstrap. Finally, we show the sequence \((f_n, g_n)\) are Cauchy in the space of exponential decaying functions. For fixed \(\gamma > 0\) define the norm
\[
\|f\|_{L^\infty_\gamma\tH^4} = \sup_{s > 0}e^{s\gamma}\|f(s)\|_{\tH^4}.
\]
Consider the differences \(F_n = f_{n+1}-f_n\) and \(G_n = g_{n+1}-g_{n}\) which satisfy
\begin{align}\label{eq:F_eqn}\begin{dcases}
    \p_s F_n + \Lb F_n = N(f_{n+1}+g_{n+1}, f_{n+1}+g_{n+1}) - N(f_n+g_n, f_n+g_n) \\
     G_n(s) = \int_{s}^{\infty}e^{(s'-s)\L}\P_U \Lk F_{n-1}(s')\d s' - \int_{0}^{s}e^{(s'-s)\L}\P_S \Lk F_{n-1}(s')\d s'.
    \end{dcases}
\end{align}
Taking the inner product of \eqref{eq:F_eqn} with \(F_n\), we have
\begin{align*}
    \frac{1}{2}\p_s \|F_n\|_{\tH^4}^2 + \eta \|F_n\|_{\tH^4}^2 &\leq \inner{N(f_{n+1}+g_{n+1}, f_{n+1}+g_{n+1})}{F_n}_{\tH^4} - \inner{N(f_n+g_n, f_n+g_n)}{F_{n}}_{\tH^4} \\
    &\lesssim \|F_n\|_{\tH^4}(\|F_n\|_{\tH^4} + \|G_n\|_{\tH^4})(\|f_{n+1}\|_{\tH^4} + \|g_{n+1}\|_{\tH^4} + \|f_n\|_{\tH^4} + \|g_n\|_{\tH^4}).
\end{align*}
Since \(\|f_n(s)\|_{\tH^4} + \|g_n(s)\|_{\tH^4} \leq \epsilon e^{-\eta s/2}\) for all \(n\), we have
\begin{equation}\label{eq:Fn_ode}
\p_s \|F_n\|_{\tH^4} + \eta \|F_n\|_{\tH^4} \lesssim \epsilon e^{-\eta s/2}(\|F_n\|_{\tH^4} + \|G_n\|_{\tH^4}).
\end{equation}
Now, we note that from \eqref{eq:F_eqn}, it follows that \(\|G_n(s)\|_{\tH^4} \lesssim e^{-\eta s/2}\|F_{n-1}\|_{L^\infty_\gamma\tH^4}\) for all \(\frac{3\eta}{4} < \gamma\). Therefore from \eqref{eq:Fn_ode} we can conclude that
\[
\|F_n(s)\|_{\tH^4} \lesssim \epsilon e^{-9\eta s/10}(\|F_n(0)\|_{\tH^4} + \|F_{n-1}\|_{L^\infty_\gamma \tH^4}) \lesssim \epsilon e^{-9\eta s/10}\|F_{n-1}(s)\|_{L^\infty_\gamma \tH^4}
\]
and thus
\begin{align*}
    e^{\gamma s}\|F_n(s)\|_{\tH^4} \lesssim \epsilon e^{\left(\gamma -9\eta/10\right)s}\|F_{n-1}(s)\|_{L^\infty_\gamma, \tH^4}.
\end{align*}
Choosing \(\epsilon\) sufficiently small, and \(3\eta/4 < \gamma < 9\eta/10\), we conclude \(F_n\) are Cauchy in \(L^\infty_\gamma\tH^4\) (so that \(G_n\) are Cauchy in \(L^\infty_{\eta/2} \tH^4\)).
\end{proof}

\section{Blow Up For Solutions with Compact Support}
In this section, we show that the radially homogeneous blow-up solutions constructed in the previous sections can be made compactly supported. We follow the approach developed in \cite{EJ-B, EJ-S}. We decompose the initial data \(\rho_0 = \rho_{SI, 0} + \overline{\rho_{0}}\) where \(\rho_{SI, 0} = rP(\theta)\) is radially homogeneous and \(\nabla^{\perp}\overline{\rho_0} \in C^{\alpha}(\overline{\Omega})\). We then show that near \(r = 0\), the homogeneous part dominates and thus obtain singularity formation in the full system.

\begin{proposition}
Suppose \(\rho_0 = \rho_{SI, 0} + \overline{\rho_0} \in \Ca(\overline{\Omega})\) where \(\rho_{SI, 0} = rP_0(\theta)\) and \(\nabla^{\perp}\overline{\rho_0} \in C^\alpha(\overline{\Omega})\). Moreover, we assume that \(\rho_0\) is even in \(y\) and \(P_0\) is even in \(\theta\) and \(P_0 \in C^{2, \alpha}([0, L])\). Finally, we assume the vanishing, \(\nabla^{\perp}\overline{\rho_0}(0) = (0,0)\). Then,
\begin{enumerate}
    \item There exists \(T > 0\) such that the unique local solution of \eqref{eq:IPM} can be written
    \[
    \rho(t, \theta) = \rho_{SI}(t, \theta) + \overline{\rho}(t, \theta), \qquad \rho_{SI} = rP(t, \theta), \quad \nabla^{\perp}\overline{\rho} \in C^{0}([0, T]; C^{\alpha}(\overline{\Omega}))
    \]
    where \(P(t, \cdot) \in C^{2, \alpha}([0, L])\) is the unique local solution of the 1D system \eqref{eq:ipm_1d}. The time \(T > 0\) on which the decomposition is valid can be extended if and only if \(\rho, P\) do not blow-up.
    \item If there exists \(T_* < \infty\) such that \(\limsup_{t \to T_*}\|P(t, \cdot)\|_{L^\infty} = +\infty\) then there exists \(0 < T' \leq T_*\) such that \(\limsup_{t \to T'}\|\nabla^{\perp}\rho\|_{\Ca} = +\infty\).
\end{enumerate}
\end{proposition}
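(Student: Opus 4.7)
The plan is to substitute the ansatz $\rho(t, x) = rP(t, \theta) + \overline{\rho}(t, x)$ into \eqref{eq:IPM}. By linearity of Darcy's law the velocity splits as $u = u_{SI} + \overline{u}$, where $u_{SI} = \nabla^{\perp}(r^2 G(t, \theta))$ with $G$ recovered from $P$ via the 1D Biot--Savart relation of \eqref{eq:ipm_1d}, and $\overline{u}$ is the velocity generated from $\overline{\rho}$ through Darcy's law. Imposing that $\rho_{SI} = rP$ satisfies the scale-invariant 1D system decouples the transport equation into
\[
\p_t \rho_{SI} + u_{SI} \cdot \nabla \rho_{SI} = 0, \qquad \p_t \overline{\rho} + u \cdot \nabla \overline{\rho} = -\overline{u} \cdot \nabla \rho_{SI},
\]
so that $(P, G)$ satisfies the 1D system (locally well-posed in $C^{2,\alpha}([0, L])$ by a variant of Proposition \ref{prop:LWP_X}) and $\overline{\rho}$ satisfies a linear transport equation with forcing.

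For part (1), beyond the 1D theory for $P$, the main task is to construct $\overline{\rho}$ and propagate the vanishing $\nabla^\perp \overline{\rho}(t, 0) = (0, 0)$ together with $C^\alpha$ control on $\nabla^\perp \overline{\rho}$. The vanishing is essential: it forces $\overline{u}$ to vanish linearly at the origin, so the forcing $\overline{u} \cdot \nabla \rho_{SI}$ is a product of $C^\alpha$-type factors in which one factor vanishes at $x = 0$, which is precisely the hypothesis of the scale-invariant product estimate (Lemma \ref{lemma:Ca_product}). Applying $\nabla^\perp$ to the equation for $\overline{\rho}$ and running estimates parallel to Proposition \ref{prop:lwp_si}, handling the new forcing term via Lemma \ref{lemma:Ca_product}, then yields a closed Gronwall inequality whose right-hand side depends on $\|\nabla u\|_{L^\infty}$, $\|\nabla^\perp \overline{\rho}\|_{C^\alpha}$, and $\|P\|_{C^{2,\alpha}}$. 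Standard iteration gives existence and uniqueness on a short interval, with a continuation criterion controlled by $\|\nabla^\perp \rho\|_{\Ca}$ and $\|P\|_{C^{2,\alpha}}$.

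For part (2), I argue by contrapositive. Assume that $\limsup_{t \to T}\|\nabla^\perp \rho(t)\|_{\Ca} < \infty$ for every $T \leq T_*$. On any interval where the decomposition is valid, part (1) gives $\nabla^\perp \overline{\rho} \in C^\alpha$ with $\nabla^\perp \overline{\rho}(t, 0) = (0, 0)$, hence $|\nabla^\perp \overline{\rho}(t, r, \theta)| \leq C r^\alpha \to 0$ as $r \to 0$. Writing $\nabla^\perp \rho = \nabla^\perp(rP) + \nabla^\perp \overline{\rho}$ with $\nabla^\perp(rP) = -\hat r P'(t, \theta) + \hat\theta P(t, \theta)$ depending only on $\theta$, the triangle inequality along $r \to 0$ gives
\[
|P(t, \theta)| \leq \sqrt{P(t, \theta)^2 + P'(t, \theta)^2} = |\nabla^\perp(rP)(t, \theta)| \leq \|\nabla^\perp \rho(t, \cdot)\|_{L^\infty} \leq \|\nabla^\perp \rho(t, \cdot)\|_{\Ca}.
\]
Hence $\|P(t, \cdot)\|_{L^\infty}$ is bounded uniformly on any subinterval of $[0, T_*)$ on which the decomposition is valid, and a standard 1D Gronwall argument on the equation for $P$ upgrades this $L^\infty$ bound to a uniform $C^{2,\alpha}$ bound. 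By the continuation criterion of part (1), the decomposition then extends all the way up to $T_*$, so $\|P(t, \cdot)\|_{L^\infty}$ stays bounded on $[0, T_*)$, contradicting the hypothesis.

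The principal obstacle is part (1): propagating the vanishing $\nabla^\perp \overline{\rho}(t, 0) = 0$ simultaneously with $C^\alpha$ regularity, and arranging the estimates so that the resulting continuation criterion is exactly the one that makes part (2) close. The key cancellation is that $\overline{u}$ inherits the vanishing of $\overline{\rho}$ at the origin, permitting the use of Lemma \ref{lemma:Ca_product} on the forcing; without this, the forcing would fail to sit in $C^\alpha$ and the scheme would break down.
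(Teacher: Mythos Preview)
Your proposal is correct and follows essentially the same approach as the paper: decompose $\rho = \rho_{SI} + \overline{\rho}$, propagate the vanishing $\nabla^\perp\overline{\rho}(t,0)=0$ so that Lemma~\ref{lemma:Ca_product} applies to the forcing and closes a $C^\alpha$ Gr\"onwall estimate, and for part~(2) send $r\to 0$ to extract control of $P$ (and $P'$) from $\nabla^\perp\rho$. The paper supplies one technical detail you only allude to, namely a near/far-field Green's function estimate showing $|\overline{u}(x)|\lesssim |x|^{1+\alpha}$ (hence $\nabla\overline{u}(0)=0$); conversely, your explicit upgrade of the $L^\infty$ bound on $P$ to $C^{2,\alpha}$ via a 1D Gr\"onwall argument makes the continuation step in part~(2) slightly more transparent than the paper's terse contradiction.
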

\begin{proof}
The proof follows that of \cite{EJ-B} but we include it for sake of completeness. We write \(\rho = \rho_{SI} + \overline{\rho}\) where \(\rho_{SI} = rP(t, \theta)\) and \(\overline{\rho} = \rho - \rho_{SI}\). Likewise write \(u = \nabla^{\perp}\Psi_{SI} + \nabla^{\perp}\overline{\Psi}\) where \(\Psi_{SI} = r^2G(\theta)\) and \(\overline{\Psi} = \Psi - \Psi_{SI}\). Here, \(\Psi\) is the stream function defined by \(\Psi = \Delta^{-1}\p_{x_2}\rho\) . Now, \(\nabla^{\perp}\overline{\rho}\) satisfies the equation
\begin{equation}
\p_t \nabla^{\perp}\overline{\rho} + (u_{SI} \cdot \nabla)\nabla^{\perp}\overline{\rho} + (\overline{u} \cdot \nabla)\nabla^{\perp}\rho_{SI} + (\overline{u} \cdot \nabla)\nabla^{\perp}\overline{\rho} = \nabla u_{SI} \cdot \nabla^{\perp}\overline{\rho} + \nabla \overline{u} \nabla^{\perp}\rho_{SI} + \nabla \overline{u} \nabla^{\perp}\overline{\rho} \label{eq:rho_bar_eqn}
\end{equation}
We begin by finding apriori \(L^{\infty}\) bounds for \(\nabla^{\perp}\overline{\rho}\). First, note that \(\|\nabla u_{SI}\|_{L^\infty} + \|\nabla^{\perp}\rho_{SI}\|_{L^\infty} \leq C \|P\|_{C^1}\). Then, from \eqref{eq:rho_bar_eqn},
\begin{align*}
\frac{d}{dt}\|\nabla^{\perp}\overline{\rho}\|_{L^{\infty}} &\leq \|(\overline{u} \cdot \nabla) \nabla^{\perp}\rho_{SI}\|_{L^{\infty}} + \|\nabla u_{SI} \nabla^{\perp}\overline{\rho}\|_{L^\infty} + \|\nabla \overline{u} \nabla^{\perp}\rho_{SI}\|_{L^\infty} + \|\nabla \overline{u} \nabla^{\perp}\overline{\rho}\|_{L^\infty} \\
&\lesssim \left\|\frac{\overline{u}}{|x|}\right\|_{L^\infty}\|P\|_{C^2} + \|P\|_{C^1}\|\nabla^{\perp}\overline{\rho}\|_{L^\infty} + \|\nabla \overline{u}\|_{L^\infty}\|P\|_{C^1} + \|\nabla \overline{u}\|_{L^\infty}\|\nabla^{\perp}\overline{\rho}\|_{L^\infty}.
\end{align*}
Now, if \(\nabla^{\perp} \rho \lesssim |x|^{\alpha}\) we claim \(\nabla \overline{u}(x) \lesssim |x|^{1+\alpha}\). Indeed, splitting into near and far field,  we have
\begin{align*}
    |\overline{u}(x)| &\lesssim \int_{\Omega_\beta}|\nabla_{x} G(x, z)| |z|^{\alpha}\d z \\
    &= \int_{[|x-z| < 2|x|] \cap \Omega_\beta}|\nabla_{x} G(x, z)| |z|^{\alpha}\d z + \int_{[|x-z| \geq 2|x| \cap \Omega_\beta]}|\nabla_{x} G(x, z)| |z|^{\alpha}\d z.
\end{align*}
In the near field, using that \(|\nabla_x G(x, z)| \lesssim |x-z|^{-1}\) and \(|z|^{\alpha} \lesssim |x|^{\alpha}\) we have
\[
\int_{[|x-z| < 2|x|] \cap \Omega_\beta}|\nabla_{x} G(x, z)| |z|^{\alpha}\d z \lesssim \int_{|x-z| < 2|x|}\frac{|x|^{\alpha}}{|x-z|}\d z\lesssim |x|^{1+\alpha}.
\]
In the far field, we use that \(\nabla_x G(x, z) \lesssim |x|^{1/\beta}/|x-z|^{1/\beta +1}\) where \(\beta < 1/2\) to again obtain 
\[
 \int_{[|x-z| \geq 2|x| \cap \Omega_\beta]}|\nabla_{x} G(x, z)| |z|^{\alpha}\d z \lesssim \int_{|x-z| \geq 2|x|}\frac{|z|^\alpha}{|x-z|^{1/\beta + 1}}\d z \lesssim |x|^{1/\beta + 1}.
\]
In particular, we now conclude \(\nabla^{\perp}\overline{u}(0) = 0\), and from \eqref{eq:rho_bar_eqn} it follows that \(\nabla^{\perp}\overline{\rho}(0) = 0\) is propagated. We therefore have
\[
\left|\frac{\overline{u}(x)}{|x|}\right| + |\nabla \overline{u}(x)| \lesssim \|\nabla \overline{u}(x)\|_{C^\alpha}|x|^{\alpha}, \quad \left|\frac{\overline{\rho}(x)}{|x|}\right| + |\nabla^\perp \overline{\rho}(x)| \lesssim \|\nabla^{\perp} \overline{\rho}(x)\|_{C^\alpha}|x|^{\alpha}.
\]
Since \(\nabla^{\perp}\overline{\rho}(0) = 0, \nabla \overline{u}(0) = 0\) we may apply Lemma \ref{lemma:Ca_product} to \(\nabla \overline{u}, \nabla^{\perp}\overline{\rho}, \overline{u}/|x|\) in the forthcoming \(C^{\alpha}\) estimates. From \eqref{eq:rho_bar_eqn}, and the product estimate of Lemma \ref{lemma:Ca_product} we have
\begin{align*}
    \frac{d}{dt}\|\nabla^{\perp}\overline{\rho}\|_{C^{\alpha}} &\leq \|(\overline{u} \cdot \nabla) \nabla^{\perp}\rho_{SI}\|_{C^\alpha} + \|\nabla u_{SI} \nabla^{\perp}\overline{\rho}\|_{C^\alpha} + \|\nabla \overline{u} \nabla^{\perp}\rho_{SI}\|_{C^\alpha} + \|\nabla \overline{u} \nabla^{\perp}\overline{\rho}\|_{C^\alpha} \\
    &\lesssim \left\|\frac{\overline{u}(x)}{x}\right\|_{C^{\alpha}}\|P\|_{C^{2,\alpha}} + \|P\|_{C^{1, \alpha}}\|\nabla^{\perp}\overline{\rho}\|_{C^\alpha} + \|\nabla \overline{u}\|_{C^{\alpha}}\|P\|_{C^{1, \alpha}} + \|\nabla \overline{u}\|_{L^\infty}\|\nabla^{\perp}\overline{\rho}\|_{C^\alpha}.
\end{align*}
Upon noting that \(\|\overline{u}(x)/|x|\|_{C^\alpha} \lesssim \|\nabla \overline{u}\|_{C^{\alpha}} \lesssim \|\nabla^{\perp}\overline{\rho}\|_{C^\alpha}\) we have
\[
 \frac{d}{dt}\|\nabla^{\perp}\overline{\rho}\|_{C^{\alpha}} \lesssim \left\|\nabla^{\perp} \overline{\rho}\right\|_{C^{\alpha}}(\|P\|_{C^{2,\alpha}} + \|\nabla \overline{u}\|_{L^\infty}).
\]
Given this apriori estimate, it is straightforward to verify that \eqref{eq:rho_bar_eqn} has a unique, local in time solution and the solution satisfies \(\overline{\rho} = \rho - rP\). Moreover, if \(\rho\) and \(P\) remain regular up to time \(T\), then \(\|\nabla^{\perp}\rho\|_{L^\infty}, \|P\|_{C^{2, \alpha}}\) are uniformly bounded and it then follows that \(\|\nabla^{\perp}\overline{\rho}\|_{L^\infty}\) remains bounded and hence \(\overline{\rho}\) remains regular as well. Thus, the decomposition persists until either \(\rho, P\) blow-up for their respective systems. Now we prove the second statement of the theorem. Suppose for sake of contradiction that
\[
\|\nabla u\|_{L^{\infty}} + \|\nabla \rho\|_{L^{\infty}} \leq C
\]
remain uniformly bounded for \(0 \leq t < T_*\).
Then, since \(|\nabla \overline{\rho}| \lesssim |x|^{\alpha}\) locally, it follows that
\begin{align*}
    \limsup_{|x| \to 0}|\nabla^{\perp}\rho| = \limsup_{|x| \to 0}|\nabla^{\perp}\rho_{SI}+ \nabla^{\perp}\overline{\rho}| = \limsup_{|x| \to 0}|\nabla^{\perp}\rho_{SI}| \geq \|P\|_{\text{Lip}} 
\end{align*}
which is a contradiction as \(t \to T_*\).
\end{proof}

\section*{Appendix}
Here, we prove a general result regarding the existence and smoothness of solutions to a fixed point problem which arises when solving singular ODE of the form
\begin{equation}\label{sing_ode}
    \theta y' + Ay = F(y, y', \theta)
    \end{equation}
    where \(y: [0, d]^n \to \R^n\),  and \(A\) is a constant, positive definite matrix. We use this result to construct a local solution of the profile equation in \S 4.2 and to prove regularity of any unstable modes in \S 7.1
\begin{lemma}\label{lemma:sing_ode}
    Consider the fixed point problem
    \begin{equation}\label{eq:sing_ode}
    y(\theta) = \theta^2G(y, \theta) + \theta^{-\lambda}\int_{0}^{\theta}\phi^{\lambda-1}(v+\phi^2 F(y, \phi))\d\phi
    \end{equation}
    where \(y: [0, d]^n \to \R^n\), \(v \in \R^n\) is a constant vector, \(\lambda > 0\) and \(F, G\) are smooth. Then there exists \(d > C/(\|F\|_{C^1} + \|G\|_{C^1})^{1/2}\) where \(C\) is a constant depending only on \(\lambda\) such that there is a unique solution \(y \in C^{\infty}([0, d])\) to \eqref{sing_ode}. Moreover, \(\|y\|_{L^\infty} \leq 2\|v\|\).
\end{lemma}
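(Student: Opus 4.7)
The plan is to apply the Banach fixed point theorem to the operator
\[
T(y)(\theta) := \theta^2 G(y(\theta), \theta) + \theta^{-\lambda}\int_0^\theta \phi^{\lambda-1}\bigl(v + \phi^2 F(y(\phi), \phi)\bigr)\,d\phi
\]
on a closed ball in $C^0([0,d]; \R^n)$, and then to bootstrap the regularity of the resulting fixed point from $C^0$ to $C^\infty$. The first observation is that $\theta^{-\lambda}\int_0^\theta \phi^{\lambda-1} v \, d\phi = v/\lambda$, so $T(y)$ extends continuously to $\theta = 0$ with value $v/\lambda$, and $R := 2\|v\|$ is the natural radius for the ball (with a $\lambda$-dependent constant absorbed into $C$).

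For the Banach step I would use the elementary pointwise bounds
\[
|\theta^2 G(y,\theta)| \leq d^2 \|G\|_{L^\infty}, \qquad \left|\theta^{-\lambda}\int_0^\theta \phi^{\lambda+1} F(y,\phi)\,d\phi\right| \leq \frac{d^2}{\lambda+2}\|F\|_{L^\infty}
\]
to verify $T(B_R) \subset B_R$, together with the mean value theorem in the $y$-variable to obtain
\[
\|T(y_1) - T(y_2)\|_{L^\infty} \leq C_\lambda d^2(\|F\|_{C^1} + \|G\|_{C^1}) \|y_1 - y_2\|_{L^\infty},
\]
which is a contraction provided $d^2$ is smaller than a $\lambda$-dependent multiple of $1/(\|F\|_{C^1}+\|G\|_{C^1})$. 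This is precisely the quantitative lower bound on $d$ appearing in the statement and produces a unique continuous fixed point $y$ with $\|y\|_{L^\infty} \leq 2\|v\|$.

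The main obstacle is upgrading to $C^\infty([0,d])$ regularity, which is delicate only at the singular endpoint $\theta = 0$. Away from the origin, $T$ manifestly preserves smoothness, so differentiating $y = T(y)$ gives $y \in C^\infty((0,d])$ directly. At the origin I would differentiate the fixed point identity and multiply through by $\theta$ to obtain the singular ODE
\[
\theta y'(\theta) + \lambda y(\theta) = v + \theta^2 J(y(\theta), y'(\theta), \theta), \qquad J \text{ smooth},
\]
whose principal part is non-degenerate because $\lambda > 0$. Regularity at the origin then follows by induction on the order of the Taylor expansion: assuming an expansion of $y$ to order $k$, inserting it into the integrand together with the Taylor expansions of $F$ and $G$, and using the identities $\theta^{-\lambda}\int_0^\theta \phi^{\lambda-1+j}d\phi = \theta^j/(\lambda+j)$, one reads off the coefficient of $\theta^{k+2}$ in $y$ from $T(y)$, gaining two powers at each step from the $\phi^2$ prefactors. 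The technical point to verify carefully is that the remainder after truncated Taylor expansion of $\phi^2 F$ and $\phi^2 G$ is genuinely $o(\theta^{k+2})$ uniformly in a neighborhood of $0$, and that the denominators $\lambda + j$ never vanish, which is exactly where the assumption $\lambda > 0$ is used.
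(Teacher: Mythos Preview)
Your contraction mapping step is essentially identical to the paper's, down to the choice of radius $R = 2\|v\|$ and the estimate $\|T(y_1)-T(y_2)\|_{L^\infty} \leq C_\lambda d^2(\|F\|_{C^1}+\|G\|_{C^1})\|y_1-y_2\|_{L^\infty}$.

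For the regularity upgrade, however, your Taylor-expansion sketch has a real gap. Computing all formal Taylor coefficients of $y$ from the identity $y=T(y)$ and verifying that the remainder after each truncation is $o(\theta^{k+2})$ only shows that $y$ admits an asymptotic expansion to every order at $\theta=0$; this is strictly weaker than $y\in C^\infty([0,d])$. What is missing is the statement that each derivative $y^{(k)}$, already known to exist on $(0,d]$, extends continuously to $\theta=0$. Your singular ODE $\theta y' + \lambda y = v + \theta^2 J(y,y',\theta)$ does not hand this to you for free, because isolating $y^{(k+1)}(0)$ from it requires a division by $\theta$ at a point where both sides vanish, so you are back to controlling the integral operator.

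The paper closes this gap more directly: writing the fixed point as $y(\theta)-\theta^2 G(y,\theta)=v/\lambda+\theta^{-\lambda}\int_0^\theta\phi^{\lambda+1}F(y,\phi)\,d\phi$ and assuming $y\in C^k$, it suffices to show that $g\in C^k$ implies $\theta^{-\lambda}\int_0^\theta\phi^{\lambda+1}g(\phi)\,d\phi\in C^{k+1}$. The key trick is the substitution $\phi=\theta\eta$, which turns the integral into $\theta^2\int_0^1\eta^{\lambda+1}g(\theta\eta)\,d\eta$; after $k+1$ $\theta$-derivatives the worst term is $\theta^2\int_0^1\eta^{\lambda+k+2}g^{(k+1)}(\theta\eta)\,d\eta$, and one integration by parts in $\eta$ reduces it to $g^{(k)}$, which is continuous. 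Then, since the coefficient of $y^{(k+1)}$ on the left is $1-\theta^2\partial_1 G(y,\theta)\neq 0$ for small $\theta$, one solves for $y^{(k+1)}$ and obtains $y\in C^{k+1}$. This change-of-variables argument replaces your formal Taylor matching with a genuine one-derivative gain and is what your bootstrap is missing.
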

\begin{proof}
   The result follows easily from the Banach fixed point theorem. Indeed, if \(y_1, y_2 \in B_{R}(0) \subset C^0([0, d])\) where \(R = 2\|v\|\) then,
   \begin{align*}
       &\left|\theta^2(G(y_1, \theta) - G(y_2, \theta)) + \theta^{-\lambda}\int_{0}^{\theta}\phi^{\lambda-1}\phi^2 (F(y_1, \phi) - F(y_2, \phi))\d\phi \right| \\
       &\hspace{15em}\leq C\|G\|_{C^1}\theta^2\|y_1 - y_2\|_{C^0} + C\|F\|_{C^1}\|y_1 - y_2\|_{C^0}\theta^{-\lambda}\int_{0}^{\theta}\phi^{\lambda+1}\d\phi \\
       &\hspace{15em}\leq C_{\lambda}(\|F\|_{C^1} + \|G\|_{C^1})d^2\|y_1 - y_2\|_{C^0}.
   \end{align*}
   Taking \(a\) sufficiently small, we conclude that the associated mapping is a contraction on \(B_{R}(0) \subset C^0\) and thus has a unique fixed point \(y \in C^{0}([0, d])\). We now use \eqref{eq:sing_ode} to show that \(y \in C^{\infty}([0, d])\). Suppose \(y \in C^k([0, d])\) for some \(k \geq 0\). Then, 
   \[
   \frac{d^{k+1}}{d\theta^{k+1}}\left[y(\theta) - \theta^2 G(y, \theta)\right] = (1-\theta^2\p_1^{k+1} G(y, \theta))y^{(k+1)}(\theta) - G_k(y, \theta)
   \]
   where \(G_k \in C^{0}([0, d])\) denotes all the lower order terms and \(\p_1\) denotes a derivative in the first component. Since \(1-\theta^2\p_1^{k+1} G(y, \theta) > 0\) for \(\theta\) sufficiently small we can divide to isolate \(y^{(k+1)}\). From \eqref{eq:sing_ode}, it now suffices to show that if \(g \in C^k([0,d])\) then
   \[
   \theta^{-\lambda}\int_{0}^{\theta}\phi^{\lambda+1}g(\phi)\d\phi \in C^{k+1}([0, d]).
   \]
   Making the change of variables \(\theta\eta=\phi\) we have
   \begin{equation}\label{eq:fp_smoothing}
   \theta^{-\lambda}\int_{0}^{\theta}\phi^{\lambda+1}g(\phi)\d\phi = \theta^2\int_{0}^{1}\eta^{\lambda+1}g(\theta\eta)\d\eta.
   \end{equation}
   Taking \(k+1\) derivatives of \eqref{eq:fp_smoothing}, the highest order term occurs when all derivatives land on \(g\). Considering this term and integrating by parts, we obtain
   \begin{align*}
   \theta^2\int_{0}^{1}\eta^{\lambda+k+2}g^{(k+1)}(\theta\eta)\d\eta &= \theta^2\int_{0}^{1}\eta^{\lambda+k+2}\theta^{-1}\frac{d}{d\eta}g^{(k)} (\theta\eta)\d\eta \\
   &= \theta g^{(k)}(\theta\eta)-\theta(\lambda+k+2)\int_{0}^{1}\eta^{\lambda+k+1}g^{(k)}(\theta\eta)\d\eta,
   \end{align*}
   which is continuous since \(g \in C^k\). Moreover, since \(g \in C^k\) all lower order terms are also continuous and thus we conclude that \(y \in C^{\infty}([0, d])\) as desired.
\end{proof}

\begin{lemma}\label{lemma:holder}
    Suppose \(f \in C^{1}((0, 1]) \cap C([0,1])\), \(f(0) = 0\) and
    \[
    \lim_{\theta \to 0}\frac{\theta f'(\theta)}{f(\theta)} = \alpha
    \]
    for some \(\alpha > 0\). Then, \(f \in C^{\beta}([0,1])\) for all \(\beta < \max\{1, \alpha\}\).
\end{lemma}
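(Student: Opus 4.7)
The plan is to upgrade the ratio hypothesis to a two-sided power-law envelope on $f$, extract from it a pointwise bound on $|f'|$, and then integrate to obtain the Hölder estimate.

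First, since $\theta f'(\theta)/f(\theta) \to \alpha > 0$, the function $f$ is nonvanishing on some $(0,\delta)$ and $f, f'$ must share a common sign there; continuity of $f$ together with $f(0) = 0$ then forces $f$ to have a constant sign on $(0, \delta)$, so after replacing $f$ with $-f$ if necessary we may assume $f > 0$. Fix any small $\epsilon > 0$; after shrinking $\delta$ the hypothesis yields the differential inequality
\[
\frac{\alpha - \epsilon}{\theta} \leq \frac{f'(\theta)}{f(\theta)} \leq \frac{\alpha + \epsilon}{\theta}, \qquad \theta \in (0, \delta).
\]
Integrating $\frac{d}{d\theta}\log f = f'/f$ between $\theta \in (0, \delta/2)$ and $\delta/2$ produces the envelope
\[
c_\epsilon \theta^{\alpha+\epsilon} \leq f(\theta) \leq C_\epsilon \theta^{\alpha-\epsilon}, \qquad \theta \in (0, \delta/2),
\]
for constants depending on $\epsilon$ and $f(\delta/2)$.

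Feeding the upper envelope back into the hypothesis gives the derivative bound $|f'(\theta)| \leq (\alpha+\epsilon) f(\theta)/\theta \leq C_\epsilon' \theta^{\alpha - 1 - \epsilon}$ for $\theta \in (0, \delta/2)$. Away from zero, $f \in C^1((0,1])$ already forces $|f'|$ to be bounded on $[\delta/2, 1]$, so after enlarging the constant we may take this envelope (or simply $|f'| \leq C$ when the exponent is nonnegative) to hold on all of $(0, 1]$.

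For the Hölder estimate, for $0 \leq \theta_1 < \theta_2 \leq 1$ we have $|f(\theta_2) - f(\theta_1)| \leq \int_{\theta_1}^{\theta_2}|f'(s)|\,ds$. If $\alpha \geq 1$, choosing $\epsilon$ small makes the integrand either bounded or integrable with harmless exponent $-\epsilon$, so $f$ is Lipschitz up to arbitrarily small loss and hence $f \in C^\beta([0,1])$ for every $\beta < 1$. If $\alpha < 1$, for any $\beta < \alpha$ pick $\epsilon$ with $\beta < \alpha - \epsilon < 1$; then subadditivity of $x \mapsto x^{\alpha - \epsilon}$ on $[0,1]$ gives
\[
\int_{\theta_1}^{\theta_2} s^{\alpha - 1 - \epsilon}\,ds \;=\; \frac{\theta_2^{\alpha - \epsilon} - \theta_1^{\alpha - \epsilon}}{\alpha - \epsilon} \;\leq\; C_\epsilon (\theta_2 - \theta_1)^{\alpha - \epsilon} \;\leq\; C_\epsilon (\theta_2 - \theta_1)^\beta,
\]
which yields $f \in C^\beta([0,1])$ as desired.

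There is no substantive obstacle; the only subtle points are the sign argument needed to take $\log f$ legitimately and the bookkeeping that routes the derivative envelope into the final Hölder estimate separately in the two regimes $\alpha < 1$ and $\alpha \geq 1$.
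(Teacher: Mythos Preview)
Your proof is correct and follows essentially the same route as the paper's: integrate the log-derivative to obtain the power-law envelope $f(\theta) \leq C_\epsilon \theta^{\alpha-\epsilon}$, feed this back to bound $|f'(\theta)| \leq C\theta^{\alpha-1-\epsilon}$, and then integrate to get the H\"older estimate via subadditivity of $x \mapsto x^{\alpha-\epsilon}$. You are in fact a bit more careful than the paper, making explicit the sign argument that legitimizes taking $\log f$ and separating the regimes $\alpha < 1$ and $\alpha \geq 1$; the paper handles both at once by always choosing $\epsilon$ so that $\alpha - \epsilon < 1$. (Note also that the $\max$ in the statement is a typo for $\min$, as both your argument and the paper's final line confirm.)
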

\begin{proof}
    We note, that it is not necessarily true that \(f \in C^{\alpha}([0,1])\). For instance, one can take \(f(\theta) = \theta^\alpha\log(\theta)\). For any \(\epsilon > 0\), choose \(\delta > 0\) such that \(|\theta (\log(f(\theta)))' - \alpha| < \epsilon\) for all \(0 < \theta < \delta\). Then, for \(0 < \theta < \delta\)
    \[
     | (\log(f(\theta)))' - \alpha \theta^{-1}| < \epsilon \theta^{-1}
    \]
    and integrating on \([\theta, \delta]\) we have
    \begin{align}\label{eq:holder_vanish}
       \left|\log(f(\theta) - \log(f(\delta)) + \alpha \log(\theta) - \alpha \log(\delta) \right| \leq \int_{\theta}^{\delta} | (\log(f(\theta)))' - \alpha \theta^{-1}| \leq -\epsilon\log(\theta/\delta).
    \end{align}
    Exponentiating the above gives \(|f(\theta)\theta^{-\alpha}||f(\delta)\delta^{-\alpha}| \leq \theta^{-\epsilon}\delta^{\epsilon}\) and therefore
    \[
    \left|\frac{f(\theta)}{\theta^\beta}\right| \leq \theta^{\alpha-\beta - \epsilon}\frac{\delta^{\alpha+\epsilon}}{|f(\delta)|}.
    \]
    Then, using that \(|\theta f'(\theta)/f(\theta) - \alpha| < \epsilon\) and the fundamental theorem of calculus,
    \begin{align*}
    |f(x) - f(y)| = \left|\int_{x}^{y}f'(z)\d z\right| \leq (\alpha + \epsilon)\left|\int_{x}^{y} \frac{f(z)}{z}\d z\right| 
    \end{align*}
    Finally, using \eqref{eq:holder_vanish} we conclude
    \[
    |f(x) - f(y)| \leq (\alpha + \epsilon)\int_{x}^{y}z^{\alpha - \epsilon-1}\frac{\delta^{\alpha + \epsilon}}{|f(\delta)|}\d z = \frac{(\alpha + \epsilon)\delta^{\alpha + \epsilon}}{|f(\delta)|}|y^{\alpha - \epsilon} - x^{\alpha - \epsilon}|
    \]
    from which it follows that \(f \in C^{\beta}([0, 1])\) for \(\beta < \min\{1, \alpha\}\). 
\end{proof}

\section*{Acknowledgments}
I would like to thank my advisor Professor Tarek M. Elgindi for suggesting the problem and for his guidance and insights throughout the completion of this work. I would also like to thank Professor Federico Pasqualotto for his comments and helpful discussions.
\printbibliography

@article{EIS,
doi = {10.1088/1361-6544/ac0231},
url = {https://dx.doi.org/10.1088/1361-6544/ac0231},
year = {2021},
month = {6},
publisher = {IOP Publishing},
volume = {34},
number = {7},
pages = {5045},
author = {Tarek Elgindi and Slim Ibrahim and Shengyi Shen},
title = {Finite-time singularity formation for an active scalar equation},
journal = {Nonlinearity},
abstract = {We introduce an active scalar equation with a similar structure to the 3D Euler equations. Through studying the behavior of scale-invariant solutions, we show that compactly supported Lipschitz solutions belonging to  can become singular in finite time. The interesting feature here is that we can achieve this in the absence of spatial boundaries.}
}

@article{EJ-B,
author={Elgindi, Tarek M.
and Jeong, In-Jee},
title={{F}inite-{T}ime {S}ingularity {F}ormation for {S}trong {S}olutions to the {B}oussinesq {S}ystem},
journal={Annals of PDE},
year={2020},
month={5},
day={06},
volume={6},
number={1},
pages={5},
abstract={As a follow up to our work [27], we give examples of finite-energy and Lipschitz continuous velocity field and density {\$}{\$}(u{\_}0,{\backslash}rho {\_}0){\$}{\$} which are {\$}{\$}C^{\backslash}infty {\$}{\$}-smooth away from the origin and belong to a natural local well-posedness class for the Boussinesq equation whose corresponding local solution becomes singular in finite time. That is, while the sup norm of the gradient of the velocity field and the density remain finite on the time interval {\$}{\$}t{\backslash}in [0,1){\$}{\$}, both quantities become infinite as {\$}{\$}t{\backslash}rightarrow 1{\$}{\$}. The key is to use scale-invariant solutions similar to those introduced in [27]. The proof consists of three parts: local well-posedness for the Boussinesq equation in critical spaces, the analysis of certain special infinite-energy solutions belonging to those critical spaces, and finally a cut-off argument to ensure finiteness of energy. All of this is done on spatial domains {\$}{\$}{\backslash}{\{}(x{\_}1,x{\_}2): x{\_}1 {\backslash}ge {\backslash}gamma |x{\_}2|{\backslash}{\}}{\$}{\$} for any {\$}{\$}{\backslash}gamma > 0{\$}{\$} so that we can get arbitrarily close to the half-space case. We show that the 2D Euler equation is globally well-posed in all of the situations we look at, so that the singularity is not coming from the domain or the lack of smoothness on the data but from the vorticity amplification due to the presence of a density gradient. It is conceivable that our methods can be adapted to produce finite-energy {\$}{\$}C^{\backslash}infty {\$}{\$} solutions on {\$}{\$}{\{}{\backslash}mathbb {\{}R{\}}{\}}^2{\_}+{\$}{\$} which become singular in finite time.},
issn={2199-2576},
doi={10.1007/s40818-020-00080-0},
url={https://doi.org/10.1007/s40818-020-00080-0}
}

@article{EJ-S,
author = {Elgindi, Tarek M. and Jeong, In-Jee},
title = {Symmetries and {C}ritical {P}henomena in {F}luids},
journal = {Communications on Pure and Applied Mathematics},
volume = {73},
number = {2},
pages = {257-316},
doi = {https://doi.org/10.1002/cpa.21829},
url = {https://onlinelibrary.wiley.com/doi/abs/10.1002/cpa.21829},
eprint = {https://onlinelibrary.wiley.com/doi/pdf/10.1002/cpa.21829},
abstract = {Abstract We study two-dimensional active scalar systems arising in fluid dynamics in critical spaces in the whole plane. We prove an optimal well-posedness result that allows for the data and solutions to be scale-invariant. These scale-invariant solutions are new and their study seems to have far-reaching consequences. More specifically, we first show that the class of bounded vorticities satisfying a discrete rotational symmetry is a global existence and uniqueness class for the two-dimensional Euler squation. That is, in the well-known L1∩L∞ theory of Yudovich, the L1-assumption can be dropped upon having an appropriate symmetry condition. We also show via explicit examples the necessity of discrete symmetry for the uniqueness. This already answers problems raised by Lions in 1996 and Bendetto, Marchioro, and Pulvirenti in 1993. Next, we note that merely bounded vorticity allows for one to look at solutions that are invariant under scaling—the class of vorticities that are 0-homo-geneous in space. Such vorticity is shown to satisfy a new one-dimensional evolution equation on ��1. Solutions are also shown to exhibit a number of interesting properties. In particular, using this framework, we construct time quasi-periodic solutions to the two-dimensional Euler equation exhibiting pendulum-like behavior. Finally, using the analysis of the one-dimensional equation, we exhibit strong solutions to the two-dimensional Euler equation with compact support for which angular derivatives grow at least (almost) quadratically in time (in particular, superlinear) or exponential in time (the latter being in the presence of a boundary). A similar study can be done for the surface quasi-geostrophic (SQG) equation. Using the same symmetry condition, we prove local existence and uniqueness of solutions that are merely Lipschitz continuous near the origin—though, without the symmetry, Lipschitz initial data is expected to lose its Lipschitz continuity immediately. Once more, a special class of radially homogeneous solutions is considered, and we extract a one-dimensional model that bears great resemblance to the so-called De Gregorio model. We then show that finite-time singularity formation for the one-dimensional model implies finite-time singularity formation in the class of Lipschitz solutions to the SQG equation that are compactly support. While the study of special infinite energy (i.e., nondecaying) solutions to fluid models is classical, this appears to be the first case where these special solutions can be embedded into a natural existence/uniqueness class for the equation. Moreover, these special solutions approximate finite-energy solutions for long time and have direct bearing on the global regularity problem for finite-energy solutions. © 2019 Wiley Periodicals, Inc.},
year = {2020}
}

@article {KY-IPM,
    AUTHOR = {Kiselev, Alexander and Yao, Yao},
     TITLE = {{S}mall {S}cale {F}ormations in the {I}ncompressible {P}orous {M}edia
              {E}quation},
   JOURNAL = {Arch. Ration. Mech. Anal.},
  FJOURNAL = {Archive for Rational Mechanics and Analysis},
    VOLUME = {247},
      YEAR = {2023},
    NUMBER = {1},
     PAGES = {Paper No. 1, 25},
      ISSN = {0003-9527,1432-0673},
   MRCLASS = {76S05 (35Q35 76E30)},
  MRNUMBER = {4527834},
MRREVIEWER = {Adrian\ Muntean},
       DOI = {10.1007/s00205-022-01830-z},
       URL = {https://doi.org/10.1007/s00205-022-01830-z},
}

@article{EP1,
      title={{F}rom {I}nstability to {S}ingularity {F}ormation in {I}ncompressible {F}luids}, 
      author={Tarek M. Elgindi and Federico Pasqualotto},
      year={2023},
      eprint={2310.19780},
      archivePrefix={arXiv},
      primaryClass={math.AP},
      url={https://arxiv.org/abs/2310.19780}, 
}

@article{EP2,
      title={{I}nvertibility of a {L}inearized {B}oussinesq {F}low: {A} {S}ymbolic {A}pproach}, 
      author={Tarek M. Elgindi and Federico Pasqualotto},
      year={2023},
      eprint={2310.19781},
      archivePrefix={arXiv},
      primaryClass={math.AP},
      url={https://arxiv.org/abs/2310.19781}, 
}

@article{MRRS,
  title = {{O}n {B}low {U}p for the {E}nergy {S}uper {C}ritical {D}efocusing {N}onlinear {S}chr\"{o}dinger {E}quations},
  volume = {227},
  ISSN = {1432-1297},
  url = {http://dx.doi.org/10.1007/s00222-021-01067-9},
  DOI = {10.1007/s00222-021-01067-9},
  number = {1},
  journal = {Inventiones mathematicae},
  publisher = {Springer Science and Business Media LLC},
  author = {Merle,  Frank and Raphaël,  Pierre and Rodnianski,  Igor and Szeftel,  Jeremie},
  year = {2021},
  month = sep,
  pages = {247–413}
}

@article{E_Remarks,
      title={Remarks on {F}unctions with {B}ounded {L}aplacian}, 
      author={Tarek M. Elgindi},
      year={2016},
      eprint={1605.05266},
      archivePrefix={arXiv},
      primaryClass={math.AP},
      url={https://arxiv.org/abs/1605.05266}, 
}

@book{EN,
    author = {Engel, Klaus-Jochen and Nagel, Rainer},
    title = {One-Parameter Semigroups for Linear Evolution Equations},
    publisher = {Springer},
    year = {2000}
}

@article{CH1,
      title={{S}table {N}early {S}elf-similar {B}lowup of the 2{D} {B}oussinesq and 3{D} {E}uler {E}quations with {S}mooth {D}ata I: {A}nalysis}, 
      author={Jiajie Chen and Thomas Y. Hou},
      year={2023},
      eprint={2210.07191},
      archivePrefix={arXiv},
      primaryClass={math.AP},
      url={https://arxiv.org/abs/2210.07191}, 
}

@article{CH2,
      title={{S}table {N}early {S}elf-similar {B}lowup of the 2{D} {B}oussinesq and 3{D} {E}uler {E}quations with {S}mooth {D}ata II: {R}igorous {N}umerics}, 
      author={Jiajie Chen and Thomas Y. Hou},
      year={2024},
      eprint={2305.05660},
      archivePrefix={arXiv},
      primaryClass={math.AP},
      url={https://arxiv.org/abs/2305.05660}, 
}

@Article{KS,
 Author = {Kiselev, Alexander and {\v{S}}ver{\'a}k, Vladimir},
 Title = {{S}mall {S}cale {C}reation for {S}olutions of the {I}ncompressible {T}wo-{D}imensional {E}uler {E}quation},
 FJournal = {Annals of Mathematics. Second Series},
 Journal = {Ann. Math. (2)},
 ISSN = {0003-486X},
 Volume = {180},
 Number = {3},
 Pages = {1205--1220},
 Year = {2014},
 Language = {English},
 DOI = {10.4007/annals.2014.180.3.9},
 Keywords = {35Q31},
 zbMATH = {6380816},
 Zbl = {1304.35521}
}

@article{EC1a,
author = {Tarek M. Elgindi},
title = {{{F}inite-{T}ime {S}ingularity {F}ormation for $C^{1,\alpha}$ {S}olutions to the {I}ncompressible {E}uler {E}quations on $\mathbb{R}^3$}},
volume = {194},
journal = {Annals of Mathematics},
number = {3},
publisher = {Department of Mathematics of Princeton University},
pages = {647 -- 727},
keywords = {asymptotic stability, Euler equations, singularity formation},
year = {2021},
doi = {10.4007/annals.2021.194.3.2},
URL = {https://doi.org/10.4007/annals.2021.194.3.2}
}

@misc{CM-IPM,
      title={Finite time singularities of smooth solutions for the 2D incompressible porous media (IPM) equation with a smooth source}, 
      author={Diego Córdoba and Luis Martínez-Zoroa},
      year={2024},
      eprint={2410.22920},
      archivePrefix={arXiv},
      primaryClass={math.AP},
      url={https://arxiv.org/abs/2410.22920}, 
}

@misc{CCSV_CompressVorticity,
      title={Vorticity blowup in 2D compressible Euler equations}, 
      author={Jiajie Chen and Giorgio Cialdea and Steve Shkoller and Vlad Vicol},
      year={2024},
      eprint={2407.06455},
      archivePrefix={arXiv},
      primaryClass={math.AP},
      url={https://arxiv.org/abs/2407.06455}, 
}

@article {EJ-adv,
    AUTHOR = {Elgindi, Tarek M. and Jeong, In-Jee},
     TITLE = {On the effects of advection and vortex stretching},
   JOURNAL = {Arch. Ration. Mech. Anal.},
  FJOURNAL = {Archive for Rational Mechanics and Analysis},
    VOLUME = {235},
      YEAR = {2020},
    NUMBER = {3},
     PAGES = {1763--1817},
      ISSN = {0003-9527,1432-0673},
   MRCLASS = {76B47 (35Q35 76B03)},
  MRNUMBER = {4065651},
       DOI = {10.1007/s00205-019-01455-9},
       URL = {https://doi.org/10.1007/s00205-019-01455-9},
}

@article {CH-DG,
    AUTHOR = {Chen, Jiajie and Hou, Thomas Y. and Huang, De},
     TITLE = {On the finite time blowup of the {D}e {G}regorio model for the
              3{D} {E}uler equations},
   JOURNAL = {Comm. Pure Appl. Math.},
  FJOURNAL = {Communications on Pure and Applied Mathematics},
    VOLUME = {74},
      YEAR = {2021},
    NUMBER = {6},
     PAGES = {1282--1350},
      ISSN = {0010-3640,1097-0312},
   MRCLASS = {35Q30 (76B03 76D03)},
  MRNUMBER = {4242826},
       DOI = {10.1002/cpa.21991},
       URL = {https://doi.org/10.1002/cpa.21991},
}

@misc{EMS,
      title={On the long-time behavior of scale-invariant solutions to the 2d Euler equation and applications}, 
      author={Tarek. M. Elgindi and Ryan. W. Murray and Ayman. R. Said},
      year={2022},
      eprint={2211.08418},
      archivePrefix={arXiv},
      primaryClass={math.AP},
      url={https://arxiv.org/abs/2211.08418}, 
}

@article {CGR_IPM,
    AUTHOR = {C\'ordoba, Diego and Gancedo, Francisco and Orive, Rafael},
     TITLE = {Analytical behavior of two-dimensional incompressible flow in
              porous media},
   JOURNAL = {J. Math. Phys.},
  FJOURNAL = {Journal of Mathematical Physics},
    VOLUME = {48},
      YEAR = {2007},
    NUMBER = {6},
     PAGES = {065206, 19},
      ISSN = {0022-2488,1089-7658},
   MRCLASS = {35Q35 (35B65 76D03 76S05)},
  MRNUMBER = {2337005},
       DOI = {10.1063/1.2404593},
       URL = {https://doi.org/10.1063/1.2404593},
}

@misc{Z1,
      title={The 2D Muskat Problem I: Local Regularity on the Half-plane, Plane, and Strips}, 
      author={Andrej Zlatos},
      year={2024},
      eprint={2401.14659},
      archivePrefix={arXiv},
      primaryClass={math.AP},
      url={https://arxiv.org/abs/2401.14659}, 
}

@misc{Z2,
      title={The 2D Muskat Problem II: Stable Regime Small Data Singularity on the Half-plane}, 
      author={Andrej Zlatos},
      year={2024},
      eprint={2401.14660},
      archivePrefix={arXiv},
      primaryClass={math.AP},
      url={https://arxiv.org/abs/2401.14660}, 
}

@article {CMZZ_Euler,
    AUTHOR = {C\'ordoba, Diego and Martinez-Zoroa, Luis and Zheng, Fan},
     TITLE = {Finite time singularities to the 3{D} incompressible {E}uler
              equations for solutions in {$C^\infty (\mathbb{R}^3 \setminus
              \{0\})\cap C^{1,\alpha}\cap L^2$}},
   JOURNAL = {Ann. PDE},
  FJOURNAL = {Annals of PDE. Journal Dedicated to the Analysis of Problems
              from Physical Sciences},
    VOLUME = {11},
      YEAR = {2025},
    NUMBER = {2},
     PAGES = {Paper No. 19, 56},
      ISSN = {2524-5317,2199-2576},
   MRCLASS = {35B44 (35Q31)},
  MRNUMBER = {4929621},
       DOI = {10.1007/s40818-025-00214-2},
       URL = {https://doi.org/10.1007/s40818-025-00214-2},
}

@article {CH_C1a,
    AUTHOR = {Chen, Jiajie and Hou, Thomas Y.},
     TITLE = {Finite time blowup of 2{D} {B}oussinesq and 3{D} {E}uler
              equations with {$C^{1,\alpha}$} velocity and boundary},
   JOURNAL = {Comm. Math. Phys.},
  FJOURNAL = {Communications in Mathematical Physics},
    VOLUME = {383},
      YEAR = {2021},
    NUMBER = {3},
     PAGES = {1559--1667},
      ISSN = {0010-3616,1432-0916},
   MRCLASS = {35Q35},
  MRNUMBER = {4244260},
       DOI = {10.1007/s00220-021-04067-1},
       URL = {https://doi.org/10.1007/s00220-021-04067-1},
}
\end{document}